\theoremstyle{plain}
\newtheorem{theorem}{Theorem}[section]
\newtheorem{lemma}[theorem]{Lemma}
\newtheorem{corollary}[theorem]{Corollary}
\newtheorem{proposition}[theorem]{Proposition}
\theoremstyle{definition}
\newtheorem{definition}[theorem]{Definition}
\newtheorem{example}[theorem]{Example}
\newtheorem{remark}[theorem]{Remark}
\theoremstyle{remark}
\newcounter{zahl}
\def\theenumi{(\alph{enumi})}
\def\p@enumii{\theenumi}
\newcommand{\DS}{\displaystyle}
\newcommand{\TS}{\textstyle}
\newcommand{\SC}{\scriptstyle}
\newcommand{\SSC}{\scriptscriptstyle}
\DeclareMathOperator{\Frob}{Frob}
\DeclareMathOperator{\GL}{GL}
\DeclareMathOperator{\CKoh}{\check H}
\DeclareMathOperator{\Hom}{Hom}
\DeclareMathOperator{\Ind}{Ind}
\DeclareMathOperator{\Int}{Int}
\DeclareMathOperator{\Lie}{Lie}
\DeclareMathOperator{\PGL}{PGL}
\DeclareMathOperator{\Quot}{Frac}
\DeclareMathOperator{\Rep}{Rep}
\DeclareMathOperator{\Res}{Res}
\DeclareMathOperator{\SL}{SL}
\DeclareMathOperator{\Spec}{Spec}
\DeclareMathOperator{\Spf}{Spf}
\DeclareMathOperator{\Tor}{Tor}
\DeclareMathOperator{\Var}{V}
\newcommand{\Zar}{{\it Zar\/}}
\newcommand{\ad}{{\rm ad}}
\newcommand{\alg}{{\rm alg}}
\DeclareMathOperator{\codim}{codim}
\DeclareMathOperator{\coker}{coker}
\newcommand{\cont}{{\rm cont}}
\newcommand{\dom}{{\rm dom}}
\newcommand{\et}{{\acute{e}t\/}}
\newcommand{\fppf}{{\it fppf\/}}
\newcommand{\fpqc}{{\it fpqc\/}}
\DeclareMathOperator{\id}{\,id}
\renewcommand{\mod}{{\rm\,mod\,}}
\DeclareMathOperator{\ord}{ord}
\newcommand{\PHS}{\mbox{\it P\hspace{-0.1em}HS}}
\newcommand{\red}{{\rm red}}
\DeclareMathOperator{\rk}{rk}
\newcommand{\topol}{{*}}
\newcommand{\whtimes}{\wh{\raisebox{0ex}[0ex]{$\times$}}}
\renewcommand{\phi}{\varphi}
\renewcommand{\epsilon}{\varepsilon}
\newcommand{\BOne} {{\mathchoice{\hbox{\rm1\kern-2.7pt l\kern.9pt}}
                              {\hbox{\rm1\kern-2.7pt l\kern.9pt}}
                              {\hbox{\scriptsize\rm1\kern-2.3pt l\kern.4pt}}
                              {\hbox{\scriptsize\rm1\kern-2.4pt l\kern.5pt}}}}
\newcommand{\BA}{{\mathbb{A}}}
\newcommand{\BC}{{\mathbb{C}}}
\newcommand{\BD}{{\mathbb{D}}}
\newcommand{\BF}{{\mathbb{F}}}
\newcommand{\BG}{{\mathbb{G}}}
\newcommand{\BN}{{\mathbb{N}}}
\newcommand{\BQ}{{\mathbb{Q}}}
\newcommand{\BZ}{{\mathbb{Z}}}
\newcommand{\CB}{{\cal{B}}}
\newcommand{\CC}{{\cal{C}}}
\newcommand{\CF}{{\cal{F}}}
\newcommand{\CG}{{\cal{G}}}
\newcommand{\CH}{{\cal{H}}}
\newcommand{\CI}{{\cal{I}}}
\newcommand{\CL}{{\cal{L}}}
\newcommand{\CM}{{\cal{M}}}
\newcommand{\CN}{{\cal{N}}}
\newcommand{\CO}{{\cal{O}}}
\newcommand{\CP}{{\cal{P}}}
\newcommand{\CS}{{\cal{S}}}
\newcommand{\Fa}{{\mathfrak{a}}}
\newcommand{\Fm}{{\mathfrak{m}}}
\newcommand{\Fp}{{\mathfrak{p}}}
\let\setminus\smallsetminus
\newcommand{\es}{\enspace}
\newcommand{\dual}{^{\SSC\lor}}
\newcommand{\kompl}{^{\SSC\land}}
\newcommand{\mal}{^{\SSC\times}}
\newcommand{\ul}[1]{{\underline{#1}}}
\newcommand{\ol}[1]{{\overline{#1}}}
\newcommand{\wh}[1]{{\widehat{#1}}}
\newcommand{\wt}[1]{{\widetilde{#1}}}
\newcommand{\invlim}[1][]{\ifthenelse{\equal{#1}{}}
{\DS \lim_{\longleftarrow}}
{\DS \lim_{\underset{#1}{\longleftarrow}}}
}
\newcommand{\dirlim}[1][]{\ifthenelse{\equal{#1}{}}
{\DS \lim_{\longrightarrow}}
{\DS \lim_{\underset{#1}{\longrightarrow}}}
}
\newcommand{\dbl}{{\mathchoice{\mbox{\rm [\hspace{-0.15em}[}}
                              {\mbox{\rm [\hspace{-0.15em}[}}
                              {\mbox{\scriptsize\rm [\hspace{-0.15em}[}}
                              {\mbox{\tiny\rm [\hspace{-0.15em}[}}}}
\newcommand{\dbr}{{\mathchoice{\mbox{\rm ]\hspace{-0.15em}]}}
                              {\mbox{\rm ]\hspace{-0.15em}]}}
                              {\mbox{\scriptsize\rm ]\hspace{-0.15em}]}}
                              {\mbox{\tiny\rm ]\hspace{-0.15em}]}}}}
\newcommand{\dpl}{{\mathchoice{\mbox{\rm (\hspace{-0.15em}(}}
                              {\mbox{\rm (\hspace{-0.15em}(}}
                              {\mbox{\scriptsize\rm (\hspace{-0.15em}(}}
                              {\mbox{\tiny\rm (\hspace{-0.15em}(}}}}
\newcommand{\dpr}{{\mathchoice{\mbox{\rm )\hspace{-0.15em})}}
                              {\mbox{\rm )\hspace{-0.15em})}}
                              {\mbox{\scriptsize\rm )\hspace{-0.15em})}}
                              {\mbox{\tiny\rm )\hspace{-0.15em})}}}}
\newcommand{\BFZ}{{\BF_q\dbl\zeta\dbr}}
\newcommand{\dotBD}{\vbox{\hbox{\kern2pt\bf.}\vskip-4.5pt\hbox{$\BD$}}}
\newcounter{commentcounter}
\def\?{\ 
{\bf\color{red}???}\ 
\immediate\write16{}
\immediate\write16{Warning: There was still a question mark . . . }
\immediate\write16{}}
\DeclareMathOperator{\QIsog}{QIsog}
\DeclareMathOperator{\Nilp}{\CN \!{\it ilp}}
\newcommand{\NilpF}{{\Nilp_{\BFZ}}}
\DeclareMathOperator{\Sets}{\CS \!{\it ets}}
\DeclareMathOperator{\Gr}{Gr}
\def\ulM{{\underline{M\!}\,}{}}
\def\s{\sigma^\ast}
\def\longto{\longrightarrow}
\def\isoto{\stackrel{}{\mbox{\hspace{1mm}\raisebox{+1.4mm}{$\SC\sim$}\hspace{-3.5mm}$\longrightarrow$}}}
\newbox\mybox
\def\arrover#1{\mathrel{
       \setbox\mybox=\hbox spread 1.4em{\hfil$\scriptstyle#1$\hfil}
       \vbox{\offinterlineskip\copy\mybox
             \hbox to\wd\mybox{\rightarrowfill}}}}
\newcommand{\ppsi}{\delta}
\newcommand{\X}{\Gr}
\newcommand{\Defo}{\mathcal{D}}
\newcommand{\DefoI}{{\overline{\mathcal{D}}_I}}
\newcommand{\Mnull}{{\underline{\CM}}}
\newcommand{\Test}{{Y}}
\begin{document}

\author{Urs Hartl and Eva Viehmann}

\title{The Newton stratification on deformations of local $G$-shtukas}

\maketitle

\begin{abstract}
Bounded local $G$-shtukas are function field analogs for $p$-divisible groups with extra structure. We describe their deformations and moduli spaces. The latter are analogous to Rapoport-Zink spaces for $p$-divisible groups. The underlying schemes of these moduli spaces are affine Deligne-Lusztig varieties. For basic Newton polygons the closed Newton stratum in the universal deformation of a local $G$-shtuka is isomorphic to the completion of a corresponding affine Deligne-Lusztig variety in that point. This yields bounds on the dimension and proves equidimensionality of the basic affine Deligne-Lusztig varieties.
   
\noindent
{\it Mathematics Subject Classification (2000)\/}: 
20G25   
(11G09, 
14L05,  
14M15)  
\end{abstract}

\bigskip

%
%

\section{Introduction}

Deformations and moduli spaces of $p$-divisible groups play an important role for the local theory and the reduction modulo $p$ of Shimura varieties. A first case was studied by Drinfeld~\cite{Drinfeld2} who used such a moduli space to uniformize certain Shimura curves. Generalizing Drinfeld's result, Rapoport and Zink~\cite{RZ} constructed formal schemes over $\mathbb{Z}_p$ parametrizing $p$-divisible groups together with a quasi-isogeny to a fixed $p$-divisible group, and also variants including extra structure such as a polarization, endomorphisms, or a level structure. These spaces are used to uniformize Shimura varieties (for a corresponding group, i.e. a restriction of scalars of some general linear or symplectic group) along Newton strata.

In this article we consider an analog over the local function field $\BF_q\dpl z\dpr$. Here we are not restricted to the general linear or symplectic groups. We replace $p$-divisible groups by so-called local $G$-shtukas for any split connected reductive group $G$ over the finite field $\BF_q$. 
To define them, let $LG$ be the loop group of $G$, that is, the ind-scheme over $\BF_q$ representing the sheaf of groups for the \fpqc-topology whose sections for an $\BF_q$-algebra $R$ are given by $LG(\Spec R)\,=\,G\bigl(R\dbl z\dbr[\frac{1}{z}]\bigr)$. Let $K$ be the infinite dimensional affine group scheme over $\BF_q$ with $K(\Spec R)\,=\,G\bigl(R\dbl z\dbr\bigr)$. Every $K$-torsor over an $\BF_q$-scheme $S$ for the \fpqc-topology is already a $K$-torsor for the \'etale topology (Proposition~\ref{PropGzTorsor}). For such a $K$-torsor $\CG$ on $S$ let $\CL\CG$ be the associated $LG$-torsor and $\s\CL\CG$ the pullback of $\CL\CG$ under the $q$-Frobenius morphism $\Frob_q:S\to S$.
As our base scheme for a local $G$-shtuka let $S$ be an $\BF_q\dbl z\dbr$-scheme on which $z$ is locally nilpotent and denote the image of $z$ in $\CO_S$ by $\zeta$. Then a \emph{local $G$-shtuka} over $S$ is a pair $\ul\CG=(\CG,\phi)$ consisting of a $K$-torsor $\CG$ on $S$ and an isomorphism of $LG$-torsors $\phi:\s\CL\CG\isoto\CL\CG$. Let $k$ be an algebraically closed field extension of $\BF_q$. Local $G$-shtukas over $k$ can also be described as follows. There exists a trivialization $\CG\cong K_k$ and with respect to such a trivialization $\phi$ corresponds to an element $b\in LG(k)$. A change of the trivialization replaces $b$ by $g^{-1}b\s(g)$ for $g\in K(k)$, where $\s$ is the endomorphism of $K$ and of $LG$ induced by $\Frob_q:S\to S$.

An important invariant of $\ul\CG$ over $k$ is its \emph{Hodge polygon} which controls the relative position of $\CG$ and the image of  $\s\CG$ under $\phi$. We fix a Borel subgroup $B$ of $G$ containing a split maximal torus $T$. Let $b\in LG(k)$ be as above. Then the Hodge polygon of $\ul\CG$ is defined (using the Cartan decomposition) to be the unique dominant cocharacter $\mu_{\ul\CG}\in X_\ast(T)$ with $b\in K(k)z^{\mu_{\ul\CG}}K(k)$. Clearly $\mu_{\ul\CG}$ does not depend on the chosen trivialization $\CG\cong K_k$. If $\mu$ is any dominant cocharacter of $G$ we say that $\ul\CG$ is \emph{bounded by $\mu$} if $\mu_{\ul\CG}\preceq\mu$ in the Bruhat ordering of $X_*(T)$. In Definition~\ref{DefBounded} we also extend this ad hoc definition of boundedness to not necessarily reduced base schemes $S$.

The \emph{affine Grassmannian} is the quotient sheaf $\X=LG/K$ for the \fppf-topology. It is an ind-scheme over $\mathbb{F}_q$ which is of ind-finite type; see \cite[\S4.5]{BeilinsonDrinfeld}, \cite{BeauvilleLaszlo}, \cite{LaszloSorger}, who comprehensively develop the theory of the affine Grassmannian over the field of complex numbers. Most of their results, and in particular all we use here, also hold with the same proofs over $\BF_q$. For instance \cite{Ngo-Polo}, \cite{Faltings03} reprove some statements. Moreover \cite{PR2} present a generalization of results and proofs to twisted affine flag varieties over $\BF_q$. 

One main result of our present article, Theorem~\ref{ThmDefoSp}, identifies the universal deformation space $\Spf \Defo$ for deformations bounded by $\mu$ of a local $G$-shtuka $\ul\CG$ over a field $k'$ with the completion of a closed subscheme of $\X\times_{\Spec \mathbb{F}_q} \Spec k'\dbl\zeta\dbr$ determined by $\mu$. It is noetherian of relative dimension $\langle 2\rho,\mu\rangle$ over $k'\dbl\zeta\dbr$, and $\ol\Defo:=(\Defo/\zeta \Defo)_\red$ is normal and Cohen-Macaulay. Here $\rho$ is the half-sum of the positive roots of $G$. We call $\Spf \ol\Defo$ the \emph{universal $(\zeta=0)$-deformation space} of $\ul\CG$.

We construct moduli spaces of local $G$-shtukas $\ul\CG$ bounded by $\mu$ together with a quasi-isogeny $\ppsi$ to a fixed trivializable local $G$-shtuka $\ul\BG$ over $k'$, which are the analogs of the moduli spaces of $p$-divisible groups defined by Rapoport and Zink, \cite{RZ}. They are formal schemes locally formally of finite type over $k'\dbl\zeta\dbr$ (Theorem~\ref{ThmADLV=RZSp}) whose underlying topological spaces can be described as follows. Inside the affine Grassmannian we consider the \emph{affine Deligne-Lusztig variety} which is defined to be the locally closed reduced ind-subscheme of $\X$ whose points over an algebraically closed extension $k$ of $k'$ are
\[
X_\mu(b)(k)\;:=\;\bigl\{\,g\in \X(k)=LG(k)/K(k):\es g^{-1}b\s(g)\in K(k)z^\mu K(k)\,\bigr\}
\]
and the \emph{closed affine Deligne-Lusztig variety} $X_{\preceq\mu}(b)\;=\;\bigcup_{\mu'\preceq\mu}X_{\mu'}(b)$. They are schemes locally of finite type over $k'$ (Corollary~\ref{ADLVisScheme}). The underlying topological space of the Rapoport-Zink space of local $G$-shtukas bounded by $\mu$ is isomorphic to $X_{\preceq\mu}(b)$ where $b\in LG(k')$ describes the Frobenius $\phi$ of $\ul\BG$. 

We consider a second invariant of a local $G$-shtuka $\ul\CG$ over $k$, its \emph{Newton polygon}. It is the quasi-cocharacter $\nu_{\ul\CG}\in X_\ast(T)_\BQ$ associated by Kottwitz \cite{Kottwitz85,Kottwitz97} to the $\sigma$-conjugacy class of $b\in LG(k)$. Kottwitz's articles only consider the analogous case of $\sigma$-conjugacy classes of elements $b\in G(\Quot(W(k))$ where $W(k)$ is the ring of Witt vectors over $k$. The arguments carry over literally to the equal characteristic case. By definition $\nu_{\ul\CG}$ is invariant under isogeny. Under specialization Hodge and Newton polygon behave like those of $p$-divisible groups: there is a generalization of Grothendieck's specialization theorem by Rapoport and Richartz \cite{RapoportRichartz}. By results of Vasiu \cite{Vasiu} the Newton stratification is pure, that is the Newton polygon of a local $G$-shtuka over a connected scheme jumps in codimension one or is constant (Theorem~\ref{reinheit}). Using these two properties we study the Newton stratification on the universal $(\zeta=0)$-deformation space of an arbitrary local $G$-shtuka $\ul\CG$ over $k$ and give lower bounds for the dimensions of the strata (Proposition~\ref{propnewtondim}). 

We show that a local $G$-shtuka over a reduced complete local ring whose Newton polygon $\nu=\nu_{\ul\CG}$ is \emph{basic}, that is central in $G$, is isogenous to a constant local $G$-shtuka (Proposition \ref{prop41}), which is an analog of a result of Oort and Zink, \cite[Proposition 3.3]{OortZink}. It allows to compare the Newton stratum $\CN_\nu$ in the universal $(\zeta=0)$-deformation to the Rapoport-Zink space. Here we use a bijection between bounded local shtukas over $\Spec R$ and $\Spf R$ for any complete local ring $R$ of characteristic $p$ (Proposition \ref{remdejong}) to pass from the universal deformation (which is a formal scheme) to the associated scheme. We obtain

\begin{theorem}\label{thmnewtonadlv}
Let $b\in LG(k)$ be basic and let $\mu\in X_\ast(T)$ be dominant and such that $X_{\preceq\mu}(b)\neq \emptyset$. Let $g\in X_{\preceq \mu}(b)$ be a $k$-valued point and let $(X_{\preceq\mu}(b))\kompl_g$ be the completion of $X_{\preceq\mu}(b)$ in this point. Let $\mathcal{N}_{\nu}$ be the basic Newton stratum in the universal $(\zeta=0)$-deformation of $\ul\CG=\bigl(K_k,g^{-1}b\s(g)\s\bigr)$ bounded by $\mu$. Then $(X_{\preceq\mu}(b))\kompl_g$ is canonically isomorphic to $\mathcal{N}_{\nu}$.
\end{theorem}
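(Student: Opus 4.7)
The plan is to build mutually inverse morphisms between $\mathcal{N}_\nu$ and $(X_{\preceq\mu}(b))\kompl_g$ by combining Theorem~\ref{ThmDefoSp} with the Rapoport-Zink description of Theorem~\ref{ThmADLV=RZSp}, using Proposition~\ref{prop41} (constant isogeny type in the basic case), Proposition~\ref{remdejong} (moving between $\Spf$ and $\Spec$), and the rigidity of quasi-isogenies of local $G$-shtukas, i.e.\ the function-field analog of Drinfeld's rigidity. Note that $g\in X_{\preceq\mu}(b)(k)$ naturally gives a quasi-isogeny $g\colon\ul\CG=\bigl(K_k,g^{-1}b\s(g)\s\bigr)\to\ul\BG=(K_k,b\s)$, which is the data needed to produce a $k$-point of the Rapoport-Zink space, namely the image of $g$ in $X_{\preceq\mu}(b)$.

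To construct $h\colon(X_{\preceq\mu}(b))\kompl_g\to\mathcal{N}_\nu$, I restrict the universal local $G$-shtuka from the Rapoport-Zink space to $(X_{\preceq\mu}(b))\kompl_g$. This is a deformation of $\ul\CG$ bounded by $\mu$, so by the universal property of Theorem~\ref{ThmDefoSp} (together with Proposition~\ref{remdejong} to go between $\Spec$ and $\Spf$) it is classified by a morphism $h$ to $\Spf\ol\Defo$. Since this deformation is globally quasi-isogenous to the constant shtuka $\ul\BG$ and the Newton polygon is an isogeny invariant, its Newton polygon is identically $\nu$; therefore $h$ factors through $\mathcal{N}_\nu$.

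To construct the reverse $f\colon\mathcal{N}_\nu\to(X_{\preceq\mu}(b))\kompl_g$, I pass through Proposition~\ref{remdejong} to view the universal shtuka $\ul\CG^{\univ}$ as a local $G$-shtuka on $\Spec\ol\Defo$ and restrict to $\mathcal{N}_\nu^{\red}$. This is a reduced complete local scheme on which the Newton polygon is constantly the basic $\nu$, so Proposition~\ref{prop41} furnishes a quasi-isogeny $\ppsi^{\red}$ from $\ul\CG^{\univ}|_{\mathcal{N}_\nu^{\red}}$ to $\ul\BG_{\mathcal{N}_\nu^{\red}}$, normalized so that its fiber at the closed point is the one induced by $g$. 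Rigidity of quasi-isogenies lifts $\ppsi^{\red}$ uniquely to a quasi-isogeny $\ppsi$ over all of $\mathcal{N}_\nu$. The pair $(\ul\CG^{\univ}|_{\mathcal{N}_\nu},\ppsi)$ is classified by the Rapoport-Zink space, giving a morphism whose closed point maps to $g$, hence factoring through $(X_{\preceq\mu}(b))\kompl_g$.

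That $f$ and $h$ are mutually inverse follows by comparing the universal objects classified by the two compositions: uniqueness in Theorem~\ref{ThmDefoSp} pins down $f\circ h$ as the identity on the deformation side, while rigidity of quasi-isogenies (which makes the quasi-isogeny constructed in the definition of $f$ the unique lift of its closed-fiber value) pins down $h\circ f$ as the identity on the Rapoport-Zink side. I expect the principal technical obstacle to be establishing this rigidity statement for local $G$-shtukas in the required generality, together with the verification that Proposition~\ref{prop41} actually applies to $\mathcal{N}_\nu^{\red}$ viewed as a reduced complete local scheme; once these two ingredients are in hand the remainder of the argument is a formal manipulation of universal properties.
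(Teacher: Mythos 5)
Your proposal is correct and mirrors the paper's proof step by step, using Theorem~\ref{ThmADLV=RZSp}, Theorem~\ref{ThmDefoSp}, Propositions~\ref{remdejong} and~\ref{prop41}, and rigidity (Proposition~\ref{PropRigidity}); the only slips are cosmetic. Note that $\mathcal{N}_\nu$ is reduced by definition, so the lift from $\mathcal{N}_\nu^{\red}$ to $\mathcal{N}_\nu$ is vacuous, and in the final paragraph the two tools are attached to the wrong composites: $h\circ f$ (a self-map of $\mathcal{N}_\nu$, which constructs and then forgets the quasi-isogeny) is the identity by uniqueness of the classifying morphism, while $f\circ h$ (a self-map of $(X_{\preceq\mu}(b))\kompl_g$) is the identity by rigidity, since the reconstructed quasi-isogeny agrees with the original one at the closed point.
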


Using the dimension formula for affine Deligne-Lusztig varieties in \cite{GHKR}, \cite{Viehmann06}, and Theorem \ref{thmnewtonadlv}, we show that these varieties are also equidimensional. A similar strategy is used by de Jong and Oort in \cite[Proposition 5.19 and Corollary 5.20]{deJongOort} to show equidimensionality of a moduli space of $p$-divisible groups.

\begin{theorem}\label{thmequidim}
Let $b\in LG(k)$ be basic and let $\mu\in X_\ast(T)$ be dominant and such that $X_{\preceq\mu}(b)\neq \emptyset$. Then $X_{\mu}(b)$ and $X_{\preceq \mu}(b)$ are equidimensional of dimension $\langle \rho, \mu\rangle-\frac{1}{2}{\rm def}(b)$. Here $\rho$ is the half-sum of the positive roots of $G$ and ${\rm def}(b)=\rk(G)-\rk(J)$ is the defect of $b$ (see \ref{remnewtondim}).
\end{theorem}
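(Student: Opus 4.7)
Set $d := \langle \rho, \mu\rangle - \frac{1}{2}\mathrm{def}(b)$. My plan is to combine Theorem \ref{thmnewtonadlv} with the dimension formula of \cite{GHKR} and \cite{Viehmann06} and the lower bound on Newton stratum dimensions from Proposition \ref{propnewtondim}, in the spirit of \cite[Proposition~5.19]{deJongOort}. From the GHKR--Viehmann formula one has $\dim X_{\mu'}(b)\leq \langle \rho, \mu'\rangle - \frac{1}{2}\mathrm{def}(b)\leq d$ for every $\mu'\preceq\mu$, so every irreducible component of $X_{\preceq\mu}(b) = \bigcup_{\mu'\preceq\mu} X_{\mu'}(b)$ has dimension at most $d$. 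For equidimensionality it therefore suffices to prove that the local dimension of $X_{\preceq\mu}(b)$ at every $k$-valued point is at least $d$.

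Fix $g\in X_{\preceq\mu}(b)(k)$ and put $\ul\CG := (K_k,\, g^{-1}b\s(g)\s)$, a local $G$-shtuka over $k$ bounded by $\mu$ with Newton polygon $\nu = \nu_b$. By Theorem \ref{thmnewtonadlv} the completion $(X_{\preceq\mu}(b))\kompl_g$ is canonically isomorphic to the basic Newton stratum $\CN_\nu$ inside the universal $(\zeta=0)$-deformation space $\Spf \ol\CalD$ of $\ul\CG$ bounded by $\mu$, and by Theorem \ref{ThmDefoSp} this formal scheme is Cohen-Macaulay of dimension $\langle 2\rho, \mu\rangle$. Applying Proposition \ref{propnewtondim} to $\nu$, which uses the purity theorem \ref{reinheit} of Vasiu and the Grothendieck--Rapoport--Richartz specialization theorem, yields the matching lower bound $\dim \CN_\nu \geq d$. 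Together with the upper bound, this shows $\dim_g X_{\preceq\mu}(b) = d$; as $g$ was arbitrary, $X_{\preceq\mu}(b)$ is equidimensional of dimension $d$. Since $X_\mu(b)$ is an open subscheme of $X_{\preceq\mu}(b)$, each of its irreducible components is an open subscheme of an irreducible component of $X_{\preceq\mu}(b)$ and so also has dimension $d$.

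The main obstacle is verifying that the lower bound coming out of Proposition \ref{propnewtondim} is exactly $d$ in the basic case, and not something strictly smaller. Purity and specialization only ensure that each jump of Newton polygon in $\Spf \ol\CalD$ occurs in codimension one, so the best achievable lower bound on $\dim \CN_\nu$ is $\langle 2\rho, \mu\rangle$ minus the length of a maximal chain in the poset of Newton polygons joining the basic $\nu$ to the generic polygon in a deformation bounded by $\mu$. To reach $d$ one must know that this maximal chain length equals $\langle \rho, \mu\rangle + \frac{1}{2}\mathrm{def}(b)$, the expected codimension of the basic stratum, and that such a chain is realised geometrically by an honest specialization sequence in the deformation space; these combinatorial and geometric facts are precisely what Proposition \ref{propnewtondim} must supply, and their interaction with the basicness of $b$ is the delicate point of the argument.
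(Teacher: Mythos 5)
Your strategy reproduces the paper's proof: pin down the dimension of both $X_\mu(b)$ and $X_{\preceq\mu}(b)$ by the GHKR--Viehmann formula, reduce equidimensionality to a lower bound on the local dimension at each closed point, transport that local question to the basic Newton stratum $\CN_\nu$ in the universal $(\zeta=0)$-deformation via Theorem~\ref{thmnewtonadlv}, and invoke Proposition~\ref{propnewtondim} for the lower bound. The ``openness'' argument deducing equidimensionality of $X_\mu(b)$ from that of $X_{\preceq\mu}(b)$ is also how the paper proceeds.

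However, the final step you flag as ``the main obstacle'' is left as an unresolved gap, and your description of it is not quite right. Proposition~\ref{propnewtondim} does not directly assert $\dim\CN_\nu\geq d$; it gives the explicit estimate
$\dim\CN_\nu\geq \langle 2\rho,\mu\rangle-\sum_i\lceil\langle\omega_i,\mu-\nu\rangle\rceil$.
The missing ingredient is Remark~\ref{remnewtondim}, which records the identity of Kottwitz \cite{Kottwitz2006}
$\langle 2\rho,\mu\rangle-\sum_i\lceil\langle\omega_i,\mu-\nu\rangle\rceil=\langle\rho,\mu+\nu\rangle-\tfrac{1}{2}\mathrm{def}(b)$,
together with the observation that $\langle\rho,\nu\rangle=0$ because $\nu$ is central (this is the only place basicness enters). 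Substituting gives exactly $d=\langle\rho,\mu\rangle-\tfrac{1}{2}\mathrm{def}(b)$. Your worry about ``realizability'' of a maximal chain of Newton polygons is a red herring: the lower bound of Proposition~\ref{propnewtondim} comes from Corollary~\ref{cor3.4}(b), which bounds the length of any realizable chain by the length of an abstract chain of comparable Newton polygons (Chai's purely combinatorial formula, not depending on what actually specializes in $\Spf\ol\Defo$). Realizability is only needed for the sharper statement that the generic Newton polygon is $\mu$, which is Corollary~\ref{corunivnp} and a posteriori follows from the equidimensionality you are proving, not the other way around. Once you cite Remark~\ref{remnewtondim} and note $\langle\rho,\nu\rangle=0$, your argument closes and coincides with the paper's.
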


For $X_{\mu}(b)$, this finishes the proof of a conjecture by Rapoport \cite[Conjecture 5.10]{RapoportGuide}. For $X_{\mu}(b)$ with $b\in T\bigl(k\dpl z\dpr\bigr)$ (not necessarily basic) Theorem \ref{thmequidim} has been shown in \cite[Proposition 2.17.1]{GHKR}.  For $X_{\preceq\mu}(b)$, this theorem immediately implies

\begin{corollary}
Let $b\in LG(k)$ be basic and $\mu\in X_\ast(T)$ be dominant. Then $X_{\preceq\mu}(b)$ is the closure of $X_{\mu}(b)$ in $\X$.
\end{corollary}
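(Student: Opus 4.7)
My plan is to combine the equidimensionality result in Theorem~\ref{thmequidim} with a dimension comparison of the Hodge strata and the openness of the top stratum inside $X_{\preceq\mu}(b)$.

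First, by definition $X_{\preceq\mu}(b)=\bigcup_{\mu'\preceq\mu}X_{\mu'}(b)$ is a closed subscheme of $\X$ that contains $X_\mu(b)$, so one inclusion $\overline{X_\mu(b)}\subseteq X_{\preceq\mu}(b)$ is automatic. For the reverse inclusion, I would note that since the Cartan decomposition writes $LG$ as a disjoint union of the double cosets $K(k)z^{\mu'}K(k)$, the affine Schubert cell $Kz^\mu K/K$ is open inside the Schubert variety $\bigcup_{\mu'\preceq\mu}Kz^{\mu'}K/K$. Intersecting with the closed condition $g^{-1}b\s(g)\in Kz^{\mu'}K$ then shows that $X_\mu(b)$ is open in $X_{\preceq\mu}(b)$, and its complement is the union $\bigcup_{\mu'\prec\mu}X_{\mu'}(b)$.

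Next, I would compute dimensions. Applying Theorem~\ref{thmequidim} to each dominant cocharacter $\mu'$ with $X_{\mu'}(b)\neq\emptyset$ (note $b$ remains basic, and non-emptiness of $X_{\preceq\mu}(b)$ is assumed), one has $\dim X_{\mu'}(b)=\langle\rho,\mu'\rangle-\tfrac12{\rm def}(b)$. Since $\mu-\mu'$ is a non-negative integral combination of simple coroots and $\langle\rho,\alpha_i^\vee\rangle=1$ for every simple coroot, $\mu'\prec\mu$ strictly implies $\langle\rho,\mu'\rangle<\langle\rho,\mu\rangle$. Therefore the complement $X_{\preceq\mu}(b)\setminus X_\mu(b)$ has dimension strictly less than $d:=\langle\rho,\mu\rangle-\tfrac12{\rm def}(b)$, whereas by Theorem~\ref{thmequidim} every irreducible component of $X_{\preceq\mu}(b)$ has dimension exactly $d$.

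Finally, let $Z$ be an irreducible component of $X_{\preceq\mu}(b)$. If $Z\cap X_\mu(b)=\emptyset$, then $Z$ would be contained in the lower-dimensional locus $\bigcup_{\mu'\prec\mu}X_{\mu'}(b)$, contradicting $\dim Z=d$. Hence $Z\cap X_\mu(b)$ is a nonempty open subset of the irreducible scheme $Z$, hence dense in $Z$, giving $Z\subseteq\overline{Z\cap X_\mu(b)}\subseteq\overline{X_\mu(b)}$. Taking the union over all components yields $X_{\preceq\mu}(b)\subseteq\overline{X_\mu(b)}$, completing the proof.

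The argument is essentially routine once Theorem~\ref{thmequidim} is available; the only thing that might be considered subtle is verifying that $X_\mu(b)$ is open in $X_{\preceq\mu}(b)$, but this follows directly from the fact that $Kz^\mu K/K$ is open in the affine Schubert variety together with the Cartan-decomposition stratification of $LG$. So the main work has already been done in Theorem~\ref{thmequidim}.
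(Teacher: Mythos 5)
Your proof is correct and follows the same route as the paper's: both use the dimension formula of Theorem~\ref{thmequidim} together with the inequality $\langle\rho,\mu'\rangle<\langle\rho,\mu\rangle$ for $\mu'\prec\mu$ to conclude that the lower strata have strictly smaller dimension, and then invoke equidimensionality of $X_{\preceq\mu}(b)$ to deduce density of $X_\mu(b)$. Your write-up simply makes explicit the intermediate steps (openness of $X_\mu(b)$ in $X_{\preceq\mu}(b)$, the argument component-by-component) that the paper compresses into three sentences.
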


\begin{proof}
We have $\langle \rho, \mu'\rangle<\langle \rho, \mu\rangle$ for every $\mu'\preceq \mu$ with $\mu'\neq\mu$. Thus $\dim X_{\mu'}(b)<\dim X_{\mu}(b)$ for each such $\mu'$. Hence equidimensionality of $X_{\preceq\mu}(b)$ implies that $X_{\mu}(b)$ is dense in $X_{\preceq\mu}(b)$.
\end{proof}

A more careful analysis of the proof of Theorem \ref{thmequidim} yields 
\begin{corollary}\label{corunivnp}
Let $\ul\CG$ be a local $G$-shtuka over $k$ which is basic and bounded by some $\mu\in X_\ast(T)$. Let $\ol\Defo$ be the universal $(\zeta=0)$-deformation ring for deformations of $\ul\CG$ that are bounded by $\mu$. Then the generic fibre of the local $G$-shtuka over $\Spec\ol\Defo$ corresponding to the universal family over $\Spf \ol\Defo$ has Newton polygon $\mu$.
\end{corollary}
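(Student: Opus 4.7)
The plan is to pin down the Newton polygon $\nu_{\mathrm{gen}}$ of the universal local $G$-shtuka at the generic point of $\Spec\ol\Defo$ by combining the dimension of the basic Newton stratum from Theorem \ref{thmequidim} with purity of the Newton stratification.

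First, by the Rapoport-Richartz specialization theorem Newton polygons are Bruhat-smaller after specialization; since the closed point of $\Spec\ol\Defo$ carries $\ul\CG$, this gives $\nu_{\ul\CG}\preceq\nu_{\mathrm{gen}}$. Mazur's inequality at the generic point, combined with the boundedness of the universal shtuka by $\mu$, gives $\nu_{\mathrm{gen}}\preceq\mu$. Next, Theorem \ref{thmnewtonadlv} identifies the closed basic Newton stratum $\CN_{\nu_{\ul\CG}}\subset\Spec\ol\Defo$ with $(X_{\preceq\mu}(b))\kompl_g$, and Theorem \ref{thmequidim} gives it dimension $\langle\rho,\mu\rangle-\tfrac{1}{2}\mathrm{def}(b)$. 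Since $\dim\Spec\ol\Defo=\langle 2\rho,\mu\rangle$ by Theorem \ref{ThmDefoSp}, the codimension of the basic stratum in the universal deformation equals $\langle\rho,\mu\rangle+\tfrac{1}{2}\mathrm{def}(b)$.

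Iterating Vasiu's purity (Theorem \ref{reinheit}) inside $\Spec\ol\Defo$ produces a saturated chain $\nu_{\mathrm{gen}}=\nu_0\succ\nu_1\succ\cdots\succ\nu_r=\nu_{\ul\CG}$ in $B(G,\mu)$ with $\CN_{\nu_i}$ of codimension $i$, so $r=\langle\rho,\mu\rangle+\tfrac{1}{2}\mathrm{def}(b)$. On the other hand, the dimension formula for affine Deligne-Lusztig varieties from \cite{GHKR,Viehmann06} (read off at the cover relations) shows that $B(G,\mu)$ is graded by the length function $\ell(\nu):=\langle\rho,\nu-\nu_{\ul\CG}\rangle+\tfrac{1}{2}(\mathrm{def}(\nu_{\ul\CG})-\mathrm{def}(\nu))$, so $r=\ell(\nu_{\mathrm{gen}})$. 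Using $\langle\rho,\nu_{\ul\CG}\rangle=0$ (centrality of the basic), the equality
$\langle\rho,\mu\rangle+\tfrac{1}{2}\mathrm{def}(\nu_{\ul\CG})=\langle\rho,\nu_{\mathrm{gen}}\rangle+\tfrac{1}{2}(\mathrm{def}(\nu_{\ul\CG})-\mathrm{def}(\nu_{\mathrm{gen}}))$ rearranges to
\[
\langle\rho,\mu-\nu_{\mathrm{gen}}\rangle+\tfrac{1}{2}\mathrm{def}(\nu_{\mathrm{gen}})=0.
\]
Both summands are non-negative, hence both vanish; the vanishing of the first, together with $\mu-\nu_{\mathrm{gen}}$ being a non-negative rational combination of positive coroots, forces $\nu_{\mathrm{gen}}=\mu$.

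The main obstacle is justifying the gradedness of $B(G,\mu)$ by $\ell$ at the level of the iterated purity in $\Spec\ol\Defo$: one must verify that each codimension-one drop produced by Vasiu's purity corresponds to a cover step of length one in the $\ell$-graded poset, so that the chain length produced by the iteration coincides with $\ell(\nu_{\mathrm{gen}})$. This is the technical heart of the "more careful analysis" of Theorem \ref{thmequidim} referenced in the corollary's statement; once it is in hand, the arithmetic above closes the argument.
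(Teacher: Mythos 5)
Your proposal is essentially the paper's argument, packaged in the equivalent ``defect'' coordinates of Remark~\ref{remnewtondim} instead of the ceiling sums $\sum_i\lceil\langle\omega_i,\cdot\rangle\rceil$, and the ``obstacle'' you flag at the end is not actually a gap left open: it is precisely what Corollary~\ref{cor3.4} already supplies, so you do not need to prove any gradedness statement about $B(G,\mu)$.

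More precisely: Corollary~\ref{cor3.4}(a) identifies the codimension of $\CN_\nu$ with the maximal length of a \emph{realizable} chain, and the proof of~(b) bounds this above by the abstract maximal chain length $\sum_i\lceil\langle\omega_i,\nu_{\mathrm{gen}}-\nu\rangle\rceil$ via Chai's Theorem~7.4(iv). So the step where you write $r=\ell(\nu_{\mathrm{gen}})$ should be the inequality $r\le\ell(\nu_{\mathrm{gen}})$; the realizable chain need not be saturated in the abstract poset, and one cannot (and need not) claim that ``each codimension-one drop produced by purity corresponds to a cover of $\ell$-length one.'' With the inequality in place, your arithmetic gives $\langle\rho,\mu-\nu_{\mathrm{gen}}\rangle+\tfrac12\mathrm{def}(\nu_{\mathrm{gen}})\le 0$, and then both nonnegative summands vanish, whence $\nu_{\mathrm{gen}}=\mu$. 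This is exactly the paper's chain of deductions: Theorem~\ref{thmequidim} plus Theorem~\ref{thmnewtonadlv} give $\dim\CN_\nu=\langle 2\rho,\mu\rangle-\sum_i\lceil\langle\omega_i,\mu-\nu\rangle\rceil$; Corollary~\ref{cor3.4}(b) gives the lower bound $\dim\CN_\nu\ge\langle 2\rho,\mu\rangle-\sum_i\lceil\langle\omega_i,\nu_{\mathrm{gen}}-\nu\rangle\rceil$; and comparison via Chai's theorem forces $\nu_{\mathrm{gen}}=\mu$. So replace your equality by the inequality from Corollary~\ref{cor3.4}(b) (citing \cite[Theorem 7.4(iv)]{Chai} for the interpretation as chain lengths), and the ``technical heart'' you were worried about disappears.
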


We prove Theorem~\ref{thmnewtonadlv}, Theorem~\ref{thmequidim} and Corollary~\ref{corunivnp} in Section~\ref{SectBasic}.

In the non-basic case the dimensions of the affine Deligne-Lusztig varieties and the lower bound on the dimension of the corresponding (non-basic) Newton stratum differ by $\langle 2\rho, \nu\rangle$. Especially, Theorem \ref{thmnewtonadlv} does not hold in this general situation. The same difference between the dimensions already occurs for $p$-divisible groups. There, work of Oort \cite{Oort04} shows that up to a finite morphism the Newton stratum $\CN_\nu$ is a product of the underlying scheme of a Rapoport-Zink space with a so-called central leaf, and by \cite{Oort08}, or \cite{Chai2} the difference $\langle 2\rho, \nu\rangle$ between the dimensions is the dimension of this central leaf. In a sequel to this article we show that a similar structure exists in the non-basic case for our deformations of local $G$-shtukas.

In the last section we define local $G$-shtukas with Iwahori-level structure. Using a variant of Theorem \ref{thmnewtonadlv} we can compare the Newton stratum of an associated universal $(\zeta=0)$-deformation with an affine Deligne-Lusztig variety inside the affine flag manifold in the basic case. This proves the basic case of a conjecture by Beazley \cite[Conjecture 1]{Beazley}. The strategy used to prove Theorem~\ref{thmequidim} also gives a lower bound on the dimension of the affine Deligne-Lusztig variety in the Iwahori setting. However, in this case the lower bound depends on the maximal length of a chain of Newton polygons in the universal deformation, for which a general formula is not known to us.

\bigskip

\noindent
\emph{Acknowledgement.} We are grateful to O.~Gabber, R.~Kottwitz, and W.~Soergel for helpful explanations and to A.\ Genestier for many stimulating discussions. We also thank W.~Soergel for providing the proof of Proposition~\ref{PropFaithfulRep} and M.~E.~Arasteh Rad for telling us about an inaccuracy in a previous version of this article. The first author acknowledges support of the DFG (German Research Foundation) in form of DFG-grant HA3006/2-1 and SFB 478, project C5. This work was also supported by the SFB/TR 45 ``Periods, Moduli Spaces and Arithmetic of Algebraic Varieties'' of the DFG.

\tableofcontents

%
%

\section{Notation and complements on torsors for loop groups}\label{SectNotation}

We denote by $\BF_q$ the finite field with $q$ elements and by $k$ an algebraically closed field which is also an $\BF_q$-algebra. Let $\BF_q\dbl z\dbr$ be the ring of power series in the indeterminate $z$ and let $\Nilp_{\BF_q\dbl z\dbr}$ be the category of $\BF_q\dbl z\dbr$-schemes on which $z$ is locally nilpotent. For a scheme $S$ in $\Nilp_{\BF_q\dbl z\dbr}$ we denote by $\zeta\in\CO_S$ the image of $z$ and we write from now on $S\in\NilpF$. We view $z$ and $\zeta$ as algebraically independent indeterminates.

A reference for the various topologies ($\fpqc$, $\fppf$, $\et$ in addition to the Zariski topology) that we consider on the category of all $\BF_q$-schemes is \cite[Expos\'e IV, \S6.3]{SGA3}. Let $\CO_S\dbl z\dbr$ be the sheaf of $\CO_S$-algebras on $S$ for the \fpqc-topology whose ring of sections on an $S$-scheme $\Test$ is the ring of power series $\CO_S\dbl z\dbr(\Test):=\Gamma(\Test,\CO_\Test)\dbl z\dbr$.  This is indeed a sheaf being the countable direct product of $\CO_S$. Let $\CO_S\dpl z\dpr$ be the \fpqc-sheaf of $\CO_S$-algebras on $S$ associated with the presheaf $\Test\mapsto\Gamma(\Test,\CO_\Test)\dbl z\dbr[\frac{1}{z}]$. If $\Test$ is quasi-compact then $\CO_S\dpl z\dpr(\Test)=\Gamma(\Test,\CO_\Test)\dbl z\dbr[\frac{1}{z}]$ by \cite[Exercise II.1.11]{Hartshorne}. We denote by $\s$ the endomorphism of $\CO_S\dbl z\dbr$ and $\CO_S\dpl z\dpr$ that acts as the identity on the variable $z$, and as $b\mapsto b^q$ on local sections $b\in \CO_S$. For a sheaf $M$ of $\CO_S\dbl z\dbr$-modules on $S$ we set $\s M:=M\otimes_{\CO_S\dbl z\dbr,\s}\CO_S\dbl z\dbr$. 

Let $G$ be a split connected reductive group over $\BF_q$. Let $B\supset T$ be a Borel subgroup and a maximal split torus of $G$. Recall that a weight $\lambda\in X^\ast(T)$ is called dominant if $\langle\lambda,\alpha\dual\rangle\ge0$ for every positive coroot $\alpha\dual$ of $G$, and similarly for coweights. We consider the ordering $\preceq$ on the group of coweights $X_\ast(T)$ of $G$, which is defined as $\mu_1\preceq\mu_2$ if and only if the difference $\mu_2-\mu_1$ is a non-negative integral linear combination of simple coroots. If $\mu_1,\mu_2$ are dominant it coincides with the Bruhat ordering. On $X^*(T)$ we consider the analogous ordering. On $X_*(T)_{\mathbb{Q}}=X_*(T)\otimes_{\mathbb{Z}}\mathbb{Q}$ we use the ordering with $\mu_1\preceq\mu_2$ if and only if the difference $\mu_2-\mu_1$ is a non-negative rational linear combination of simple coroots. For $\mu\in X_*(T)$ we denote by $z^{\mu}$ the image of $z\in\mathbb{G}_m\bigl(\mathbb{F}_q\dpl z\dpr\bigr)$ under
  the morphism $\mu:\mathbb{G}_m\rightarrow T$. 

Let $LG$ be the loop group of $G$, see \cite[Definition 1]{Faltings03}. That is, $LG$ is the ind-scheme of ind-finite type over $\BF_q$ representing the sheaf of groups for the \fpqc-topology on $\BF_q$-schemes $S$ given by
\[
\TS LG(S)\;=\;G\bigl(\CO_S\dpl z\dpr(S)\bigr)\,.
\]
On $\NilpF$ the local nilpotency of $\zeta$ on $S$ implies that the sheaf $LG$ is canonically isomorphic to the sheaf associated with the presheaf $S\mapsto G\bigl(\Gamma(S,\CO_S)\dbl z\dbr[\frac{1}{z-\zeta}]\bigr)$. We denote by $K$ the infinite dimensional affine group scheme over $\BF_q$ whose $S$-valued points for an $\BF_q$-scheme $S$ are $K(S)\,=\,G\bigl(\CO_S\dbl z\dbr(S)\bigr)\,=\,G\bigl(\Gamma(S,\CO_S)\dbl z\dbr\bigr)$. In the natural way $K$ can be viewed as a subsheaf of $LG$. For any scheme $S\in\NilpF$ we denote by $K_S$ the induced sheaf of groups for the \fpqc-topology on $S$. 
The endomorphism $\s$ induces an endomorphism of $LG$ and $K$. 

Let $\topol\in\{\fpqc,\fppf,\et\}$. Let $S\in\NilpF$ and let $H$ be a sheaf of groups on $S$ for the $\topol$-topology. In this article a (\emph{right}) \emph{$H$-torsor for the $\topol$-topology} on $S$ is a sheaf $\CG$ for the $\topol$-topology on $S$ together with a (right) action of the sheaf $H$ such that $\CG$ is isomorphic to $H$ on a $\topol$-covering of $S$. Here $H$ is viewed as an $H$-torsor by right multiplication. The $H$-torsors are classified up to isomorphism by \v{C}ech cohomology $\CKoh^1(S_\topol,H)$, which is a pointed set. For $H=K$ the categories of torsors for the three different topologies $\fpqc$, $\fppf$, $\et$ are equivalent by Proposition~\ref{PropGzTorsor} below. Therefore we simply speak of \emph{$K$-torsors on $S$}.

Let $K'$ be equal to $K$ or to the Iwahori subgroup 
\[
I(S)\;:=\;\bigl\{\,g\in K(S)=G\bigl(\Gamma(S,\CO_S)\dbl z\dbr\bigr):\es g\mod z\in B(S)\,\bigr\}\,.
\]
Let $\CG$ be a $K'$-torsor on $S$ for the $\topol$-topology. Then $\CG$ is trivialized by a $\topol$-covering $S'\to S$ and $\CG_{S'}\cong K'_{S'}$ is an affine scheme. The covering $S'\to S$ is in particular an \fpqc-covering, and the \fpqc-descent is effective by \cite[\S6.1, Theorem 6]{BLR}. Hence $\CG$ is an (infinite dimensional) affine scheme over $S$. With $\CG$ we associate an $LG$-torsor $\CL\CG$ for the $\topol$-topology via the map $\CKoh^1(S_\topol,K')\to\CKoh^1(S_\topol,LG)$. Also for an $LG$-torsor $\CL\CG$ on $S$ we denote by $\s\CL\CG$ the pullback of $\CL\CG$ under the $q$-Frobenius morphism $\Frob_q:S\to S$.

The pointed set $\CKoh^1(S_\topol,K')$ also classifies another kind of torsors which we introduce now. Throughout this article (i.e.~for the definition of local $G$-shtukas) we only use the first kind of torsors. However, the second interpretation of $\CKoh^1(S_\topol,K')$ turns out to be useful to compare these sets for the various topologies in Proposition \ref{PropGzTorsor}. Besides, it relates the torsors underlying local $G$-shtukas to global versions as studied by Varshavsky~\cite{Varshavsky}. To explain it let $S$ be an $\BF_q$-scheme and let $S\dbl z\dbr$ be the formal scheme over $\Spf\BF_q\dbl z\dbr$ in the sense of \cite[I$_{new}$, Section~10]{EGA} consisting of the topological space $S$ endowed with the structure sheaf $\CO_S\dbl z\dbr$. Equivalently $S\dbl z\dbr$ is the formal completion of $\BA^1_S=S\times_{\BF_q}\Spec\BF_q[z]$ along the closed subscheme $S\times_{\BF_q}\Var(z)$. Let $G'$ be the smooth affine parahoric group scheme over $\mathbb{F}_q\dbl z\dbr$ with $K'(\mathbb{F}_q)=G'(\mathbb{F}_q\dbl z\dbr)$, with generic fiber $G\times_{\BF_q}\BF_q\dpl z\dpr$ and connected special fiber; see \cite{HR} or \cite[4.6.2, 4.6.26, and 5.2.6]{BT}. We write $\wh{G'}=G'\times_{\Spec\BF_q\dbl z\dbr}\Spf\BF_q\dbl z\dbr$ for the $z$-adic formal completion of $G'$. It is an affine formal scheme over $\Spf\BF_q\dbl z\dbr$. There is an equality
\begin{eqnarray*}
K'(S)&=&G'\bigl(\Gamma(S,\CO_S)\dbl z\dbr\bigr)\\[1mm]
&=&\Hom_{\BF_q\dbl z\dbr}\bigl(\Gamma(G',\CO_G')\,,\,\Gamma(S,\CO_S)\dbl z\dbr\bigr)\\[1mm]
&=&\Hom_{\BF_q\dbl z\dbr}^\cont\bigl(\Gamma(\wh{G'},\CO_{\wh{G'}})\,,\,\Gamma(S,\CO_S)\dbl z\dbr\bigr)\\[1mm]
&=&\Hom_{\Spf\BF_q\dbl z\dbr}\bigl(S\dbl z\dbr\,,\,\wh{G'}\bigr)
\end{eqnarray*}
where the latter $\Hom$ group denotes morphisms of formal schemes over $\Spf\BF_q\dbl z\dbr$, and the last equation holds by \cite[I$_{\rm new}$, Proposition 10.4.6]{EGA} since $\wh{G'}$ is affine.

\begin{definition}\label{DefGzTorsor}
Let $\topol\in\{\fpqc,\fppf,\et\}$. We denote by $\PHS_\topol(S\dbl z\dbr,\wh{G'})$ (for principal homogenous space) the set of isomorphism classes of $z$-adic formal schemes $\wh\CG$ over $S\dbl z\dbr$ together with an action $\wh{G'}\whtimes_{\Spf\BF_q\dbl z\dbr}\wh\CG\to\wh\CG$ of $\wh{G'}$ on $\wh\CG$ such that there is a covering $S'\dbl z\dbr\to S\dbl z\dbr$ where $S'\to S$ is a $\topol$-covering and a $\wh{G'}$-equivariant isomorphism $\alpha:\wh\CG\whtimes_{S\dbl z\dbr}S'\dbl z\dbr\isoto\wh{G'}\whtimes_{\Spf\BF_q\dbl z\dbr}S'\dbl z\dbr$. Here $\wh{G'}$ acts on itself by right multiplication. The set $\PHS_\topol(S\dbl z\dbr,\wh{G'})$ is a pointed set, the distinguished point being the trivial torsor.
\end{definition}

\begin{proposition}\label{PropGzTorsor}
\begin{enumerate}
\item 
There is a natural bijection of pointed sets $\CKoh^1(S_\topol,K')=\PHS_\topol(S\dbl z\dbr,\wh{G'})$.
\item 
$\CKoh^1(S_\et,K')\;=\;\CKoh^1(S_\fppf,K')\;=\;\CKoh^1(S_\fpqc,K')$.
\item 
Let $S=\Spec A$ be an affine $\BF_q$-scheme, let $\Fa\subset A$ be a nilpotent ideal and set $\ol A=A/\Fa$ and $\ol S=\Spec\ol A$. Then for any $K'$-torsor $\CG$ over $S$ whose pull back to $\ol S$ admits a trivialization $\bar\alpha:\CG_{\ol S}\isoto K'_{\ol S}$, there is a trivialization $\alpha:\CG\isoto K'_S$ lifting $\bar\alpha$. Likewise for any formal $\wh{G'}$-torsor $\wh\CG$ over $S\dbl z\dbr$ as in Definition~\ref{DefGzTorsor} whose pull back to $\ol S\dbl z\dbr$ admits a trivialization $\bar\alpha$, there is a trivialization $\alpha$ of $\wh\CG$ lifting $\bar\alpha$.
\end{enumerate}
\end{proposition}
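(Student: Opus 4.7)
My plan is to prove the three parts in the order (a), (c), (b), with (c) furnishing the key technical input used again in (b).

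Part (a). I would exploit the natural identification
\[
K'(T)\;=\;\Hom_{\Spf\BF_q\dbl z\dbr}\bigl(T\dbl z\dbr,\wh{G'}\bigr)
\]
established in the paragraph preceding the proposition. Given a $K'$-torsor $\CG$ on $S$ for the $\topol$-topology, choose a trivializing $\topol$-cover $S'\to S$ and the transition cocycle $g\in K'(S'\times_S S')$. Using $(S'\times_S S')\dbl z\dbr=S'\dbl z\dbr\whtimes_{S\dbl z\dbr}S'\dbl z\dbr$, translate $g$ into a morphism $\phi:S'\dbl z\dbr\whtimes_{S\dbl z\dbr}S'\dbl z\dbr\to\wh{G'}$. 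The cocycle condition on $g$ becomes the cocycle condition on $\phi$, viewed as descent data for the trivial $\wh{G'}$-torsor on $S'\dbl z\dbr$. Effectivity of descent for affine schemes (applied levelwise, after truncation modulo $z^{n+1}$) yields a formal $\wh{G'}$-torsor on $S\dbl z\dbr$. The inverse construction reads off the transition isomorphism of a locally trivialized formal torsor as a cocycle in $K'$. The two constructions are mutually inverse and preserve the distinguished trivial class.

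Part (c). By induction along the filtration $\Fa\supset\Fa^2\supset\cdots\supset\Fa^N=0$, I reduce to the case of a square-zero ideal $\Fa$. Since $\CG$ is affine over $S$ (as recorded just before the proposition), write $\CG=\lim_n\CG_n$ where $\CG_n$ is the $G'_n$-torsor associated with the smooth affine quotient $G'_n:=G'\otimes_{\BF_q\dbl z\dbr}\BF_q[z]/z^{n+1}$ of finite type over $\BF_q$. I lift the system $\bar\alpha=(\bar\alpha_n)$ level-by-level. Given $\alpha_n\in\CG_n(S)$ lifting $\bar\alpha_n$, the fiber product $F_n:=\CG_{n+1}\times_{\CG_n,\alpha_n}S$ is a torsor over $S$ under the kernel $\ker(G'_{n+1}\to G'_n)\cong\Lie G\otimes_{\BF_q}(z^n\BF_q[z]/z^{n+1})$, a vector group. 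Any vector-group torsor over an affine scheme is trivial since its class lies in $H^1_\zar(S,\CO_S^{\dim G})=0$, so $F_n(S)\onto F_n(\ol S)$, and $\bar\alpha_{n+1}$ (which lifts $\bar\alpha_n=\alpha_n|_{\ol S}$, hence lies in $F_n(\ol S)$) lifts to some $\alpha_{n+1}\in F_n(S)\subset\CG_{n+1}(S)$. The compatible tower $(\alpha_n)$ defines $\alpha\in\CG(S)=\lim_n\CG_n(S)$ lifting $\bar\alpha$. The formal-torsor assertion follows from part (a), which translates trivializations of $\wh\CG$ on $\ol S\dbl z\dbr$ into trivializations of the associated $K'$-torsor on $\ol S$.

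Part (b). The only non-trivial direction is $\CKoh^1(S_\fpqc,K')\to\CKoh^1(S_\et,K')$. As in (c), write $\CG=\lim_n\CG_n$. The torsor $\CG_0$ is under the smooth affine group $G'_0$ of finite type over $\BF_q$, hence by the classical result for smooth affine groups it is étale-locally trivial. Pick an étale cover $U\to S$ trivializing $\CG_0$ and refine it to an affine Zariski cover $\{V_i\}$. On each affine $V_i$ the same level-by-level lifting argument as in (c)---without any thickening of the base, but with exactly the same vanishing $H^1_\zar(V_i,\CO_{V_i}^{\dim G})=0$---extends the chosen section of $\CG_0|_{V_i}$ to a section of $\CG_n|_{V_i}$ for every $n$, hence to a section of $\CG|_{V_i}$. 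Thus $\CG$ is trivialized on $\bigsqcup V_i\to S$, an étale cover of $S$. The equality $\CKoh^1(S_\et,K')=\CKoh^1(S_\fppf,K')$ is then formal (every fppf torsor is in particular fpqc).

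The main obstacle is that $K'$ is not of finite type, so one cannot invoke smoothness theorems for affine group schemes directly. The trick throughout is to decompose a $K'$-torsor into the tower of smooth affine finite-type torsors $\CG_n$ whose consecutive kernels are vector groups, and then to run cohomological vanishing on affine opens of the base at each level; this uniformly handles descent effectivity in (a), infinitesimal lifting in (c), and the fpqc-to-étale reduction in (b).
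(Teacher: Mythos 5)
Your proposal is correct in broad outline and follows the same strategy as the paper: unwind the infinite-dimensional torsor into a tower of finite-type smooth affine levels, use fpqc descent for affines to prove effectivity, and use smoothness of the levels to produce \'etale-local sections and infinitesimal lifts. Where you diverge is in part (c): the paper first shows that $\CKoh^1(S_\et,K')\to\CKoh^1(\ol S_\et,K')$ has trivial kernel by producing \emph{some} section $\beta$ over $S$ via smoothness, and then corrects $\beta$ to one lifting $\bar\alpha$ by a cocycle manipulation at the level of \v{C}ech data; you instead lift the given trivialization directly, level by level, observing that the fibers $F_n=\CG_{n+1}\times_{\CG_n,\alpha_n}S$ are torsors under the vector group $\ker(G'_{n+1}\to G'_n)$, which are trivial over affines. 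This is cleaner and conceptually clarifies why smoothness of $\wh\CG_n$ gives the lift in the paper's version. Likewise your (b) replaces the paper's appeal to EGA IV smoothness-descent and formal lifting by the same vector-group-torsor vanishing on affine opens; the content is the same.

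There is one small gap: your inner induction in (c) (and correspondingly in (b)) starts with the hypothesis ``Given $\alpha_n\in\CG_n(S)$ lifting $\bar\alpha_n$,'' but the base case $n=0$ is not covered by the vector-group argument, since $\CG_0$ is a torsor under the full special fiber $G'_0$, not under a vector group. To lift $\bar\alpha_0\in\CG_0(\ol S)$ to $\alpha_0\in\CG_0(S)$ one needs the smoothness of $\CG_0$ over $S$ directly (the obstruction lies in $H^1(\ol S,\bar\alpha_0^*T_{\CG_0/S}\otimes\Fa)$, which vanishes since $\ol S$ is affine). This is exactly the step the paper makes explicit. It is easily supplied, but as written your induction does not close. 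A second, more cosmetic, imprecision: the kernel $\ker(G'_{n+1}\to G'_n)$ is the vector group $\Lie(G'_0)\otimes\BG_a$, not literally $\Lie G$, though they have the same dimension so your cohomological vanishing argument is unaffected.
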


We are mainly interested in statement (c) for $K'$ and in statement (b). However, we prove them using $\wh{G'}$.

\begin{proof}
(a) Let $\wh\CG$ be a formal $\wh{G'}$-torsor representing a class in the set $\PHS_\topol(S\dbl z\dbr,\wh{G'})$, and let $\alpha:\wh\CG\whtimes_{S\dbl z\dbr}S'\dbl z\dbr\isoto \wh{G'}\whtimes_{\Spf\BF_q\dbl z\dbr}S'\dbl z\dbr$ be a trivialization over $S'\dbl z\dbr$ for a $\topol$-covering $S'\to S$. We can pull back $\alpha$ to $S'\dbl z\dbr\whtimes_{S\dbl z\dbr}S'\dbl z\dbr=S''\dbl z\dbr$ for $S''=S'\times_S S'$ under the two projections $p_i:S''\dbl z\dbr\to S'\dbl z\dbr$. Then $h:=p_2^\ast\alpha\circ p_1^\ast\alpha^{-1}$ is a $\wh{G'}$-equivariant automorphism of $\wh{G'}\whtimes_{\Spf\BF_q\dbl z\dbr}S''\dbl z\dbr$, hence given by an element $h\in\Hom_{\Spf\BF_q\dbl z\dbr}\bigl(S''\dbl z\dbr\,,\,\wh{G'}\bigr)\,=\,K'(S'')$. Clearly $h$ is a $1$-cocycle for the covering $S'\to S$ and induces a cohomology class in $\CKoh^1(S_\topol,K')$. 

Conversely any such cohomology class is represented by a $\topol$-covering $S'\to S$ and a $1$-cocycle $h\in K'(S'')$. The latter defines a descent datum on $\wh{G'}\whtimes_{\Spf\BF_q\dbl z\dbr}S'\dbl z\dbr$ which we must prove to be effective. Let $n$ be a positive integer and consider the situation modulo $z^n$. Then $h\pmod{z^n}$ is a descent datum on 
\[
\wh{G'}\whtimes_{\Spf\BF_q\dbl z\dbr}S'\dbl z\dbr\whtimes_{\Spf\BF_q\dbl z\dbr}\Spec\BF_q\dbl z\dbr/(z^n)\es=\es G'\times_{\Spec\BF_q\dbl z\dbr}\bigl(S'\times_{\BF_q}\Spec\BF_q[z]/(z^n)\bigr)\,.
\]
Since $G'$ is affine and $S'\times_{\BF_q}\Spec\BF_q[z]/(z^n)\to S\times_{\BF_q}\Spec\BF_q[z]/(z^n)$ is an \fpqc-covering by base change from $S'\to S$, the descent is effective by \cite[\S6.1, Theorem 6a]{BLR} and we obtain an affine scheme $\wh\CG_n$ over $S\times_{\BF_q}\Spec\BF_q[z]/(z^n)$. It is of finite presentation and smooth by \cite[IV$_2$, Proposition 2.7.1 and IV$_4$, Corollaire 17.7.3]{EGA} since the same holds for $G'$ over $\BF_q\dbl z\dbr$. For varying $n$ we obtain an inductive system $\wh\CG_n$ with $\wh\CG_{n+1}\times_{\BF_q\dbl z\dbr/(z^{n+1})}\BF_q\dbl z\dbr/(z^n)=\wh\CG_n$ whose limit exists as a $z$-adic affine formal scheme $\wh\CG$ over $S\dbl z\dbr$ with underlying topological space $\wh\CG_1$ by \cite[I$_{\rm new}$, Corollary 10.6.4]{EGA}.

\medskip\noindent
(b) Due to (a) and the injectivity of the natural maps $\PHS_\et(S\dbl z\dbr,\wh{G'})\hookrightarrow\PHS_\fppf(S\dbl z\dbr,\wh{G'})\hookrightarrow\PHS_\fpqc(S\dbl z\dbr,\wh{G'})$ we only need to prove surjectivity of these maps. So let $\wh\CG$ be a formal $\wh{G'}$-torsor over $S\dbl z\dbr$ representing an element of $\PHS_\fpqc(S\dbl z\dbr,\wh{G'})$. We must show that $\wh\CG$ is trivial over $S'\dbl z\dbr$ for an \'etale covering $S'\to S$. As in (a), $\wh\CG=\dirlim\wh\CG_n$ is the limit of smooth affine schemes $\wh\CG_n$ over $S\times_{\BF_q}\Spec\BF_q[z]/(z^n)$. Since $\wh\CG_1$ is smooth over $S$ it has a section $s_1:S\to\wh\CG_1$ over an \'etale covering $S'\to S$. We may further assume that $S'$ is (the disjoint union of a family of) affine (schemes). Then the section $s_1$ lifts inductively for all $n$ to a section $s_n:S'\times_{\BF_q}\Spec\BF_q[z]/(z^n)\to\wh\CG_n$ by the smoothness of $\wh\CG_n$. In the limit this gives a section $s:S'\dbl z\dbr\to\wh\CG$ which induces a trivialization of $\wh\CG\whtimes_{S\dbl z\dbr}S'\dbl z\dbr$ over $S'\dbl z\dbr$ as desired.
 
\medskip\noindent
(c) We first consider a cohomology class in $\CKoh^1(S_\et,K')$ whose image under $\CKoh^1(S_\et,K')\to \CKoh^1(\ol S_\et,K')$ is trivial. By (a) this class corresponds to a formal $\wh{G'}$-torsor $\wh\CG$ over $\Spf A\dbl z\dbr$ whose pull back to $\Spf\ol A\dbl z\dbr$ is trivial. As above $\wh\CG$ is the limit of $\Spec A[z]/(z^n)$-schemes $\wh\CG_n$ which are of finite presentation and smooth. By the triviality of $\wh\CG\whtimes_{\Spf A\dbl z\dbr}\Spf\ol A\dbl z\dbr$ there exists a section $\bar\alpha:\Spf\ol A\dbl z\dbr\to\wh\CG$. Modulo $z$ it induces a section $\Spec \ol A\to\wh\CG_1$ which lifts by the smoothness of $\wh\CG_1$ to a section $\Spec A\to\wh\CG_1$. It inductively lifts by the smoothness of $\wh\CG_n$ to a section $\Spec A[z]/(z^n)\to\wh\CG_n$. In the limit we obtain a section $\beta:\Spf A\dbl z\dbr\to\wh\CG$ (which in general does not lift the initial section $\bar\alpha:\Spf\ol A\dbl z\dbr\to\wh\CG$). Nevertheless, the section yields a trivialization of $\wh\CG$ over $\Spf A\dbl z\dbr$ and this proves that the only cohomology class which becomes trivial under $\CKoh^1(S_\et,K')\to\CKoh^1(\ol S_\et,K')$ is the trivial one.

We have to prove the existence of a trivialization $\alpha$ which lifts $\bar\alpha$ both in the situation for formal $\wh{G'}$-torsors $\wh\CG$ and for $K'$-torsors $\CG$. Let the torsor be represented by $S'\to S$ and $h\in K'(S'')$. The trivialization $\bar\alpha$ after pull back to $\ol S$ of the formal $\wh{G'}$-torsor $\wh\CG$ over $\ol S\dbl z\dbr$, respectively the $K'$-torsor $\CG$ over $\ol S$, is given by an element $\bar g\in K'(\ol S')$ with $p_2^\ast(\bar g)\,p_1^\ast(\bar g)^{-1}=h\pmod \Fa$ in $K'(\ol S'')$. Similarly the trivialization $\beta$ of $\wh\CG$ over $S\dbl z\dbr$, respectively $\CG$ over $S$, is given by an element $f\in K'(S')$ with $p_2^\ast(f)\,p_1^\ast(f)^{-1}=h$ in $K'(S'')$. Let $\bar f$ be the pullback of $f$ to $\ol S$. Then $p_2^\ast(\bar f^{-1}\bar g)=p_1^\ast(\bar f^{-1}\bar g)$ in $K'(\ol S'')$ implies that $\bar f^{-1}\bar g\in K'(\ol S)=G'\bigl(\ol A\dbl z\dbr\bigr)$. Since $\ker(A\dbl z\dbr\to\ol A\dbl z\dbr)$ is nilpotent and $G'$ is smooth over $\BF_q\dbl z\dbr$ we can lift $\bar f^{-1}\bar g$ to an element $f^{-1}g\in G'\bigl(\ol A\dbl z\dbr\bigr)$ with which we multiply $f$ to obtain an element $g\in K'(S')$ satisfying $p_2^\ast(g)\,p_1^\ast(g)^{-1}=h$ and lifting $\bar g\in K'(\ol S')$. Then $g$ induces a trivialization $\alpha$ of $\wh\CG$ over $S\dbl z\dbr$, respectively $\CG$ over $S$, which lifts the trivialization $\bar\alpha$ as desired. 
\end{proof}

\begin{proposition}[Hilbert 90 for loop groups]\label{PropHilbert90}
Let $G=\GL_r$. As before let $K$ be the associated \fpqc-sheaf over $\BF_q$ whose $\Test$-valued points are $K(\Test)=\GL_r\bigl(\Gamma(\Test,\CO_\Test)\dbl z\dbr\bigr)$. Then for any $\BF_q$-scheme $S$
\[
\CKoh^1(S_\Zar,K)\;=\;\CKoh^1(S_\et,K)\;=\;\CKoh^1(S_\fppf,K)\;=\;\CKoh^1(S_\fpqc,K)
\]
and this pointed set parametrizes isomorphism classes of locally free sheaves
of rank $r$ on the formal scheme $S\dbl z\dbr$, or equivalently sheaves of
$\CO_S\dbl z\dbr$-modules on $S$ which locally for the Zariski-topology on $S$
are isomorphic to $\CO_S\dbl z\dbr^{\oplus r}$. In particular, any \fpqc-sheaf of $\CO_S\dbl z\dbr$-modules on $S$ which is isomorphic to $\CO_S\dbl z\dbr^{\oplus r}$ \fpqc-locally on $S$ is already isomorphic to $\CO_S\dbl z\dbr^{\oplus r}$ Zariski-locally on $S$.
\end{proposition}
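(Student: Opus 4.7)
The plan is to reduce the proposition to classical Hilbert~90 for $\GL_r$ via the formal torsor interpretation set up in Proposition~\ref{PropGzTorsor}. The three equalities $\CKoh^1(S_\et,K)=\CKoh^1(S_\fppf,K)=\CKoh^1(S_\fpqc,K)$ are immediate from part~(b) of that proposition applied to the smooth affine group scheme $G'=\GL_{r,\BF_q\dbl z\dbr}$. Moreover, the frame-bundle construction $\CG\mapsto\CF:=\CG\times^K\CO_S\dbl z\dbr^{\oplus r}$ (with inverse $\CF\mapsto\Isom_{\CO_S\dbl z\dbr}(\CO_S\dbl z\dbr^{\oplus r},\CF)$) gives, for every topology $\topol$, a bijection between $\CKoh^1(S_\topol,K)$ and isomorphism classes of $\CO_S\dbl z\dbr$-modules on $S$ that are $\topol$-locally isomorphic to $\CO_S\dbl z\dbr^{\oplus r}$. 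Hence the only remaining assertion is that an fpqc-locally trivial $K$-torsor on $S$ is already Zariski-locally trivial.

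To prove this I would pass to the formal torsor $\wh\CG$ over $S\dbl z\dbr$ corresponding to $\CG$ under Proposition~\ref{PropGzTorsor}(a). As in the construction given in that proof, $\wh\CG=\dirlim\wh\CG_n$, where each $\wh\CG_n$ is a smooth affine scheme over $S\times_{\BF_q}\Spec\BF_q[z]/(z^n)$. In particular the reduction $\wh\CG_1$ is a $\GL_r$-torsor on $S$ which, by hypothesis, is fpqc-locally trivial. Classical Hilbert~90 for $\GL_r$ (equivalently, fpqc descent of finite projective modules of constant rank) then implies that $\wh\CG_1$ is Zariski-locally trivial, so around any point of $S$ there is an affine Zariski neighborhood $U=\Spec A$ on which $\wh\CG_1$ admits a section $\sigma_1:U\to\wh\CG_1$.

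It remains to lift $\sigma_1$ successively to sections $\sigma_n:U\times_{\BF_q}\Spec\BF_q[z]/(z^n)\to\wh\CG_n$. The closed immersion $U\times\Spec\BF_q[z]/(z^n)\hookrightarrow U\times\Spec\BF_q[z]/(z^{n+1})$ is defined by the square-zero ideal generated by $z^n$, and $\wh\CG_{n+1}$ is smooth over $U\times\Spec\BF_q[z]/(z^{n+1})$. The obstruction to lifting $\sigma_n$ to $\sigma_{n+1}$ therefore lies in $\Koh^1$ of a quasi-coherent sheaf on the affine scheme $\Spec A[z]/(z^n)$, which vanishes. Passing to the limit in $n$ yields a section $\sigma:U\dbl z\dbr\to\wh\CG$ of formal $\Spf\BF_q\dbl z\dbr$-schemes, i.e.\ a trivialization of $\wh\CG|_{U\dbl z\dbr}$, which by Proposition~\ref{PropGzTorsor}(a) corresponds to a trivialization of $\CG|_U$.

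The main technical obstacle is the bookkeeping in the inductive lifting: one must verify that the $\wh\CG_n$ produced in the proof of Proposition~\ref{PropGzTorsor} genuinely are smooth and come with the expected transition isomorphisms $\wh\CG_{n+1}\times_{\BF_q[z]/(z^{n+1})}\BF_q[z]/(z^n)\cong\wh\CG_n$, and then invoke the square-zero obstruction theory correctly so that the limit section really lives in $\Hom_{\Spf\BF_q\dbl z\dbr}(U\dbl z\dbr,\wh\CG)$. Once these pieces are in place, the argument is a direct application of formal smoothness together with the vanishing of higher cohomology of quasi-coherent sheaves on affine schemes.
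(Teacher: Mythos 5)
Your proof is correct, and it does take a slightly different route from the paper's, so let me compare. The paper works module-theoretically throughout: it interprets the cocycle $h\in K(S'')$ as an fpqc-descent datum on $\CO_{S'}\dbl z\dbr^{\oplus r}$, reduces modulo $z^n$, applies fpqc descent for finite locally free modules (BLR \S6.1, Theorem~4, together with EGA IV$_2$, Proposition~2.5.2) to obtain a Zariski-locally free sheaf $\CF_n$ of rank $r$ on $S\times_{\BF_q}\Spec\BF_q[z]/(z^n)$ for each $n$, and then invokes EGA I$_{\rm new}$, Proposition~10.10.8.6 to conclude that the projective limit $\CF=\invlim\CF_n$ is Zariski-locally free on $S\dbl z\dbr$. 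You instead remain on the torsor side, reusing the formal $\wh{G'}$-torsor $\wh\CG=\dirlim\wh\CG_n$ from the proof of Proposition~\ref{PropGzTorsor}, obtaining a Zariski-local section of $\wh\CG_1$ by classical Hilbert~90 (which is the same input that the paper also needs at each level $n$), and then lifting it through the nilpotent thickenings by smoothness and vanishing of $\Koh^1$ of quasi-coherent sheaves on affine schemes; this is essentially the argument from the proof of part~(b) of Proposition~\ref{PropGzTorsor} with the initial \'etale-local section upgraded to a Zariski-local one. Both approaches are sound. Your version replaces the citation to EGA on limits of locally free sheaves with a direct construction of a compatible system of trivializations, which is slightly more self-contained and arguably more transparent; the paper's version is more parallel to its own proof of the $\et=\fppf=\fpqc$ comparison, keeping everything in the language of descent data for modules. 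One small nit: you phrase the obstruction as lying in ``$\Koh^1$ of a quasi-coherent sheaf on $\Spec A[z]/(z^n)$''; this is correct, but the paper achieves the same lift by simply invoking the infinitesimal lifting criterion for smooth morphisms over an affine base, which you could also do and it saves a line.
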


\begin{proof}
The second and third equality were proved in the preceding proposition. Consider a cohomology class in $\CKoh^1(S_\fpqc,K)$ and represent it by an \fpqc-covering $S'\to S$ and a 1-cocycle $h\in K(S'')$ for $S''=S'\times_SS'$. Let $p_i:S''\to S'$ be the projection onto the $i$-th factor. On the sheaf $\CF'=\CO_{S'}\dbl z\dbr^{\oplus r}$ consider the descent datum given by the isomorphism $h:p_1^\ast\CO_{S'}\dbl z\dbr^{\oplus r}\isoto p_2^\ast\CO_{S'}\dbl z\dbr^{\oplus r},\,v''\mapsto h\,v''$ over $S''$. In this way $\CKoh^1(S_\fpqc,K)$ parametrizes isomorphism classes of pairs $(S'\to S\,,\,h)$ where $S'\to S$ is an \fpqc-covering and $h$ is a descent datum on $\CF'=\CO_{S'}\dbl z\dbr^{\oplus r}$.

Modulo $z^n$ such a descent datum induces a descent datum on the trivial sheaf $\CF'_n$ of rank $r$ on $S'\times_{\BF_q}\Spec\BF_q[z]/(z^n)$. By \fpqc-descent \cite[\S6.1, Theorem 4]{BLR} and \cite[IV$_2$, Proposition 2.5.2]{EGA} the sheaf $\CF'_n$ descends to a locally free sheaf $\CF_n$ of rank $r$ on $S\times_{\BF_q}\Spec\BF_q[z]/(z^n)$. By \cite[I$_{\rm new}$, Proposition 10.10.8.6]{EGA} the projective limit $\CF=\invlim\CF_n$ is a locally (in the Zariski-topology) free sheaf of rank $r$ on $S\dbl z\dbr$. Since $S$ is the underlying topological space and $\CO_S\dbl z\dbr$ is the structure sheaf of the formal scheme $S\dbl z\dbr$ the sheaf $\CF$ is nothing else than a sheaf of $\CO_S\dbl z\dbr$-modules on $S$. This proves the proposition.
\end{proof}

%
%

\section{Local $G$-shtukas} \label{SectLocGShtuka}
\setcounter{equation}{0}

\begin{definition} \label{DefLocGShtuka}
A \emph{local $G$-shtuka} over $S$ is a pair $\ul\CG=(\CG,\phi)$ consisting of a $K$-torsor $\CG$ on $S$ together with an isomorphism $\phi:\s\CL\CG\isoto\CL\CG$ of the associated $LG$-torsors.
\end{definition}

If $f:S'\to S$ is a morphism in $\NilpF$ there is a natural way to pull back a local $G$-shtuka $\ul\CG$ from $S$ to $S'$. We denote the pullback by $\ul\CG_{S'}$ or $f^\ast\ul\CG$.

\begin{remark}\label{RemEtaleTrivial}
In particular, if $\ul\CG=(\CG,\varphi)$ is a local $G$-shtuka over a noetherian complete local ring $R$ with algebraically closed residue field, then $\CG$ is isomorphic to the trivial torsor, and the morphism $\varphi$ can be written as $b\s$ for some $b\in LG(R)$. Indeed, $\CG$ is trivialized by an \'etale $R$-algebra by Proposition~\ref{PropGzTorsor}, but every \'etale $R$-algebra decomposes into a direct sum of copies of $R$.
\end{remark}

For $G=\GL_r$ local $G$-shtukas previously appeared in the literature as pairs $(M,\phi)$ where $M$ is a locally free sheaf of $\CO_S\dbl z\dbr$-modules of rank $n$, and where $\phi:\s M[\frac{1}{z-\zeta}]\isoto M[\frac{1}{z-\zeta}]$ is an isomorphism. They were first introduced by Anderson~\cite{Anderson2} in the case when $S$ is the spectrum of a complete discrete valuation ring. Genestier~\cite{Genestier} constructed Rapoport-Zink spaces for them in the Drinfeld case and used these to uniformize Drinfeld modular varieties. Local $\GL_r$-shtukas were further used and studied, among others, in \cite{HartlAbSh,HartlPSp}. (To be precise, in all these references they have to satisfy an additional boundedness condition in the style of Definition \ref{DefBounded} and Example \ref{exend} below.) The two definitions of local $\GL_r$-shtukas given above are equivalent by Proposition~\ref{PropHilbert90}. For more details see Section~\ref{SectG=GLr}. Local $\GL_r$-shtukas are analogs in the arithmetic of local function fields of $p$-divisible groups, or more precisely of $F$-crystals. To a large extent they behave similarly (see also \cite{HartlDict} and \cite{HartlAbSh} where they were called Dieudonn\'e $\BF_q\dbl z\dbr$-modules). Similarly to $p$-divisible groups which describe the local behaviour of abelian varieties at the prime $p$, there is also a global version of local $G$-shtukas. In \cite{Varshavsky} Varshavsky describes $G$-shtukas which are defined using a smooth projective curve instead of $\Spf\BF_q\dbl z\dbr$.

\medskip

We define the Hodge polygon of a local $G$-shtuka. Let $S\in \NilpF$. Let $\CG$ and $\CH$ be $K$-torsors on $S$ and let $\delta:\CL\CH\isoto\CL\CG$ be an isomorphism of the associated $LG$-torsors. For a geometric point $s:\Spec k\to S$, we can choose trivializations of $s^\ast\CG$ and $s^\ast\CH$. Then $\delta$ corresponds to an element $g\in LG(k)$. Changing the trivializations changes $g$ within its $K(k)$-double coset. The Cartan decomposition shows that $LG(k)$ is the disjoint union of the sets $K(k)z^\mu K(k)$ where $\mu\in X_\ast(T)$ is a dominant coweight. Obviously the double coset of $g$ does not depend on the chosen algebraic closure $k$ of $\kappa(s)$.

\begin{definition}\label{DefHodgePoly}
The dominant element $\mu_\delta(s)\in X_\ast(T)$ with $g\in K(k)z^{\mu_\delta(s)}K(k)$ is called the \emph{Hodge polygon of $\delta$ at $s$}. If $\ul\CG=(\CG,\phi)$ is a local $G$-shtuka over $S$, we write $\mu_{\ul\CG}(s):=\mu_\phi(s)$.
\end{definition}

Let $\pi_1(G)$ be the quotient of $X_\ast(T)$ by the lattice generated by the coroots. For any coweight $\mu\in X_\ast(T)$ let $[\mu]$ be its image in $\pi_1(G)$.

\begin{proposition}\label{PropHPLocConst}
Let $\CG$ and $\CH$ be $K$-torsors on $S$ and let $\delta:\CL\CH\isoto\CL\CG$ be an isomorphism of the associated $LG$-torsors. Then the function $S\to\pi_1(G),\,s\mapsto[\mu_\delta(s)]$ is locally constant on $S$.
\end{proposition}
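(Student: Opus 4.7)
The plan is to realize $s\mapsto[\mu_\delta(s)]$ as the pullback of the locally constant ``connected component'' morphism $LG\to\pi_1(G)$, exploiting the fact that the connected components of the loop group (equivalently, of the affine Grassmannian $\X=LG/K$) are naturally indexed by $\pi_1(G)$.

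First, I would trivialize both torsors on an \'etale cover. By Proposition~\ref{PropGzTorsor}, there is an \'etale surjection $\pi:S'\to S$ together with trivializations $\alpha:\CG_{S'}\isoto K_{S'}$ and $\beta:\CH_{S'}\isoto K_{S'}$. Under these trivializations the pullback $\pi^\ast\delta:\CL\CH_{S'}\isoto\CL\CG_{S'}$ is an $LG$-equivariant automorphism of $LG_{S'}$, hence given by left multiplication by a section $g\in LG(S')$, i.e.~a morphism of ind-schemes $g:S'\to LG$.

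Second, I would invoke the Kottwitz/component homomorphism $\omega_G:LG\to\pi_1(G)$, where $\pi_1(G)$ is viewed as a constant sheaf: it identifies $\pi_1(G)$ with the sheaf of connected components of $LG$, satisfies $\omega_G(z^\mu)=[\mu]$ for every $\mu\in X_\ast(T)$, and is $K$-biinvariant because $K$ is pro-connected (its reduction is the connected group $G$ and the higher jet kernels are unipotent). Combining this with the Cartan decomposition
\[
LG(k)\;=\;\bigsqcup_{\mu\text{ dominant}}K(k)\,z^\mu\,K(k)
\]
yields $\omega_G\bigl(g(s')\bigr)=[\mu_\delta(s')]$ at every geometric point $s'$ of $S'$; thus the function $s'\mapsto[\mu_\delta(s')]$ on $S'$ coincides with the composite $\omega_G\circ g$.

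Third, since $\omega_G$ is locally constant on $LG$ and $g:S'\to LG$ is a morphism of ind-schemes, the composite $\omega_G\circ g$ is locally constant on $S'$. Its value depends only on the image in $S$ (a different choice of $\alpha,\beta$ multiplies $g$ on the left and right by sections of $K(S')$, which are annihilated by $\omega_G$), so it descends to a function on $S$ that is locally constant because $\pi$ is an open surjection. The principal ingredient is the identification $\pi_0(LG)=\pi_1(G)$ together with the $K$-biinvariance of the resulting map; this is standard for affine Grassmannians and is among the results of \cite{BeilinsonDrinfeld,Faltings03} referred to in the introduction, so no further obstacle arises.
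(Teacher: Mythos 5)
Your proof is correct and follows essentially the same route as the paper: trivialize both torsors over an \'etale cover via Proposition~\ref{PropGzTorsor}, obtain a section $g\in LG(S')$, use the identification of connected components of the loop group (equivalently of $\X=LG/K$, since $K$ is connected) with $\pi_1(G)$, and descend the resulting locally constant map to $S$ using independence of trivialization. The paper cites \cite[Theorem 0.1]{PR2} for the identification $\pi_0(\X)=\pi_1(G)$ and factors through $\X$ rather than working with $\pi_0(LG)$ and $K$-biinvariance directly, but this is a cosmetic difference.
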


\begin{proof}
By Proposition~\ref{PropGzTorsor} we can choose trivializations for the torsors $\CG$ and $\CH$ over an \'etale covering $f:S'\to S$. Thus $\delta$ corresponds to an element of $LG(S')$. This induces a morphism from $S'$ to the affine Grassmannian $\X$. By \cite[Theorem 0.1]{PR2} the group $\pi_1(G)$ equals the set of connected components $\pi_0(\X)$ and the induced morphism $S'\to\pi_0(\X)=\pi_1(G)$ does not depend on the chosen trivializations. Hence it descends to a morphism $S\to\pi_0(\Gr)$ which coincides with $[\mu_\delta(\,.\,)]$. Therefore $[\mu_\delta(\,.\,)]$ is locally constant.
\end{proof}

Let $\ol{B}\subset G$ be the Borel subgroup opposite to our fixed $B$. For a dominant weight $\lambda$ of $G$ we let $V(\lambda):=\bigl(\Ind_{\ol{B}}^G(-\lambda)_\dom\bigr)\dual$ be the Weyl module of $G$ with highest weight $\lambda$. It is a cyclic $G$-module generated by a $B$-stable line on which $B$ acts through $\lambda$. Any other such $G$-module is a quotient of $V(\lambda)$, see for example \cite[II.2.13]{Jantzen}. For a $K$-torsor $\CG$ on a scheme $S$ we denote by $\CG_\lambda$ the \fpqc-sheaf of $\CO_S\dbl z\dbr$-modules on $S$ associated with the presheaf
\[
\Test\;\longmapsto\;\Bigl(\CG(\Test)\times\bigl(V(\lambda)\otimes_{\BF_q}\CO_S\dbl z\dbr(\Test)\bigr)\Bigr)\big/K(\Test)\,.
\]
This means in particular that if $S'\to S$ is an \'etale covering trivializing $\CG$ (see Proposition~\ref{PropGzTorsor}) and if $\alpha:\CG_{S'}\isoto K_{S'}$ is an isomorphism of $K$-torsors with $p_2^\ast\alpha\circ (p_1^\ast\alpha)^{-1}=g\in K(S'')$ on $S''=S'\times_S S'$ (with $p_i$ the projection onto the $i$-th factor) then
\[
\CG_\lambda(\Test)\;\cong\;\bigl\{\,v'\in V(\lambda)\otimes_{\BF_q}\CO_S\dbl z\dbr(\Test\times_S S'):\es p_2^\ast v'=g\cdot p_1^\ast v'\text{ on }\Test\times_S S''\,\bigr\}\,.
\]
By Proposition~\ref{PropHilbert90} the sheaf $\CG_\lambda$ is locally free in the Zariski-topology on $S$.

Furthermore, if $\CG$ and $\CH$ are $K$-torsors on $S$ and $\delta:\CL\CH\isoto\CL\CG$ is an isomorphism of the associated $LG$-torsors then $\delta$ induces an isomorphism of sheaves of $\CO_S\dpl z\dpr$-modules 
\[
\delta:\;\CH_\lambda\otimes_{\CO_S\dbl z\dbr}\CO_S\dpl z\dpr\;\isoto\;\CG_\lambda\otimes_{\CO_S\dbl z\dbr}\CO_S\dpl z\dpr\,.
\]
Indeed, if over an \'etale covering $S'\to S$ there are trivializations $\alpha:\CG_{S'}\isoto K_{S'}$ and $\beta:\CH_{S'}\isoto K_{S'}$ with $p_2^\ast\alpha\circ (p_1^\ast\alpha)^{-1}=g\in K(S'')$ and $p_2^\ast\beta\circ (p_1^\ast\beta)^{-1}=h\in K(S'')$ and $\alpha\delta\beta^{-1}=\Delta\in LG(S')$, then $\delta$ sends $v'\in \CH_\lambda(\Test)\otimes_{\CO_S\dbl z\dbr(\Test)}\CO_S\dpl z\dpr(\Test)$ to $\Delta\cdot v'$.

\begin{definition}\label{DefBounded} Let $S$ be a connected scheme in $\NilpF$ and let $\mu$ be a dominant coweight of $G$. Let either $\tilde z=z-\zeta$ or $\tilde z=z$.
\begin{enumerate}
\item 
Let $\CG$ and $\CH$ be $K$-torsors on $S$ and let $\delta:\CL\CH\isoto\CL\CG$ be an isomorphism of the associated $LG$-torsors. The isomorphism $\delta$ is \emph{bounded by $(\mu,\tilde z)$} if for each dominant weight $\lambda$ of $G$
\begin{eqnarray} 
\delta(\CH_\lambda)&\subset&\tilde z\;^{-\langle\,(-\lambda)_\dom,\mu\rangle}\,\CG_\lambda\es\subset\es\CG_\lambda\otimes_{\CO_S\dbl z\dbr}\CO_S\dpl z\dpr\quad\text{and}\label{EqBounded1}\\
~[\mu]&=&[\mu_\delta(s)] \text{ in }\pi_1(G)\text{ for all }s\in S.\label{EqBounded2}
\end{eqnarray}
\item 
A local $G$-shtuka $(\CG,\phi)$ over $S$ is \emph{bounded by $\mu$} if the isomorphism $$\phi:\s\CL\CG\isoto\CL\CG$$ is bounded by $(\mu,z-\zeta)$.
\end{enumerate}
\end{definition}

\begin{remark}\label{RemWeylModules}
Bounds using $\tilde z=z$ are only used in some of the proofs to bound quasi-isogenies between local $G$-shtukas (see for example Definition \ref{DefQIsog} and the proof of Theorem \ref{ThmADLV=RZSp}).

Instead of using Weyl modules for the definition of boundedness one could also use a different class of representations such as for example the class of all induced representations $V(\lambda)\dual=\Ind_{\ol{B}}^G(-\lambda)_\dom$, or the class of all representations with lowest weight $(-\lambda)_\dom$, or the class of all tilting modules \cite[Chapter E]{Jantzen}. It follows from Lemma~\ref{LemmaBoundedIfQC}\ref{(b)} below that on reduced schemes $S$ in $\NilpF$ the boundedness definitions for all these classes are equivalent. However, they may differ in their nilpotent structure.
\end{remark}

\begin{lemma}\label{LemmaCritBounded}
The monoid $X^\ast(T)_+$ of dominant weights is finitely generated. Condition (\ref{EqBounded1}) holds for all dominant weights (and fixed $\tilde z=z-\zeta$ or $\tilde z=z$) if and only if it holds for a finite generating system.
\end{lemma}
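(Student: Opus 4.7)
The plan is to establish the first claim via a standard convex-geometry argument and then reduce the main equivalence to an additivity step for the bound. The monoid $X^\ast(T)_+$ is the intersection of the rational polyhedral cone $\{\lambda\in X^\ast(T)\otimes_\BZ\BQ:\langle\lambda,\alpha\dual\rangle\ge0 \text{ for all positive coroots }\alpha\dual\}$ with the lattice $X^\ast(T)$, so Gordan's lemma yields a finite generating system $\lambda_1,\ldots,\lambda_n$; for simply connected $G$ one may take the fundamental weights. The ``only if'' direction of the stated equivalence is immediate, so it remains to show that if \eqref{EqBounded1} holds for $\lambda_1,\ldots,\lambda_n$, then it holds for any dominant $\lambda=\sum_i c_i\lambda_i$ with $c_i\in\BZ_{\ge0}$. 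By induction on $\sum c_i$ this reduces to the following step: \emph{if \eqref{EqBounded1} holds for two dominant weights $\lambda'$ and $\lambda''$, then it also holds for $\lambda'+\lambda''$.}

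The central ingredient is a $G$-equivariant surjection of Weyl modules
\[
\pi:\;V(\lambda')\otimes_{\BF_q}V(\lambda'')\;\twoheadrightarrow\;V(\lambda'+\lambda''),
\]
arising as the top quotient of a Weyl filtration of the tensor product (in characteristic zero this is immediate from complete reducibility; in positive characteristic it is a consequence of Mathieu's theorem on tensor products of modules with good filtrations, see \cite[II.4.21]{Jantzen}). Because the associated-bundle construction is functorial and sends tensor products of $G$-modules to $\CO_S\dbl z\dbr$-tensor products of the induced sheaves, $\pi$ yields surjections
\[
\pi_\CH:\;\CH_{\lambda'}\otimes_{\CO_S\dbl z\dbr}\CH_{\lambda''}\;\twoheadrightarrow\;\CH_{\lambda'+\lambda''}\quad\text{and}\quad\pi_\CG:\;\CG_{\lambda'}\otimes_{\CO_S\dbl z\dbr}\CG_{\lambda''}\;\twoheadrightarrow\;\CG_{\lambda'+\lambda''}
\]
of locally free $\CO_S\dbl z\dbr$-modules, compatible with $\delta$ after inverting $\tilde z$.

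Given a local section $s$ of $\CH_{\lambda'+\lambda''}$, I lift it Zariski-locally through $\pi_\CH$ to a section $t$ of $\CH_{\lambda'}\otimes\CH_{\lambda''}$. Setting $n'=\langle(-\lambda')_\dom,\mu\rangle$ and $n''=\langle(-\lambda'')_\dom,\mu\rangle$, the inductive hypothesis gives $\delta(t)\in\tilde z^{-n'-n''}\bigl(\CG_{\lambda'}\otimes\CG_{\lambda''}\bigr)$, and applying $\pi_\CG$ yields $\delta(s)=\pi_\CG\bigl(\delta(t)\bigr)\in\tilde z^{-n'-n''}\CG_{\lambda'+\lambda''}$. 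Since for dominant $\nu$ one has $(-\nu)_\dom=-w_0\nu$ with $w_0$ the longest element of the Weyl group, and the opposition involution $-w_0$ on $X^\ast(T)$ is $\BZ$-linear, the exponents add: $n'+n''=\langle(-\lambda'-\lambda'')_\dom,\mu\rangle$. This is exactly the exponent demanded for $\lambda'+\lambda''$ in \eqref{EqBounded1}, closing the induction. The only real obstacle is the existence of $\pi$ in positive characteristic; everything else is formal manipulation of associated bundles.
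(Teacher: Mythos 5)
Your overall strategy — reduce to additivity in $\lambda$, use a Weyl filtration of $V(\lambda')\otimes V(\lambda'')$, and exploit linearity of $\lambda\mapsto\langle(-\lambda)_\dom,\mu\rangle$ — matches the paper, but there is a genuine error in the direction of your map between Weyl modules, and it is not cosmetic.

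You assert a $G$-equivariant surjection $\pi\colon V(\lambda')\otimes V(\lambda'')\twoheadrightarrow V(\lambda'+\lambda'')$ ``arising as the top quotient of a Weyl filtration.'' In positive characteristic this is false in general; the factor $V(\lambda'+\lambda'')$, carrying the \emph{maximal} highest weight and appearing with multiplicity one, sits at the \emph{bottom} of the Weyl filtration, i.e.\ as a submodule (this is the content of \cite[II.4.19]{Jantzen}, the very reference the paper cites). A concrete counterexample: take $G=\SL_2$ in characteristic $p\ge3$, $\lambda'=1$, $\lambda''=p-1$. Then $V(1)\otimes V(p-1)$ is the indecomposable tilting module $T(p)$, with composition series $L(p-2)\,|\,L(p)\,|\,L(p-2)$ (socle to top). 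The submodule with factors $L(p-2),L(p)$ is $V(p)$, but the top quotient $T(p)/M_{r-1}$ is $L(p-2)=V(p-2)$, not $V(p)$. If $V(p)$ were both a sub and a quotient it would be a direct summand, contradicting indecomposability of $T(p)$. So your surjection $\pi$ does not exist, and your lift-through-$\pi_\CH$/push-through-$\pi_\CG$ argument collapses at the first step.

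The paper instead uses the \emph{inclusion} $V(\lambda'+\lambda'')\hookrightarrow V(\lambda')\otimes V(\lambda'')$ and correspondingly $\CH_{\lambda'+\lambda''}\hookrightarrow\CH_{\lambda'}\otimes\CH_{\lambda''}$, together with the fact that this inclusion has locally free cokernel (the higher filtration steps), so that locally $\CG_{\lambda'}\otimes\CG_{\lambda''}\cong\CG_{\lambda'+\lambda''}\oplus Q$ as $\CO_S\dbl z\dbr$-modules. Then $\delta(\CH_{\lambda'+\lambda''})$ lands both in $\tilde z^{-n'-n''}(\CG_{\lambda'}\otimes\CG_{\lambda''})$ and in $\CG_{\lambda'+\lambda''}\otimes\CO_S\dpl z\dpr$, and the intersection of these is exactly $\tilde z^{-n'-n''}\CG_{\lambda'+\lambda''}$. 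That is the mechanism you should be using; everything else in your write-up (Gordan's lemma, the reduction to a two-weight additivity step, $(-\nu)_\dom=-w_0\nu$ and linearity) is fine and agrees with the paper.
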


\begin{proof}
Every dominant weight is a non-negative integral linear combination of the dominant weights in a generating system. So it suffices to show that if condition (\ref{EqBounded1}) holds for two dominant weights $\lambda$ and $\lambda'$ then it also holds for $\lambda+\lambda'$. Now $V(\lambda+\lambda')$ is a $G$-submodule of $V(\lambda)\otimes V(\lambda')$. Indeed, as $V(\lambda)$ and $V(\lambda')$ are Weyl modules, they trivially admit a Weyl-filtration, i.e. a filtration with Weyl-modules as factors. By \cite[II.4.19, II.4.21]{Jantzen} the same is true for $V(\lambda)\otimes V(\lambda')$. By Weyl's character formula \cite[Corollary II.5.11]{Jantzen} the character of $V(\lambda)\otimes V(\lambda')$ contains $\lambda+\lambda'$ as highest weight. Thus we can find $V(\lambda+\lambda')$ as a submodule of $V(\lambda)\otimes V(\lambda')$ by \cite[II.4.19]{Jantzen}. Consequently also $\CH_{\lambda+\lambda'}$ is a submodule of $\CH_\lambda\otimes\CH_{\lambda'}$ and the same holds for $\
 CG_{\lambda+\lambda'}$. Now the linearity of $\lambda\mapsto\langle\,(-\lambda)_\dom,\mu\rangle$ for dominant $\lambda$ proves the lemma.
\end{proof}

\begin{definition}\label{DefQIsog}
A \emph{quasi-isogeny} between local $G$-shtukas $(\CG,\phi)\to(\CG',\phi')$ over $S$ is an isomorphism of the associated $LG$-torsors $f:\CL\CG\isoto\CL\CG'$ with $\phi'\s(f)=f\phi$. The set of quasi-isogenies between $\ul\CG$ and $\ul\CG'$ over $S$ is denoted $\QIsog_S(\ul\CG,\ul\CG')$.
\end{definition}

As one might expect from the analogy with $p$-divisible groups, quasi-isogenies are rigid in the following sense.

\begin{proposition} \label{PropRigidity}
Let $\ul\CG=(\CG,\phi)$ and $\ul\CG'=(\CG',\phi')$ be two local $G$-shtukas over $S\in\NilpF$ and let $i:\ol S\hookrightarrow S$ be a closed immersion defined by a sheaf of ideals $\CI$ which is locally nilpotent. Then
\[
i^\ast:\QIsog_S(\ul\CG,\ul\CG')\isoto\QIsog_{\ol S}(i^\ast\ul\CG,i^\ast\ul\CG')\;,\es f\mapsto i^\ast f
\]
is a bijection of sets. Let now $S$ be quasi-compact and let $\ul\CG, \ul\CG'$ both be bounded. Then an element of the right hand side is bounded by $(\mu,z)$ for some $\mu$ if and only if the corresponding element over $S$ is bounded by $(\tilde\mu,z)$ for some $\tilde\mu$. 
\end{proposition}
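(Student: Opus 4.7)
The plan is to reduce to the case of trivial torsors, where a quasi-isogeny becomes an element $f\in LG(S)$ satisfying the fixed-point equation $f=T(f):=b'\,\sigma(f)\,b^{-1}$, and then use that $\sigma$ sends the ideal sheaf $\CI$ into $\CI^q$, so that the natural iteration $f_{n+1}:=T(f_n)$ is $q$-adically convergent on every quasi-compact piece where $\CI$ is nilpotent. Working \'etale-locally on $S$, Remark~\ref{RemEtaleTrivial} together with Proposition~\ref{PropGzTorsor}(c) trivializes both $\CG$ and $\CG'$, so Frobenius takes the form $\phi=b\s$, $\phi'=b'\s$ with $b,b'\in LG(S)$; a quasi-isogeny over $S$ (resp.\ $\ol S$) is then just an element of $LG(S)$ (resp.\ $LG(\ol S)$) satisfying $f=T(f)$ (resp.\ its reduction).

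For injectivity, fix a faithful representation $\rho\colon G\hookrightarrow\GL_V$ and work matrix-wise. If $f,g\in LG(S)$ lift $\bar f$, then $h:=fg^{-1}$ satisfies $h=b'\,\sigma(h)\,(b')^{-1}$ and $\rho(h)\equiv I\pmod\CI$. Since $\sigma$ raises $\CO_S$-coefficients to the $q$-th power, induction on $n$ gives $\rho(h)\equiv I\pmod{\CI^{q^n}}$, so $h=1$ by local nilpotency of $\CI$ on each quasi-compact open. For surjectivity, choose \'etale-locally on $S$ an arbitrary lift $f_0\in LG(S)$ of $\bar f$ (possible by smoothness of $LG$ and Proposition~\ref{PropGzTorsor}(c) applied to the trivializations) and iterate $f_{n+1}:=T(f_n)$. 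The same $\sigma$-contraction argument shows $\rho(f_{n+1})\equiv\rho(f_n)\pmod{\CI^{q^n}}$, so the iteration stabilizes on each quasi-compact open and produces a local lift $f$ satisfying $T(f)=f$. The uniqueness just proved glues these local lifts into a global quasi-isogeny.

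For the last statement assume $S$ quasi-compact and $\ul\CG,\ul\CG'$ bounded by $(\mu_1,z-\zeta)$ and $(\mu_2,z-\zeta)$. If $f$ is bounded by $(\tilde\mu,z)$ then so is $\bar f$: condition~(\ref{EqBounded1}) is preserved by reduction of sheaves modulo $\CI$, and (\ref{EqBounded2}) is unchanged because $S$ and $\ol S$ share the same geometric points, so $\mu_f(s)=\mu_{\bar f}(s)$ for every $s$ and the locally constant class $[\mu_f(\cdot)]$ of Proposition~\ref{PropHPLocConst} agrees on both. Conversely, assume $\bar f$ bounded by $(\mu,z)$. The same identification of geometric points makes (\ref{EqBounded2}) hold for any dominant $\tilde\mu$ with $[\tilde\mu]=[\mu]$. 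By Lemma~\ref{LemmaCritBounded} it suffices to verify (\ref{EqBounded1}) for finitely many generators $\lambda_1,\dots,\lambda_r$ of $X^\ast(T)_+$. Iterating $f=T(f)$ gives
\[
\rho_{\lambda_i}(f)\;=\;\bigl(\textstyle\prod_{j=0}^{n-1}\sigma^j\rho_{\lambda_i}(b')\bigr)\,\sigma^n\rho_{\lambda_i}(f)\,\bigl(\textstyle\prod_{j=n-1}^{0}\sigma^j\rho_{\lambda_i}(b^{-1})\bigr).
\]
Pick $N$ with $\zeta^N=0$ on a quasi-compact piece and $n$ with $q^n\geq N$. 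The expansion $(z-\zeta^{q^j})^{-1}=z^{-1}\sum_{k<N}(\zeta^{q^j}/z)^k$, which is a finite sum by the choice of $N$, converts the $(z-\zeta)$-bound on $b$ and $b'$ into explicit pole-order bounds in $z$ on each $\sigma^j\rho_{\lambda_i}(b')$ and $\sigma^j\rho_{\lambda_i}(b^{-1})$. At the same time, $\sigma^n\rho_{\lambda_i}(f)$ differs from $\sigma^n$ applied to any fixed lift of $\rho_{\lambda_i}(\bar f)$ by entries whose $\CO_S$-coefficients lie in $\CI^{q^n}=0$, so it has pole order at most $\langle(-\lambda_i)_\dom,\mu\rangle$. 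Adding these contributions yields an explicit integer $M_i$ with $\rho_{\lambda_i}(f)\in z^{-M_i}\cdot\CG_{\lambda_i}\otimes_{\CO_S\dbl z\dbr}\CO_S\dpl z\dpr$, and any dominant $\tilde\mu$ in the class $[\mu]$ with $\langle(-\lambda_i)_\dom,\tilde\mu\rangle\geq M_i$ for every $i$ then bounds $f$ in the sense of (\ref{EqBounded1}).

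The hard part is the translation in this last paragraph from the shtuka divisor $z-\zeta$ (in which the hypothesis on $\ul\CG,\ul\CG'$ is phrased) to the Laurent variable $z$ (in which the fixed point of the iteration is controlled), done \emph{uniformly in the weight $\lambda$}; the two ingredients that make this possible are the nilpotency of $\zeta$ on each quasi-compact piece (which makes the geometric series for $(z-\zeta^{q^j})^{-1}$ finite) and the local nilpotency of $\CI$ (which forces $\sigma^n(f)$ to coincide, for $n$ large, with a Frobenius lift of $\sigma^n(\bar f)$ whose pole order is already under control).
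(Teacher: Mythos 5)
Your proof of the bijection is correct but follows a genuinely different route from the paper. You \'etale-locally trivialize, reduce to the fixed-point equation $f=b'\sigma(f)b^{-1}$, and run the classical $\sigma$-contraction iteration with $q$-adic convergence. The paper instead argues without any trivialization: after reducing to $\CI^q=(0)$, the $q$-Frobenius factors as $S\xrightarrow{j}\ol S\xrightarrow{i}S$, so $\s f=j^\ast(i^\ast f)$, and the relation $f=\phi'\circ\s f\circ\phi^{-1}$ becomes an explicit formula $f=\phi'\circ j^\ast(i^\ast f)\circ\phi^{-1}$ expressing $f$ in terms of $i^\ast f$ --- giving injectivity and surjectivity simultaneously in one line. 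Your argument buys familiarity (it mirrors the Drinfeld/Messing rigidity proof for $p$-divisible groups) at the cost of an iteration, a choice of faithful representation, and a glueing step; the paper's argument is shorter and coordinate-free.

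For the boundedness assertion there is a genuine gap. You estimate the $z$-pole of $\rho_{\lambda_i}(f)$ by expanding each $(z-\zeta^{q^j})^{-1}$ as a finite geometric series, obtaining integers $M_i$, and then declare that any dominant $\tilde\mu$ with $[\tilde\mu]=[\mu]$ and $\langle(-\lambda_i)_\dom,\tilde\mu\rangle\geq M_i$ does the job. But you never check that such a $\tilde\mu$ exists. For \emph{central} generators $\lambda_i$ (which exist whenever $G$ is not semisimple) the pairing $\langle(-\lambda_i)_\dom,\tilde\mu\rangle$ depends only on $[\tilde\mu]\in\pi_1(G)$ and so is \emph{frozen} at $\langle(-\lambda_i)_\dom,\mu\rangle$ once $[\tilde\mu]=[\mu]$; it cannot be increased by adding multiples of $2\rho\dual$. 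Your uniform estimate, which simply adds up crude pole bounds for the factors $\sigma^j\rho_{\lambda_i}(b')$ and $\sigma^j\rho_{\lambda_i}(b^{-1})$ (each contributing an extra $\lceil N/q^j\rceil-1$), gives $M_i$ strictly larger than $\langle(-\lambda_i)_\dom,\mu\rangle$ as soon as $\zeta\neq 0$, so no $\tilde\mu$ satisfies your requirement. To close this, the central weights must be treated separately: for central $\lambda$ the two-sided application of (\ref{EqBounded1}) to $\lambda$ and $-\lambda$ shows that $\rho_\lambda(b')$ and $\rho_\lambda(b)$ are \emph{units times} $(z-\zeta)^{\langle\lambda,\omega'\rangle}$ and $(z-\zeta)^{\langle\lambda,\omega\rangle}$, and one must also observe $[\omega]=[\omega']$ (coming from the existence of $f$), so that in the telescoping product the contributions from $b'$ and $b^{-1}$ cancel exactly. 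This is precisely the extra argument the paper makes for central $\lambda$; without it the claim ``any $\tilde\mu$ with $\langle(-\lambda_i)_\dom,\tilde\mu\rangle\geq M_i$'' is vacuous.
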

\begin{proof}
Arguing by induction over $\CO_S/\CI^{q^n}$ it suffices to treat the case where $\CI^q=(0)$. In this case the $q$-Frobenius $\Frob_p$ factors as $S\xrightarrow{\;j}\ol S\xrightarrow{\;i}S$ where $j$ is the identity on the underlying topological space $|\ol S|=|S|$ and on the structure sheaf this factorization is given by
\begin{eqnarray*}
\TS\CO_S \es \xrightarrow{\es i^\ast\;} & \CO_{\ol S} & \TS\xrightarrow{\es j^\ast\;} \es \CO_S\\
\TS b\quad \mapsto\;\: & b \mod I& \TS\;\:\mapsto\quad b^q\,.
\end{eqnarray*}
Therefore $\s  f=j^\ast(i^\ast f)$ for any $f\in\QIsog_S(\ul\CG,\ul\CG')$. We obtain the diagram
\begin{equation}\label{EqRigidity}
\xymatrix @C=5pc @R+0.5pc {
\CL\CG \ar[r]_\cong^{\TS f} & \CL\CG' \\
\s \CL\CG \ar[u]_\cong^{\TS\phi} \ar[r]_\cong^{\TS j^\ast(i^\ast f)\es} & \s \CL\CG'\ar[u]^\cong_{\TS\phi'}
}
\end{equation}
from which the bijectivity is obvious.

For the assertion on boundedness we may again assume that $\CI^q=(0)$. One direction is obvious, thus we now consider the case that $i^\ast f$ is bounded by $\mu$. We use Lemma~\ref{LemmaCritBounded}. Let $\Lambda$ be a finite generating system of the monoid of dominant weights. The $\lambda\in\Lambda$ fall into two classes according to whether $\langle\lambda,2\rho\dual\rangle=0$ (i.e.~$\lambda$ is central) or $\langle\lambda,2\rho\dual\rangle>0$. Since $(-2\rho\dual)_\dom=2\rho\dual$, we have $\langle(-\lambda)_\dom,2\rho\dual\rangle>0$ for each $\lambda$ in the second class. Since $S$ is quasi-compact, we can find an $m\geq 0$ such that for all those $\lambda$, (\ref{EqBounded1}) is satisfied for the coweight $2m\rho\dual+\mu$. For the $\lambda$ with $\langle\lambda,2\rho\dual\rangle=0$ note that as $\lambda$ is central, $-\lambda$ is also dominant.  If $\ul\CG$ is bounded by some $\omega$, applying (\ref{EqBounded1}) to $\lambda$ and $-\lambda$ implies that $\phi$ induces
  an isomorphism
\[
\phi:\;\s\CG_\lambda \;\isoto \;\bigl({\TS\frac{1}{z-\zeta}}\bigr)^{\langle-\lambda,\omega\rangle}\CG_\lambda\,.
\]
We get a similar isomorphism for $\ul\CG'$ with some $\omega'$. Observe that $\langle\lambda,\omega\rangle=\langle\lambda,\omega'\rangle$ since $\lambda$ is central, $[\omega]=[\omega']$ in $\pi_1(G)$ by (\ref{EqBounded2}), and $f\circ \phi=\phi'\circ \s f$. Also the boundedness of $i^\ast f$ over $\ol S$ by $\mu$ yields (by applying $j^{ \ast}$) an isomorphism $$f_{\lambda}:\CG_\lambda\cong\bigl({\TS\frac{1}{z-\zeta}}\bigr)^{\langle\lambda,\omega\rangle}\s\CG_\lambda \isoto \bigl({\TS\frac{1}{z-\zeta}}\bigr)^{\langle\lambda,\omega\rangle}\bigl({\TS\frac{1}{z}}\bigr)^{\langle-\lambda,\mu\rangle}\s\CG'_\lambda\cong\bigl({\TS\frac{1}{z}}\bigr)^{\langle-\lambda,\mu\rangle}\CG'_\lambda=\bigl({\TS\frac{1}{z}}\bigr)^{\langle-\lambda,2m\rho\dual+\mu\rangle}\CG'_\lambda$$ over $S$, proving that $f$  is bounded by $2m\rho\dual+\mu$. 
\end{proof}

\begin{lemma}\label{LemmaBoundedIfQC}
Let $\CG$ and $\CH$ be $K$-torsors on $S$ for a connected scheme $S\in\NilpF$. Let either $\tilde z=z-\zeta$ or $\tilde z=z$ and let $\delta:\CL\CH\isoto\CL\CG$ be an isomorphism of the associated $LG$-torsors. 
Let $\mu$ be a dominant coweight of $G$ satisfying (\ref{EqBounded2}). 
\begin{enumerate}
\item \label{(a)}
Then the condition that $\delta$ is bounded  by $(\mu,\tilde z)$ is representable by a finitely presented closed immersion into $S$.
\item \label{(b)}
If $S$ is reduced then $\delta$ is bounded by $(\mu,\tilde z)$ if and only if this holds for the pullback to every geometric point of $S$. By \ref{(a)} it is even enough to consider the pullback to the generic points of $S$.
\end{enumerate}
\end{lemma}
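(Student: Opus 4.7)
For part (a), my approach is first to use Lemma~\ref{LemmaCritBounded} to reduce (\ref{EqBounded1}) to a finite generating family of dominant weights; as intersections of finitely presented closed immersions are again finitely presented closed immersions, it then suffices to treat one fixed $\lambda$ with associated integer $N_\lambda=\langle(-\lambda)_\dom,\mu\rangle$. The condition on $\delta$ translates into requiring that the morphism $\tilde z^{N_\lambda}\delta\colon \CH_\lambda\to \CG_\lambda\otimes_{\CO_S\dbl z\dbr}\CO_S\dpl z\dpr$ factor through the submodule $\CG_\lambda$.

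I would then perform the analysis Zariski-locally on $S$. On an affine open $U=\Spec A$ on which $\zeta^n=0$, Proposition~\ref{PropHilbert90} gives trivializations $\CH_\lambda|_U\cong A\dbl z\dbr^{\oplus r}\cong \CG_\lambda|_U$, under which $\delta$ is represented by a matrix $D\in M_r\bigl(A\dbl z\dbr[\tilde z^{-1}]\bigr)$. The nilpotence of $\zeta$ yields the ring identifications $A\dbl z\dbr=A\dbl\tilde z\dbr$ (via the terminating binomial expansion $z^j=\sum_i\binom{j}{i}\zeta^{j-i}\tilde z^i$) and $A\dbl z\dbr[z^{-1}]=A\dbl z\dbr[\tilde z^{-1}]$ (since $z$ and $\tilde z$ differ by a nilpotent element and hence are units in each other's localizations). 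Taking a common denominator for the $r^2$ entries I can write $\tilde z^{N_\lambda}D=\tilde z^{-M}Y$ with $M\geq 0$ and $Y\in M_r(A\dbl\tilde z\dbr)$; noting that $\tilde z$ is not a zero divisor in $A\dbl\tilde z\dbr$, the condition $\tilde z^{N_\lambda}D\in M_r(A\dbl z\dbr)$ is equivalent to $Y\equiv 0\pmod{\tilde z^M}$. This is a finite system of $A$-linear equations, coming from the isomorphism $A\dbl\tilde z\dbr/\tilde z^M\cong A^{\oplus M}$, and therefore cuts out a finitely presented closed subscheme of $U$. Since the condition on $\delta$ is independent of the chosen trivializations, these local subschemes glue to a finitely presented closed immersion $S'\hookrightarrow S$.

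For part (b), given the closed immersion $S'\hookrightarrow S$ furnished by (a), the hypothesis that every geometric point of $S$ factors through $S'$ forces the set-theoretic equality $|S'|=|S|$. Since $S$ is reduced this implies $S'=S_\red=S$, so $\delta$ is bounded on all of $S$. The addendum on generic points is immediate: the open complement $S\smallsetminus|S'|$ is empty if and only if it contains no generic point of $S$, because every nonempty open of $S$ meets some generic point.

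The main subtlety I expect in carrying out this plan lies in the case $\tilde z=z-\zeta$: one has to invoke nilpotence of $\zeta$ at the right moments to identify $A\dbl z\dbr$ with $A\dbl\tilde z\dbr$, so that divisibility by $\tilde z^M$ becomes a finite $A$-linear condition on the coefficients of $Y$. For $\tilde z=z$ this reindexing is trivial, but for $\tilde z=z-\zeta$ it is the crucial step that turns the naively infinite-looking condition into a finitely presented one.
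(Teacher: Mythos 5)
Your proof is correct and takes essentially the same route as the paper: both reduce to a finite generating system $\Lambda$ via Lemma~\ref{LemmaCritBounded}, trivialize $\CG_\lambda$ and $\CH_\lambda$ Zariski-locally, find a uniform denominator $\tilde z^{N}$, and observe that the boundedness condition becomes finitely many $A$-linear equations. The paper phrases this by saying the generators of $\CH_\lambda$ must vanish in the free $R$-module $M_\lambda=(z-\zeta)^{-N_\lambda}\CG_\lambda/(z-\zeta)^{-\langle(-\lambda)_\dom,\mu\rangle}\CG_\lambda$, while you phrase it in terms of the coefficients of a matrix $Y$ modulo $\tilde z^M$; these are the same computation, and your explicit identification $A\dbl z\dbr=A\dbl\tilde z\dbr$ via the binomial expansion is a useful spelling-out of why $M_\lambda$ is $A$-free in the $\tilde z=z-\zeta$ case, which the paper leaves implicit. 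For part (b) the paper argues via the injectivity of $M\hookrightarrow M\otimes_R\prod_{\Fp}\kappa(\Fp)^\alg$ over all primes, whereas you argue topologically using the closed immersion from (a) and sobriety of the underlying space; both arguments are standard and equivalent, and your formulation makes the ``generic points suffice'' addendum slightly more transparent.
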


\begin{proof} We prove the lemma for $\tilde z=z-\zeta$, the other case is completely analogous.
Fix a generating system $\Lambda$ of the monoid $X^\ast(T)_+$ of dominant weights and consider for each $\lambda\in\Lambda$ the isomorphism
\[
\delta:\;\CH_\lambda\otimes_{\CO_S\dbl z\dbr}\CO_S\dbl z\dbr[{\TS\frac{1}{z-\zeta}}] \;\isoto\;\CG_\lambda\otimes_{\CO_S\dbl z\dbr}\CO_S\dbl z\dbr[{\TS\frac{1}{z-\zeta}}].
\]
Since both questions are local on $S$ we can assume that $S=\Spec R$ and $\CG_\lambda$ and $\CH_\lambda$ are free $R\dbl z\dbr$-modules. Then $\delta(\CH_\lambda)$ is contained in $(z-\zeta)^{-N_\lambda}\CG_\lambda\subset\CG_\lambda\otimes_{R\dbl z\dbr}R\dbl z\dbr[\frac{1}{z-\zeta}]$ for some $N_\lambda\gg0$. So $\delta$ is bounded by $\mu$ if and only if $\delta$ maps all generators of $\CH_\lambda$ to zero in $(z-\zeta)^{-N_\lambda}\CG_\lambda/(z-\zeta)^{-\langle(-\lambda)_\dom,\mu\rangle}\CG_\lambda=:M_\lambda$ for all $\lambda\in\Lambda$ (by Lemma~\ref{LemmaCritBounded}). Let $M:=\bigoplus_{\lambda\in\Lambda} M_\lambda$. Since $M$ is a free $R$-module of finite rank this condition is represented by a finitely presented closed immersion. This proves (a).

The condition in (b) is clearly necessary. Since $M\hookrightarrow M\otimes_R\prod_{\Fp\subset R}\kappa(\Fp)^\alg$ where the product is taken over all prime ideals $\Fp$ of $R$, the condition is also sufficient.
\end{proof}

\begin{lemma}\label{LemmaHPBounded}
Let $k$ be an algebraically closed field in $\NilpF$ (hence $\zeta=0$ in $k$) and let $\CG$ and $\CH$ be $K$-torsors over $k$. Let $\delta:\CL\CH\isoto\CL\CG$ be an isomorphism of the associated $LG$-torsors with Hodge polygon $\mu_\delta(\Spec k)$ and let $\mu$ be a dominant coweight of $G$. Then $\delta$ is bounded by $(\mu,z)$ if and only if $\mu_\delta(\Spec k)\preceq\mu$.
\end{lemma}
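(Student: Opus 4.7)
The plan is as follows. Since $k$ is algebraically closed, Proposition~\ref{PropGzTorsor} together with the fact that every \'etale cover of $\Spec k$ splits shows that any $K$-torsor over $k$ is trivial. Fix trivializations $\CG\cong K_k\cong\CH$. Under them $\delta$ corresponds to left multiplication by some $g\in LG(k)$, which by the Cartan decomposition may be written $g=k_1 z^{\mu_0}k_2$ with $k_1,k_2\in K(k)$ and $\mu_0:=\mu_\delta(\Spec k)$ dominant. Simultaneously $\CG_\lambda$ and $\CH_\lambda$ become identified with the standard lattice $V(\lambda)\otimes_{\BF_q}k\dbl z\dbr$, and $\delta$ acts through the representation $G\to\GL(V(\lambda))$.

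Next I would reduce the boundedness condition to a purely numerical statement. Since $K(k)=G(k\dbl z\dbr)$ acts on $V(\lambda)\otimes k\dbl z\dbr$ via a representation defined over $\BF_q$, the factors $k_1,k_2$ preserve this lattice and drop out. Thus (\ref{EqBounded1}) becomes
\[
z^{\mu_0}\cdot V(\lambda)\otimes k\dbl z\dbr\;\subset\; z^{-\langle(-\lambda)_\dom,\mu\rangle}\,V(\lambda)\otimes k\dbl z\dbr,
\]
and decomposing into $T$-weight spaces this amounts to $\langle\nu,\mu_0\rangle\ge -\langle(-\lambda)_\dom,\mu\rangle$ for every weight $\nu$ of $V(\lambda)$.

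Now I would exploit that $\mu_0$ is dominant. Since the weights of $V(\lambda)$ lie in the convex hull of $W\cdot\lambda$ and contain the lowest weight $w_0\lambda=-(-\lambda)_\dom$, the minimum of $\langle\nu,\mu_0\rangle$ over these weights is attained there and equals $-\langle(-\lambda)_\dom,\mu_0\rangle$. Hence (\ref{EqBounded1}) is equivalent to $\langle(-\lambda)_\dom,\mu-\mu_0\rangle\ge 0$ for every dominant weight $\lambda$; as $\lambda\mapsto(-\lambda)_\dom=-w_0\lambda$ permutes the dominant cone, this is the same as $\langle\tilde\lambda,\mu-\mu_0\rangle\ge 0$ for all dominant $\tilde\lambda$. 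Combined with (\ref{EqBounded2}), which says $\mu-\mu_0$ lies in the coroot lattice, the standard duality between the dominant weight cone in $X^\ast(T)_\BR$ and the cone of non-negative combinations of simple coroots forces $\mu-\mu_0$ to be a non-negative \emph{integral} combination of simple coroots, i.e.\ $\mu_0\preceq\mu$. The converse is immediate, since $\mu_0\preceq\mu$ yields both $[\mu_0]=[\mu]$ in $\pi_1(G)$ and the numerical inequalities above.

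The main obstacle I anticipate is the clean verification that $-\langle(-\lambda)_\dom,\mu_0\rangle$ is really the minimum of $\langle\nu,\mu_0\rangle$ over weights of $V(\lambda)$: one needs that $w_0\lambda$ actually occurs as a weight (clear since $V(\lambda)$ is cyclic of highest weight $\lambda$ with $W$-invariant set of weights) and the standard fact that for dominant $\mu_0$ the pairing on the convex hull of $W\cdot\lambda$ is minimised at $w_0\lambda$. Everything else is then just the Cartan decomposition and the cone duality recalled above.
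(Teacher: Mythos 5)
Your proof is correct and follows essentially the same path as the paper's: trivialize, apply the Cartan decomposition $g=k_1z^{\mu_0}k_2$, observe that the $K(k)$-factors preserve $\CG_\lambda$ and $\CH_\lambda$ and hence drop out, and reduce to the weight-space inequality $\langle\nu,\mu_0\rangle\ge-\langle(-\lambda)_\dom,\mu\rangle$ for every weight $\nu$ of $V(\lambda)$. The paper closes by the same observation that the minimum of $\langle\nu,\mu_0\rangle$ is $-\langle(-\lambda)_\dom,\mu_0\rangle$, citing Weyl's character formula where you invoke the convex hull of $W\cdot\lambda$ (an equivalent packaging), followed by the same dual-cone characterization of $\preceq$ combined with (\ref{EqBounded2}).
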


\begin{proof}
After choosing trivializations of $\CG$ and $\CH$ the isomorphism $\delta$ is represented by left multiplication with an element $sz^{\mu'}t\in LG(k)$ for $\mu':=\mu_\delta(\Spec k)$ and $s,t\in K(k)$. Let $\lambda$ be a dominant weight of $G$ and consider the Weyl module $V(\lambda)$. By Weyl's character formula \cite[Corollary II.5.11]{Jantzen} its weights $\lambda'$ all satisfy $-(-\lambda)_\dom\preceq\lambda'\preceq\lambda$ and the weight $-(-\lambda)_\dom$ occurs. Let $v_{\lambda'}\in V(\lambda)$ be an element of the weight space of $\lambda'$. Then
\[
\delta(t^{-1}v_{\lambda'})=z^{\langle \lambda',\mu'\rangle}\cdot
s(v_{\lambda'})\;\in\;z^{\langle\lambda',\mu'\rangle}\,\CG_\lambda\,.
\]
In particular $\delta(\CH_\lambda)\subset z^{-\langle(-\lambda)_\dom,\mu'\rangle}\CG_\lambda \setminus z^{-n}\CG_\lambda$
for all $n<\langle(-\lambda)_\dom,\mu'\rangle$. So $\delta$ is bounded by $\mu$
if and only if $\mu-\mu'$ is a linear combination of coroots (by (\ref{EqBounded2})) with
$\langle(-\lambda)_\dom,\mu-\mu'\rangle\ge 0$ for every dominant $\lambda$. This
is equivalent to $\mu'\preceq\mu$.
\end{proof}

\begin{example}\label{exunbounded}
For $G=\BG_m$ we give an example of a local $G$-shtuka over $R=k[\varepsilon]/(\varepsilon^2)$ which is not bounded by any coweight $\mu$. Let $\ul\CG=(K_R,b\s)$ with $b=1+\frac{\varepsilon}{z}$. The Hodge polygon in the special point is equal to $\mu=(0)\in\mathbb{Z}$. Thus in the special point it is bounded by $\mu=(0)$ and by no other $\mu'$. But as $b\notin \mathbb{G}_m\bigl(R\dbl z\dbr\bigr)$, the boundedness condition does not hold on all of $\Spec R$.
\end{example}

On the other hand we have
\begin{lemma}
Let $\CG$ and $\CH$ be $K$-torsors on a quasi-compact connected scheme $S\in\NilpF$. Let either $\tilde z=z-\zeta$ or $\tilde z=z$ and let $\delta:\CL\CH\isoto\CL\CG$ be an isomorphism of the associated $LG$-torsors. Assume that $G$ is semi-simple or that $S$ is reduced. Then there is a dominant $\mu\in X_*(T)$ such that $\delta$ is bounded by $(\mu,\tilde z)$.
\end{lemma}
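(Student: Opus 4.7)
The plan is to choose $\mu$ of the form $\mu_0+N\cdot 2\rho^\vee$, where $2\rho^\vee=\sum_{\alpha>0}\alpha^\vee$ is the sum of positive coroots and $N$ is a large positive integer, and to verify condition~(\ref{EqBounded1}) using elementary bounds obtained from quasi-compactness. First, by Lemma~\ref{LemmaCritBounded}, it suffices to check~(\ref{EqBounded1}) for $\lambda$ running over a finite generating system $\Lambda$ of the monoid $X^\ast(T)_+$. Proposition~\ref{PropHPLocConst} combined with the connectedness of $S$ shows that $[\mu_\delta(s)]\in\pi_1(G)$ is constant on $S$, so I would fix a dominant representative $\mu_0\in X_\ast(T)$ of this class. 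Using the quasi-compactness of $S$, I would cover $S$ by finitely many affine opens on which $\CG_\lambda$ and $\CH_\lambda$ are free $\CO_S\dbl z\dbr$-modules; taking the maximum of the resulting pole orders produces, for each $\lambda\in\Lambda$, a uniform integer $N_\lambda\geq 0$ with $\delta(\CH_\lambda)\subset\tilde z^{-N_\lambda}\CG_\lambda$ globally on $S$. Setting $\mu:=\mu_0+N\cdot 2\rho^\vee$, I then observe that $\mu$ is dominant (since $\langle\alpha_i,2\rho^\vee\rangle=2$ for every simple root $\alpha_i$) and that $2\rho^\vee$ lies in the coroot lattice, so that $[\mu]=[\mu_0]$ and condition~(\ref{EqBounded2}) is automatic.

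In the semisimple case I would verify~(\ref{EqBounded1}) directly on $S$: the identity
\[
\langle(-\lambda)_\dom,\mu\rangle\;=\;\langle(-\lambda)_\dom,\mu_0\rangle+2N\,\langle(-\lambda)_\dom,\rho^\vee\rangle
\]
exceeds $N_\lambda$ once $N$ is large enough, because $\rho^\vee$ lies in the interior of the dominant chamber of coweights of a semisimple group, so $\langle(-\lambda)_\dom,\rho^\vee\rangle>0$ for every non-zero dominant weight $\lambda$ (and the case $\lambda=0$ is trivial). This gives $\delta(\CH_\lambda)\subset\tilde z^{-N_\lambda}\CG_\lambda\subset\tilde z^{-\langle(-\lambda)_\dom,\mu\rangle}\CG_\lambda$ as desired. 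In the reduced case I would instead invoke Lemma~\ref{LemmaBoundedIfQC}\ref{(b)} to reduce the question to checking boundedness after pullback to each geometric point of $S$; since the locally nilpotent $\zeta$ vanishes on every residue field, Lemma~\ref{LemmaHPBounded} then translates boundedness into the inequality $\mu_\delta(s)\preceq\mu$ for every geometric point $s$. Pulling back the uniform inclusion $\delta(\CH_\lambda)\subset\tilde z^{-N_\lambda}\CG_\lambda$ to such a point and reusing the calculation in the proof of Lemma~\ref{LemmaHPBounded} yields the uniform estimate $\langle(-\lambda)_\dom,\mu_\delta(s)\rangle\leq N_\lambda$; since $\{(-\lambda)_\dom:\lambda\in\Lambda\}$ is again a generating set of $X^\ast(T)_+$, this gives a uniform upper bound on $\langle\omega,\mu_\delta(s)\rangle$ for every dominant weight $\omega$. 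Using the pairing identity above and noting that central characters pair trivially with elements of the coroot lattice (so contribute zero to $\langle\omega,\mu-\mu_\delta(s)\rangle$), a sufficiently large $N$ achieves $\langle\omega,\mu-\mu_\delta(s)\rangle\geq 0$ for every $\omega\in X^\ast(T)_+$ and every $s$.

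The main obstacle is the last step: passing from the non-negativity of these pairings to the integral positivity of $\mu-\mu_\delta(s)$ as a $\BZ_{\geq 0}$-combination of simple coroots (which is what $\mu_\delta(s)\preceq\mu$ means). This rests on the fact that the dominant weights of $G$ dualize, over $\BQ$, to the rational positive cone of the simple coroots in $X_\ast(T)\otimes\BR$; combined with the integrality that $\mu-\mu_\delta(s)$ lies in the coroot lattice (from constancy of the $\pi_1(G)$-class), this forces $\mu-\mu_\delta(s)$ into the $\BZ_{\geq 0}$-span of the simple coroots. The reducedness hypothesis is crucial for the non-semisimple part of the argument, since Example~\ref{exunbounded} demonstrates that no such $\mu$ exists on non-reduced bases when $G$ is not semisimple.
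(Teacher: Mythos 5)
Your proof is correct and takes essentially the same approach as the paper's: fix a finite generating system $\Lambda$ of dominant weights, obtain uniform pole bounds $N_\lambda$ from quasi-compactness, set $\mu=\mu_0+2c\rho\dual$ for $c\gg 0$, handle non-central generators by the strict positivity of $\langle(-\lambda)_\dom,\rho\dual\rangle$, and in the reduced case reduce to geometric points where central weights contribute equal pairings on both sides because $\mu_0-\mu_\delta(s)$ lies in the coroot lattice. The only cosmetic difference is at the very end: the paper concludes directly by checking $\langle(-\lambda_i)_\dom,\mu_\delta(s)\rangle\leq\langle(-\lambda_i)_\dom,\mu\rangle$ on the finitely many generators at each geometric point (which is exactly condition~(\ref{EqBounded1}) there, via Lemmas~\ref{LemmaCritBounded} and \ref{LemmaBoundedIfQC}), so the dual-cone argument you flag as the ``main obstacle'' can be sidestepped entirely, though as you carry it out it is also correct.
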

\begin{proof}
Let $\{\lambda_1,\dotsc,\lambda_n\}$ with $\lambda_i\neq 0$ be a finite generating set of the monoid of dominant weights of $G$. Let $\{\lambda_{n'},\dotsc,\lambda_n\}$ be the subset of weights that are central in $G$, that is orthogonal to the sum $2\rho\dual$ of the positive coroots. If $G$ is semi-simple, the latter set is empty. Since $S$ is quasi-compact, there are constants $c_i$ for $i=1,\dotsc,n$ with $\delta(\CH_{\lambda_i})\subset\tilde z\,^{-c_i}\CG_{\lambda_i}$. We have to show that there is a dominant $\mu$ with $[\mu]=[\mu_{\delta}(s)]$ in $\pi_1(G)$ for all $s\in S$ and $\langle(-\lambda_i)_{\dom},\mu\rangle\geq c_i$ for all $i$. As $S$ is connected, $[\mu_{\delta}(s)]\in \pi_1(G)$ is constant. Let $\mu$ be dominant with $[\mu]=[\mu_{\delta}(s)]$. Replacing $\mu$ by $\mu+2c\rho\dual$ for $c\in \mathbb{N}$ leaves the image in $\pi_1(G)$ invariant and replaces $\langle(-\lambda_i)_{\dom},\mu\rangle$ by $\langle(-\lambda_i)_{\dom},\mu\rangle +c\langle(-\lambda_i)_{\dom},2\rho\dual\rangle$. For non-central $\lambda_i$, the last bracket is strictly positive. Thus for $i=1,\dotsc, n'-1$ we can choose $c$ large enough to ensure that $\langle(-\lambda_i)_{\dom},\mu+2c\rho\dual\rangle\geq c_i$. For $G$ semi-simple this proves the lemma. Consider now the case that $S$ is reduced. We want to show that $\mu+2c\rho\dual$ also satisfies the analogous condition for $i\geq n'$. As $S$ is reduced, it is by Lemma~\ref{LemmaBoundedIfQC} enough to check the condition in each geometric point separately. Thus we have to show that $\langle(-\lambda_i)_{\dom},\mu_{\delta}(s)\rangle\leq\langle(-\lambda_i)_{\dom},\mu+2c\rho\dual\rangle$ for $i\geq n'$. As $\lambda_i$ is central, both sides are determined by the images of $\mu_{\delta}(s)$ resp. $\mu+2c\rho\dual$ in $\pi_1(G)=X_*(T)/(\text{coroot~lattice})$. As those images agree, the two sides are equal.
\end{proof}

\begin{proposition}[Soergel]\label{PropFaithfulRep}
Let $V=\bigoplus_{\lambda\in\Lambda} V(\lambda)$ be the direct sum of Weyl modules over a finite generating system $\Lambda$ of the monoid $X^\ast(T)_+$ of dominant weights for $G$. Then the natural map $\eta:G\to\GL(V)$ is a closed immersion.
\end{proposition}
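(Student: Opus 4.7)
The plan is to show that $N := \ker\eta$ is the trivial group scheme over $\bar{\BF_q}$; by the standard fact that a homomorphism of affine group schemes of finite type over a field is a closed immersion if and only if its kernel is trivial as a group scheme, this will force $\eta$ to be a closed immersion.

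The whole argument hinges on one computation restricted to $T$. Suppose $t \in T(R) \cap N(R)$ for some $\bar{\BF_q}$-algebra $R$. Then $t$ acts as the identity on each $V(\lambda)\otimes_{\BF_q} R$, in particular on a highest weight vector $v_\lambda$, which is a $T$-eigenvector of weight $\lambda$. Hence $\lambda(t) = 1 \in R^\times$ for every $\lambda \in \Lambda$. Now $\Lambda$ generates $X^*(T)_+$ as a monoid, and $X^*(T)_+$ in turn generates $X^*(T)$ as a group: for any $\chi \in X^*(T)$ and any integer $M$ large enough, $\chi + M\cdot 2\rho$ is dominant (since $\langle 2\rho,\alpha\dual\rangle = 2 > 0$ on every simple coroot), so $\chi = (\chi + M\cdot 2\rho) - M\cdot 2\rho$ is a difference of elements of $X^*(T)_+$. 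Hence $\chi(t) = 1$ for every $\chi \in X^*(T)$, so $t = e$ in $T(R)$. This proves $N \cap T = 1$ as group schemes.

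From $N \cap T = 1$ one deduces $\Lie N = 0$. Indeed, $\Lie N = \ker(d\eta)$ is a Lie ideal of $\Lie G$ (as $d\eta$ is $G$-equivariant), hence $T$-stable, and it meets $\Lie T$ trivially (the case $R = \bar{\BF_q}[\epsilon]/(\epsilon^2)$ of what was just proved). By the root space decomposition $\Lie G = \Lie T \oplus \bigoplus_\alpha \Fg_\alpha$, a $T$-stable ideal is a sum of weight spaces; if it contained some $\Fg_\alpha$, the Lie ideal property would force it to contain $[\Fg_{-\alpha},\Fg_\alpha] = \bar{\BF_q}\cdot h_\alpha$, where $h_\alpha \in \Lie T$ is the nonzero coroot element, a contradiction. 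So $\ker(d\eta) \subseteq \Lie T \cap \ker(d\eta) = 0$. By Nakayama applied to the local ring of $N$ at $e$, the vanishing of $\Lie N$ forces $N$ to be \'etale at $e$, hence by left translation \'etale everywhere, so $N$ is a finite disjoint union of copies of $\Spec\bar{\BF_q}$.

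It remains to check $N(\bar{\BF_q}) = \{e\}$. For each $n \in N(\bar{\BF_q})$ the orbit map $G \to N,\ g \mapsto gng^{-1}$ is a morphism from the connected scheme $G$ to the discrete finite scheme $N$, hence constant; so $gng^{-1} = n$ for all $g$ and $N(\bar{\BF_q}) \subseteq Z(G)(\bar{\BF_q}) \subseteq T(\bar{\BF_q})$. Combined with $N \cap T = 1$ this gives $N(\bar{\BF_q}) = \{e\}$, completing the proof. The main subtle point I anticipate is the passage from the hypothesis that $\Lambda$ generates the monoid $X^*(T)_+$ to the consequence that $\Lambda$ generates the lattice $X^*(T)$; the cofinality trick via $M\cdot 2\rho$ above is what resolves it.
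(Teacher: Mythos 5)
Your overall strategy (trivial kernel as a group scheme, split into $N(\ol{\BF_q}) = 1$ and $\Lie N = 0$) and your argument for $N\cap T = 1$ and for $N(\ol{\BF_q})\subseteq Z(G)\subseteq T$ are all fine, and the first step runs parallel to the paper's injectivity-on-points argument. But the crucial deduction ``$N\cap T = 1 \Rightarrow \Lie N = 0$'' is wrong, and the error is precisely the subtlety the paper goes to great lengths to address.

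The gap is the assertion that a $T$-stable Lie ideal of $\Lie G$ meeting $\Lie T$ trivially must be zero because $[\Fg_{-\alpha},\Fg_\alpha]=\ol{\BF_q}\cdot h_\alpha$ would lie in $\Lie T$ and be nonzero. In positive characteristic this bracket can vanish: for $G=\PGL_2$ in characteristic $2$ one has $h_\alpha = d\alpha\dual(1) = 2\cdot d\omega(1) = 0$ in $\Lie T$ (since $\alpha\dual$ is twice a generator of $X_*(T)$), and correspondingly $[\bar e,\bar f]=\bar h = 0$ in $\Lie(\PGL_2)$. Indeed $\Fg_\alpha\oplus\Fg_{-\alpha}$ is a nonzero, $T$-stable, abelian Lie ideal of $\Lie(\PGL_2)$ in characteristic $2$ meeting $\Lie T$ trivially, directly contradicting your claimed dichotomy. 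More damningly, your argument never uses that the $V(\lambda)$ are Weyl modules rather than irreducible highest-weight modules: $N\cap T=1$ holds for both, so you would conclude that $\eta':\PGL_2\to\GL(V')$ with $V'$ irreducible is also a closed immersion, which the Remark following the proposition explicitly disproves (that map has a $2$-dimensional kernel on Lie algebras in characteristic $2$). The paper avoids this by directly proving $\eta$ is injective on each $\Lie U_\alpha$, using a representation $W_\lambda$ embedded in $\BF_q[U]$ and Kostant's character formula to show that, for suitable $\lambda$, the Weyl module generates $\BF_q[U_\alpha]$ so that $\Lie U_\alpha$ acts faithfully; this is genuinely representation-theoretic and cannot be replaced by a soft structure-of-Lie-ideals argument.
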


\begin{proof}
We first prove injectivity for points with values in an algebraic closure of $\BF_q$. Every such point lies in a Borel subgroup 
and is thus conjugate to an element $g\in B(\BF_q^\alg)$. Assume that $g\in\ker (\eta)(\BF_q^\alg)$. Let $g=g_sg_u$ be its decomposition into semi-simple and unipotent part. For each $\lambda\in \Lambda$ consider a highest weight vector $v_\lambda\in V(\lambda)$. That is, $v_\lambda$ generates the $G$-module $V(\lambda)$, and $B$ stabilizes $\BF_q\cdot v_\lambda$ and operates through the quotient $T$ and the weight $\lambda$ on $v_\lambda$. In particular $\lambda(g_s)\cdot v_\lambda=\lambda(g)\cdot v_\lambda=\eta(g)(v_\lambda)=v_\lambda$, and hence $\lambda(g_s)=1$. Since this holds for all $\lambda$ in a generating system of $X^\ast(T)$, we must have $g_s=1$. So $\ker(\eta)$ is a closed normal subgroup consisting solely of unipotent elements. Since $G$ is reductive we conclude $\ker(\eta)(\BF_q^\alg)=(1)$.

For $\eta$ to be a closed immersion it remains to prove that the induced map on Lie algebras is injective. Since $\ker(\eta)$ is normal in $G$, the torus $T$ acts on it and the Lie algebra of $\ker(\eta)$ decomposes into weight spaces under $T$. So it suffices to show that $\eta$ is injective on $\Lie T$ and on $\Lie U_\alpha$ for each root subgroup $U_\alpha$. Our argument for injectivity on points also shows that $\eta$ is injective on $\Lie T$.

Now we consider $U_\alpha$. By conjugation with the longest element of the Weyl group it suffices to treat the case where $\alpha$ is a negative root. For this consider a dominant weight $\lambda$ (not necessarily in $\Lambda$) and the $B$-representation $W_\lambda:=(-\lambda)\otimes \Res^G_B\Ind^G_{\ol{B}}\lambda$ where 
\[
\Ind^G_{\ol{B}}\lambda \;:=\;\bigl\{\,f\in\BF_q[G]:\; f(xb)=\lambda(b)^{-1}f(x)\text{ for all }x\in G,b\in \ol{B}\,\bigr\}\,.
\]
Let $U$ be the unipotent radical of $B$. Via restriction to $U$ we get a map $W_\lambda\to \BF_q[U]$. This is an inclusion, since every $f\in W_\lambda$ which is zero on $U$ is also zero on the big cell $U\ol{B}$, hence zero in $\BF_q[G]$. For $t\in T$ we have
$(tf)(x)=f(t^{-1}x)=f(t^{-1}xt\,t^{-1})=\lambda(t)\,\Int_t^\ast(f)(x)$ with $\Int_t^\ast(f)(x):=f(t^{-1}xt)$. The inclusion $W_\lambda\subset\BF_q[U]$ is equivariant under $T$, where $T$ acts on $\BF_q[U]$ through $\Int_t^\ast$. Note that the weights of $\BF_q[U]$ are in $\BQ_{\le0}\cdot R^+:=\{\sum_{\beta\in R^+}n_\beta\cdot\beta: n_\beta\in\BQ,n_\beta\le0\}$, where $R^+$ is the set of positive roots. The inclusion is also equivariant under $U$, where $U$ operates on $\BF_q[U]$ through left translation.

We choose $\lambda$ such that $w(\lambda+\rho)-\lambda-\alpha-\rho\notin\BQ_{\ge0}\cdot R^+$ for all $w\ne1$ in the Weyl group $W$, where $\rho$ is the halfsum of all positive roots. To see that such a $\lambda$ exists, let $\alpha_1,\dotsc,\alpha_r$ be the simple roots of $G$ and choose $\mu_j\in X_*(T)_{\mathbb{Q}}$ for $j=1,\dotsc,r$ with $\langle\alpha_i,\mu_j\rangle=\delta_{ij}$. Let
\[
N=\max\bigl\{\,\langle -\alpha\,,\,\mu_j\rangle:\;j=1,\ldots,r\,\bigr\}\in\mathbb{N}_0
\]
and let $\lambda=2N\rho$. For any $1\ne w\in W$ there exists a $\mu_j$ with $\langle\rho-w(\rho),\mu_j\rangle \neq 0$, i.~e.  $\langle\rho-w(\rho),\mu_j\rangle \geq 1/2$. Then
\[
\langle w(\lambda+\rho)-\lambda-\alpha-\rho\,,\,\mu_j\rangle\;\le\;(2N+1)\,\langle w(\rho)-\rho\,,\,\mu_j\rangle + \langle -\alpha\,,\,\mu_j\rangle\;<\;0\,,
\]
and this implies that $w(\lambda+\rho)-\lambda-\alpha-\rho\notin\BQ_{\ge0}\cdot R^+$ for all $w\ne1$. Note that for this choice of $\lambda$ we have $\lambda=(-\lambda)_\dom$.

We next use Kostant's character formula \cite[Corollary 5.83]{Knapp}. Note that although it is stated in loc.\ cit.\ for Lie algebra representations in characteristic zero, the formula also holds in our situation because it is a formal consequence (see \cite[proof of Corollary 5.83]{Knapp}) of Weyl's character formula \cite[Corollary II.5.10]{Jantzen}. Let $\mu$ be a weight and
\[
\CP(\mu)\;:=\;\#\bigl\{\,(n_\beta)_{\beta\in  R^+}:\;n_\beta\in\BN_{>0},\;\mu=\sum_{\beta\in R^+}n_\beta\cdot\beta\,\bigr\}\;=\;\dim_{\BF_q}(\BF_q[U])^{-\mu}\,,\]
the dimension of the $(-\mu)$-weight space of $\BF_q[U]$. If $\mu\notin\BQ_{\ge0}\cdot R^+$ then $\CP(\mu)=0$. Kostant's formula  says that the $\alpha$-weight space of $W_\lambda$ has dimension
\begin{align*}
\dim_{\BF_q}(W_\lambda)^\alpha\es&=\es\dim_{\BF_q}\bigl(\Ind^G_{\ol{B}}\lambda\bigr)^{\lambda+\alpha}\es=\es\sum_{w\in W}(-1)^{\ell(w)}\CP(w(\lambda+\rho)-\lambda-\alpha-\rho)
\intertext{which is for our choice of $\lambda$}
&=\es\CP(\lambda+\rho-\lambda-\alpha-\rho)\es=\es\dim_{\BF_q}(\BF_q[U])^\alpha\,.
\end{align*}
In particular, $W_\lambda$ generates the $\BF_q$-algebra $\BF_q[U_\alpha]$. Assume that $u\in\Lie U_\alpha$ lies in $\ker(\eta)$. Since $V(\lambda)$ is a $G$-submodule of a tensor power of $V$ (see the argument of Lemma~\ref{LemmaCritBounded}), $u$ acts trivially on $V(\lambda)$. As $\lambda=(-\lambda)_{\dom}$, it also acts trivially on $W_\lambda$. Therefore $u$ acts trivially on $\BF_q[U_\alpha]$. Since this operation is via left translation we must have $u=0$. This proves that $\eta$ is injective on $\Lie U_\alpha$ and finishes the proof.
\end{proof}

\begin{remark}
The corresponding map $\eta':G\to\GL(V')$ where $V'$ is a sum over $\Lambda$ of corresponding \emph{irreducible} highest weight representations is in general not an immersion. This already occurs for $G=\PGL_2$ in characteristic $2$, where one can choose $\Lambda$ to consist of the only positive root $\alpha$. The corresponding Weyl module $V(\alpha)$ is the dual of the adjoint representation. With respect to the decomposition $V(\alpha)=(\Lie T\oplus\Lie U_\alpha\oplus\Lie U_{-\alpha})\dual$ it is given by 
\[
\eta:\PGL_2\to\GL(V(\alpha))\;,\;g=\left(\begin{array}{cc}a&b\\c&d\end{array}\right)\longmapsto\left(\begin{array}{ccc}1&\frac{ac}{\det g}&\frac{bd}{\det g}\\[1mm]
0&\frac{a^2}{\det g}&\frac{b^2}{\det g}\\[1mm]
0&\frac{c^2}{\det g}&\frac{d^2}{\det g}\end{array}\right)\;.
\]
It is an extension of a two-dimensional irreducible representation $V'$ by the trivial representation, and the corresponding map $\eta':\PGL_2\rightarrow \GL(V')$ is not injective on tangent spaces.
\end{remark}

For the next proposition recall that by \cite[0$_{\rm I}$, Lemma 7.7.2]{EGA}, a linearly topologized $\BFZ$-algebra $R$ is admissible if $R=\invlim R_\alpha$ for a projective system $(R_\alpha,u_{\alpha\beta})$ of discrete rings such that the filtered index-poset has a smallest element $0$, all maps $R\to R_\alpha$ are surjective, and the kernels $I_\alpha:=\ker u_{\alpha,0}\subset R_\alpha$ are nilpotent.

\begin{proposition}\label{remdejong}
Let $R$ be an admissible $\BFZ$-algebra as above with filtered index-poset $\BN_0$. Then the pullback under the natural morphism $\Spf R\to \Spec R$ defines a bijection between local $G$-shtukas bounded by $\mu$ over $\Spec R$ and over $\Spf R$. 
\end{proposition}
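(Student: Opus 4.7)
\emph{Proof proposal.} The plan is to interpret a local $G$-shtuka over $\Spf R$ as a compatible system $(\ul\CG_n)_{n\in\BN_0}$ of local $G$-shtukas $\ul\CG_n=(\CG_n,\phi_n)$ over $\Spec R_n$ bounded by $\mu$, with transition isomorphisms $\ul\CG_{n+1}\times_{\Spec R_{n+1}}\Spec R_n\isoto\ul\CG_n$, and to construct an inverse to the restriction functor by trivializing everything over a single \'etale cover of $\Spec R$ and gluing via a projective-limit argument.

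By Proposition~\ref{PropGzTorsor}\,(c) the $K$-torsor $\CG_0$ is trivial on some \'etale covering $U_0\to\Spec R_0$. Since each $R_n\to R_0$ has nilpotent kernel $I_n$, topological invariance of the \'etale site lifts $U_0$ uniquely to compatible \'etale covers $U_n\to\Spec R_n$ with $U_n\times_{R_n}R_0=U_0$, and loc.~cit.~also furnishes compatible trivializations $\alpha_n:\CG_n|_{U_n}\isoto K_{U_n}$. To algebraize the system $(U_n)$, observe that admissibility makes $R=\invlim[n]R_n$ complete and separated in the topology with open ideals $J_n:=\ker(R\to R_n)$, and since $R/J_n=R_n$ with $I_n\subset R_n$ nilpotent, each $J_n$ lies in the Jacobson radical of $R$. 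Hence $(R,J_n)$ is a Henselian pair and \'etale $R$-algebras are equivalent to compatible systems of \'etale $R_n$-algebras, so $(U_n)$ algebraizes to a unique \'etale cover $U=\Spec A\to\Spec R$ with $A=\invlim[n]A_n$ and $A_n:=A\otimes_R R_n$.

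Under $\alpha_n$ the Frobenius $\phi_n$ corresponds to an element $b_n\in LG(U_n)$. Since $G$ is affine and both formation of power series and localization at $z-\zeta$ commute with projective limits,
\[
LG(U)\;=\;G\bigl(A\dbl z\dbr\bigl[{\TS\frac{1}{z-\zeta}}\bigr]\bigr)\;=\;\invlim[n]G\bigl(A_n\dbl z\dbr\bigl[{\TS\frac{1}{z-\zeta}}\bigr]\bigr)\;=\;\invlim[n]LG(U_n),
\]
so the $b_n$ glue to a single $b\in LG(U)$. Applying the analogous identity $K(U\times_{\Spec R}U)=\invlim[n]K(U_n\times_{R_n}U_n)$ to the cocycles in $K$ that underlie the $\CG_n$, the descent data assemble into a descent datum for a single $K$-torsor $\CG$ on $\Spec R$ trivialized on $U$, with Frobenius identified with $b\s$. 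This yields the desired local $G$-shtuka $\ul\CG=(\CG,\phi)$ on $\Spec R$ recovering each $\ul\CG_n$. Boundedness by $\mu$ transfers because by Lemma~\ref{LemmaBoundedIfQC}\,(a) it defines a finitely presented closed subscheme of $\Spec R$ containing every $\Spec R_n$, and $\bigcap_n J_n=0$ forces this subscheme to equal $\Spec R$. Fully faithfulness of the restriction functor is an immediate consequence of the same projective-limit formulae for sections of $LG$ and $K$ over common trivializing \'etale covers.

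The main obstacle is the algebraization step, i.e.~producing a single \'etale $R$-algebra from the compatible system $(U_n)$. It rests on the Henselianity of $(R,J_n)$, a consequence of completeness of $R$ in its admissible topology together with the nilpotency of each $I_n$ placing $J_n$ in the Jacobson radical.
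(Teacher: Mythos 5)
The proposal breaks down at its central gluing step, and this is precisely the point where the paper has to use the boundedness hypothesis. You assert that ``formation of power series and localization at $z-\zeta$ commute with projective limits'' and deduce
\[
LG(U)\;=\;G\bigl(A\dbl z\dbr\bigl[{\TS\frac{1}{z-\zeta}}\bigr]\bigr)\;=\;\invlim[n]G\bigl(A_n\dbl z\dbr\bigl[{\TS\frac{1}{z-\zeta}}\bigr]\bigr)\;=\;\invlim[n]LG(U_n)\,.
\]
The power-series part is fine ($A\dbl z\dbr=\invlim[n]A_n\dbl z\dbr$), but localization at $z-\zeta$ does \emph{not} commute with the projective limit: an element of $\invlim[n]A_n\dbl z\dbr[\frac{1}{z-\zeta}]$ is a compatible family $(x_n)_n$ with no uniform bound on the power of $z-\zeta$ needed to clear denominators, so there is only an inclusion $A\dbl z\dbr[\frac{1}{z-\zeta}]\hookrightarrow\invlim[n]A_n\dbl z\dbr[\frac{1}{z-\zeta}]$, not an equality. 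Consequently the compatible family $(b_n)_n$ need not come from a single $b\in LG(U)$, and your construction of the torsor over $\Spec R$ collapses. If the displayed equality were true, your argument would prove the bijection with \emph{no} boundedness hypothesis at all; but the paper's own remark after the proposition notes that without boundedness the pullback map is in general only injective, so such an unconditional argument cannot exist.

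The paper repairs exactly this: fix a closed immersion $\eta:G\hookrightarrow\GL(V)$ (Proposition~\ref{PropFaithfulRep}) and an $N$ with $N\ge\langle(-\lambda)_\dom,\mu\rangle$ for all $\lambda$ in a generating set. The boundedness of each $\ul\CG_n$ by $\mu$ forces $(z-\zeta)^N\eta(b_n)\in M_{\dim V}\bigl(A_n\dbl z\dbr\bigr)$ with $N$ \emph{independent of $n$}, so the system $\bigl((z-\zeta)^N\eta(b_n)\bigr)_n$ converges in $M_{\dim V}\bigl(A\dbl z\dbr\bigr)$ and one recovers $b\in G\bigl(A\dbl z\dbr[\frac{1}{z-\zeta}]\bigr)$. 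Your downstream boundedness check via Lemma~\ref{LemmaBoundedIfQC}\,(a) is placed too late: boundedness is needed to make the gluing converge, not merely to be verified once $b$ exists. The remainder of your outline (lifting the \'etale cover $U_0$ to $R$ and using $K(U)=\invlim K(U_n)$ for full faithfulness) is sound and matches the paper's steps, since $K$-valued points involve $A\dbl z\dbr$ without localizing.
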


\begin{remark} Without the boundedness by $\mu$ the pullback map is in general only injective. The corresponding result for $p$-divisible groups is shown by Messing in \cite[Lemma II.4.16]{Messing} and by de Jong in \cite[Lemma 2.4.4]{dJ95}.
\end{remark}

\begin{proof}[Proof of Proposition \ref{remdejong}]
A local $G$-shtuka over $\Spf R$ is by definition a projective system $(\ul\CG_n)_{n\in\BN_0}$ of local $G$-shtukas $\ul\CG_n$ over $R_n$ with $\ul\CG_{n-1}\cong\ul\CG_n\otimes_{R_n}R_{n-1}$. The pullback under $\Spf R\to\Spec R$ sends a local $G$-shtuka $\ul\CG$ to the projective system $(\ul\CG\otimes_RR_n)_n$. We describe the inverse map. By Proposition~\ref{PropGzTorsor} there is an \'etale $R_0$-algebra $A_0$ which trivializes $\CG_0$. By \cite[Th\'eor\`eme I.5.5]{SGA1} there is a unique \'etale $R$-algebra $A$ with $A\otimes_RR_0\cong A_0$. Set $A_n:=A\otimes_RR_n$. By Proposition~\ref{PropGzTorsor} we can simultaneously trivialize all $\CG_n$ over $A_n$. Thus $\ul\CG_n\otimes_{R_n}A_n\cong(K_{A_n},b_n\s)$ for $b_n\in LG(A_n)\,=\,G\bigl(A_n\dbl z\dbr[\frac{1}{z-\zeta}]\bigr)$ with $b_n\otimes_{A_n}A_{n-1}=b_{n-1}$. We must show that $b:=\invlim b_n$ exists as an element of $G\bigl(A\dbl z\dbr[\frac{1}{z-\zeta}]\bigr)$. Note that without the boundedness by $\mu$ this is in general false.

Consider the representation $\eta:G\to\GL(V)$ with $V$ being the $\mathbb{F}_q$-vector space $\bigoplus_{\lambda\in \Lambda}V(\lambda)$ from Proposition~\ref{PropFaithfulRep}. Let $N$ be a positive integer with $N\ge \langle(-\lambda)_\dom,\mu\rangle$ for all $\lambda\in\Lambda$. Then the boundedness by $\mu$ implies that $(z-\zeta)^N\cdot\eta(b_n)\in M_r\bigl(A_n\dbl z\dbr\bigr)$, that is the denominator of $b_n$ is bounded by $(z-\zeta)^N$ independently of $n$. Clearly the projective system $\bigl((z-\zeta)^N\cdot\eta(b_n)\bigr)_n$ has a limit in $M_r\bigl(A\dbl z\dbr\bigr)$. It is of the form $(z-\zeta)^N\cdot\eta(b)$ for a $b\in G\bigl(A\dbl z\dbr[\frac{1}{z-\zeta}]\bigr)$ by construction. The local $G$-shtuka $\bigl(K_{\Spec A},b\s\bigr)$ over $\Spec A$ inherits a descent datum from the $\ul\CG_n$. This gives us the desired local $G$-shtuka over $\Spec R$.
\end{proof}

%
%

\section{The general linear group} \label{SectG=GLr}
\setcounter{equation}{0}

In this section we consider the case $G=\GL_r$ and give the translation between $\GL_r$-torsors and locally free sheaves of finite rank. This special case has the advantage that the similarity to the theory of crystals is visible more clearly. 

Let $B\subset\GL_r$ be the Borel subgroup of upper triangular matrices and let $T$ be the torus of diagonal matrices. Then $X_*(T)\cong \mathbb{Z}^r$ with simple coroots $e_{i}-e_{i+1}$ for $i=1,\dotsc,r-1$. Also $X^*(T)\cong \mathbb{Z}^r$. Let $\lambda_{i}=(1,\dotsc,1,0,\dotsc,0)$ with multiplicities $i$ and $r-i$. The Weyl module $V(\lambda_1)=\bigl(\Ind_{\ol{B}}^{\GL_r}(-\lambda_1)_\dom\bigr)\dual$ of highest weight $\lambda_1$ is simply the standard representation of $\GL_r$ on the space of column vectors with $r$ rows, and $V(\lambda_i)=\wedge^i V(\lambda_1)$. For an $\BF_q$-scheme $S$ we have $K(S)=\GL_r\bigl(\Gamma(S,\CO_S)\dbl z\dbr\bigr)$. By Proposition~\ref{PropHilbert90} there is an equivalence between the category of $K$-torsors on $S$ and the category of sheaves of $\CO_S\dbl z\dbr$-modules which Zariski-locally on $S$ are free of rank $r$ with isomorphisms as the only morphisms. This equivalence sends $\CG$ to the sheaf $\CG_{\lambda_1}$ associated with the presheaf
\[
\Test\;\longmapsto\;\Bigl(\CG(\Test)\times\bigl(V(\lambda_1)\otimes_{\BF_q}\CO_S\dbl z\dbr(\Test)\bigr)\Bigr)\big/\GL_r\bigl(\Test\dbl z\dbr\bigr)\,;
\]
compare the discussion before Definition~\ref{DefBounded}.
On the sheaf $\CG_{\lambda_1}$ the group $\GL_r$ acts via the standard representation (which corresponds to $\lambda_1$ as above).
  
\begin{definition}\label{DefG.1}
A \emph{local shtuka} over $S\in\NilpF$ (of rank $r$) is a pair $(M,\phi)$ where $M$ is a sheaf of $\CO_S\dbl z\dbr$-modules on $S$ which Zariski-locally is free of rank $r$, together with an isomorphism of $\CO_S\dpl z\dpr$-modules $\phi:\s  M\otimes_{\CO_S\dbl z\dbr}\CO_S\dpl z\dpr\isoto M\otimes_{\CO_S\dbl z\dbr}\CO_S\dpl z\dpr$.\\
A \emph{quasi-isogeny} between local shtukas $(M,\phi)\to(M',\phi')$ is an isomorphism of $\CO_S\dpl z\dpr$-modules $f:M\otimes_{\CO_S\dbl z\dbr}\CO_S\dpl z\dpr\isoto M'\otimes_{\CO_S\dbl z\dbr}\CO_S\dpl z\dpr$ with $\phi'\s (f)=f\phi$.
\end{definition}

\begin{lemma}\label{LemmaG.2}
Under the functor $\CG\mapsto\CG_{\lambda_1}=:M$ the category of local $\GL_r$-shtukas over $S$ with quasi-isogenies as morphisms is equivalent to the category of local shtukas over $S$ with quasi-isogenies as morphisms.\qed
\end{lemma}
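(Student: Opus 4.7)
The plan is to reduce the equivalence to the ordinary sheaf--torsor correspondence of Proposition~\ref{PropHilbert90}, extend it to the loop level, and then check that this extension carries the Frobenius structure $\phi$ and the quasi-isogeny morphisms to the right data on the module side.

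First I would observe that Proposition~\ref{PropHilbert90} already yields an equivalence of groupoids (morphisms being automatically isomorphisms on both sides) between $K$-torsors $\CG$ on $S$ and Zariski-locally free $\CO_S\dbl z\dbr$-modules $M$ of rank $r$, via $\CG\mapsto \CG_{\lambda_1}$. The construction $\CG\mapsto\CG_{\lambda_1}$ was defined before Definition~\ref{DefBounded} as the sheaf associated with a presheaf that uses the standard representation $V(\lambda_1)\otimes\CO_S\dbl z\dbr$; this construction commutes both with the extension of scalars $\CO_S\dbl z\dbr\hookrightarrow\CO_S\dpl z\dpr$ (which on the torsor side is induced by $K\hookrightarrow LG$) and with the Frobenius pullback $\s$. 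Consequently, under the equivalence the associated $LG$-torsor $\CL\CG$ is matched with $M\otimes_{\CO_S\dbl z\dbr}\CO_S\dpl z\dpr$, and the twist $\s\CL\CG$ with $\s M\otimes_{\CO_S\dbl z\dbr}\CO_S\dpl z\dpr$.

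Next I would establish the analog of Proposition~\ref{PropHilbert90} for $LG$-torsors that arise by extension of scalars from $K$-torsors: morphisms of such $LG$-torsors (which are in particular all isomorphisms) correspond bijectively to isomorphisms of the associated locally free rank-$r$ $\CO_S\dpl z\dpr$-modules. Given two trivializations over a common \fpqc-cover $S'\to S$, an $LG$-torsor morphism is encoded by an element of $LG(S')$ satisfying a cocycle-compatibility condition, and the same element encodes the corresponding isomorphism of $\CO_S\dpl z\dpr^{\oplus r}$; alternatively one may deduce this directly from the $K$-torsor case by clearing denominators (multiplying by a sufficiently high power of $z$) to reduce to working with $K$-torsors on an \fpqc-cover and invoking the $K$-level statement.

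With both ingredients in hand the equivalence is formal: a local $\GL_r$-shtuka $(\CG,\phi)$ consists of a $K$-torsor $\CG$ together with an isomorphism of the associated $LG$-torsors $\phi:\s\CL\CG\isoto\CL\CG$, which under the correspondence becomes exactly the datum of a Zariski-locally free rank-$r$ $\CO_S\dbl z\dbr$-module $M$ together with an isomorphism $\phi:\s M\otimes\CO_S\dpl z\dpr\isoto M\otimes\CO_S\dpl z\dpr$ of $\CO_S\dpl z\dpr$-modules, i.e.\ a local shtuka in the sense of Definition~\ref{DefG.1}. A quasi-isogeny $(\CG,\phi)\to(\CG',\phi')$ in the sense of Definition~\ref{DefQIsog} is an isomorphism $f:\CL\CG\isoto\CL\CG'$ of $LG$-torsors with $\phi'\s(f)=f\phi$, which under the correspondence is precisely an isomorphism $f:M\otimes\CO_S\dpl z\dpr\isoto M'\otimes\CO_S\dpl z\dpr$ satisfying the same relation, i.e.\ a quasi-isogeny of local shtukas. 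The main obstacle in this plan is the $LG$-version of Proposition~\ref{PropHilbert90} (the second step above), since $LG$ is an ind-scheme rather than an affine group scheme, but as indicated one can bypass any descent difficulty by multiplying by a large power of $z$ and reducing to the already-established $K$-torsor case.
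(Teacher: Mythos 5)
Your proof is correct and fills in precisely the details the paper leaves implicit (the lemma is stated with an immediate \qed, so the authors treat it as a direct consequence of Proposition~\ref{PropHilbert90} and the discussion preceding Definition~\ref{DefBounded}, where it is already explained that an isomorphism $\delta:\CL\CH\isoto\CL\CG$ of $LG$-torsors induces an isomorphism $\CH_\lambda\otimes\CO_S\dpl z\dpr\isoto\CG_\lambda\otimes\CO_S\dpl z\dpr$, given locally by the element $\Delta\in LG(S')$). Your second step is essentially that observation restated, so the approach matches the paper's intended one.
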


Let $\ulM=(M,\phi)$ be a local shtuka over $S\in\NilpF$ and let $s:\Spec L\to S$ be a point with $L$ a field. By the elementary divisor theorem (i.e.\ the Cartan decomposition for $G=\GL_r$) there are $L\dbl z\dbr$-bases $m_1,\ldots,m_r$ of $M_s$ and $n_1,\ldots,n_r$ of $\s  M_s$ with $\phi(n_i)=z^{-e_i}\!\cdot\! m_i$ and $e_i=e_i(\ulM;s)$. The elementary divisors $e_1\ge\ldots\ge e_r$ are called the \emph{Hodge weights} of $\ulM_s$. The decreasing ordering $e_1\ge\ldots\ge e_r$ corresponds to the choice that the Borel subgroup $B$ is the group of upper triangular matrices and that we want $(e_1,\ldots,e_r)$ to be dominant. The vector $\mu=(e_1,\dotsc,e_r)$ is also called the \emph{Hodge polygon} of $\ulM_s$. This name comes from the fact that one often considers the polygon associated to $\mu$ which is the graph of the piecewise linear continuous function $[0,r]\rightarrow \mathbb{R}$ with $0\mapsto 0$ and slope $e_i$ on $[i-1,i]$. Note that we chose the opposite ordering $e_1\ge\ldots\ge e_r$ than Katz~\cite{Katz} does and therefore the shape of our polygons is opposite to the one of Katz. 

The second coordinate $e_1+\ldots+e_r$ of the endpoint of the polygon associated to $\mu$ equals the valuation $\ord_z(\det\phi)$ of the determinant of $\phi$ with respect to any $L\dbl z\dbr$-bases of $M_s$ and $\s  M_s$. One easily checks that it is a locally constant function on $S$. In view of $\pi_1(\GL_r)\cong\BZ$ and $[\mu_\ulM(s)]=\ord_z(\det s^\ast\phi)$ this gives a simple proof of Proposition~\ref{PropHPLocConst} for $G=\GL_r$. 

The ordering on $X_*(T)$ can also be visualized using the associated polygons: $\mu'\preceq\mu$ if and only if the polygon associated to $\mu'$ lies below the polygon associated to $\mu$ and both polygons have the same endpoint. Writing $\mu'=(d_i)$ and $\mu=(e_i)$ this is equivalent to $e_1+\dotsc+e_i\geq d_1+\ldots+d_i$ for all $1\le i\le r$ with equality for $i=r$.

\begin{lemma}\label{LemmaG.3}
Let $d_1\ge\ldots\ge d_r$ be integers. Then a local $\GL_r$-shtuka $\ul\CG$ is bounded by $\mu=(d_1,\ldots,d_r)\in\BZ^r=X_\ast(T)$ if and only if its associated local shtuka $(M,\phi)$ satisfies
\[
\phi\bigl(\wedge^i\s  M\bigr)\;\subset\;(z-\zeta)^{d_{r-i+1}+\ldots+d_r}\wedge^i M\qquad\text{for }1\le i\le r\text{ with equality for } i=r\,. 
\]
In this case $\coker\bigl(\phi:\s M\to (z-\zeta)^{d_r}M\bigr)$ is a locally free sheaf of $\CO_S$-modules of finite rank on $S$.
\end{lemma}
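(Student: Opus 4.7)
The plan is to unpack Definition~\ref{DefBounded} for $G=\GL_r$ by applying Lemma~\ref{LemmaCritBounded} with an explicit finite generating system of the monoid $X^\ast(T)_+$. For $\GL_r$ I would take the generators $\lambda_1,\dots,\lambda_{r-1},\lambda_r,-\lambda_r$; the first $r-1$ together with $\pm\lambda_r$ suffice because any dominant weight $(a_1,\dots,a_r)$ with $a_1\ge\dots\ge a_r$ decomposes as $a_r\lambda_r+\sum_{i<r}(a_i-a_{i+1})\lambda_i$ with nonnegative coefficients on $\lambda_1,\dots,\lambda_{r-1}$. Under the equivalence of Lemma~\ref{LemmaG.2}, $\CG_{\lambda_i}$ is identified with $\wedge^iM$ for $1\le i\le r$ (since $V(\lambda_i)=\wedge^iV(\lambda_1)$), and $\CG_{-\lambda_r}$ with $(\wedge^rM)\dual$.

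Next I would compute $(-\lambda_i)_\dom$. Writing $\lambda_i=(1,\dots,1,0,\dots,0)$ with $i$ leading ones, the dominant (non-increasing) representative of $-\lambda_i$ is $(0,\dots,0,-1,\dots,-1)$ with $-1$ in the last $i$ slots, so $\langle(-\lambda_i)_\dom,\mu\rangle=-(d_{r-i+1}+\dots+d_r)$. Substituting into condition~\eqref{EqBounded1} with $\tilde z=z-\zeta$ for $\lambda=\lambda_i$ gives precisely $\phi(\wedge^i\s M)\subset(z-\zeta)^{d_{r-i+1}+\dots+d_r}\wedge^iM$. Applying the same formula to $\lambda=-\lambda_r$ (where $(-(-\lambda_r))_\dom=\lambda_r$) gives the reverse inclusion on the line bundle $\wedge^rM$, and combined with the $\lambda_r$-inclusion this forces equality at $i=r$. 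Finally \eqref{EqBounded2} for $\pi_1(\GL_r)=\BZ$ (via the determinant) amounts to fixing $\ord_z(\det\phi)=d_1+\dots+d_r$ on each geometric point, and this is implied by the equality at $i=r$; conversely, \eqref{EqBounded2} together with the $\lambda_r$-inclusion upgrades it to equality. This closes the equivalence in both directions.

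For the cokernel assertion, the case $i=1$ of the inclusion says $\phi(\s M)\subset(z-\zeta)^{d_r}M$, so $\phi$ factors as a map of $\CO_S\dbl z\dbr$-modules $\s M\to(z-\zeta)^{d_r}M$. Working Zariski-locally on $S$ where both sides are free, write $\phi=(z-\zeta)^{d_r}A'$ with $A'\in M_r(\CO_S\dbl z\dbr)$; the equality at $i=r$ becomes $\det A'=u(z-\zeta)^N$ with $N:=\sum_i(d_i-d_r)$ and $u$ a unit. The adjugate identity $A'\cdot\mathrm{adj}(A')=u(z-\zeta)^N\cdot I_r$ shows that the cokernel $C$ is annihilated by $(z-\zeta)^N$, hence is a quotient of $\bigl(\CO_S\dbl z\dbr/(z-\zeta)^N\bigr)^r$. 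Since $(z-\zeta)^N$ is a monic polynomial in $z$ with leading coefficient $1$, Weierstrass division gives $\CO_S\dbl z\dbr/(z-\zeta)^N\cong\CO_S^{\oplus N}$ as $\CO_S$-modules, so $C$ is finitely generated over $\CO_S$. On each geometric fibre $s:\Spec\kappa\to S$ the nilpotent $\zeta$ becomes $0$, and Smith normal form over the discrete valuation ring $\kappa\dbl z\dbr$ shows $\dim_\kappa C\otimes\kappa=\ord_z\det A'=N$. The adjugate identity also gives injectivity of $A'\otimes_{\CO_S}M$ for any $\CO_S$-module $M$, from which, together with $\CO_S$-flatness of $\CO_S\dbl z\dbr$, one concludes that $C$ is $\CO_S$-flat. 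Finitely generated, flat, and of constant fibre rank $N$ yields locally free of rank $N$.

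The main technical obstacle is the last step: verifying $\CO_S$-flatness of $C$ in sufficient generality (without Noetherian hypotheses). The idea is to exploit that $(z-\zeta)^N$ is a distinguished Weierstrass polynomial, so the problem reduces to a $\Tor$-computation with the finite free quotient $\CO_S\dbl z\dbr/(z-\zeta)^N$, where the adjugate supplies the needed injectivity after base change along any $\CO_S$-module. The rest of the argument is bookkeeping under the dictionary of Lemma~\ref{LemmaG.2}.
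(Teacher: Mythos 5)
Your proof is correct, and the first half (the boundedness equivalence) follows the same route as the paper: apply Lemma~\ref{LemmaCritBounded} to the generating system $\lambda_1,\dots,\lambda_r,-\lambda_r$, use $V(\lambda_i)=\wedge^iV(\lambda_1)$ to translate~\eqref{EqBounded1} into the stated inclusions, and use the one-dimensional module $V(\pm\lambda_r)$ to produce the equality at $i=r$ (the paper phrases this as $V(\lambda_r)=V(-\lambda_r)\dual$; your observation that~\eqref{EqBounded2} is then automatic from this equality is implicit there too). For the cokernel, your argument is correct but takes a somewhat different and more self-contained path. The paper writes down the $\Tor_1$ long exact sequence fibrewise, shows $s^\ast\phi$ is injective at each geometric point because its determinant is $z^{d_1+\cdots+d_r}$ times a unit, and then invokes the local criterion of flatness together with finite presentation (citing Eisenbud). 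You instead use the adjugate identity $\mathrm{adj}(A')\cdot A'=u(z-\zeta)^N I_r$ to establish injectivity of $A'\otimes_{\CO_S}M$ for \emph{every} $\CO_S$-module $M$ at once, and Weierstrass division against the distinguished polynomial $(z-\zeta)^N$ to get a split short exact sequence of $\CO_S$-modules $0\to\CO_S\dbl z\dbr\to\CO_S\dbl z\dbr\to\CO_S^{\oplus N}\to 0$, which both supplies the finite generation over $\CO_S$ and makes the universal $\Tor_1$ vanishing transparent. What your version buys is that it bypasses the Noetherian local criterion entirely and makes the flatness verification purely elementary; what the paper's version buys is brevity. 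Both are legitimate; yours is arguably the more careful of the two on the flatness step.
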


\begin{proof}
To prove the first assertion we use Lemma~\ref{LemmaCritBounded}. The dominant weights $\lambda_i=(1,\ldots,1,0,\ldots,0)$ with $1$ repeated $i$ times for $1\le i\le r$, together with $-\lambda_r$ generate the monoid $X^\ast(T)_+$ of dominant weights of $\GL_r$. The Weyl module $V(\lambda_i)$ associated with $\lambda_i$ is the $i$-th exterior power of the standard representation $V(\lambda_1)$. The Weyl module $V(\lambda_r)$ is one dimensional and therefore equal to $V(-\lambda_r)\dual$; cf.\ \cite[Remark 3, p.\ 177]{Jantzen}. This is responsible for the equality for $i=r$.

To prove that $\coker\phi$ is locally free note that $\phi:\s  M\to (z-\zeta)^{d_r}M$ is injective. Let $s:\Spec\kappa(s)\to S$ be a point of $S$ and consider the sequence
\[
0\;\longto\;\Tor_1^{\CO_S}(\kappa(s),\coker\phi)\;\longto\;s^\ast\s  M\;\xrightarrow{\es s^\ast\phi\;}\;s^\ast (z-\zeta)^{d_r}M\;\longto\;s^\ast\coker\phi\;\longto\;0\,.
\]
As $M$ is locally free we have $s^\ast\s M\cong \kappa(s)\dbl z\dbr^{\oplus r}\cong s^\ast (z-\zeta)^{d_r}M$. The boundedness condition says for $i=r$ that $\det(s^\ast\phi)$ equals $z^{d_1+\ldots+d_r}$ times a unit. Therefore the map $s^\ast\phi$ is injective. So $\coker\phi$ is finitely presented by construction and flat over $S$ by Nakayama's Lemma; e.g.\ \cite[Exercise~6.2]{Eisenbud}, hence locally free of finite rank.
\end{proof}

\begin{remark} Over a field $S=s=\Spec k$ the elementary divisor theorem (or Lemma~\ref{LemmaHPBounded}) tells us that $\ulM$ is bounded by $\mu=(d_i)$ if and only if $e_1(\ulM;s)+\ldots+e_i(\ulM;s)\le d_1+\ldots+d_i$ for all $1\le i\le r$ with equality for $i=r$, that is the Hodge polygon $\mu_\ulM(s):=\bigl(e_i(\ulM;s)\bigr)$ satisfies $\mu_\ulM(s)\preceq\mu$.
\end{remark}

\begin{example}\label{exend}
Consider $\mu=(d,0,\ldots,0)$ for some $d\geq 0$. Then a local $\GL_r$-shtuka $\ul\CG$ over $S$ is bounded by $\mu$ if and only if its associated local shtuka $(M,\phi)$ satisfies that $\phi:\s  M\to M$ is a true morphism of $\CO_S\dbl z\dbr$-modules and $\coker\phi$ is locally free of rank $d$ on $S$.
\end{example}

%
%

\section{Deformation theory of local $G$-shtukas} \label{SectDeformTh}
\setcounter{equation}{0}

Before we treat deformations let us return to the affine Grassmannian $\X$. Consider the formal scheme $\Spf \BFZ$ as the ind-scheme $\dirlim\Spec \mathbb{F}_q[\zeta]/(\zeta^{n})$ and let $\wh \X$ be the fiber product $\X\times_{\Spec \mathbb{F}_q}\Spf \BFZ$ in the category of ind-schemes (\cite[7.11.1]{BeilinsonDrinfeld}). Thus $\wh \X$ is the restriction of the sheaf $\X$ to the $\fppf$-site of schemes in $\NilpF$. We can also think of $\wh \X$ as the formal completion of $\X\times_{\Spec \mathbb{F}_q}\Spec \BFZ$ along the special fiber $\Var(\zeta)\subset \X\times_{\Spec \mathbb{F}_q}\Spec \BFZ$. 

The following proposition is a variant of \cite[Proposition 3.10]{LaszloSorger}. Namely loc. cit. treats the case where $\ul\CG$ is a $G$-torsor over $X\times_{\BF_q}S$ for a smooth projective connected curve, and $\delta$ is a trivialization outside a single point $p\in X$. Our proposition is the infinitesimal variant and the argument of loc. cit. carries over literally. We will give a detailed proof of this fact in a slightly modified situation in Theorem~\ref{ThmModuliSpX}.

\begin{proposition} \label{Prop4.1}
The ind-scheme $\wh \X$ pro-represents the functor $\bigl(\NilpF\bigr)^o \longto \Sets$
\begin{eqnarray*}
S&\longmapsto & \Bigl\{\,\text{Isomorphism classes of pairs }(\CG,\delta)\text{ where} \\
&&\qquad \CG \text{ is a $K$-torsor on $S$ and} \\
&&\qquad \delta:\CL\CG\isoto LG_S\text{ is an isomorphism of the associated $LG$-torsors}\,\Bigr\}\,.
\end{eqnarray*}
Here $(\CG,\delta)$ and $(\CG',\delta)$ are isomorphic if $\delta^{-1}\circ\delta'$ is an isomorphism $\CG'\to\CG$.
\end{proposition}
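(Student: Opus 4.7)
The strategy is to build a natural transformation from $\wh\X$ to the functor $F$ described in the statement and then verify that it is an isomorphism of sheaves on $\NilpF$, using the identification $\X = LG/K$ as an fppf sheaf together with the étale-local triviality of $K$-torsors from Proposition~\ref{PropGzTorsor}.

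\emph{Construction.} Since $\X$ is by definition the fppf quotient $LG/K$, an element of $\wh\X(S)$ for $S \in \NilpF$ is given locally on an fppf cover $S' \to S$ by some $g \in LG(S')$ whose two pullbacks satisfy $(p_1^*g)^{-1}\,p_2^*g \in K(S'')$ for $S'' = S'\times_S S'$. To such a datum attach the pair $(K_{S'}, \delta_g)$, where $\delta_g \colon \CL K_{S'} = LG_{S'} \to LG_{S'}$ is left multiplication by $g$, which is manifestly a morphism of right $LG$-torsors. The cocycle relation above gives descent data, producing a pair $(\CG, \delta) \in F(S)$. This defines a natural transformation $\wh\X \to F$.

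\emph{Verification that this is an isomorphism.} The functor $F$ is an fppf sheaf because $K$-torsors form an fppf stack (by Proposition~\ref{PropGzTorsor} even an étale stack) and the trivializations $\delta$ descend tautologically. Hence it suffices to verify local surjectivity and injectivity. For surjectivity, given $(\CG,\delta) \in F(S)$, Proposition~\ref{PropGzTorsor} yields an étale cover $S' \to S$ and a trivialization $\alpha \colon \CG_{S'} \isoto K_{S'}$; the composition $\delta_{S'} \circ \CL(\alpha)^{-1}$ is a right-$LG$-equivariant automorphism of $LG_{S'}$, hence equal to $\delta_g$ for a unique $g \in LG(S')$, and the required cocycle condition follows from the fact that $\CG$ descends. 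For injectivity, two elements $g, g' \in LG(S')$ produce isomorphic pairs $(K_{S'}, \delta_g) \cong (K_{S'}, \delta_{g'})$ exactly when $\delta_g^{-1}\circ\delta_{g'}$, i.e.\ left multiplication by $g^{-1}g'$, comes from a right-$K$-equivariant automorphism of $K_{S'}$; since every such automorphism is itself left multiplication by an element of $K(S')$, this happens iff $g^{-1}g' \in K(S')$, iff $g$ and $g'$ have equal image in $(LG/K)(S') = \X(S')$.

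\emph{Formal completion and main obstacle.} Because $\zeta$ is locally nilpotent on any $S \in \NilpF$, any $\BF_q$-morphism $S \to \X$ factors uniquely through $\wh\X = \X \times_{\Spec \BF_q} \Spf \BFZ$, and ind-pro-representability is inherited from the ind-scheme structure of $\X$ over $\BF_q$ (recalled at the start of Section~\ref{SectLocGShtuka}). The main subtlety I expect is keeping the two descents aligned, namely the tautological descent presenting a point of $LG/K$ and the descent producing $\CG$ from the trivialization $\alpha$ with transition cocycle, and in particular verifying that the left-multiplication description of right-$LG$-equivariant isomorphisms is consistent in both directions; once Proposition~\ref{PropGzTorsor} is invoked to work in the étale topology rather than only the fppf topology, the comparison is essentially formal and no further loop-group input is required.
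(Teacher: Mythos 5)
Your argument is correct and follows the same strategy the paper intends: the paper defers to Laszlo--Sorger's Proposition 3.10 and to its own Theorem~\ref{ThmModuliSpX} for the details, and that proof proceeds exactly along your lines --- \'etale-local triviality of $K$-torsors from Proposition~\ref{PropGzTorsor}, local representation of $\delta$ by left multiplication by an element of $LG(S')$, and descent via a cocycle $h\in K(S'')$. The well-definedness point you flag as ``essentially formal'' (independence of the chosen representative $g\in LG(S')$, replaced by $gk$ for $k\in K(S')$) is spelled out explicitly in the proof of Theorem~\ref{ThmModuliSpX}, but it is exactly the routine check you describe.
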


\begin{definition} 
We write $\wh \X=\dirlim X_n$ for some quasi-compact schemes $X_n\in\NilpF$. Let either $\tilde z=z-\zeta$ or $\tilde z=z$. Then the condition that the inverse $\gamma:=\delta^{-1}$ of the universal isomorphism $\delta:\CL\CG_{X_n}\isoto LG_{X_n}$ from Proposition~\ref{Prop4.1} is bounded by $(\mu,\tilde z)$ is represented by a closed subscheme of $X_n$ by Lemma~\ref{LemmaBoundedIfQC}. The inductive limit of these closed subschemes defines a closed ind-subscheme of $\wh \X$, which we call $\wh\Gr^{\preceq(\mu,\tilde z)}$. The fibers over $\zeta=0$ of $\wh\Gr^{\preceq(\mu,z-\zeta)}$ and $\wh\Gr^{\preceq(\mu,z)}$ coincide as ind-subschemes of $\Gr$.
\end{definition}

To describe $\wh\Gr^{\preceq(\mu,z-\zeta)}$ and $\wh\Gr^{\preceq(\mu,z)}$ recall that the Cartan decomposition defines a stratification of $\X$ by the Schubert cells $\Gr^\mu:=Kz^{(-\mu)_\dom}K/K$ for dominant coweights $\mu$. Note that our unusual definition of $\Gr^\mu$ is motivated by Proposition~\ref{PropProSchubertCell} and Remark \ref{remavoid} below. The following result is well-known.

\begin{proposition}\label{PropSchubertCell}
\begin{enumerate}
\item 
The Schubert cell $\Gr^\mu$ is a locally closed subscheme of $\X$ which is a quasi-projective scheme over $\BF_q$.
\item 
The closure of $\Gr^\mu$ in $\X$ equals the union $\Gr^{\preceq\mu}:=\bigcup_{\mu'\preceq\mu}\Gr^{\mu'}$. The latter is a projective scheme over $\BF_q$ and $\Gr^\mu$ is open in $\Gr^{\preceq\mu}$.
\item 
Both $\Gr^\mu$ and $\Gr^{\preceq\mu}$ are irreducible and of dimension $\langle 2\rho,\mu\rangle$ where $2\rho$ is the sum of all positive roots of $G$.
\end{enumerate}
\end{proposition}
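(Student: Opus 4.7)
My plan is to reduce each of the three assertions to the classical theory of Schubert varieties in the affine Grassmannian, as developed in the references \cite{BeilinsonDrinfeld}, \cite{BeauvilleLaszlo}, \cite{Faltings03}, \cite{Ngo-Polo}, \cite{PR2}. The only genuinely new ingredient is keeping track of the convention $\Gr^\mu=Kz^{(-\mu)_\dom}K/K$; but since $\mu\mapsto(-\mu)_\dom$ is an involution on the set of dominant coweights that preserves the Bruhat order $\preceq$, every classical statement transports verbatim.

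For (a), I would realize $\Gr^\mu$ as the $K$-orbit through the $\BF_q$-point $z^{(-\mu)_\dom}K\in\X(\BF_q)$ under left multiplication. Although $K$ is of infinite type, the stabilizer of this point contains a congruence subgroup $K^{(N)}\subset K$ for $N$ sufficiently large (depending only on $\mu$), so the orbit map factors through the finite-type quotient $K_N:=K/K^{(N)}$. The image is then the $K_N$-orbit of $z^{(-\mu)_\dom}K$ in a finite-dimensional piece of $\X$, which by general orbit geometry for actions of affine algebraic groups is locally closed, smooth and quasi-projective.

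For (b), projectivity of $\Gr^{\preceq\mu}$ is the statement that the closure is of finite type and proper; this follows because everything happens inside one of the finite-dimensional projective ind-pieces of $\X$ that appears in the presentation $\X=\dirlim X_n$. The identification of the closure with the union $\bigcup_{\mu'\preceq\mu}\Gr^{\mu'}$ is the essential classical content: the Bruhat order on the affine Weyl group restricts on the set of minimal-length representatives (i.e.~dominant coweights) to $\preceq$, and $\overline{K\ol{w}K/K}=\bigsqcup_{w'\le w}Kw'K/K$. Openness of $\Gr^\mu$ in $\Gr^{\preceq\mu}$ then comes for free since a $K_N$-orbit is always open in its closure.

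For (c), irreducibility of $\Gr^\mu$ follows from that of $K_N$ via the surjective orbit map, and $\Gr^{\preceq\mu}$ inherits irreducibility from its open dense subset $\Gr^\mu$. The dimension computation is local at the base point: the stabilizer of $z^{(-\mu)_\dom}K$ in $K_N$ is (a translate of) a parabolic-type subgroup built from the root subgroups $U_\alpha(\CO)$ with $\langle\alpha,(-\mu)_\dom\rangle\ge 0$, and a standard count using the decomposition of $\Lie K_N$ into root spaces gives codimension $\langle 2\rho,\mu\rangle$, matching the classical formula. The main potential obstacle is the compatibility of $\preceq$ with closure relations; this is however exactly the content of \cite[\S4.5]{BeilinsonDrinfeld} and \cite{Faltings03}, so I do not expect to need any new argument beyond a careful citation.
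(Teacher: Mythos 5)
Your proposal reaches the same conclusions but by a noticeably different route than the paper. The paper's proof is essentially a citation argument: after observing that $(-\mu)_\dom = w_0(-\mu)$ (so the involution $\mu\mapsto(-\mu)_\dom$ preserves $\preceq$ and fixes $\langle 2\rho,\cdot\rangle$), it simply quotes \cite[Lemma 2.2]{Ngo-Polo} or \cite[4.5.8, 4.5.12]{BeilinsonDrinfeld} for parts (a), (c), the closure description in (b), and openness. The only thing the paper proves from scratch is the \emph{projectivity} of $\Gr^{\preceq\mu}$, and it does so via a small but non-trivial chain: $K$ is affine, hence quasi-compact, hence $\Gr^\mu$ and the finite union $\Gr^{\preceq\mu}$ are quasi-compact; then Lemma~\ref{LemmaQCFactors} is used to show that a quasi-compact scheme mapping to the ind-scheme $\X=\dirlim X_n$ must factor through some $X_n$ (this is not automatic, because $U\mapsto\dirlim X_n(U)$ is only a presheaf), and $\Gr^{\preceq\mu}$ closed in a projective $X_n$ is projective. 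You, by contrast, reconstruct the classical arguments directly: the orbit map through a finite-type quotient $K_N$ for (a), irreducibility and the root-space codimension count for (c), and you only cite for the closure/Bruhat-order statement in (b).

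The one place your proposal is weaker than the paper's is exactly the step the paper chose to make explicit. You assert that projectivity ``follows because everything happens inside one of the finite-dimensional projective ind-pieces of $\X$,'' but that is precisely what needs justification in an ind-scheme, and the paper supplies Lemma~\ref{LemmaQCFactors} for it. Your own framework gives you the ingredients to fill this in (the $K_N$-orbit construction places $\Gr^\mu$ inside a single $X_n$, and finiteness of $\{\mu'\preceq\mu\}$ then handles $\Gr^{\preceq\mu}$), but as written the claim is left unjustified. If you spell out that factorization argument, your more computational approach is a legitimate alternative to the paper's cite-and-patch strategy; it buys you more transparency and less dependence on external references, at the cost of re-verifying known facts about the affine Grassmannian over $\BF_q$ rather than $\BC$.
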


\begin{proof}
Note that $(-\mu)_{\dom}=w_0(-\mu)$ where $w_0$ is the longest element of the Weyl group. Thus $\mu'\preceq\mu$ if and only if $(-\mu')_{\dom}\preceq (-\mu)_{\dom}$. Besides, $\langle 2\rho,(-\mu)_{\dom}\rangle=\langle 2\rho,\mu\rangle$. Now the assertion follows from \cite[Lemma 2.2 and the remarks thereafter]{Ngo-Polo} or \cite[4.5.8, 4.5.12]{BeilinsonDrinfeld}, except for the projectivity of $\Gr^{\preceq\mu}$ which we now prove. Since $K$ is an affine scheme, it and $\Gr^\mu$ are quasi-compact. Since there are only finitely many $\mu'\preceq\mu$, also $\Gr^{\preceq\mu}$ is quasi-compact and then its projectivity follows from the facts that $\X$ is ind-projective (that is $\X=\dirlim X_n$ for projective $\BF_q$-schemes $X_n$; see \cite[Proof of 4.5.1(iv) and 7.11.2(iii)]{BeilinsonDrinfeld}), that $\Gr^{\preceq\mu}$ is closed in $\X$, and the following Lemma~\ref{LemmaQCFactors}. Note that strictly speaking the results in \cite[\S4.5]{BeilinsonDrinfeld} are proved only over the base field $\BC$ but their proofs also work over $\BF_q$.
\end{proof}

\begin{lemma}\label{LemmaQCFactors} Let $X$ be an ind-scheme over a base scheme $S$, cf.\ \cite[7.11.1]{BeilinsonDrinfeld}, that is $X$ is a sheaf of sets for the \fppf-topology on $S$, which can be represented as the limit $\dirlim X_\alpha$ of an inductive system of quasi-compact $S$-schemes $X_\alpha$ and closed immersions $X_\alpha\hookrightarrow X_\beta$ for $\alpha\le\beta$ in a directed set. If $U$ is a quasi-compact $S$-scheme, then any $S$-morphism $f:U\to X$, that is section $f\in X(U)$, factors through some $X_{\alpha_0}$.
\end{lemma}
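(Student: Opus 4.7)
The strategy is to exploit the description of the ind-scheme $X$ as an \fppf-sheafification of the filtered colimit presheaf together with the monomorphism property of the closed immersions $X_\alpha\hookrightarrow X_\beta$. The key input from the outside is Grothendieck's theorem that any representable functor is a sheaf for the \fppf-topology, so each $X_\alpha$ is itself an \fppf-sheaf and supports descent.

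First I would unpack what the section $f\in X(U)$ means. Since $X=\dirlim_\alpha X_\alpha$ in \fppf-sheaves and filtered colimits of presheaves are computed sectionwise, an element of the presheaf colimit on $V$ is just an element of some $X_\alpha(V)$. After \fppf-sheafification, $f$ is therefore represented by an \fppf-cover $\{U_i\to U\}_{i\in I}$ together with morphisms $g_i:U_i\to X_{\alpha_i}$ whose images in $X(U_i)$ are the restrictions $f|_{U_i}$, and such that on each overlap $U_{ij}:=U_i\times_UU_j$ the two restrictions $g_i|_{U_{ij}}$ and $g_j|_{U_{ij}}$ become equal after passage to some common $X_{\alpha_{ij}}$ with $\alpha_{ij}\geq\alpha_i,\alpha_j$.

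Next I would use the quasi-compactness of $U$: any \fppf-cover of a quasi-compact scheme can be refined to a finite \fppf-cover, so after refinement only finitely many indices $\alpha_i$ and $\alpha_{ij}$ occur. By directedness, choose $\alpha_0$ dominating all of them. Composing the $g_i$ with the closed immersions $X_{\alpha_i}\hookrightarrow X_{\alpha_0}$ and using that the chain of transition maps is compatible with the colimit, I obtain morphisms $g_i':U_i\to X_{\alpha_0}$, and the compatibility on $U_{ij}$ that originally lived in $X_{\alpha_{ij}}$ pushes forward to an honest equality $g_i'|_{U_{ij}}=g_j'|_{U_{ij}}$ in $X_{\alpha_0}(U_{ij})$. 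This is exactly an \fppf-descent datum for the scheme $X_{\alpha_0}$, which descends to a morphism $g:U\to X_{\alpha_0}$. The composition $U\xrightarrow{g}X_{\alpha_0}\hookrightarrow X$ agrees with $f$ after pullback to the cover $\{U_i\to U\}$, hence equals $f$ since $X$ is a sheaf. Thus $f$ factors through $X_{\alpha_0}$.

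The main technical point of the argument is the initial refinement step, which is pure topos-theoretic bookkeeping: one must ensure that a general \fppf-cover of a quasi-compact $U$ can be replaced by a finite one so that both the indices $\alpha_i$ of the local factorizations and the indices $\alpha_{ij}$ witnessing the compatibility can be simultaneously bounded. Once that is done, everything else is a formal consequence of directedness, the fact that closed immersions are monomorphisms, and \fppf-descent for representable functors.
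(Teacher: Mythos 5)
Your proof is correct and takes essentially the same route as the paper: represent $f$ by an \fppf-covering and local factorizations through the $X_{\alpha_i}$, use quasi-compactness of $U$ to reduce to a finite covering, bound the finitely many indices by directedness, and conclude by \fppf-descent for the scheme $X_{\alpha_0}$. Your presentation is slightly more explicit in tracking the compatibility indices $\alpha_{ij}$ and in noting that the monomorphism property of the transition maps pushes the cocycle equality back down to $X_{\alpha_0}$, but this is only a refinement of the same argument.
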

Note that since the presheaf $U\mapsto \dirlim X_\alpha(U)$ is not a sheaf, an argument for this is required. 
\begin{proof}
Since $X$ is the sheaf associated with the presheaf $U\mapsto \dirlim X_\alpha(U)$, the morphism $f$ is given by a covering $U'\to U$ for the $\fppf$-topology and an element $f'\in\dirlim X_\alpha(U')$. This means that there is an $\fppf$-morphism $U''\to U$ and a Zariski covering $U''=\bigcup_i U''_i$ such that $U'=\coprod_i U''_i$. In particular $f'\in\prod_i\dirlim X_\alpha(U''_i)$ and for each $i$ there is an $\alpha_i$ with $f'|_{U''_i}\in X_{\alpha_i}(U''_i)$. Since $U$ is quasi-compact, already finitely many of the $U''_i$ form an $\fppf$-covering of $U$. Replacing $U'$ by their union we can assume that $U'$ is also quasi-compact. Then taking $\alpha_0$ as an upper bound of the finitely many $\alpha_i$ involved yields $f'\in X_{\alpha_0}(U')$. By $\fppf$-descent \cite[\S6.1, Theorem 6a]{BLR} the morphism $f$ factors through $X_{\alpha_0}$.
\end{proof}

\begin{proposition}\label{PropProSchubertCell}
Let either $\tilde z=z-\zeta$ or $\tilde z=z$. Then the ind-scheme $\wh\Gr^{\preceq(\mu,\tilde z)}$ is a $\zeta$-adic noetherian formal scheme over $\BFZ$ whose underlying topological space is $\Gr^{\preceq\mu}$.
\end{proposition}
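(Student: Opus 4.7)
The plan is to identify $\wh\Gr^{\preceq(\mu,\tilde z)}$ as a closed formal subscheme of $\Gr^{\preceq\mu}\whtimes_{\Spec\BF_q}\Spf\BFZ$, and then conclude from the projectivity of $\Gr^{\preceq\mu}$ (Proposition~\ref{PropSchubertCell}) that the latter, and hence any closed formal subscheme of it, is a $\zeta$-adic noetherian formal scheme.

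First I would determine the underlying topological space. Since $\wh\Gr^{\preceq(\mu,\tilde z)}\subset\wh\Gr$ lives in $\NilpF$, its underlying topological space agrees with that of its special fibre over $\zeta=0$. On that special fibre the two conditions $(\mu,z-\zeta)$ and $(\mu,z)$ coincide, so it is enough to treat $\tilde z=z$. Applying Lemma~\ref{LemmaBoundedIfQC}(b) we may check the boundedness condition on the universal pair $(\CG,\delta)$ pointwise, and Lemma~\ref{LemmaHPBounded} then translates it into $\mu_\delta(s)\preceq\mu$. Combined with the moduli description of $\Gr$ as parametrising such pairs (Proposition~\ref{Prop4.1}), the geometric points coincide with those of $\Gr^{\preceq\mu}$, proving that the underlying space is $|\Gr^{\preceq\mu}|$.

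For the scheme-theoretic structure, write $\wh\Gr^{\preceq(\mu,\tilde z)}=\dirlim Y_n$ with each $Y_n$ a quasi-compact closed subscheme of the corresponding level $X_n\in\NilpF$. I would argue that the morphism $Y_n\to\Gr$ given by the projection $\wh\Gr\to\Gr$ factors through the closed immersion $\Gr^{\preceq\mu}\hookrightarrow\Gr$ not only set-theoretically (which follows from the first paragraph) but scheme-theoretically. The key input is the well-known identification of $\Gr^{\preceq\mu}$ as the closed subscheme of $\Gr$ cut out by the boundedness condition $(\mu,z)$ on the universal pair of Proposition~\ref{Prop4.1} restricted to $\Gr$; see \cite[\S4.5]{BeilinsonDrinfeld}, \cite{Faltings03}. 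Granting this, the closed condition defining $Y_n\subset X_n$ is nothing but the pullback to $\wh\Gr$ of this defining condition on $\Gr$ (in the case $\tilde z=z$, directly; in the case $\tilde z=z-\zeta$, after noting that the two conditions agree modulo $\zeta$ and that any closed formal subscheme of $\wh\Gr$ whose special fibre is contained in $\Gr^{\preceq\mu}$ is itself contained in $\Gr^{\preceq\mu}\whtimes\Spf\BFZ$, since $\Gr^{\preceq\mu}$ is closed and quasi-compact in $\Gr$). This gives the desired factorization $\wh\Gr^{\preceq(\mu,\tilde z)}\hookrightarrow\Gr^{\preceq\mu}\whtimes_{\Spec\BF_q}\Spf\BFZ$ as closed formal subschemes.

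Once this factorization is established, the conclusion is routine: $\Gr^{\preceq\mu}$ is a projective $\BF_q$-scheme (Proposition~\ref{PropSchubertCell}), hence noetherian, and covered by finitely many open affines $\Spec A_i$ with $A_i$ a finitely generated $\BF_q$-algebra. Then $\Gr^{\preceq\mu}\whtimes_{\Spec\BF_q}\Spf\BFZ$ is covered by the formal affine opens $\Spf A_i\dbl\zeta\dbr$, each of which is $\zeta$-adic noetherian, so the ambient formal scheme is $\zeta$-adic noetherian; any closed formal subscheme is again of this type. Thus $\wh\Gr^{\preceq(\mu,\tilde z)}$ is a $\zeta$-adic noetherian formal scheme whose underlying topological space is $|\Gr^{\preceq\mu}|$, as claimed. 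The main obstacle is precisely the scheme-theoretic factorization in step two; it relies on the (non-trivial) moduli-theoretic description of the Schubert variety $\Gr^{\preceq\mu}$, whereas the remaining steps are essentially formal.
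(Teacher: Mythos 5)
Your identification of the underlying topological space via Lemma~\ref{LemmaBoundedIfQC}(b) and Lemma~\ref{LemmaHPBounded} is fine and matches the paper. But the scheme-theoretic step --- the proposed factorization $\wh\Gr^{\preceq(\mu,\tilde z)}\hookrightarrow\Gr^{\preceq\mu}\whtimes_{\Spec\BF_q}\Spf\BFZ$ as closed formal subschemes --- has two genuine gaps.

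First, your ``key input'' that $\Gr^{\preceq\mu}$ equals, scheme-theoretically, the locus in $\Gr$ cut out by the $(\mu,z)$-boundedness condition on the universal pair is not available in the paper's framework and is actually in doubt. The paper's Definition~\ref{DefBounded} uses Weyl modules, and Remark~\ref{RemWeylModules} explicitly notes that different choices of $G$-modules give the same underlying reduced subschemes but may \emph{differ in nilpotent structure}; so the bounded locus in $\Gr$ is a priori only set-theoretically, not scheme-theoretically, equal to the reduced Schubert variety $\Gr^{\preceq\mu}$. Proving reducedness of the bounded locus (if even true for the Weyl-module definition) would require inputs like normality and Frobenius splitting; it is not something one can simply cite from~\cite{BeilinsonDrinfeld} or~\cite{Faltings03} without checking that their scheme structure coincides with this one.

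Second, and independently, the bridging claim for $\tilde z=z-\zeta$ --- ``any closed formal subscheme of $\wh\Gr$ whose special fibre is contained in $\Gr^{\preceq\mu}$ is itself contained in $\Gr^{\preceq\mu}\whtimes\Spf\BFZ$'' --- is false. The obstruction is elementary: in $\Spf k[x]\dbl\zeta\dbr$, the closed formal subscheme $V(x-\zeta)$ has special fibre $V(x)$, yet it is not contained in $V(x)\whtimes\Spf k\dbl\zeta\dbr$, since $x\notin(x-\zeta)$. So even scheme-theoretic containment of the special fibre does not propagate to a $\zeta$-adic containment, and your argument for $\tilde z=z-\zeta$ does not go through.

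The paper's proof avoids both problems: it does not assert any reducedness of the bounded locus. Instead, fixing $n$ and working modulo $\zeta^n$, it chooses a $q$-power $N$ with $N\ge n$ and $N\ge\langle(-\lambda)_\dom,\mu\rangle$ for all $\lambda$ in a generating set $\Lambda$, so that $(z-\zeta)^N=z^N$ holds on $Y_n$. It then uses the faithful representation $\eta:G\to\GL(V\dual)$ of Proposition~\ref{PropFaithfulRep} and Proposition~\ref{PropHilbert90} to realize $Y_n$ as a closed subscheme of a projective scheme $\wt X^N\times_{\Spec\BF_q}\Spec\BFZ/(\zeta^n)$ of pole-bounded lattices for $\GL$. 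This gives noetherianity and finite type level-by-level without ever needing to compare to the reduced Schubert variety, which is only used at the very end to identify the geometric points via Lemma~\ref{LemmaHPBounded}.
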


\begin{proof}
We claim that $\wh\Gr^{\preceq(\mu,\tilde z)}\times_{\Spf \BFZ}\Spec \BFZ/(\zeta^n)=:Y_n$ is a scheme locally of finite type over $\BFZ/(\zeta^n)$ with underlying topological space $\Gr^{\preceq\mu}$. From the claim the proposition follows by \cite[I$_{\rm new}$, Corollary 10.6.4]{EGA}.

To prove the claim let us first recall the structure of $\X$ as an ind-scheme. Choosing an embedding $G\subset\GL_r$ induces an injection of $\X$ into the affine Grassmannian for $\GL_r$ which we denote by $\wt \X$. The ind-scheme structure on $\wt \X$ can be described as follows. The valuation $\ord_z\det(A)$ for $A\in\GL_r\bigl(k\dpl z\dpr\bigr)$ yields a locally constant function on $\wt \X$; see Proposition~\ref{PropHPLocConst}. So $\wt \X$ is the disjoint union of the $\wt \X(h):=\{\,A\in\wt \X:\ord_z\det(A)=h\,\}$. Let $N$ be a positive integer and let $\wt X(h)^N$ be the reduced subscheme of $\wt\X(h)$ defined by
\[
\wt X(h)^N(k)\;:=\;\{\,A\in \wt \X(h)(k): A\in M_r\bigl(z^{-N}{k}\dbl z\dbr\bigr)/\GL_r\bigl({k}\dbl z\dbr\bigr)\,\}\,.
\]
Then the $\wt X(h)^N$ are projective ${\BF_q}$-schemes, $\wt X^N:=\coprod_h\wt X(h)^N$ is a scheme locally of finite type over ${\BF_q}$, and $\wt \X=\dirlim \wt X^N$. The inclusion $\X\hookrightarrow \wt \X$ realizes $\X$ as a closed ind-subscheme and the induced ind-scheme structure of $\X$ does not depend on the chosen embedding; see \cite[Theorem 4.5.1]{BeilinsonDrinfeld} in characteristic zero, but note that the proof also works over $\BF_q$.

We consider the $G$-module $V$ and the induced embedding $\eta:G\rightarrow \GL(V\dual)$ of Proposition~\ref{PropFaithfulRep}. Let $N=N(n,\mu)\in\BN$ be a power of $q$ such that $N\ge n$ and $N\ge\langle(-\lambda)_\dom,\mu\rangle$ for all $\lambda\in \Lambda$. On $Y_n$ we have $(z-\zeta)^N=z^N-\zeta^N=z^N$. By our definition of $\wh\Gr^{\preceq(\mu,\tilde z)}$, the universal isomorphism $\gamma:=\delta^{-1}:LG_{Y_n}\isoto\CL\CG|_{Y_n}$ on $Y_n$ is bounded by $(\mu,\tilde z)$, hence satisfies 
\[
\gamma\bigl(K_{Y_n}\bigr)_\lambda\;=\;\gamma\bigl(V(\lambda)\otimes_{\BF_q}\CO_{Y_n}\dbl z\dbr\bigr)\;\subset\;\tilde z\;^{-\langle(-\lambda)_\dom,\mu\rangle}\,\CG_\lambda\;\subset\;  z^{-N}\CG_\lambda
\]
for all $\lambda\in\Lambda$ and both choices of $\tilde z$. By Proposition~\ref{PropHilbert90} we may choose bases of the $\CG_\lambda$ Zariski-locally on $Y_n$. Then $\gamma$ is represented on $V=\bigoplus_{\lambda\in\Lambda}V(\lambda)$ as a matrix $A$ in the right coset
\[
A\in\GL_{\dim V}\bigl(Y_n\dbl z\dbr\bigr)\backslash M_{\dim V}\bigl(z^{-N}Y_n\dbl z\dbr\bigr)\,.
\]
On $V\dual=\bigl(\bigoplus_{\lambda\in\Lambda}V(\lambda)\bigr)\dual$ its inverse $\delta=\gamma^{-1}$ is represented by its transpose
\[
A^T\in M_{\dim V}\bigl(z^{-N}Y_n\dbl z\dbr\bigr)/\GL_{\dim V}\bigl(Y_n\dbl z\dbr\bigr)\,.
\]
Therefore $Y_n$ is a closed subscheme of $\wt X^N\times_{\Spec {\BF_q}} \Spec \BFZ/(\zeta^n)$, hence locally of finite type over $\BFZ/(\zeta^n)$. The reduced closed subscheme underlying $Y_n$ can be computed by looking at geometric points $x\in \X$. Over $x$ the isomorphism $\gamma_x=\delta_x^{-1}$ is given by an element $g\in K\bigl(\kappa(x)\bigr)z^{\mu_{\gamma}(x)}K\bigl(\kappa(x)\bigr)$ after choosing a trivialization of $\CG_x$. By definition of $\wh\Gr$ this means that the point $x$ is given by $g^{-1}\in LG(\kappa(x))$ with $g^{-1}\in\Gr^{\mu_\gamma(x)}$. By Lemma~\ref{LemmaHPBounded} the point $x$ belongs to $Y_n$ if and only if $\mu_{\gamma}(x)\preceq\mu$, which is equivalent to $x$ lying in $\Gr^{\preceq\mu}$.
\end{proof}

Let $\mu\in X_*(T)$ be dominant and let $\ul\BG=(\BG,\phi_{_\BG})$ be a local $G$-shtuka bounded by $\mu$ over a field $k'\in\NilpF$. Assume that there is a trivialization $\alpha:\BG\isoto K_{k'}$ such that $\alpha:\ul\BG\isoto(K_{k'},b_0\s)$ for some $b_0\in LG(k')$. Note that if $k'$ is algebraically closed, a trivialization always exists by Remark~\ref{RemEtaleTrivial}. The inverse $b_0^{-1}$ of $b_0$ defines a point $x\in \X(k')$ (which depends on the chosen trivialization). Since $\ul\BG$ is bounded by $\mu$, $(b_0^{-1})^{-1}=b_0$ is bounded by $\mu$ and we have $x\in\Gr^{\preceq\mu}(k')$ by Proposition~\ref{PropProSchubertCell}. Let $\Defo$ be the complete local ring of $\wh\Gr^{\preceq(\mu,z-\zeta)}$ at the point $x$. It is a complete noetherian local ring over $k'\dbl\zeta\dbr$. 

\begin{theorem}\label{ThmDefoSp}
$\Defo$ pro-represents the formal deformation functor of $\ul\BG$  
\begin{eqnarray*}
&F:&\bigl(\text{Artinian local $k'\dbl\zeta\dbr$-algebras with residue field
  $k'$}\bigr)\longto \Sets\,,\\[2mm]
A&\longmapsto& \Bigl\{\;\text{Isomorphism classes of pairs }(\ul\CG,\beta)\text{ where}\\
& & \qquad \ul\CG\text{ is a local $G$-shtuka over $\Spec A$ bounded by $\mu$ and}\\
& & \qquad \beta:\ul\BG\isoto\ul\CG\otimes_A k' \text{ is an isomorphism of local $G$-shtukas}\;\Bigr\}
\end{eqnarray*}
where $(\ul\CG,\beta)$ and $(\ul\CG',\beta')$ are isomorphic if there exists an isomorphism $\eta:\ul\CG\to\ul\CG'$ with $\beta'=(\eta\otimes_A k')\circ\beta$.
\end{theorem}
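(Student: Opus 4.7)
My approach is to establish a natural bijection between deformations $(\ul\CG,\beta)\in F(A)$ over an Artinian local $k'\dbl\zeta\dbr$-algebra $A$ with residue field $k'$ and $A$-points of $\wh\Gr^{\preceq(\mu,z-\zeta)}$ reducing to $x$, using Proposition~\ref{Prop4.1} to translate between the two sides. Given $(\ul\CG,\beta)\in F(A)$, I would first invoke Proposition~\ref{PropGzTorsor}(c) to lift the underlying $K$-torsor isomorphism $\alpha\circ\beta^{-1}:\CG\otimes_A k'\isoto K_{k'}$ to a trivialization $\tilde\beta:\CG\isoto K_A$. With respect to $\tilde\beta$ the Frobenius $\phi_\CG$ acquires the form $b\cdot\s$ for some $b\in LG(A)$ with $b\equiv b_0\pmod{\fm_A}$, and the boundedness of $\ul\CG$ by $\mu$ translates precisely to $b$ being bounded by $(\mu,z-\zeta)$ in the sense of Definition~\ref{DefBounded}.

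From the pair $(\tilde\beta,b)$ I would construct an $LG$-trivialization $\delta:\CL\CG\isoto LG_A$, concretely given by composing $\tilde\beta_{LG}$ with left multiplication by $b^{-1}$. Its reduction at the special fibre corresponds, under the identification of Proposition~\ref{Prop4.1}, to the point $x=[b_0^{-1}]\in\X(k')$, and the boundedness just established places the resulting $A$-point in $\Spf\Defo\subset\wh\Gr^{\preceq(\mu,z-\zeta)}$. For well-definedness I would check that a different lift $\tilde\beta$ (which differs from the original by an element of $K_1(A):=\ker(K(A)\to K(k'))$, transforming $b$ by $\sigma$-conjugation $b\mapsto g^{-1}b\s(g)$) produces a pair $(\CG,\delta')$ isomorphic to $(\CG,\delta)$ in the sense of Proposition~\ref{Prop4.1}, and similarly for isomorphic deformations. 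The rigidity of quasi-isogenies (Proposition~\ref{PropRigidity}) guarantees that the liftings are compatible with morphisms of the input data.

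Conversely, I would define an inverse as follows: given a pair $(\CG,\delta)$ representing an $A$-point of $\Spf\Defo$, apply Proposition~\ref{PropGzTorsor}(c) again to choose a $K$-trivialization $\tilde\beta:\CG\isoto K_A$ compatible at the residue field with the choice implicit at $x$ (i.e., so that $\tilde\beta\otimes k'$ provides the trivialization of $\CG\otimes k'$ with respect to which $\delta\otimes k'$ is left multiplication by $b_0^{-1}$). The pair $(\tilde\beta,\delta)$ then determines a uniquely defined element $b\in LG(A)$ with $b\equiv b_0\pmod{\fm_A}$, out of which the local $G$-shtuka $\ul\CG=(\CG,b\s)$ and the framing $\beta:\ul\BG\isoto\ul\CG\otimes k'$ (induced by $\alpha^{-1}$ and $\tilde\beta\otimes k'$) are recovered. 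After verifying that these two constructions are mutually inverse on isomorphism classes, the functorial nature of the bijection implies that $\Spf\Defo$ pro-represents $F$.

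The main obstacle is the careful bookkeeping aligning the equivalence relations on the two sides: the $K_1(A)$-ambiguity in choosing $\tilde\beta$ (acting on $b$ by $\sigma$-conjugation) must match exactly the equivalence on pairs $(\CG,\delta)$ of Proposition~\ref{Prop4.1} (governed by $K$-automorphisms of $\CG$), and the boundedness conditions on $\ul\CG$ and on $\delta^{-1}$ must agree. The first of these is the delicate point and follows from a direct computation tracking how the $K_1$-action on the Frobenius element $b$ is absorbed into the $K$-automorphism group of the underlying torsor in the presence of the $LG$-trivialization $\delta$; the second follows essentially by construction from Lemma~\ref{LemmaHPBounded} and Definition~\ref{DefBounded}.
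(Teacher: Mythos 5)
Your overall strategy is the same as the paper's: both sides are classified by elements $b\in LG(A)$ with $b\equiv b_0\pmod{\Fm_A}$, and the task is to match the equivalence relation on the deformation side ($\sigma$-conjugation by $K_1(A):=\ker(K(A)\to K(k'))$, $b\mapsto g^{-1}b\s(g)$) with the one on $\wh\Gr$-points (right multiplication of $b^{-1}$ by $K(A)$). You correctly flag this as the delicate point, but your claim that it ``follows from a direct computation'' is where the argument breaks down. Replacing $b$ by $g^{-1}b\s(g)$ for $g\in K_1(A)$ sends $b^{-1}$ to $\s(g)^{-1}b^{-1}g$, which lies in $b^{-1}K(A)$ only if $b\s(g)^{-1}b^{-1}\in K(A)$ --- and that is false in general, since conjugation by $b$ need not preserve $K$. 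So the forward map you describe (trivialize, extract $b$, pass to $[b^{-1}]\in\wh\Gr(A)$) does not descend to isomorphism classes, and similarly the backward construction depends on the choice of $\tilde\beta$ beyond what a single isomorphism of framed shtukas can correct. The two equivalence relations genuinely do not coincide in one step.

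The paper overcomes this by an induction along the filtration $A_n:=A/\Fm^{q^n}$ with $q$-th power exponents. The crucial observation is that if $g\in K(A_n)$ reduces to $1$ in $K(A_{n-1})$, then $\s(g)=1$, because $\Frob_q$ sends $\Fm^{q^{n-1}}$ into $\Fm^{q^n}=0$ in $A_n$. At each step the discrepancy $g:=b_n\,u_n(\tilde b^{-1})$ between the chosen trivialization of the given deformation and the pullback of the universal frame lies in $K(A_n)$ with $g\otimes_{A_n}A_{n-1}=1$, hence $\s(g)=1$; one may then absorb $g$ into the trivialization and obtain $g^{-1}b_n\s(g)=g^{-1}b_n=u_n(\tilde b)$, which is precisely the point where $\sigma$-conjugation and plain left multiplication get reconciled. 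Without this Frobenius-killing step there is no reason the two sides should agree, so the induction is not optional bookkeeping but the heart of the proof. Two smaller remarks: rigidity of quasi-isogenies plays no role here (the relevant lifting tool is Proposition~\ref{PropGzTorsor}(c)); and you do not actually construct the pro-universal Frobenius element $\tilde b\in LG(\Defo)$, which the paper obtains from a local section of $LG\to\Gr$ near the identity translated by $b_0^{-1}$ together with the boundedness argument of Proposition~\ref{remdejong} to pass to the limit.
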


\begin{proof}
We first construct the universal pro-object over $\Defo$. Let $\Defo_n=\Defo/\Fm_\Defo^n$ where $\Fm_\Defo$ is the maximal ideal of $\Defo$. By \cite[Lemma 2.1]{Ngo-Polo} the projection morphism $LG\to \Gr$ has a section in an open neighborhood of the image of $1\in LG(k')$. Translating by $b_0^{-1}\in LG(k')$ we obtain a section in an open neighborhood of $x\in\Gr(k')$. Since $\Fm_\Defo$ is nilpotent on $\Defo_n$, the natural map $\Spec \Defo_n\to \wh\Gr^{\preceq(\mu,z-\zeta)}\to \wh \X$ can be given by some element $\tilde b_n^{-1}\in LG(\Defo_n)=G\bigl(\Defo_n\dbl z\dbr[\frac{1}{z-\zeta}]\bigr)$ with $\tilde b_n^{-1}\otimes_{\Defo_n} k'=b_0^{-1}$. Note that $\tilde b_n^{-1}$ is only determined up to multiplication on the right with elements $g\in K(\Defo_n)$ satisfying $g\otimes_{\Defo_n} k'=1$. By construction $\gamma_n=(\tilde b_n^{-1})^{-1}=\tilde b_n$ is bounded by $(\mu,z-\zeta)$. We choose the $\tilde b_n$ in a compatible way and let $\tilde b\in G\bigl(\Defo\dbl z\dbr[\frac{1}{z-\zeta}]\bigr)$ be the limit. The boundedness by $\mu$ of all $\tilde b_n$ implies the existence and boundedness by $\mu$ of $\tilde b$ by the same argument as in Proposition~\ref{remdejong}. Although $\tilde b^{-1}$ is still only determined up to right multiplication with elements $g\in K(\Defo)$ as above, we take $\ul\CG^{\rm univ}:=(K_{\Defo},\tilde b\s)$ and $\alpha:\ul\BG\isoto\ul\CG^{\rm univ}\otimes_\Defo k'$ so that $(\ul\CG^{\rm univ},\alpha)\in F(\Defo)$ because $\tilde b$ and $\tilde b\s$ are bounded by $\mu$. In particular we obtain a morphism of functors $\Hom_{k'\dbl\zeta\dbr}(\Defo,\,.\,)\to F$ by sending $u:\Defo\to A$ to the deformation $\bigl(K_A,u(\tilde b)\s,\alpha\bigr)\in F(A)$.

We will show that this morphism of functors is an isomorphism. Let $(\ul\CG,\beta)\in F(A)$. Let $\Fm\subset A$ be the maximal ideal. We use induction on $A_n:=A/\Fm^{q^n}$ to show the following

\smallskip
\noindent
{\it Claim.} For each $n$ there is a uniquely determined $k'\dbl\zeta\dbr$-homomorphism $u_n:\Defo\to A_n$ with
\[
\bigl(K_{A_n},u_n(\tilde b)\s,\alpha\bigr)\;=\;(\ul\CG,\beta)\otimes_A A_n\es\text{ in }F(A_n)\,.
\]

For $n=0$ we have $A_0=k'$ and there is only one $k'\dbl\zeta\dbr$-homomorphism, namely $u_0:\Defo\to \Defo/\Fm_\Defo=k'=A_0$. This proves the claim for $n=0$ since $u_0(\tilde b)=b_0$ and $\bigl(K_{k'},b_0\s,\alpha\bigr)$ is isomorphic to $(\ul\CG,\beta)\otimes_A k'$ via $\beta\circ\alpha^{-1}$.

Now let $n\ge 1$ and assume that we have already constructed the uniquely determined $u_{n-1}$. In particular there is an isomorphism $\eta:\bigl(K_{A_{n-1}},u_{n-1}(\tilde b)\s\bigr)\isoto\ul\CG\otimes_A A_{n-1}$ with $(\eta\otimes_{A_{n-1}}k')\circ\alpha=\beta$. By Proposition~\ref{PropGzTorsor} the trivialization $\eta$ lifts to a trivialization \mbox{$\eta':K_{A_n}\isoto\CG\otimes_A A_n$}. We let $b_n:=(\eta')^{-1}\,\phi_\CG\,\s(\eta')\in G\bigl(A_n\dbl z\dbr[\frac{1}{z-\zeta}]\bigr)$ and replace $(\ul\CG,\beta)\otimes_A A_n$ by $\bigl(K_{A_n},b_n\s,\alpha\bigr)$. Then $b_n\otimes_{A_n}A_{n-1}=u_{n-1}(\tilde b)$. Consider the diagram
\[
\xymatrix @C+3pc {
\Spec A_n\ar@{-->}[r]^{\TS b_n^{-1}} \ar@{-->}[dr]^{\TS\Spec u_n} & LG \ar[r]& \wh \X\\
\Spec A_{n-1}\ar[u]\ar[r]_{\TS\Spec u_{n-1}} & \Spec \Defo\ar[u]_{\TS\tilde b^{-1}}\ar[r] & \wh\Gr^{\preceq(\mu,z-\zeta)}\ar[u]
}
\]
Here the right square commutes by the definition of $\tilde b$. 
The element $b_n^{-1}$ yields a morphism $\Spec A_n\to\wh \X$ which factors through $\wh\Gr^{\preceq(\mu,z-\zeta)}$, since $\ul\CG$ and hence $(b_n^{-1})^{-1}$ are bounded by $\mu$. As $b_n\otimes_{A_n}k'=b_0$, the maximal ideal of $A_n$ is sent to $x\in\wh\Gr^{\preceq(\mu,z-\zeta)}$. Since $A_n$ is complete, the morphism even factors through $\Spec \Defo$. We obtain a homomorphism $u_n:\Defo\to A_n$ with $u_n(\tilde b)\otimes_{A_n}A_{n-1}=u_{n-1}(\tilde b)$ and such that $g:=b_nu_n(\tilde b^{-1})\in K(A_n)$ by definition of $\wh \X$. In particular $g\otimes_{A_n}A_{n-1}=1$, hence $\s(g)=1$. We now replace $\bigl(K_{A_n},b_n\s,\alpha\bigr)$ by $\bigl(K_{A_n},g^{-1}b_n\s(g)\cdot\s,\alpha\bigr)$, that is, we change $\eta'$ to $\eta'g$. Then $u_n(\tilde b)=g^{-1}b_n=g^{-1}b_n\s(g)$ as desired.

It remains to show that $u_n$ is uniquely determined. Let $u_n,u'_n:\Defo\to A_n$ be two homomorphisms as in our claim. Applying $\otimes_{A_n}A_{n-1}$ and exploiting the uniqueness for $n-1$ we see that $u_n(\tilde b)\otimes_{A_n}A_{n-1}=u'_n(\tilde b)\otimes_{A_n}A_{n-1}$. So there is a $g\in K(A_n)$ with $g^{-1}u_n(\tilde b)\s(g)=u'_n(\tilde b)$ and $g\otimes_{A_n}A_{n-1}=1$. In particular $\s(g)=1$ and hence $u'_n(\tilde b^{-1})=u_n(\tilde b^{-1})g$. This implies that the compositions of $u_n$ and $u'_n$ with $\Spec \Defo\to \wh \X$ coincide. By the definition of $\Defo$ this implies $u_n=u'_n$ as desired.
\end{proof}

\begin{remark}
  From now on we always consider the deformation ring $\Defo$ with a fixed isomorphism of functors $\Hom_{k'\dbl\zeta\dbr}(\Defo,\,.\,)\cong F$ (and thus a fixed  universal local $G$-shtuka over $\Defo$) as in the proof.
\end{remark}

\begin{remark}\label{remavoid}
One could avoid the inversion $b_0^{-1}$ and the definition of $\Gr^\mu$ as $Kz^{(-\mu)^\dom}K/K$ by instead considering the affine Grassmannian $(K\backslash LG)\times_{\Spec\BF_q}\Spf\BFZ$ and defining $\wh{\Gr}^{\preceq(\mu,z-\zeta)}$ as a closed subscheme of the latter. On the other hand, it is not clear to us whether the two conditions that $\delta^{-1}$ is bounded by $\mu$ and that $\delta$ is bounded by $(-\mu)_\dom$ are equivalent over a non-reduced scheme. Therefore we do not know whether one could also define $\wh\Gr^{\preceq(\mu,z-\zeta)}$ as the subscheme on which $\delta$ is bounded by $(-\mu)_\dom$.
As mentioned in Remark~\ref{RemWeylModules}, we could in the definition of boundedness also take the class of all $G$-modules of lowest weight $(-\lambda)_\dom$ or the class of all tilting modules. The latter are self-dual by \cite[E.6]{Jantzen}. Then we would have that $\delta^{-1}$ is bounded by $\mu$ if and only if $\delta$ is bounded by $(-\mu)_\dom$.
\end{remark}

\begin{proposition}\label{PropReducedDefoSp}
Let $\Defo$ be the deformation ring from Theorem~\ref{ThmDefoSp}. Then the reduced quotient $\ol\Defo=(\Defo/\zeta \Defo)_\red$ is a complete normal noetherian integral domain of dimension $\langle 2\rho, \mu\rangle$ and Cohen-Macaulay.
\end{proposition}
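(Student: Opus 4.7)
The plan is to identify $\ol\Defo$ with the completed local ring of the affine Schubert variety $\Gr^{\preceq\mu}$ at the point $x$, and then to deduce the stated properties from known geometric results on affine Schubert varieties together with the excellence of projective schemes over a field.

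First I would invoke Proposition~\ref{PropProSchubertCell}, which asserts that the underlying topological space of $\wh\Gr^{\preceq(\mu,z-\zeta)}$ is $\Gr^{\preceq\mu}$ and that the special fibers at $\zeta=0$ of $\wh\Gr^{\preceq(\mu,z-\zeta)}$ and $\wh\Gr^{\preceq(\mu,z)}$ coincide. Since $\Gr^{\preceq\mu}$ is reduced (it is the closure of the reduced Schubert cell $\Gr^\mu$), this yields a canonical identification $\Spec\ol\Defo \cong \Spec\wh\CO_{\Gr^{\preceq\mu}_{k'},x}$, where $\Gr^{\preceq\mu}_{k'}:=\Gr^{\preceq\mu}\times_{\Spec\BF_q}\Spec k'$. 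Since $\BF_q$ is perfect, base change from $\BF_q$ to $k'$ preserves reducedness, irreducibility, normality and Cohen-Macaulayness, so it suffices to establish these properties for $\Gr^{\preceq\mu}$ itself.

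Next I would gather the known geometric facts. Proposition~\ref{PropSchubertCell}(c) provides irreducibility of $\Gr^{\preceq\mu}$ and its dimension $\langle 2\rho,\mu\rangle$. The normality and Cohen-Macaulayness of $\Gr^{\preceq\mu}$ in positive characteristic are due to Faltings~\cite{Faltings03} (see also~\cite{PR2} for the more general setting of twisted affine flag varieties), and are proved via Frobenius splitting techniques.

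Finally, since $\Gr^{\preceq\mu}_{k'}$ is projective over the field $k'$ it is excellent, so the local ring $\CO_{\Gr^{\preceq\mu}_{k'},x}$ is an excellent normal Cohen-Macaulay local domain of dimension $\langle 2\rho,\mu\rangle$. Its $\Fm$-adic completion $\ol\Defo$ therefore has the same dimension, is Cohen-Macaulay (completion preserves this), and is normal (completion of an excellent normal local ring is normal). Since $\ol\Defo$ is local and normal, its spectrum is connected and therefore irreducible, so $\ol\Defo$ is an integral domain. The main non-trivial ingredient in this plan is the normality and Cohen-Macaulayness of $\Gr^{\preceq\mu}$, which is geometric input imported from the references; once this is granted, the remainder is standard commutative algebra about completions of excellent local rings.
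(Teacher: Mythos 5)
Your proof is essentially the paper's: both identify $\ol\Defo$ with the completed local ring of $\Gr^{\preceq\mu}$ at $x$, import normality and Cohen--Macaulayness of the Schubert variety from Faltings~\cite{Faltings03} (and Pappas--Rapoport), and then argue that these properties, together with irreducibility and dimension from Proposition~\ref{PropSchubertCell}, pass to the completion; the paper packages the completion step via Zariski's Main Theorem and \cite[Proposition 18.8]{Eisenbud}, whereas you invoke excellence, but these are the same facts. One small imprecision to note: perfection of $\BF_q$ alone does not guarantee that irreducibility persists under base change to $k'$ (that requires geometric irreducibility), but this is harmless here since $\Gr^{\preceq\mu}$ is geometrically irreducible, being the closure of a cell.
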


\begin{proof}
The ring $\ol\Defo$ equals the completion of the local ring of $x$ in $\Gr^{\preceq\mu}$. Since $\Gr^{\preceq\mu}$ is irreducible, reduced, normal, and Cohen-Macaulay (by \cite[Theorem 6.1]{PappasRapoport} for $\SL_n$ or \cite[Theorem 8 and Remark before Corollary 11]{Faltings03} in general) this local ring is a normal integral domain and Cohen-Macaulay. By Zariski's Main Theorem \cite[Theorem VIII.32]{ZariskiSamuel2} and \cite[Proposition 18.8]{Eisenbud} the same is then true for its completion $\ol\Defo$.
By Proposition \ref{PropSchubertCell}, $\Gr^{\preceq\mu}$ has dimension $\langle 2\rho, \mu\rangle$. Thus the same holds for $\ol\Defo$. 
 \end{proof}

Note that if one is only interested in the dimension (as we are in Proposition~\ref{propnewtondim}), one does not need the normality. Namely, $\Gr^{\preceq\mu}$ is equidimensional of dimension $\langle 2\rho, \mu\rangle$. Thus the same holds for every local ring. By \cite[Theorem 18.17]{HIO} the completion of a local equidimensional and universally catenary ring is equidimensional. 

\begin{example}\label{ExDefoSpNotSmooth}
Contrary to $p$-divisible groups the deformation space of a local $G$-shtuka is in general not smooth. Consider for example the ring $A=\BFZ/(\zeta^2)$ and the local shtuka $$(M,\phi)=\Bigl(A\dbl z\dbr^{\oplus2},\left(\begin{array}{cc}z&0\\0&z-2\zeta\end{array}\right)\s\Bigr).$$ According to Lemma~\ref{LemmaG.3} and since the determinant of $\phi$ equals $(z-\zeta)^2$, it is bounded by $(2,0)$ . We claim that it cannot be deformed to a local shtuka over $\wt A=\BFZ/(\zeta^3)$ bounded by any $\mu$ (which is then of the form $(\mu_1,2-\mu_1)_{\dom}$). Indeed assume there is a deformation $(\wt M,\tilde\phi)=(\wt A\dbl z\dbr^{\oplus2},\tilde b\s)$ with $$\tilde b-\left(\begin{array}{cc}z&0\\0&z-2\zeta\end{array}\right)=\zeta^2\left(\begin{array}{cc}c_{11}&c_{12}\\c_{21}&c_{22}\end{array}\right)\in \zeta^2 M_2\bigl(\wt A\dbl z\dbr\bigr).$$ Then $\det\tilde b=z^2-2\zeta z+\zeta^2 z(c_{11}+c_{22})$ and the boundedness by $(\mu_1,2-\mu_1)$ requires that this determinant differs from
  $(z-\zeta)^2$ by a unit in $\wt A\dbl z\dbr\mal$. However this is not the case.
\end{example}

Nevertheless deformation from $k'$ to $k'\dbl\zeta\dbr$ is always possible:

\begin{lemma}\label{LemmaLift}
Let $\ul\BG$ be a local $G$-shtuka over a field $k'\in\NilpF$ of the form $\ul\BG=\bigl(K_{k'},b\s\bigr)$ for some $b\in LG(k')$. Then there exists a local $G$-shtuka $\ul{\wh\BG}$ over $k'\dbl\zeta\dbr$ with $\ul{\wh\BG}\otimes_{k'\dbl\zeta\dbr}k'\cong\ul\BG$. If $\ul\BG$ is bounded by some dominant coweight $\mu\in X_\ast(T)$ then we can find a $\ul{\wh\BG}$ which is also bounded by $\mu$.
\end{lemma}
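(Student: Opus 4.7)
The strategy is to lift $b$ using its Cartan decomposition, substituting $z-\zeta$ for $z$ in the torus factor, which will exactly match the required $(z-\zeta)$-boundedness.

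First, let $\mu'=\mu_{\ul\BG}(\Spec k')\in X_\ast(T)$ be the Hodge polygon of $\ul\BG$. By the Cartan decomposition of $LG(k')$, I can write $b=u_1\,z^{\mu'}\,u_2$ with $u_1,u_2\in K(k')=G(k'\dbl z\dbr)$; if $\ul\BG$ is bounded by $\mu$, then Lemma~\ref{LemmaHPBounded} yields $\mu'\preceq\mu$. Next I lift $u_1,u_2$ to elements $\wh u_1,\wh u_2\in G(k'\dbl\zeta\dbr\dbl z\dbr)$ by induction on $n$: since the kernel of $k'[\zeta]/(\zeta^{n+1})\dbl z\dbr\twoheadrightarrow k'[\zeta]/(\zeta^n)\dbl z\dbr$ is nilpotent, the smoothness of $G$ produces compatible lifts at each stage, whose projective limit lies in $G(k'\dbl\zeta\dbr\dbl z\dbr)=\varprojlim_n G(k'[\zeta]/(\zeta^n)\dbl z\dbr)$ because $G$ is affine of finite type.

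I then set $\wh b:=\wh u_1\cdot(z-\zeta)^{\mu'}\cdot\wh u_2$, viewed inside $G\bigl(k'\dbl\zeta\dbr\dbl z\dbr[\tfrac{1}{z-\zeta}]\bigr)$, where $(z-\zeta)^{\mu'}=\mu'(z-\zeta)$ takes values in $T$. Its reduction modulo $\zeta^n$ yields an element of $LG(k'[\zeta]/(\zeta^n))$ for each $n$, and the resulting compatible system defines a local $G$-shtuka $\ul{\wh\BG}:=(K_{k'\dbl\zeta\dbr},\wh b\,\s)$ over $\Spf k'\dbl\zeta\dbr$ (equivalently, over $\Spec k'\dbl\zeta\dbr$ in the bounded case by Proposition~\ref{remdejong}). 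Modulo $\zeta$, the element $(z-\zeta)^{\mu'}$ becomes $z^{\mu'}$ and $\wh u_i$ becomes $u_i$, giving $\wh b\bmod\zeta=u_1z^{\mu'}u_2=b$, so $\ul{\wh\BG}\otimes_{k'\dbl\zeta\dbr}k'\cong\ul\BG$. For boundedness, take any dominant weight $\lambda$: the torus element $(z-\zeta)^{\mu'}$ acts on the weight space $V(\lambda)_{\lambda''}$ by multiplication by $(z-\zeta)^{\langle\lambda'',\mu'\rangle}$, and since $\mu'$ is dominant the minimum of $\langle\lambda'',\mu'\rangle$ over weights of $V(\lambda)$ is attained at the lowest weight $-(-\lambda)_\dom$ and equals $-\langle(-\lambda)_\dom,\mu'\rangle$. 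Since $\wh u_1,\wh u_2\in K$ preserve the submodule $V(\lambda)\otimes k'\dbl\zeta\dbr\dbl z\dbr$, this gives
\[
\wh b\bigl(V(\lambda)\otimes k'\dbl\zeta\dbr\dbl z\dbr\bigr)\;\subset\;(z-\zeta)^{-\langle(-\lambda)_\dom,\mu'\rangle}\,V(\lambda)\otimes k'\dbl\zeta\dbr\dbl z\dbr,
\]
i.e.\ boundedness by $(\mu',z-\zeta)$. When $\ul\BG$ is bounded by $\mu$, $\mu'\preceq\mu$ together with the dominance of $(-\lambda)_\dom$ yields $\langle(-\lambda)_\dom,\mu'\rangle\le\langle(-\lambda)_\dom,\mu\rangle$, upgrading this to boundedness by $(\mu,z-\zeta)$; condition (\ref{EqBounded2}) is locally constant by Proposition~\ref{PropHPLocConst} and holds at the closed point where $\wh b\equiv b$.

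The main subtlety lies in choosing $(z-\zeta)^{\mu'}$ rather than lifting via $z^{\mu'}$: over $k'[\zeta]/(\zeta^n)$ with $n\ge2$, the submodule $z^{-N}V(\lambda)\otimes R\dbl z\dbr$ is strictly larger than $(z-\zeta)^{-N}V(\lambda)\otimes R\dbl z\dbr$, so the $(z-\zeta)$-bound is genuinely stronger than the $z$-bound, and the naive lift $\wh b=b$ extended to the larger ring would fail it. The substitution $(z-\zeta)^{\mu'}$ is chosen precisely so that the boundedness calculation on each Weyl module $V(\lambda)$ matches the condition in Definition~\ref{DefBounded} on the nose.
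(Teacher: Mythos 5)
Your proof is correct and follows essentially the same route as the paper: write $b$ via the Cartan decomposition as $u_1 z^{\mu'} u_2$, lift the $K$-factors to $K(k'\dbl\zeta\dbr)$, replace $z^{\mu'}$ by $(z-\zeta)^{\mu'}$, and use $\mu'\preceq\mu$ together with $\langle(-\lambda)_\dom,\mu'\rangle\le\langle(-\lambda)_\dom,\mu\rangle$ to verify boundedness. The only (harmless) difference is that you lift $u_1,u_2$ by an inductive smoothness argument, whereas the inclusion $k'\dbl z\dbr\subset k'\dbl\zeta\dbr\dbl z\dbr$ already gives trivial ($\zeta$-constant) lifts.
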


\begin{proof}
Write $b=s_0z^{\mu'} t_0$ for some dominant $\mu'\in X_\ast(T)$ and $s_0,t_0\in K(k')$. Choose lifts $s,t\in K\bigl(k'\dbl\zeta\dbr\bigr)$ of $s_0$ and $t_0$ and set $\ul{\wh\BG}:=\bigl(K_{k'\dbl\zeta\dbr},s(z-\zeta)^{\mu'} t\cdot\s\bigr)$. Then $\ul{\wh\BG}$ is a local $G$-shtuka over $k'\dbl\zeta\dbr$. Moreover, $\ul\BG$ is bounded by $\mu$ if and only if $\mu'\preceq\mu$ by Lemma~\ref{LemmaHPBounded}. Then $\langle(-\lambda)_\dom,\mu'\rangle\le\langle(-\lambda)_\dom,\mu\rangle$ for all dominant weights $\lambda$. This shows that $\ul{\wh\BG}$ is bounded by $\mu$.
\end{proof}

%
%

\section{Rapoport-Zink spaces for local $G$-shtukas and affine Deligne-Lusztig varieties} \label{SectADLV}
\setcounter{equation}{0}

We explain the relation between local $G$-shtukas and affine Deligne-Lusztig varieties. Let $k'$ be a field containing $\mathbb{F}_q$. Recall that $LG(k')=\coprod K(k')z^\mu K(k')$ where the union is over all dominant coweights $\mu$. We recall the definition of affine Deligne-Lusztig varieties from \cite{GHKR}.

\begin{definition}\label{DefADLV}
For an element $b\in LG(k')$ and a dominant $\mu\in X_\ast(T)$ the \emph{affine Deligne-Lusztig variety} $X_\mu(b)$ is the locally closed reduced ind-subscheme over $k'$ of the affine Grassmannian $\X_{k'}:=\X\otimes_{\BF_q}k'$ defined by
\[
X_\mu(b)(\ol{k'})\;=\;\bigl\{\,g\in \X_{k'}(\ol{k'}): g^{-1}b\s(g)\in K(\ol{k'})z^\mu K(\ol{k'})\,\bigr\}\,,
\]
where $\ol{k'}$ is an algebraic closure of $k'$.
The \emph{closed affine Deligne-Lusztig variety} $X_{\preceq\mu}(b)$ is the closed reduced ind-subscheme of $\X_{k'}$ defined by
\[
X_{\preceq\mu}(b)(\ol{k'})\;=\;\bigcup_{\mu'\preceq\mu}X_{\mu'}(b)(\ol{k'}).
\]
\end{definition}

Left multiplication by $g\in LG(k')$ induces an isomorphism between $X_\mu\bigl(g^{-1}b\s(g)\bigr)$ and $X_\mu(b)$. There is a criterion for $X_\mu(b)$ to be non-empty, see \cite{KottwitzRapoport} and \cite{Gashi}. In Corollary~\ref{ADLVisScheme} we give a proof for the well known fact that both $X_\mu(b)$ and $X_{\preceq\mu}(b)$ are schemes locally of finite type over $k'$.

We fix a local $G$-shtuka $\ul\BG$ over $k'$ of the form $\ul\BG=(K_{k'},b\s)$ for some $b\in LG(k')$. For a scheme $S\in\Nilp_{k'\dbl\zeta\dbr}$ we denote by $\bar S$ the closed subscheme $\Var(\zeta)\subset S$. Recall the ind-scheme $\wh \X$ from Section~\ref{SectDeformTh} and set $\wh \X_{k'}:=\wh \X\otimes_{\BF_q}k'$.

\begin{theorem} \label{ThmModuliSpX}
The ind-scheme $\wh \X_{k'}$ pro-represents the functor $\bigl(\Nilp_{k'\dbl\zeta\dbr}\bigr)^o \longto \Sets$
\begin{eqnarray*}
S&\longmapsto & \Bigl\{\,\text{Isomorphism classes of pairs }(\ul\CG,\bar\ppsi)\text{ where} \\
&&\qquad \ul\CG \text{ is a local $G$-shtuka over $S$ and} \\
&&\qquad \bar\ppsi:\ul\CG_{\bar S}\to\ul\BG_{\bar S}\text{ is a quasi-isogeny}\,\Bigr\}
\end{eqnarray*}
Here $(\ul\CG,\bar\ppsi)$ and $(\ul\CG',\bar\ppsi')$ are called isomorphic if $\bar\ppsi^{-1}\circ\bar\ppsi'$ lifts to an isomorphism $\ul\CG'\to\ul\CG$.
\end{theorem}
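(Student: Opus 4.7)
The plan is to reduce Theorem~\ref{ThmModuliSpX} to Proposition~\ref{Prop4.1} by means of the rigidity of quasi-isogenies. For any $S\in\Nilp_{k'\dbl\zeta\dbr}$ the closed immersion $\bar S\hookrightarrow S$ is defined by the locally nilpotent ideal sheaf $\zeta\CO_S$, so Proposition~\ref{PropRigidity} provides a canonical bijection between quasi-isogenies $\ul\CG\to\ul\BG_S$ over $S$ and quasi-isogenies $\ul\CG_{\bar S}\to\ul\BG_{\bar S}$ over $\bar S$, where $\ul\BG_S$ denotes the pullback of $\ul\BG$ from $k'$ to $S$ (which makes sense because $k'$ is an $\BF_q$-field and $S$ acquires a $k'$-structure through $k'\hookrightarrow k'\dbl\zeta\dbr$).

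Starting from a pair $(\ul\CG,\bar\ppsi)$ in the functor of the theorem, I would first apply this bijection to extend $\bar\ppsi$ uniquely to a quasi-isogeny $\ppsi:\ul\CG\to\ul\BG_S$, that is, an $LG$-torsor isomorphism $\CL\CG\isoto\CL\ul\BG_S$ satisfying $\ppsi\circ\phi_\CG=(b\s)\circ\s\ppsi$. Since $\BG=K_{k'}$ is tautologically trivialized by the data $\ul\BG=(K_{k'},b\s)$, the torsor $\CL\ul\BG_S$ is canonically identified with $LG_S$, and composing $\ppsi$ with this identification produces an isomorphism of $LG$-torsors $\delta:\CL\CG\isoto LG_S$. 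This is precisely the data represented by $\wh\X_{k'}$ according to Proposition~\ref{Prop4.1}.

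In the opposite direction, given $(\CG,\delta)$ as in Proposition~\ref{Prop4.1} over $S$, I would set $\phi_\CG:=\delta^{-1}\circ(b\s)\circ\s\delta$, the unique isomorphism $\s\CL\CG\isoto\CL\CG$ of $LG$-torsors making $\delta$ into a quasi-isogeny $\ul\CG:=(\CG,\phi_\CG)\to\ul\BG_S$, and take $\bar\ppsi:=\delta|_{\bar S}$. The two assignments are visibly mutually inverse. They also respect the equivalence relations on both sides: any isomorphism $\eta:\CG'\isoto\CG$ of $K$-torsors with $\delta\circ\eta=\delta'$ automatically satisfies $\eta\circ\phi_{\CG'}=\phi_\CG\circ\s\eta$ by a direct calculation from the formula for $\phi_\CG$, hence is an isomorphism of local $G$-shtukas $\ul\CG'\isoto\ul\CG$; conversely any isomorphism of local $G$-shtukas $\eta:\ul\CG'\isoto\ul\CG$ with $\bar\ppsi\circ\eta|_{\bar S}=\bar\ppsi'$ lifts by Proposition~\ref{PropRigidity} to the relation $\ppsi\circ\eta=\ppsi'$, equivalently $\delta\circ\eta=\delta'$. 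Functoriality in $S$ is immediate. The argument is thus essentially formal once Propositions~\ref{Prop4.1} and~\ref{PropRigidity} are available, and no substantive new obstacle arises here; the conceptual content sits entirely in those two results.
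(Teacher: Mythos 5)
Your proof is correct, and it takes a genuinely different route from the paper's. The paper does not reduce to Proposition~\ref{Prop4.1} but rather spells out the full \'etale-descent construction: it lifts $\ul\BG$ to a local $G$-shtuka $\ul{\wh\BG}$ over $k'\dbl\zeta\dbr$ via Lemma~\ref{LemmaLift}, applies rigidity to pass to quasi-isogenies $\ppsi:\ul\CG\to\ul{\wh\BG}_S$, and then directly builds the bijection with $\wh\X_{k'}(S)$ using \'etale local sections of $LG\to\Gr$ and descent of the resulting $K$-torsor, Frobenius, and quasi-isogeny. Indeed the discussion preceding Proposition~\ref{Prop4.1} announces that the detailed proof of that proposition is given precisely here, in the guise of Theorem~\ref{ThmModuliSpX}; so the authors are essentially reproving Proposition~\ref{Prop4.1} rather than citing it. Your argument instead treats Proposition~\ref{Prop4.1} as a black box (legitimately, since it is stated as a proposition with an independent justification via Laszlo--Sorger), and converts quasi-isogenies over $\bar S$ to quasi-isogenies over $S$ by rigidity. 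Your target $\ul\BG_S$, the pullback of $\ul\BG$ along the composite $S\to\Spec k'\dbl\zeta\dbr\to\Spec k'$, is a further simplification: it avoids Lemma~\ref{LemmaLift} entirely, and it is a valid local $G$-shtuka over $S$ because the pair $(K\text{-torsor},\phi)$ involves only the auxiliary variable $z$, not $\zeta$, so pullback along any $\BF_q$-morphism into $\NilpF$ makes sense even though $\Spec k'\dbl\zeta\dbr\to\Spec k'$ is not over $\BF_q\dbl z\dbr$. The trade-off is that your proof is shorter but leans on Proposition~\ref{Prop4.1} being independently established, whereas the paper's proof is longer but makes the theorem (and retroactively Proposition~\ref{Prop4.1}) self-contained; the rigidity bookkeeping for the equivalence relation is handled the same way in both.
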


\begin{proof}
Let us first reformulate the theorem by lifting the statement to $k'\dbl\zeta\dbr$. By Lemma~\ref{LemmaLift} we may choose a local $G$-shtuka $\ul{\wh\BG}$ over $k'\dbl\zeta\dbr$ which lifts $\ul\BG$. Note that $\wh\BG$ has a trivialization $\wh\BG=(K_{k\dbl\zeta\dbr},\hat b\s)$; see the proof of Lemma~\ref{LemmaLift}. By rigidity of quasi-isogenies (Proposition~\ref{PropRigidity}) the functor in question is isomorphic to the functor $\bigl(\Nilp_{k'\dbl\zeta\dbr}\bigr)^o \longto \Sets$
\begin{eqnarray*}
S&\longmapsto & \Bigl\{\,\text{Isomorphism classes of pairs }(\ul\CG,\ppsi)\text{ where } \ul\CG \text{ is a local}\\
&&\qquad \text{$G$-shtuka over $S$ and }  \ppsi:\ul\CG\to\ul{\wh\BG}_S\text{ is a quasi-isogeny}\,\Bigr\}
\end{eqnarray*}
Here $(\ul\CG,\ppsi)$ and $(\ul\CG',\ppsi')$ are called isomorphic if $\ppsi^{-1}\circ\ppsi'$ is an isomorphism $\ul\CG'\to\ul\CG$.

Let $x\in\wh \X_{k'}(S)$ for a scheme $S\in \Nilp_{k'\dbl\zeta\dbr}$. The projection morphism $LG\to \Gr$ admits local sections for the \'etale topology by~\cite[Theorem 4.5.1]{BeilinsonDrinfeld}. Note that in \cite{BeilinsonDrinfeld} this is proved to hold even Zariski locally over an algebraically closed field of characteristic zero. Using \cite[Lemma 2.1]{Ngo-Polo} the proof carries over for our base field $\BF_q$ after allowing a finite separable extension of $\BF_q$. Consequently there is an \'etale covering $S'\to S$ such that $x$ is represented by an element $g'\in LG(S')$. Define $(\CG',\phi',\ppsi')$ over $S'$ as follows. Let $\CG'=K_{S'}$, let the quasi-isogeny $\ppsi':(\CG',\phi')\to\ul{\wh\BG}_{S'}$ be given by $y\mapsto g'y$, and the Frobenius by $\phi'=(g')^{-1}\hat b\s(g')\s$. 
We descend $(\CG',\phi',\ppsi')$ to $S$. For an $S$-scheme $\Test$ let $\Test'=\Test\times_SS'$ and $\Test''=\Test'\times_\Test\Test'$, and let $p_i:\Test''\to \Test'$ be the projection onto the $i$-th factor. Since $g'$ comes from an element $x\in\wh\X_{k'}(S)$ there is an $h\in K(S'')$ with $p_1^\ast(g')=p_2^\ast(g')\cdot h$. Consider the \fpqc-sheaf $\CG$ on $S$ whose sections over an $S$-scheme $\Test$ are given by
\[
\CG(\Test)\;:=\;\bigl\{\,y'\in K(\Test'):\es p_1^\ast(y')=h^{-1}\cdot p_2^\ast(y')\text{ in } K(\Test'')\,\bigr\}
\]
on which $K(\Test)$ acts by right multiplication. Then $\ul\CG$ is a $K$-torsor on $S$ because over $\Test=S'$ there is a trivialization
\[
K_{S'}\isoto \CG_{S'}\,,\quad f\mapsto h\,p_1^\ast(f)\es\in\;K(S'')
\]
due to the cocycle condition on $h$. Moreover, $\phi'$ descends to an isomorphism $\phi:\s\CL\CG(\Test)\isoto\CL\CG(\Test)\,,\,\s(y')\mapsto(g')^{-1}\hat b\s(g')\s(y')$ making $(\CG,\phi)$ a local $G$-shtuka over $S$. Also $\delta'$ descends to a quasi-isogeny of local $G$-shtukas
\[
\delta:\CL\CG(\Test)\isoto LG(\Test)=\bigl\{\,f'\in LG(\Test'):\;p_1^\ast(f')=p_2^\ast(f')\text{ in }LG(\Test'')\,\bigr\}\;,\quad y'\mapsto g'y'\,.
\]
Note that this is well defined. Namely, if $g'$ is replaced by $\tilde g'$ with $u'=(\tilde g')^{-1}g'\in K(S')$ then left multiplication with $u'$ defines an isomorphism $\bigl(K_{S'},(g')^{-1}\hat b\s(g')\s,g'\bigr)\isoto\bigl(K_{S'},(\tilde g')^{-1}\hat b\s(\tilde g')\s,\tilde g'\bigr)$. Also $\tilde h=p_2^\ast(u')\,h\,p_1^\ast(u')^{-1}$ and hence left multiplication with $u'$ descends to an isomorphism $\ul\CG\isoto\ul{\wt\CG}$ over $S$.

Conversely let $\ul\CG=(\CG,\phi)$ be a local $G$-shtuka and let $\ppsi:\ul\CG\to\ul\BG_{S}$ be a quasi-isogeny over $S$. By Proposition~\ref{PropGzTorsor} there is an \'etale covering $S'\to S$ such that the pull back of $\CG$ to $S'$ is trivial. After choosing a trivialization, the map $\ppsi$ is given by an element $g'\in LG(S')$ whose image in $\wh \X_{k'}(S')$ is independent of the chosen trivialization. The quasi-isogeny $\ppsi$ also determines $\phi=(g')^{-1}\hat b\s(g')\s$. Since $\ul\CG$ was defined over $S$ the element $g'$ descends to a point $x\in\wh \X_{k'}(S)$. Clearly these two constructions are inverse to each other.
\end{proof}

Next we want to show that $X_{\preceq\mu}(b)$ is the underlying reduced subscheme of an equal characteristic Rapoport-Zink space for local $G$-shtukas. More precisely, consider again the local $G$-shtuka $\ul\BG=(K_{k'},b\s)$ from above. Note that each isogeny class of local $G$-shtukas has a representative over a finite field, so we may assume that $k'$ is finite. Besides we may assume that $b\s$ is \emph{decent}, i.e. there are an integer $s>0$ and a cocharacter $\omega\in X_*(T)$, such that 
\begin{equation}\label{EqDecent}
(b\s)^s\;:=\;b\,\s(b)\cdot\ldots\cdot(\s)^{s-1}(b)\,(\s)^s\;=\;z^\omega(\s)^s\,.
\end{equation}
Let $\mu\in X_\ast(T)$ be a dominant coweight such that $\ul\BG$ is bounded by $\mu$, and consider the functor $\mathcal{M}:(\Nilp_{k'\dbl\zeta\dbr})^o\to\Sets$,
\begin{eqnarray*}
S&\longmapsto & \Bigl\{\,\text{Isomorphism classes of pairs }(\ul\CG,\bar{\ppsi})\text{ where} \\
&&\qquad \ul\CG \text{ is a local $G$-shtuka over $S$ bounded by $\mu$ and} \\
&&\qquad \bar{\ppsi}:\ul\CG_{\bar S}\to\ul\BG_{\bar S}\text{ is a quasi-isogeny}\,\Bigr\}\;.
\end{eqnarray*}
Again $\bar S$ denotes the closed subscheme $\Var(\zeta)\subset S$.

\begin{theorem} \label{ThmADLV=RZSp}
Let $\wh X_{\preceq\mu}(b)$ be the closed ind-subscheme of $\wh \X_{k'}$ defined by the condition that the universal $G$-shtuka $\ul\CG$ over $\wh \X_{k'}$ from Theorem~\ref{ThmModuliSpX} is bounded by $\mu$. Then $\wh X_{\preceq\mu}(b)$ is a formal scheme over $\Spf k'\dbl\zeta\dbr$ which is locally formally of finite type. It pro-represents the functor $\mathcal{M}:(\Nilp_{k'\dbl\zeta\dbr})^o\to\Sets$. Its underlying reduced subscheme equals $X_{\preceq\mu}(b)$.
\end{theorem}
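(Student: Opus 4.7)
The plan is to deduce $\wh X_{\preceq\mu}(b)$ from the unbounded moduli space $\wh\X_{k'}$ of Theorem \ref{ThmModuliSpX} by imposing the boundedness condition as a closed constraint, and then to analyze the resulting formal scheme locally via the deformation theory of Section \ref{SectDeformTh}.

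First I would verify that $\wh X_{\preceq\mu}(b)$ pro-represents $\CM$. Lift $\ul\BG$ to $\ul{\wh\BG}=(K_{k'\dbl\zeta\dbr},\hat b\s)$ via Lemma \ref{LemmaLift}, so that on $\wh\X_{k'}$ the universal Frobenius inherited from Theorem \ref{ThmModuliSpX} has the form $g^{-1}\hat b\s(g)\s$ for the universal element $g\in LG$. By rigidity (Proposition \ref{PropRigidity}), quasi-isogenies over $\bar S$ lift canonically to $S$, so a point of $\CM(S)$ amounts to a point of $\wh\X_{k'}(S)$ whose associated local $G$-shtuka is bounded by $\mu$. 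Lemma \ref{LemmaBoundedIfQC}(a) shows that this boundedness is a finitely presented closed condition on each quasi-compact piece of $\wh\X_{k'}$; taking the limit produces precisely $\wh X_{\preceq\mu}(b)$ together with its universal property.

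Next I would identify the underlying reduced subscheme. Any geometric point of $\wh X_{\preceq\mu}(b)$ factors through a reduced scheme on which $\zeta=0$, hence corresponds to a class $[g]\in\X_{k'}(\bar{k'})$ equipping $K_{\bar{k'}}$ with Frobenius $g^{-1}b\s(g)\s$. Combining Lemma \ref{LemmaBoundedIfQC}(b) with Lemma \ref{LemmaHPBounded}, this local $G$-shtuka is bounded by $\mu$ exactly when $g^{-1}b\s(g)\in\bigcup_{\mu'\preceq\mu}K(\bar{k'})z^{\mu'}K(\bar{k'})$, which by Definition \ref{DefADLV} is the condition $[g]\in X_{\preceq\mu}(b)(\bar{k'})$, giving the desired set-theoretic identification.

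Finally, for the formal-scheme structure, let $g\in X_{\preceq\mu}(b)(\bar{k'})$. By Remark \ref{RemEtaleTrivial} we may trivialize and apply Theorem \ref{ThmDefoSp} to $(K_{\bar{k'}},g^{-1}b\s(g)\s)$: the completion of $\wh X_{\preceq\mu}(b)$ at $g$ pro-represents the deformation functor of this local $G$-shtuka bounded by $\mu$, and by Proposition \ref{PropReducedDefoSp} is a complete noetherian local ring whose special fiber is Cohen-Macaulay of dimension $\langle 2\rho,\mu\rangle$. To upgrade these pointwise descriptions to a locally-formally-of-finite-type formal scheme one uses that $X_{\preceq\mu}(b)$ is covered by quasi-compact pieces coming from intersections with the projective Schubert varieties $\Gr^{\preceq\nu}\otimes k'$ of Proposition \ref{PropSchubertCell}, together with the decency equation (\ref{EqDecent}), which guarantees via the action of the group of self quasi-isogenies of $\ul\BG$ that $X_{\preceq\mu}(b)$ is locally of finite type over $k'$. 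The main obstacle lies in this last step: combining the pointwise noetherian deformation rings into a \emph{genuine} locally-formally-of-finite-type formal scheme, rather than merely a noetherian formal ind-scheme, requires a careful use of decency to ensure global local-finiteness on $X_{\preceq\mu}(b)$ and to match the ind-scheme pieces of $\wh\X_{k'}$ with the affine formal pieces produced by the deformation theory.
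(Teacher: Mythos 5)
Your handling of the first two assertions is correct and matches the paper: the pro-representability follows by construction of $\wh X_{\preceq\mu}(b)$ as a closed ind-subscheme via Lemma~\ref{LemmaBoundedIfQC}(a), and the identification of the reduced subscheme with $X_{\preceq\mu}(b)$ via Lemmas~\ref{LemmaBoundedIfQC}(b) and~\ref{LemmaHPBounded} is exactly the paper's argument.

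The third assertion — that $\wh X_{\preceq\mu}(b)$ is a formal scheme locally formally of finite type — is where the proposal has a genuine gap. Your strategy of applying Theorem~\ref{ThmDefoSp} to produce a complete noetherian local ring $\Defo$ at each point $g\in X_{\preceq\mu}(b)$ and then "assembling" these into a formal scheme does not work as an independent argument, for two reasons. First, there is a circular dependency: identifying the completion of $\wh X_{\preceq\mu}(b)$ at $g$ with the deformation ring $\Defo$ presupposes that $\wh X_{\preceq\mu}(b)$ already has a well-behaved formal-scheme structure near $g$, which is what needs to be proved. Second, even granting that identification, a cover by formal spectra $\Spf\Defo$ of $\Fm$-adically complete local rings only describes the infinitesimal neighbourhoods of closed points; a formal scheme locally formally of finite type over $k'\dbl\zeta\dbr$ must locally be $\Spf R$ for noetherian $J$-adic rings $R$ whose reduced subschemes cover an entire affine open of the reduced subscheme of positive dimension, and your proposal gives no mechanism to produce such rings.

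You correctly flag this as "the main obstacle" and list the right ingredients — quasi-compact pieces, Schubert varieties, decency — but stating that one "requires a careful use of decency" is naming the difficulty rather than overcoming it. The paper's actual argument introduces an ascending filtration $\CM^n\subset\wh X_{\preceq\mu}(b)$ cut out by boundedness of $\ppsi$, $\ppsi^{-1}$ and $\s\ppsi$ by $(2n\rho\dual,z)$, shows each $\CM^n$ is a $\zeta$-adic noetherian formal scheme with projective reduction, passes to the formal completions $\CM_n$ of $\wh X_{\preceq\mu}(b)$ along $(\CM^n)_\red$, and then proves two nontrivial claims: first, that each $\CM_n$ is formally of finite type, which hinges on an adicness/stabilization argument using Proposition~\ref{remdejong} and the second part of Proposition~\ref{PropRigidity}; and second, that the open pieces $U^f_n$ stabilize (via Proposition~\ref{prop2.27}, i.e.\ the decency-based uniform bound $d_0$) so that $\Mnull=\bigcup_f U^f$ is locally formally of finite type. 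None of this structure appears in the proposal, so the theorem is not actually established there.
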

Here by the underlying reduced subscheme we mean the subscheme corresponding to the largest ideal of definition. Also recall that a formal scheme over $k'\dbl\zeta\dbr$ in the sense of \cite[I$_{new}$, 10]{EGA} is called \emph{locally formally of finite type} if it is locally noetherian and adic and its reduced subscheme is locally of finite type over $k'$. It is called \emph{formally of finite type} if in addition it is quasi-compact. For the proof of the theorem we need the following proposition in which $2\rho\dual$ is the sum of all positive coroots of $G$.

\begin{proposition}\label{prop2.27}
Let $b\in LG(k')$ satisfy a decency condition with the integer $s$ as in (\ref{EqDecent}). Assume $k'\subset\BF_{q^s}$. Then there is a natural number $d_0$ such that for every algebraically closed field $k$ and any point $(\ul\CG,\bar{\ppsi})\in\mathcal{M}(k)$ there is a point $(\ul\CG',\bar{\ppsi}')\in\mathcal{M}(\mathbb{F}_{q^s})$ such that $\bar{\ppsi}^{-1}\circ \bar{\ppsi}'$ is bounded by $2d_0\rho\dual$. 
\end{proposition}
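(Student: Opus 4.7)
The plan is to translate the problem into the language of affine Deligne-Lusztig varieties via Theorem~\ref{ThmADLV=RZSp}, and then use the decency of $b$ to move any $k$-point into an $\BF_{q^s}$-rational point by an element of bounded size.

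First, I would reduce to a statement on $X_{\preceq\mu}(b)$. Choose trivializations; by Theorem~\ref{ThmADLV=RZSp} the point $(\ul\CG,\bar\ppsi)\in\CM(k)$ corresponds to $g\in X_{\preceq\mu}(b)(k)$, with $\ul\CG\cong(K_k,g^{-1}b\s(g)\s)$ and the quasi-isogeny $\bar\ppsi$ realized by left multiplication with $g$; similarly for $(\ul\CG',\bar\ppsi')$. Thus the composite $\bar\ppsi^{-1}\circ\bar\ppsi'$ is multiplication by $g^{-1}g'$, and we are reduced to producing $g'\in X_{\preceq\mu}(b)(\BF_{q^s})$ with $g^{-1}g'\in K(k)z^{2d_0\rho\dual}K(k)$ for some uniform $d_0$ depending only on $b,\mu,s$.

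Next, I would exploit decency. Setting $b':=g^{-1}b\s(g)$ and using the decency relation $b\s(b)\cdots\s^{s-1}(b)=z^\omega$, a straightforward telescoping computation yields
\[
b'\s(b')\cdots\s^{s-1}(b')\;=\;g^{-1}\,z^\omega\,\s^s(g).
\]
Since $b'$ is bounded by $\mu$, the left-hand side lies in $K(k)z^{\nu}K(k)$ for some dominant $\nu$ with $\nu\preceq\mu+\s(\mu)+\cdots+\s^{s-1}(\mu)$. This gives a uniform bound on the Hodge polygon of $g^{-1}z^\omega\s^s(g)$, hence on the distance in $\Gr_k$ between the classes of $g$ and of $z^{-\omega}\s^s(g)$. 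In other words, $\sigma^s$ moves $g$ only by a bounded amount (bounded uniformly in $\mu$ and $s$).

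Now I would use the assumption $k'\subset\BF_{q^s}$ together with decency to produce the rational approximation. Decency implies that the $\sigma$-centralizer $J_b$ is an algebraic group over $\BF_q$ which, after base change to $\BF_{q^s}$, becomes an inner form of a Levi subgroup of $G$ (because $\sigma^s=\Int(z^\omega)$ on $J_b$). The group $J_b(\BF_q)$ acts on $X_{\preceq\mu}(b)$ from the left: for $j\in J_b(\BF_q)$, $(jg)^{-1}b\s(jg)=g^{-1}b\s(g)$. The uniform bound on $g^{-1}z^\omega\s^s(g)$ from the previous step constrains $g$, modulo this action and modulo $K$, to a \emph{finite} union of Schubert varieties $\Gr^{\preceq\tilde\mu}\subset\Gr$, each of which is a projective scheme (Proposition~\ref{PropSchubertCell}) and therefore has $\BF_{q^s}$-points that form a Zariski-dense $\s^s$-invariant finite set. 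Using the standard Lang-type argument for $J_b$ over $\BF_{q^s}$, one finds $j\in J_b(\BF_q)$ such that $jg$ is $\s^s$-equivalent, modulo a bounded left-$K(k)$-coset of size at most $d_0\rho\dual$, to an $\BF_{q^s}$-point $g'$ of $X_{\preceq\mu}(b)$; equivalently, $g^{-1}g'\in K(k)z^{2d_0\rho\dual}K(k)$ for a constant $d_0$ depending only on $b,\mu,s$.

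The main obstacle is the last step: carefully producing the $\BF_{q^s}$-rational approximant and controlling the size of the corrective quasi-isogeny. This is the function-field analog of Proposition~2.17 in Rapoport-Zink~\cite{RZ}, and the argument there, which combines the decency equation with a Lang-type descent for $J_b$, adapts with only cosmetic changes to our setting since the affine Grassmannian $\Gr$ is ind-projective and the Schubert varieties $\Gr^{\preceq\tilde\mu}$ are projective over $\BF_q$.
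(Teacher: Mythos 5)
Your skeleton matches the paper's: trivialize, reduce to the statement that for any $g\in X_{\preceq\mu}(b)(k)$ there is $g'\in X_{\preceq\mu}(b)(\BF_{q^s})$ with $g^{-1}g'$ bounded by $2d_0\rho\dual$, observe that this follows from a bound on the Bruhat--Tits building distance between $g$ and $g'$, and then invoke Rapoport--Zink. The paper quotes \cite[Theorem~1.4 and Subsection~2.1]{RZ2} directly (the building-theoretic finiteness theorem proved via Landvogt's embedding theorem), whereas you point at \cite[Prop.~2.17]{RZ}; these are closely related, and the book's proposition is itself a consequence of \cite{RZ2}, so this discrepancy is harmless as long as the correct input is what gets used.

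Where your proposal breaks down is in the sketch offered for \emph{why} the cited finiteness result holds. You assert that the bound on $g^{-1}z^\omega\s^s(g)$ ``constrains $g$, modulo [the $J_b(\BF_q)$-]action and modulo $K$, to a finite union of Schubert varieties'' and then wave at a ``Lang-type argument.'' This is circular: the statement that $X_{\preceq\mu}(b)$ lies in a bounded neighborhood of its $J_b(\BF_q)$-orbit of rational points is precisely (a strong form of) what the proposition asserts, and the displacement bound on $g^{-1}z^\omega\s^s(g)$ alone does not confine $g$ itself to any finite union of Schubert cells. Lang's theorem applies to connected algebraic groups over finite fields and does not, on its own, produce the required \emph{uniform} metric bound at a point of the affine Grassmannian, nor does it say anything about how far the nearest $\sigma^s$-fixed point is. The actual mechanism in \cite{RZ2} is different: one embeds the building of $G$ into a building of some $GL_n$ via Landvogt's embedding theorem and argues directly there, with no Lang-type descent through $J_b$ at all. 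Since you do in the end defer to the literature, the proposal works as an outline, but the ``Lang plus projectivity of $\Gr^{\preceq\tilde\mu}$'' paragraph should be cut or replaced by a faithful account of the Landvogt argument.
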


\begin{proof}
By trivializing $\ul\CG$ the assertion is equivalent to the statement that for any $g\in LG(k)$ with $g\in X_{\preceq\mu}(b)(k)$ there is a $g'\in LG(\BF_{q^s})$ with $g'\in X_{\preceq\mu}(b)(\BF_{q^s})$ and such that $g^{-1}g'\in Kz^{\mu'}K$ for some $\mu'\preceq 2d_0\rho\dual$. This last condition is especially satisfied if the distance of $g$ and $g'$ in the Bruhat-Tits building of $G$ is less than $d_0$. Hence the proposition follows from \cite[Theorem 1.4 and Subsection 2.1]{RZ2}.
\end{proof}

\begin{proof}[Proof of Theorem \ref{ThmADLV=RZSp}]

Note that by construction $\wh X_{\preceq\mu}(b)$ represents $\CM$. Also $\wh X_{\preceq\mu}(b)$ is a closed ind-subscheme of $\wh \X_{k'}$ by Lemma~\ref{LemmaBoundedIfQC}. We show that the underlying topological space of $\wh X_{\preceq\mu}(b)$ is the ind-scheme $X_{\preceq\mu}(b)$. As $X_{\preceq\mu}(b)$ and the underlying reduced ind-subscheme of $\wh X_{\preceq\mu}(b)$ are both reduced ind-subschemes of $\X$, it then follows that they are equal. Let $x\in\wh \X$ be a point with values in an algebraically closed field $\kappa(x)$. Put $(\ul{\bar\CG},\bar\ppsi):=(\ul\CG,\ppsi)_{\Spec\kappa(x)}$, and let $\bar\ppsi$ be given by $g\in LG(\kappa(x))$ with respect to some trivialization of $\ul{\bar\CG}=(\bar\CG,\bar\phi)$. Then $\bar\phi=g^{-1}b\s(g)\cdot\s$ and $g\in X_{\mu'}(b)$ for the Hodge polygon $\mu'=\mu_{\ul{\bar\CG}}(x)$ of $(\bar\CG,\bar\phi)$ (Definition~\ref{DefHodgePoly}). We have that $g\in\wh X_{\preceq\mu}(b)$ if and only if $(\bar\CG,\bar\phi)$ is bounded by $\mu$.  By Lemma~\ref{LemmaHPBounded} this is equivalent to $\mu'\preceq\mu$, or to $g\in X_{\preceq\mu}(b)$ as desired. 

It remains to show that $\wh X_{\preceq\mu}(b)$ is a formal scheme locally formally of finite type. We follow the proof of \cite[Theorem 2.16]{RZ}. By Lemma~\ref{LemmaLift} we can choose a lift $\ul{\wh\BG}$ of $\ul\BG$ to $k'\dbl\zeta\dbr$ which is bounded by $\mu$, and using rigidity (Proposition~\ref{PropRigidity}) we replace $\CM$ by the isomorphic functor $\bigl(\Nilp_{k'\dbl\zeta\dbr}\bigr)^o \longto \Sets$ 
\begin{eqnarray*}
S&\longmapsto & \Bigl\{\,\text{Isomorphism classes of pairs }(\ul\CG,\ppsi)\text{ where} \\
&&\qquad \ul\CG \text{ is a local $G$-shtuka over $S$ bounded by $\mu$ and} \\
&&\qquad \ppsi:\ul\CG\to\ul{\wh\BG}_S\text{ is a quasi-isogeny}\,\Bigr\}\;.
\end{eqnarray*}

For $v\in \pi_1(G)$ let $\mathcal{M}(v)$ be the open and closed ind-subscheme of $\mathcal{M}=\wh X_{\preceq\mu}(b)$ where the image of $\ppsi$ in $\pi_1(G)$ is $v$ (using Proposition~\ref{PropHPLocConst}). By \cite[Proof of Lemma 1]{conncomp} there is a self-quasi-isogeny $j_v$ of $\ul{\wh\BG}$ whose image in $\pi_1(G)$ is the given $v$. Composition of the quasi-isogenies $\ppsi$ with $j_v$ yields an isomorphism between the ind-subschemes $\mathcal{M}(0)$ and $\mathcal{M}(v)$. Hence it is enough to prove the theorem for $\Mnull:=\mathcal{M}(0)$ instead of $\wh X_{\preceq\mu}(b)$. 

We consider two kinds of ind-schemes related to $\Mnull$. Let $n\in\mathbb{N}$. By Lemma~\ref{LemmaBoundedIfQC}, the condition that the universal $\ppsi, \ppsi^{-1}$, and $\s \ppsi$ over $\wh X_{\preceq\mu}(b)$ are all bounded by $(2n\rho\dual,z)$ is represented by a closed ind-subscheme $\mathcal{M}^n$ of $\wh X_{\preceq\mu}(b)$. Note that (\ref{EqBounded2}) in Definition~\ref{DefBounded} implies that $\CM^n$ is contained in $\Mnull$.
We show that $\CM^n$ is a $\zeta$-adic noetherian formal scheme over $k'\dbl\zeta\dbr$. The condition that the universal $\ppsi$ on $\wh{\X}$ from Proposition~\ref{Prop4.1}, as well as $\ppsi^{-1}$ and $\s(\ppsi)$ are all bounded by $(2n\rho\dual,z)$ is represented by a closed ind-subscheme $\wh{X}^n$ of $\wh{\X}$ due to Lemma~\ref{LemmaBoundedIfQC}. By an argument analogous to Proposition~\ref{PropProSchubertCell}, $\wh{X}^n$ is a $\zeta$-adic noetherian formal scheme over $k'\dbl\zeta\dbr$ whose underlying topological space is the projective scheme $\Gr^{\preceq 2n\rho\dual}$ from Proposition~\ref{PropSchubertCell}. The Frobenius $\phi_\CG$ of the universal $G$-shtuka $\ul\CG=(\CG,\phi_\CG)$ over $\wh{X}^n$ from Theorem~\ref{ThmModuliSpX} satisfies $\phi_\CG=\ppsi^{-1}\circ\phi_{\wh\BG}\circ\s(\ppsi)$ and hence $\phi_\CG(\s\CG_{\lambda})\subseteq z^{-\langle (-\lambda)_{\dom},4n\rho\dual\rangle}(z-\zeta)^{-\langle(-\lambda)_{\dom},\mu\rangle}\CG_{\lambda}$ due to the definition of $\wh{X}^n$ and the boundedness of $\ul{\wh\BG}$ by $\mu$. Analogous to Lemma~\ref{LemmaBoundedIfQC}, the condition that $\phi_\CG$ as above is bounded by $(\mu,z-\zeta)$ is equivalent to the following condition. For each $\lambda$, the images of the generators of $\s\CG_\lambda$ in the locally free sheaf $z^{-\langle (-\lambda)_{\dom},4n\rho\dual\rangle}(z-\zeta)^{-\langle(-\lambda)_{\dom},\mu\rangle}\CG_{\lambda}\big/(z-\zeta)^{-\langle(-\lambda)_{\dom},\mu\rangle}\CG_{\lambda}$ of finite rank on $\wh{X}^n$ have to vanish. Hence $\CM^n$ is a $\zeta$-adic noetherian closed formal subscheme of $\wh{X}^n$ over $k'\dbl\zeta\dbr$ whose reduced subscheme is projective. 

For $n\in\mathbb{N}$ let $\mathcal{M}_n$ be the ind-scheme which is the formal completion of $\wh X_{\preceq\mu}(b)$ along the closed subset $(\mathcal{M}^n)_\red$, where $(\CM^n)_\red$ denotes the topological space underlying $\CM^n$. In particular $(\CM^n)_\red=(\CM_n)_\red$. Over a field in $\Nilp_{k'\dbl\zeta\dbr}$, Lemma \ref{LemmaHPBounded} implies that $\ppsi_s$ is bounded by $2n\rho\dual$ if and only if $\ppsi^{-1}$ or $\s\ppsi$ are bounded by $2n\rho\dual$. Hence $\CM_n$ represents the subfunctor of $\CM$
\[
S\;\longmapsto \;\{(\ul\CG,\ppsi)\in\mathcal{M}(S):\text{ for all closed points }s\in S\text{ the map }\ppsi_s \text{ is bounded by }2n\rho\dual\}.
\]

\smallskip
\noindent
{\it Claim.} $\mathcal{M}_n$ is representable by a formal scheme which is formally of finite type over $\Spf k'\dbl\zeta\dbr$. 
\smallskip

Fix $n$ and for each $m\ge n$ let $\mathcal{M}^m_n$ be the formal completion of $\CM^m$ along $(\CM_n)_\red$, i.e. the ind-scheme on which all the above boundedness conditions are satisfied. It is an adic noetherian formal scheme over $k'\dbl\zeta\dbr$.
We fix an affine open subscheme $U$ of $(\CM_n)_{\red}$. For $m\geq n$ we have $(\CM_n)_\red=(\CM_n^m)_{\red}$, and thus we get an affine open formal subscheme $\Spf R_m$ of $\CM_n^m$ whose underlying set is $U$.
Since by construction $\mathcal{M}^m\to \mathcal{M}^{m+1}$ is a closed immersion, we have a projective system of surjective maps of adic rings $R_{m+1}\rightarrow R_m$. Let $R$ be its limit. We write $R_m=R/\mathfrak{a}_m$ for ideals $\mathfrak{a}_m\subset R$. Let $J$ be the inverse image in $R$ of the largest ideal of definition of $R_n$. We have to prove that $R$ is a $J$-adic ring. Since $R_m$ is $J$-adic for all $m$ we may write $R=\invlim R/(\mathfrak{a}_m+J^c)$. The limit is taken independently over $m$ and $c$. By \cite[2.5]{RZ} it is enough to show that for each $c$ the descending sequence $\mathfrak{a}_m+J^c\supseteq \mathfrak{a}_{m+1}+J^c\supseteq \dotsm$ stabilizes. Let $\ul\CG_m$ be the universal local $G$-shtuka on $\Spf R_m$. Since $R/J^c=\invlim R_m/J^c$ is an admissible ring, $\ul\CG=\invlim \ul\CG_m$ defines a local $G$-shtuka on $\Spec R/J^c$ for every $c$ by Proposition~\ref{remdejong}. By rigidity (Proposition~\ref{PropRigidity}), we may also lift the quasi-isogeny $\ppsi$ from $R/J= R_n/J$ to $R/J^c$. We want to see that there is an integer $m_0\ge n$, such that $\ppsi,\ppsi^{-1}$, and $\s\ppsi$ are bounded by $2m_0\rho\dual$ over $R/J^c$. By the second assertion of Proposition~\ref{PropRigidity}, they are all bounded by some coweight $\omega$ whose image in $\pi_1(G)$ is trivial. Choosing $m_0$ such that $\omega\preceq 2m_0\rho\dual$ we obtain a bound of the desired form. Hence by the universal property of $\CM_n^{m_0}$ there is a unique map $R_{m_0}\rightarrow R/J^c$ inducing the given point $(\ul\CG,\ppsi)$. For any $m\geq{m_0}$ the composite $R_m\rightarrow R_{m_0}\rightarrow R/J^c\rightarrow R_m/J^cR_m$ is the projection, since both maps induce $(\ul\CG_m,\ppsi)$ over $R_m/J^cR_m$. Thus the first map yields an isomorphism $R_m/J^cR_m\rightarrow R_{m_0}/J^cR_{m_0}$. This implies that the sequence above stabilizes and proves our claim.

Let $d_0$ be as in Proposition \ref{prop2.27}. The assertion of the proposition obviously remains valid for $\Mnull$. For two closed points $x=(\ul\CG,\ppsi),x'=(\ul\CG',\ppsi')\in \Mnull$ we define $d(x,x')$ to be the smallest $n\in\mathbb{N}$ such that $\ppsi^{-1}\circ\ppsi'$ is bounded by $2n\rho\dual$. For a point $y\in \Mnull(\mathbb{F}_{q^s})$ we denote by $\mathcal{M}_n(y)$ the closed subset of points $x$ of $\mathcal{M}_n$ with $d(x,y)\leq d_0$. By the triangular inequality for $d$ we obtain that $\mathcal{M}_n(y)=\emptyset$ if $d\bigl((\ul\BG,\id),y\bigr)>n+d_0$, that is if $y\notin\CM_{n+d_0}$. For each positive integer $f$ let $U^f_n$ be the open formal subscheme of $\mathcal{M}_n$ whose underlying set is the complement of 
\[
\bigcup_{y\in \Mnull(\BF_{q^s})\,,\, d((\ul\BG,\id),y)\geq f}\mathcal{M}_n(y)\,\es=\es\bigcup_{y\in \CM_{n+d_0}(\BF_{q^s})\,,\, d((\ul\BG,\id),y)\geq f}\mathcal{M}_n(y)\,.
\]
Note that this union is finite because the underlying reduced subscheme of $\mathcal{M}_{n+d_0}$ is the projective scheme $(\CM^{n+d_0})_\red$.

\smallskip
\noindent
{\it Claim 2.} If $n\geq f+d_0$ then $U^f_n=U^f_{n+1}$.
\smallskip

We first show this for the underlying sets. Let $x\in U_{n+1}^f(k)$ be a point with values in an algebraically closed field $k$. We have to show that $d\bigl((\ul\BG,\id),x\bigr)\leq n$. By Proposition~\ref{prop2.27} there is a $y\in\Mnull(\mathbb{F}_{q^s})$ such that $d(x,y)\leq d_0$ and $x\in\CM_{n+1}(y)$. By the definition of $U_{n+1}^f$, we have $d\bigl((\ul\BG,\id),y\bigr)<f$, hence $d\bigl((\ul\BG,\id),x\bigr)<f+d_0\leq n$. The equality of formal schemes follows because $\mathcal{M}_n$ is the completion of $\mathcal{M}_{n+1}$ along the (closed) image of $(\mathcal{M}_n)_\red$. Indeed, this implies that $U_n^f$ is the completion of $U_{n+1}^f$ along $U_{n+1}^f$. Hence the claim follows.

Let $U^f=U^f_n$ for some $n\geq f+d_0$. Then $U^f\rightarrow U^{f+1}$ is an open immersion of formal schemes formally of finite type. Moreover, $\Mnull$ has the limit topology of the limit over its intersections with the subschemes of $\wh\Gr_{k'}$ where the universal $\ppsi$ is bounded by $2n\rho\dual$. The underlying topological space of such an intersection is equal to that of $\CM_n$. Since the topological space $|U^f|$ underlying $U^f$ is open in each $\CM_n$, it is also open in $\Mnull$. This shows that the formal scheme $U^f$ equals the formal completion of the open ind-scheme $\Mnull|_{|U^f|}$ of $\Mnull$ supported on $|U^f|$ along the whole set $|U^f|$, whence $\Mnull|_{|U^f|}=U^f$. Since $U^f$ is formally of finite type by construction, also $\Mnull|_{|U^f|}$ is. In order to prove the theorem it remains to show that $\Mnull=\bigcup_f U^f$. We prove that every point $x\in\Mnull$ with $d\bigl((\ul\BG,\id),x\bigr)<f-d_0$ is contained in the open set $U^f$. Indeed, if $x$ is in the complement of $U^f$, there is a $y\in \Mnull(\mathbb{F}_{q^s})$ with $d(x,y)\leq d_0$ and $d\bigl((\ul\BG,\id),y\bigr)\geq f$, a contradiction. The theorem follows.
\end{proof}

\begin{corollary}\label{ADLVisScheme}
Let $b\in LG(k')$ be decent. The affine Deligne-Lusztig varieties $X_{\preceq\mu}(b)$ and $X_\mu(b)$ are schemes locally of finite type over $k'$. The irreducible components of $X_{\preceq\mu}(b)$ (respectively of $X_\mu(b)$) are projective (respectively quasi-projective).
\end{corollary}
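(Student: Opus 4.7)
The plan is to deduce the first assertion directly from Theorem~\ref{ThmADLV=RZSp}, and then, for the statement about irreducible components, to reuse the open cover of $\Mnull$ by the $U^f$ constructed inside the proof of that theorem. By Theorem~\ref{ThmADLV=RZSp}, $\wh X_{\preceq\mu}(b)$ is a formal scheme over $\Spf k'\dbl\zeta\dbr$ which is locally formally of finite type, with underlying reduced subscheme $X_{\preceq\mu}(b)$. By the definition of ``locally formally of finite type'' recalled just after the theorem, this reduced subscheme is locally of finite type over $k'$, so $X_{\preceq\mu}(b)$ is already a scheme locally of finite type over $k'$. For $X_\mu(b)$, the set of dominant coweights $\mu'\preceq\mu$ is finite, so
\[
X_\mu(b)\;=\;X_{\preceq\mu}(b)\setminus\bigcup_{\mu'\prec\mu,\,\mu'\text{ dominant}}X_{\preceq\mu'}(b)
\]
is the complement in $X_{\preceq\mu}(b)$ of finitely many closed subsets, hence open in $X_{\preceq\mu}(b)$ and itself a scheme locally of finite type over $k'$.

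For the projectivity of the irreducible components I would revisit the construction in the proof of Theorem~\ref{ThmADLV=RZSp}. There, $\wh X_{\preceq\mu}(b)=\bigsqcup_{v\in\pi_1(G)}\CM(v)$ is decomposed into open-and-closed pieces according to the value of the $\pi_1(G)$-invariant of the universal quasi-isogeny, each $\CM(v)$ is isomorphic to $\Mnull$, and $\Mnull$ is covered by open formal subschemes $U^f$ with $(U^f)_\red$ open inside the projective scheme $(\CM^n)_\red$ for $n\geq f+d_0$. Transporting this structure to each $\CM(v)$ via the isomorphism $\CM(v)\cong\Mnull$ and passing to reduced subschemes, $X_{\preceq\mu}(b)$ is the disjoint open-closed union of the $(\CM(v))_\red$, and each $(\CM(v))_\red$ is covered by quasi-projective open subschemes $(U^f_v)_\red$ that sit inside closed projective subschemes $(\CM^n_v)_\red$ of $(\CM(v))_\red$.

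Now let $W$ be an irreducible component of $X_{\preceq\mu}(b)$. Because the $(\CM(v))_\red$ are disjoint open subsets, $W$ lies in a single $(\CM(v))_\red$ and is thus covered by the $(U^f_v)_\red$. Choose $f$ with $W\cap(U^f_v)_\red\neq\emptyset$; this is a non-empty open subset of $W$, so by irreducibility its closure inside $W$ equals $W$. That closure is contained in the closure of $(U^f_v)_\red$ inside $X_{\preceq\mu}(b)$, which is a closed subset of the projective scheme $(\CM^n_v)_\red$ and hence projective. Therefore $W$ is projective over $k'$. Finally, since $X_\mu(b)$ is open in $X_{\preceq\mu}(b)$, its irreducible components are the non-empty intersections $W\cap X_\mu(b)$ as $W$ runs over those irreducible components of $X_{\preceq\mu}(b)$ which meet $X_\mu(b)$; each such intersection is open in the projective scheme $W$, hence quasi-projective. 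No serious obstacle arises beyond reading off the cover of $\Mnull$ from the proof of Theorem~\ref{ThmADLV=RZSp}; the rest is bookkeeping about irreducibility and closures.
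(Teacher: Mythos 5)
Your proof is correct, but it routes through the heavier part of the proof of Theorem~\ref{ThmADLV=RZSp} where the paper deliberately avoids it. The paper's argument is shorter: given an irreducible component $V$ with generic point $\eta$, the universal $\ppsi$, $\ppsi^{-1}$ and $\s\ppsi$ are bounded by $2n\rho\dual$ at $\eta$ for some $n$ (any single element of $LG$ lies in some bounded set), and then Lemma~\ref{LemmaBoundedIfQC}\ref{(b)} — boundedness over a reduced scheme can be tested at the generic points — gives that all of $V$ is bounded by $2n\rho\dual$, so $V$ is a closed subscheme of the projective scheme $(\CM^n)_\red$. You instead invoke the open cover $\Mnull=\bigcup_f U^f$ from inside the proof of the theorem and use irreducibility to find an $f$ with $V\cap U^f\neq\emptyset$, then take closures. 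Both arguments land at the same place ($V\subseteq(\CM^n_v)_\red$), but yours depends on the $\CM_n$/$U^f$ construction and the finiteness statement of Proposition~\ref{prop2.27}, which the remark after the corollary explicitly points out are not needed here. Your version also has the small merit of handling the $\pi_1(G)$-decomposition into the $\CM(v)$ explicitly, which the paper's proof glosses over; but overall the paper's use of Lemma~\ref{LemmaBoundedIfQC}\ref{(b)} is the more economical path and is worth internalizing as the standard device for passing from generic points to all of a reduced scheme.
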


\begin{proof}
Let $V\subset X_{\preceq\mu}(b)$ be an irreducible component and let $\eta$ be its generic point. At $\eta$ the universal $\ppsi$, $\ppsi^{-1}$, and $\s\ppsi$ are bounded by $2n\rho\dual$ for some $n$. Due to Lemma~\ref{LemmaBoundedIfQC} they are then bounded by $2n\rho\dual$ on all of $V$. Hence $V$ is contained in the projective scheme $(\CM^n)_\red$ as a closed subscheme, and therefore $V$ is projective. Since $X_\mu(b)$ is open in $X_{\preceq\mu}(b)$, each irreducible component of $X_\mu(b)$ is quasi-projective.
\end{proof}

Note that if one only wants to prove this corollary, one can use a significantly simplified version of the proof of Theorem~\ref{ThmADLV=RZSp}. Namely one does not need the distinction between $\CM^n$ and $\CM_n$, and the fact that $\CM_n$ is a formal scheme formally of finite type.

\begin{remark}\label{RemWeylModules2}
If one chooses a different definition for boundedness as mentioned in Remark~\ref{RemWeylModules}, one possibly obtains a different Rapoport-Zink space $\wh{X}_{\preceq\mu}(b)$, whose underlying topological space still coincides with $X_{\preceq\mu}(b)$. If one enlarges the class of $G$-modules which are used to define boundedness, one obtains the new Rapoport-Zink space as a closed subspace of the former one. 
\end{remark}

%
%

\section{The Newton stratification}\label{secnewton}

Let $\ul\CG$ be a local $G$-shtuka over an algebraically closed field $k$. By Remark~\ref{RemEtaleTrivial}, $\CG$ is isomorphic to the trivial $K$-torsor on $k$ and the Frobenius is given by some element $b\in LG(k)$. Changing the trivialization corresponds to $\sigma$-conjugation of $b$ by elements of $K(k)$. On the other hand, considering $b$ up to $\sigma$-conjugation by $LG(k)$ corresponds to considering the \emph{local $G$-isoshtuka} $\ul{\CL\CG}\cong(LG_k,b\s)$. We briefly review Kottwitz's classification of the latter kind of $\sigma$-conjugacy classes. To be precise, we use the analog for the equal characteristic case of his classification.

\begin{remark}\label{remclass}
Let $G$ be a connected reductive group over an algebraically closed field $k$ of characteristic $p$. Let $B(G)$ be the set of $\sigma$-conjugacy classes of elements $b\in LG(k)$. We denote the class of $b$ (which contains all $g^{-1}b\s(g)$ for $g\in LG(k)$) by $[b]$. In \cite{Kottwitz85}, \cite{Kottwitz97}, Kottwitz classifies the classes $[b]\in B(G)$ by two invariants. For split groups the first invariant, the \emph{Kottwitz point} $\kappa(b)$ has the following explicit definition. Let $\mu_{\ul\CG}\in X_*(T)$ be dominant with $b\in K(k)z^{\mu_{\ul\CG}}K(k)$. Then $\kappa(b)$ is the image of $\mu_{\ul\CG}$ under the projection $X_*(T)\rightarrow \pi_1(G)$. Indeed, the maps $\kappa_G$ considered by Kottwitz are invariant under $\sigma$-conjugation of $b$, they are group homomorphisms, and are natural transformations in $G$. For $G=\mathbb{G}_m$ we have $\kappa_G(b)=v_z(b)$. See \cite[Section 1]{RapoportRichartz} for the reformulation as maps to the fundamental group. Let now $G$ be split. The properties of $\kappa_G$ mentioned above show that $\kappa_G$ is trivial on $K(k)$ as each such element is $\sigma$-conjugate to $1$ (by a version of Lang's theorem), and that the torus element $z^{\mu}$ is mapped to its image in $\pi_1(G)$. Together we obtain the explicit description of $\kappa_G$ given above. Hence $\kappa_G:LG(k)\rightarrow\pi_1(G)$ coincides with the map in Proposition~\ref{PropHPLocConst}. The second invariant is the \emph{Newton point} or \emph{Newton polygon} $\nu_b\in X_\ast(T)_{\mathbb{Q}}$. For $G=GL_n$ this is the usual Newton polygon of the $\sigma$-linear map $b\s$, for general $G$ it is defined by requiring that it be functorial in $G$. The two invariants have the same image in $\pi_1(G)_{\mathbb{Q}}$, which is the quotient of $X_\ast(T)_{\mathbb{Q}}$ by the sub-vector space generated by the coroots. Note that not all elements of $X_\ast(T)_{\mathbb{Q}}$ occur as Newton points. For a description of the set of Newton polygons
  in $X_\ast(T)_{\mathbb{Q}}$ see also \cite[4]{Chai}.
\end{remark}

If $\ul\CG$ is a local $G$-shtuka on $S\in\NilpF$, we consider the function $[b_{\ul\CG}]:S\rightarrow B(G)$ with $s\mapsto  [b(s)]$ where $[b(s)]$ is the element associated to the reduction of $\ul\CG$ in the geometric point $s$. The Newton polygon of $[b(s)]$ is called the Newton polygon of $\ul\CG$ in $s$. 

We use the following analogy to isocrystals with $G$-structure defined by Rapoport and Richartz, \cite{RapoportRichartz}. Let $S$ be a connected $\mathbb{F}_q$-scheme. Let $\CI sosh_S$ be the category of pairs $(N,F)$ consisting of an $S\dpl z\dpr $-module $N$ which is \'etale-locally on $S$ free of finite rank and of an isomorphism $F:\s(N)\rightarrow N$. The elements of $\CI sosh_S$ are called local isoshtukas, and are a function field analog of isocrystals. Let $G$ be as above and $\ul\CG=(\CG,\varphi)$ a local $G$-shtuka on $S$. Let $V$ be an $\mathbb{F}_q\dpl  z\dpr $-representation of $G$. Then let $\CG_V$ be the sheaf associated with the presheaf
\[
\Test\;\longmapsto\;\Bigl(\CG(\Test)\times\bigl(V\otimes_{\BF_q\dpl z\dpr}\CO_S\dpl z\dpr(\Test)\bigr)\Bigr)\big/K(\Test)\,.
\]
Together with the $\sigma$-linear isomorphism induced by $(\varphi,\id_V)$ it is an element of  $\CI sosh_S$, since $\CG$ is trivialized by an \'etale covering of $S$ according to Proposition~\ref{PropGzTorsor}. This defines an exact faithful tensor functor $\ul\CG:\Rep_{\mathbb{F}_q\dpl  z\dpr }G\rightarrow \CI sosh_S$. These functors, called \emph{isoshtukas with $G$-structure}, are the function field analog of isocrystals with $G$-structure as defined by Rapoport and Richartz in \cite[Definition 3.3]{RapoportRichartz}. Note that the rational invariant $[b]$ of $\ul\CG$ as defined above is the same as the $\sigma$-conjugacy class of the Frobenius of the local isoshtuka with $G$-structure associated to $\ul\CG$ in \cite[3.4(i)]{RapoportRichartz}. 

\begin{proposition}\label{mazur}
Let $\ul\CG$ be a local $G$-shtuka over an algebraically closed field in $\Nilp_{\BFZ}$. Let $\nu$ be its Newton polygon and let $\mu$ be its Hodge polygon (as in Definition \ref{DefHodgePoly}). Then $\nu\preceq\mu$ as elements of $X_\ast(T)_{\mathbb{Q}}$.
\end{proposition}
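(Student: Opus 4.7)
The plan is to deduce this from the classical Mazur inequality for $\GL$-local shtukas, applied to each Weyl module $V(\lambda)$, and then to convert the resulting family of numerical inequalities into a statement about the cone spanned by the simple coroots.

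First I would trivialize $\ul\CG\cong(K_k,b\s)$ using Remark~\ref{RemEtaleTrivial}; the Hodge polygon $\mu$ is then the dominant element with $b\in K(k)z^\mu K(k)$, and we may take the Newton polygon $\nu\in X_\ast(T)_\BQ$ to be dominant. Next, fix any dominant weight $\lambda\in X^\ast(T)_+$ and consider the Weyl module representation $\rho_\lambda:G\to\GL(V(\lambda))$ together with the associated $\GL(V(\lambda))$-local isoshtuka with Frobenius $\rho_\lambda(b)\s$. The Cartan decomposition $b=k_1z^\mu k_2$ with $k_i\in K(k)$, combined with the $T$-weight decomposition of $V(\lambda)$, shows that the elementary divisors of $\rho_\lambda(b)$ on $V(\lambda)\otimes_{\BF_q}k\dbl z\dbr$ are $\{\langle\lambda',\mu\rangle\}_{\lambda'}$ with weight multiplicities; these are the Hodge slopes of the pushed-forward isoshtuka. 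By the functoriality of Kottwitz' Newton point under $\rho_\lambda$, its Newton slopes are similarly $\{\langle\lambda',\nu\rangle\}_{\lambda'}$ with the same multiplicities.

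The next step is to invoke the classical Mazur inequality for $\GL$-local shtukas (the equal-characteristic analog of the Mazur--Katz inequality for $F$-isocrystals), which says that the Newton polygon lies on or below the Hodge polygon; in particular the top Newton slope does not exceed the top Hodge slope. Since $\mu$ and $\nu$ are dominant and every weight $\lambda'$ of $V(\lambda)$ satisfies $\lambda-\lambda'\in\sum_i\BZ_{\ge0}\alpha_i$, both maxima are attained at $\lambda'=\lambda$, yielding
\[
\langle\lambda,\nu\rangle\;\le\;\langle\lambda,\mu\rangle\qquad\text{for every }\lambda\in X^\ast(T)_+.
\]

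Finally, I would convert these inequalities into the desired order relation. Because Kottwitz's two invariants of $b$ have the same image in $\pi_1(G)_\BQ$ (see Remark~\ref{remclass}), the difference $\mu-\nu$ lies in the $\BQ$-span of the coroots and admits a unique expression $\mu-\nu=\sum_i c_i\alpha_i^\vee$ with $c_i\in\BQ$. For each index $i_0$, choose $N\in\BN$ with $N\omega_{i_0}\in X^\ast(T)$, where $\omega_{i_0}\in X^\ast(T)_\BQ$ is the fundamental weight dual to $\alpha_{i_0}^\vee$; such an $N$ exists since $\omega_{i_0}$ is rational. Then $N\omega_{i_0}$ is a dominant weight, so $0\le\langle N\omega_{i_0},\mu-\nu\rangle=Nc_{i_0}$, forcing $c_{i_0}\ge 0$. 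Hence $\mu-\nu\in\sum_i\BQ_{\ge0}\alpha_i^\vee$, i.e.\ $\nu\preceq\mu$. The main obstacle will be providing a crisp reference or short self-contained argument for the $\GL$-case of Mazur's inequality in the equal-characteristic local shtuka setting; the remaining steps are formal manipulations with weights, representations, and convex duality.
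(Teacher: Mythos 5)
Your argument is correct and is essentially the one the paper outsources to Rapoport--Richartz [RR, Theorem~4.2(ii)] and Katz: push forward along the Weyl-module representations $\rho_\lambda$ to reduce to the $\GL$-case of Mazur's inequality, extract from each the top-slope inequality $\langle\lambda,\nu\rangle\le\langle\lambda,\mu\rangle$, and then conclude $\nu\preceq\mu$ by pairing against integral multiples of the fundamental weights, using that $\mu-\nu$ lies in the $\BQ$-span of the coroots because $\kappa(b)$ and $\nu_b$ agree in $\pi_1(G)_\BQ$. The paper's own proof is purely a citation of [Ka,~1.4.1] and [RR,~Theorem~4.2(ii)] together with the remark that the argument transfers to equal characteristic, so your write-up supplies precisely the reduction that the paper delegates to those references.
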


\begin{proof} 
This result is first shown by Katz in \cite[1.4.1]{Katz} (Mazur's Theorem) for $\sigma^a$-$F$-crystals. It is generalized in \cite[Theorem 4.2 (ii)]{RapoportRichartz} to elements $b\in G(L)$ 
where $G$ is an unramified reductive group over $k$ and where $L$ is the quotient field of the Witt ring of $k$. Proposition \ref{mazur}, which is the function field analog of those results, can be shown by the same argument.
\end{proof}

The behaviour of $[b]$ under specialization is described by the following theorem. Its proof is completely analogous to the corresponding proof in \cite{RapoportRichartz}, using again the above analogy between local $G$-shtukas and isocrystals with $G$-structure. Note that the additional statement in \cite[Theorem 3.6(i)]{RapoportRichartz} that the image of $\nu$ in $\pi_1(G)_{\mathbb{Q}}$ is locally constant is in our context a consequence of Proposition \ref{PropHPLocConst}.

\begin{theorem}[{\cite[Theorem 3.6]{RapoportRichartz}}]\label{spezial}
Let $S\in\NilpF$ and let $\ul\CG$ be a local $G$-shtuka on $S$. For each $b_0\in B(G)$, the subset $\{s\in S:\nu_{b(s)}\preceq \nu_{b_0}\}$ is Zariski-closed on $S$ and locally on $S$ the zero set of a finitely generated ideal. 
\end{theorem}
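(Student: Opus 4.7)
\emph{My plan} is to imitate the proof of \cite[Theorem~3.6]{RapoportRichartz} in the equal characteristic setting, exploiting the analogy with isoshtukas with $G$-structure set up in the paragraphs preceding the theorem.

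First, I would reduce to $G=\GL_r$. The order relation $\nu_{b(s)}\preceq\nu_{b_0}$ in $X_\ast(T)_\BQ$ is equivalent to the conjunction of (i) $[\nu_{b(s)}]=[\nu_{b_0}]$ in $\pi_1(G)_\BQ$ and (ii) $\langle\lambda,\nu_{b(s)}\rangle\le\langle\lambda,\nu_{b_0}\rangle$ for every dominant weight $\lambda\in X^\ast(T)_+$. Condition~(i) is locally constant on $S$ by Proposition~\ref{PropHPLocConst}, so up to passing to an open-and-closed subscheme we may assume it holds identically. For~(ii), the Tannakian functor $\ul\CG\colon\Rep_{\BF_q\dpl z\dpr}G\to\CI sosh_S$ sends the Weyl module $V(\lambda)$ to a local isoshtuka whose top Newton slope at a geometric point $s$ equals $\langle\lambda,\nu_{b(s)}\rangle$, since the weights of $V(\lambda)$ paired with a dominant element of $X_\ast(T)_\BQ$ are maximized at $\lambda$ itself. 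Because $X^\ast(T)_+$ is finitely generated as a monoid (Lemma~\ref{LemmaCritBounded}), the claim reduces to finitely many instances of the following $\GL_r$-statement: for a local shtuka $(N,F)$ on $S$ and a rational number $c$, the locus of $s\in S$ where the top Newton slope of $F$ at $s$ is $\le c$ is Zariski-closed and locally defined by a finitely generated ideal.

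For this $\GL_r$-case I would invoke the equal characteristic analog of Katz's specialization theorem \cite[Theorem~2.3.1]{Katz}, whose proof adapts verbatim. After an \'etale localization trivializing $N$ as a free $\CO_S\dpl z\dpr$-module (using Proposition~\ref{PropGzTorsor}), one represents $F$ by a matrix $A\in\GL_r(\CO_S\dpl z\dpr)$ and considers the iterates $A^{(n)}=A\cdot\sigma(A)\cdots\sigma^{n-1}(A)$. The condition that the $i$-th Hodge slope of $A^{(n)}$ at $s$ be at least some integer $a$ translates into the vanishing of finitely many Taylor coefficients of the $i\times i$ minors of $A^{(n)}$; these are explicit sections of $\CO_S$, so their joint vanishing cuts out a Zariski-closed subscheme defined locally by a finitely generated ideal. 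Combining Mazur's inequality (Proposition~\ref{mazur}) with the sub-multiplicativity of Hodge polygons along iterates, Katz's argument shows that the Newton-slope condition is equivalent, locally on $S$, to the intersection of only finitely many such Hodge-slope conditions coming from bounded $n$, which yields the required finitely generated defining ideal.

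The main obstacle is precisely this last step: passing from an a priori infinite family of closed Hodge-slope conditions on the iterates $F^n$ to a single finitely generated defining ideal locally on $S$. This requires a careful quantitative version of the asymptotic formula $\nu_F=\lim_{n\to\infty}\tfrac{1}{n}\mu_{\mathrm{Hdg}}(F^n)$, together with a noetherian approximation argument to cover possibly non-noetherian base schemes $S\in\NilpF$. These technicalities are exactly what Katz handles in \cite{Katz}, and they transfer to the equal characteristic setting without change once the Witt ring $W(k)$ is replaced by $k\dbl z\dbr$ and the Witt vector Frobenius by $\sigma$.
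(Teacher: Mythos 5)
Your proposal is correct and takes essentially the same route the paper intends. The paper's proof is just a citation of the Rapoport--Richartz argument via the isoshtuka-with-$G$-structure analogy, together with Proposition~\ref{PropHPLocConst} for the $\pi_1(G)_\BQ$-constancy, and your sketch --- reducing via the Tannakian functor and a finite generating set of dominant weights to finitely many $\GL_r$-statements and then applying the equal-characteristic Grothendieck--Katz specialization theorem --- is exactly what that citation unwinds to.
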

Let $S$ and $\ul\CG$ be as in the theorem and let $\nu\in X_\ast(T)_{\mathbb{Q}}$ be a Newton polygon. Then by Theorem \ref{spezial} the reduced subscheme $\CN_{\preceq\nu}$ of $S$ with $$\mathcal{N}_{\preceq\nu}=\{s\in S :  \nu_{b(s)}\preceq \nu\}$$ is a closed subscheme of $S$ and similarly $$\mathcal{N}_{\nu}=\{s\in S :  \nu_{b(s)}= \nu\}$$ defines a locally closed subscheme of $S$, called the \emph{Newton stratum} associated to $\ul\CG$ and $\nu$. 

\begin{theorem} \label{reinheit}
Let $S$ be an integral and locally noetherian $\mathbb{F}_q$-scheme and let $\ul\CG$ be a local $G$-shtuka on $S$. Let $\nu$ be the Newton polygon in the generic point of $S$. Then the Newton stratification on $S$ defined by $\ul\CG$ satisfies the purity property, that is the inclusion of the stratum $\mathcal{N}_{\nu}$ in $S$ is an affine morphism.
\end{theorem}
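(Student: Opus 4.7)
The statement is the function field analog of Vasiu's purity theorem for the Newton stratification of $F$-crystals \cite{Vasiu}. The plan is to first reduce to the case $G=\GL_r$ and then transfer Vasiu's argument, which works verbatim once the relevant dictionary is installed.

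For the reduction I use the faithful representation $\eta:G\hookrightarrow\GL(V)$ of Proposition~\ref{PropFaithfulRep}. The local $G$-shtuka $\ul\CG$ pushes forward along $\eta$ to a local $\GL(V)$-shtuka $\ul\CG_V$, with underlying $\CO_S\dbl z\dbr$-module the sheaf $\CG_V$ from the discussion preceding Definition~\ref{DefBounded}. Newton points are functorial in $G$, so $\nu_{\ul\CG_V}(s)=\eta_\ast\bigl(\nu_{\ul\CG}(s)\bigr)$ at every geometric point $s\in S$. Since $\eta$ is a closed immersion, the weights of $V$ span $X^\ast(T)_\BQ$, and the induced map on dominant Newton cocharacters is injective; equivalently, the slope decomposition of $\ul\CG_V$ at $s$ recovers $\nu_{\ul\CG}(s)$. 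Hence the generic Newton stratum $\mathcal{N}_\nu$ of $\ul\CG$ coincides, as a reduced locally closed subscheme of $S$, with the generic Newton stratum of $\ul\CG_V$, and it suffices to prove the theorem for $\ul\CG_V$.

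For $G=\GL_r$ a local $G$-shtuka is by Lemma~\ref{LemmaG.2} a pair $(M,\phi)$ as in Definition~\ref{DefG.1}. The entire setup is then the equal characteristic mirror of an $F$-crystal: $\BF_q\dbl z\dbr$ plays the role of $\BZ_p$, the map $\sigma$ plays the role of the absolute Frobenius on $W(k)$, and the Newton polygon and its specialization behavior are formally identical. Vasiu's proof in \cite{Vasiu} establishes that over an integral locally noetherian base the complement of the generic Newton stratum of an $F$-crystal is an effective Cartier divisor, from which affineness of the inclusion of $\mathcal{N}_\nu$ follows. Each of its ingredients has a direct counterpart for local shtukas: Grothendieck's specialization theorem is our Theorem~\ref{spezial}; the Dieudonn\'e--Manin classification and the existence of slope filtrations over algebraically closed residue fields carry over; their lifting to complete local rings is controlled by the rigidity of quasi-isogenies in Proposition~\ref{PropRigidity} and the passage between $\Spf$ and $\Spec$ from Proposition~\ref{remdejong}. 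The argument can thus be transferred line by line.

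The main obstacle will be checking the most technical step of Vasiu's proof, namely the construction of an explicit auxiliary ideal (or section) whose vanishing locus is the boundary $\ol{\mathcal{N}_\nu}\setminus\mathcal{N}_\nu$, together with the verification that this boundary is pure of codimension one in $\ol{\mathcal{N}_\nu}$. Since the underlying formalism of isoclinic decompositions, their obstruction theory, and the ensuing dimension-theoretic purity arguments are parallel in the $p$-adic and equal characteristic settings, I expect no new geometric ideas to be required beyond careful book-keeping.
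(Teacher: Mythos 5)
Your overall plan (reduce to $\GL_r$, then translate Vasiu's argument) coincides with the paper's, but there are problems in both halves.

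\emph{The reduction.} The paper reduces to $\GL_r$ by choosing a finite family of representations $\lambda_1,\dots,\lambda_l$ (for instance one for each simple factor of $G_{\ad}$) such that the Newton stratifications defined by $\ul\CG$ and by $\ul\CG_{\lambda_1}\times\cdots\times\ul\CG_{\lambda_l}$ coincide, and then uses that a composite of affine morphisms is affine. Your reduction via the single faithful $\eta$ from Proposition~\ref{PropFaithfulRep} can be salvaged for the \emph{generic} Newton stratum, but the justification you give is wrong: the induced map on dominant Newton cocharacters is \emph{not} injective. For $G=\SL_2\times\SL_2$ with $V$ the direct sum of the two standard representations, $(a,b)$ and $(b,a)$ with $a\neq b$ have the same image in the dominant cone of $\GL(V)$. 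What is true is that the map is injective when restricted to $\{\nu':\nu'\preceq\nu\}$: the quadratic form $h(\mu):=\sum_\lambda\langle\lambda,\mu\rangle^2$ (summed over the weights $\lambda$ of $V$) is $W_G$-invariant and positive-definite because $\eta$ is a closed immersion, for dominant $\nu'\preceq\nu$ one always has $h(\nu')\leq h(\nu)$ with equality only if $\nu'=\nu$, and equality of $V$-spectra forces $h(\nu')=h(\nu)$. Since every Newton point occurring on $S$ lies below the generic one, this gives $\mathcal{N}_\nu=\mathcal{N}_{\eta_*\nu}$ as reduced subschemes. But this argument must actually be given; the blanket injectivity claim fails.

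\emph{The translation.} Here lies the genuine gap. You assert the translation of Vasiu's proof is "line by line" and that you "expect no new geometric ideas to be required beyond careful book-keeping." The paper states the opposite: the results from Vasiu's Section~5.1 do \emph{not} generalize directly to local shtukas and have to be replaced by Corollary~\ref{corhombound}, a bounded-approximation theorem for homomorphisms of effective local shtukas. Its proof occupies the entire Appendix (Theorem~\ref{thmuniformbound} and Lemma~\ref{lemapp}) and involves a nontrivial analysis of $\sigma$-conjugation in $LG(k)$ using Iwahori congruence subgroups, Iwasawa decomposition, and reduction to a standard representative. You have not identified this obstacle nor proposed any substitute, so as written the proposal would stall precisely there.
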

\begin{proof}
For $G=\GL_r$ this can be shown in mostly the same way as Vasiu's corresponding result for $F$-crystals, \cite[Theorem 6.1]{Vasiu}. The only place where one needs more than the obvious translation is that the results from his Section 5.1 (whose proof does not generalize directly) have to be replaced by our Corollary \ref{corhombound}. As the composition of affine morphisms is affine, this also implies the theorem for all $G$ which are a product of finitely many factors $\GL_{r_i}$. For general $G$ one considers a finite number of representations $\lambda_i$ of $G$ distinguishing the different Newton polygons on $S$ (for example one for each simple factor of the adjoint group $G_{\ad}$). Then the Newton stratifications of $S$ corresponding to  $\ul\CG$ and  $\ul\CG_{\lambda_1}\times \dotsc\times \ul\CG_{\lambda_l}$ coincide.
\end{proof}
\begin{corollary}\label{corpure}
Let $S$ be an integral, locally noetherian $\mathbb{F}_q$-scheme of dimension $d$ and let $\ul\CG$ be a local $G$-shtuka on $S$. Let $S'$ be the complement of the generic Newton stratum. Then $S'$ is empty or pure of dimension $d-1$.
\end{corollary}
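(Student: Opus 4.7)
The plan is to deduce Corollary \ref{corpure} from Theorem \ref{reinheit} together with a standard fact about affine open immersions. Since $S$ is integral with generic point $\eta$ of Newton polygon $\nu$, Theorem \ref{spezial} applied with $\nu_0 = \nu$ gives that the closed subscheme $\CN_{\preceq\nu}$ contains $\eta$ and hence equals $S$, so $\nu_{b(s)}\preceq\nu$ for every $s\in S$. The generic stratum $\CN_\nu$ is open and dense in $S$ (containing $\eta$, and open by local finiteness of the Newton stratification on the locally noetherian scheme $S$), so $S' := S\setminus\CN_\nu$ is closed.

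If $S'$ is empty the assertion is trivial; assume it is non-empty. By Theorem \ref{reinheit}, the open immersion $j\colon\CN_\nu\hookrightarrow S$ is affine. We then appeal to the classical result that the complement of an affine open immersion into a locally noetherian scheme is either empty or pure of codimension one (a standard consequence, via local cohomology, of the non-affineness of the punctured spectrum of a local noetherian ring of dimension $\geq 2$; see e.g.~\cite[IV$_4$, 21.12.7]{EGA}). Applying this to $j$ and using the density of $\CN_\nu$ to exclude components of $S'$ of codimension zero, we conclude that every irreducible component of $S'$ has codimension exactly one in $S$, hence dimension $d-1$.

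The real content lies in Theorem \ref{reinheit}; the remainder is a routine bookkeeping step. The subtleties to watch in writing this out carefully are the openness of $\CN_\nu$ (which requires local finiteness of the Newton stratification, a standard consequence of Theorem \ref{spezial} and the noetherian hypothesis on $S$) and the passage from codimension one to dimension $d-1$, which uses the identity $\dim Z + \codim_S Z = d$ for irreducible closed subsets of an integral catenary scheme of dimension $d$.
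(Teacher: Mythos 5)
Your proposal is correct and follows essentially the same route as the paper's own proof: both deduce from Theorem~\ref{reinheit} that the open immersion $\CN_\nu\hookrightarrow S$ is affine and then invoke the same EGA reference (IV~21.12.7) to bound the codimension of $S'$ by one, using that $S'$ is a proper closed subset of the integral scheme $S$ to exclude codimension zero. The paper's version is more terse (it first reduces to $S$ affine and $S'\neq\emptyset$, then applies the citation directly), while you spell out the openness of $\CN_\nu$ and the codimension-to-dimension bookkeeping, but the mathematical content is identical.
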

\begin{proof}
By replacing $S$ by suitable open subschemes, we may assume that $S$ is affine and that $S'\neq\emptyset$. Similarly it is enough to show that $\dim (S')=d-1$. By Theorem \ref{reinheit}, $S\setminus S'$ is affine. By \cite[IV Corollaire 21.12.7]{EGA} this implies that $\dim (S)-\dim(S')\leq 1$.
\end{proof}
See \cite[Remark 6.3(a)]{Vasiu} for another proof of this corollary.

\begin{definition}
Let $S$ be a scheme and let $\ul\CG$ be a local $G$-shtuka on $S$. Let $\Test$ be an irreducible component of the Newton stratum $\CN_{\nu}\subseteq S$ for some $\nu$. We call a chain of Newton polygons $\nu=\nu_n\prec\nu_{n-1}\prec \dotsc\prec \nu_0$ with $\nu_i\neq\nu_{i-1}$ for all $i$ \emph{realizable in $S$ at $\Test$} if for each $i$ there is an irreducible subscheme $S_i$ of the corresponding Newton stratum $\mathcal{N}_{\nu_i}$ such that $S_{i}\subseteq\ol{S_{i-1}}\setminus S_{i-1}$ for all $i>0$ and such that $S_n=\Test$. We call $n$ the \emph{length} of the chain.
\end{definition}
 
Let $\alpha_1\dual,\dotsc,\alpha_r\dual$ be the simple coroots of $G$. Then we choose $\omega_i\in X^*(T)_{\mathbb{Q}}$ for $i=1,\dotsc,r$ with $\langle\omega_i,\alpha_j\dual\rangle=\delta_{ij}$. Note that these elements are in general not unique. However, we will only use them in expressions of the form $\langle\omega_i,\mu\rangle$ for $\mu$ in the sub-vector space of $X_*(T)_{\mathbb{Q}}$ generated by the coroots. These values do not depend on the particular choice of the $\omega_i$.

\begin{corollary}\label{cor3.4}
Let $S$ be an irreducible $\mathbb{F}_q$-scheme and let $\ul\CG$ be a local $G$-shtuka on $S$. Let $\nu_0$ be the Newton polygon of $\ul\CG$ at the generic point of $S$. Let $\nu\preceq \nu_0$ with $\mathcal{N}_{\nu}\neq\emptyset$ and let $\Test$ be an irreducible component of $\mathcal{N}_{\nu}$. 
\begin{enumerate}
\item The difference $\dim S-\dim \Test$ equals the maximal length $n$ of a realizable chain of Newton polygons in $S$ at $\Test$.
\item $\dim (\Test)\geq \dim(S)-\sum_i \lceil\langle\omega_i,\nu_0-\nu\rangle\rceil.$
\end{enumerate}
\end{corollary}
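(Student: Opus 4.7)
The two parts are proved separately, with (b) building on (a).

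For part (a), the key input is the purity statement of Corollary~\ref{corpure}. For the upper bound, let $\nu=\nu_n\prec\dotsb\prec\nu_0$ be a realizable chain with witnessing irreducible subschemes $S_i\subseteq\CN_{\nu_i}$ satisfying $S_i\subseteq\ol{S_{i-1}}\setminus S_{i-1}$ and $S_n=\Test$. For each $i\geq1$, the scheme $\ol{S_{i-1}}$ is irreducible and contains $S_{i-1}$ as an open dense subset (open because the Newton polygon $\nu_{i-1}$ is maximal at the generic point, by Theorem~\ref{spezial}), so its generic Newton polygon is $\nu_{i-1}$. Applying Corollary~\ref{corpure} to $\ol{S_{i-1}}$ shows that the closed subset $\ol{S_{i-1}}\setminus\CN_{\nu_{i-1}}$ (which contains $S_i$, since $\nu_i\neq\nu_{i-1}$) is pure of dimension $\dim S_{i-1}-1$. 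Hence $\dim S_i\leq\dim S_{i-1}-1$, and telescoping yields $n\leq\dim S_0-\dim S_n\leq\dim S-\dim\Test$.

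For the lower bound in (a), I construct a realizable chain of length exactly $\dim S-\dim\Test$ by induction on this number. If $\dim S=\dim\Test$, then $\Test$ contains the generic point of $S$, forcing $\nu=\nu_0$; the trivial chain suffices. Otherwise $\nu\prec\nu_0$, and Corollary~\ref{corpure} applied to $S$ shows that $S\setminus\CN_{\nu_0}$ is pure of dimension $\dim S-1$. Choose an irreducible component $S'$ containing $\Test$ and let $\nu'\prec\nu_0$ be its generic Newton polygon. Since $\Test$ is maximal among irreducible subsets of $\CN_\nu\cap S$, it is also maximal in $\CN_\nu\cap S'$, i.e.\ an irreducible component of the Newton stratum in $S'$. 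The inductive hypothesis applied to $(S',\ul\CG|_{S'},\Test)$ gives a realizable chain of length $\dim S'-\dim\Test=\dim S-\dim\Test-1$, and prepending $(\CN_{\nu_0},\nu_0)$ (which is dense in $S$ with $S'\subseteq S\setminus\CN_{\nu_0}=\ol{\CN_{\nu_0}}\setminus\CN_{\nu_0}$) yields the desired chain of length $\dim S-\dim\Test$.

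For part (b), part (a) reduces the problem to the purely combinatorial bound $n\leq\sum_i\lceil\langle\omega_i,\nu_0-\nu\rangle\rceil$ on the length of any strictly decreasing chain $\nu_0\succ\nu_1\succ\dotsb\succ\nu_n=\nu$ of Newton polygons (elements in the image of $B(G)\to X_\ast(T)_\BQ$). Define
\[
f(\mu)\;:=\;\sum_i\bigl\lceil\langle\omega_i,\nu_0-\mu\rangle\bigr\rceil\quad\text{for }\mu\preceq\nu_0.
\]
Since $\mu_1\preceq\mu_2\preceq\nu_0$ implies $\langle\omega_i,\nu_0-\mu_1\rangle\geq\langle\omega_i,\nu_0-\mu_2\rangle\geq 0$, the function $f$ is monotone non-decreasing along the chain. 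The plan is then to show the step inequality $f(\nu_j)\geq f(\nu_{j-1})+1$ for each $j$: telescoping gives $n\leq f(\nu_n)-f(\nu_0)=f(\nu)$, which is the assertion. The main obstacle is this step inequality: one must prove that whenever $\mu'\prec\mu$ are distinct Newton polygons, there exists at least one index $i$ for which the ceiling $\lceil\langle\omega_i,\nu_0-\mu'\rangle\rceil$ strictly exceeds $\lceil\langle\omega_i,\nu_0-\mu\rangle\rceil$. This relies on the integrality (discreteness) properties of the image of $B(G)$ in $X_\ast(T)_\BQ$: the admissible values of the fundamental-weight pairings of Newton polygons are constrained so that consecutive Newton polygons in the partial order cannot occupy the same unit integer interval in every coordinate simultaneously. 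Once this step lemma is established, the bound on $n$, and hence the inequality $\dim\Test\geq\dim S-\sum_i\lceil\langle\omega_i,\nu_0-\nu\rangle\rceil$, follows immediately from part (a).
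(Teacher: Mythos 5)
Your proof of part (a) is correct and uses the same ingredients as the paper (Corollary~\ref{corpure}, the specialization theorem), but with a different organization: you separate the argument into an upper bound (any realizable chain drops dimension by at least one at each step) and a lower bound (an inductive construction of a chain of length exactly $\dim S-\dim\Test$). The paper instead takes a single maximal chain and shows that its witness subschemes $S_i$ can be modified so each step drops dimension by exactly one; a side effect of the paper's argument (recorded in the remark after the proof) is that the \emph{original} $S_i$ already have dimension $\dim S-i$, which your two-sided argument does not directly give. Both routes are sound; yours is a bit more self-contained in that it does not require the observation that a maximal chain necessarily starts at the generic Newton polygon $\nu_0$.

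Part (b) has a genuine gap. You correctly reduce it, via part (a), to the purely combinatorial statement that any strictly decreasing chain of Newton polygons $\nu_0\succ\nu_1\succ\dotsb\succ\nu_n=\nu$ has length $n\le\sum_i\lceil\langle\omega_i,\nu_0-\nu\rangle\rceil$. But you then stop at formulating a ``step lemma'' — that for any two distinct comparable Newton polygons $\mu'\prec\mu$ some coordinate's ceiling $\lceil\langle\omega_i,\nu_0-\mu'\rangle\rceil$ strictly exceeds $\lceil\langle\omega_i,\nu_0-\mu\rangle\rceil$ — and you explicitly acknowledge you have not established it. This step lemma \emph{is} the hard content: it encodes the integrality/breakpoint structure of Newton polygons (that the polygon associated to a Newton point has vertices at lattice points), and proving it amounts to re-deriving the key part of Chai's Theorem~7.4(iv), which the paper simply cites. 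Waving at ``integrality properties of the image of $B(G)$'' without a precise argument leaves the proof incomplete. You should either cite Chai's result as the paper does, or give a full proof of the step lemma using the characterization of the set of Newton points (e.g.\ via the lattice-point description in \cite[Theorem 7.4]{Chai}).
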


\begin{proof}
Let $\nu_n\prec\dotsc\prec \nu_0$ be a maximal realizable chain at $\Test$ and let $S_i$ be corresponding irreducible subschemes of $\CN_{\nu_i}$. As the chain is maximal, $\nu_0$ is equal to the Newton polygon at the generic point of $S$, so we may assume that $S_0$ is dense in $S$. We have to show that we can modify the $S_i$ such that they satisfy in addition that $\dim S_i -\dim S_{i+1}=1$ for all $i$. As the $S_i$ are irreducible and not equal, the difference is at least one. As $S_{i+1}$ is irreducible, it is contained in an irreducible component of $\ol{S_i}\setminus (\CN_{\nu_i}\cap \ol{S_i})$, which by Corollary \ref{corpure} has dimension $\dim (S_i)-1$. As the chain is maximal, the generic Newton polygon of this component is $\nu_{i+1}$. For $i+1<n$ this implies that we may replace $S_{i+1}$ by that component (using increasing induction on $i$) and obtain $\dim S_{i+1}=\dim S_i-1$. For $i+1=n$ this implies that $S_{i+1}$ is already equal to that component, and therefore has dimension $\dim S_{n-1}-1$.  
 
(b) is an immediate consequence of \cite[Theorem 7.4 (iv)]{Chai} on the maximal length of chains of comparable Newton polygons between $\nu_0$ and $\nu$ (that are not necessarily realizable).
\end{proof}
Note that a consequence of the proof is that already the $S_i$ we started with have dimension $\dim S -i$.

\begin{proposition}\label{propnewtondim}
Let $\Defo$ be the coordinate ring of the universal deformation bounded by $\mu$ of a local $G$-shtuka over $k$ with Newton polygon $\nu$ and let $\ol\Defo=(\Defo/\zeta\Defo)_\red$. Let $\ul\CG$ be the local $G$-shtuka over $S=\Spec\ol\Defo$ corresponding to the universal family over $\Spf\ol\Defo$ as in Proposition~\ref{remdejong}. Let $\mathcal{N}_{\nu}$ be the closed Newton stratum inside $S$ and let $\Test$ be any irreducible component of $\mathcal{N}_{\nu}$. Then 
$$\dim(\Test)\geq \langle 2\rho, \mu\rangle-\sum_i \lceil\langle\omega_i,\mu-\nu\rangle\rceil.$$
\end{proposition}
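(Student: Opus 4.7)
The plan is to reduce the claim to a direct application of Corollary \ref{cor3.4}(b), using the fact that both the dimension of the universal deformation space and the generic Newton polygon are controlled by $\mu$.

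First I would record that by Proposition \ref{PropReducedDefoSp} the ring $\ol\Defo$ is a normal integral domain of dimension $\langle 2\rho,\mu\rangle$, so $S=\Spec\ol\Defo$ is an irreducible (in fact integral) scheme of that dimension, and the local $G$-shtuka $\ul\CG$ on $S$ coming from Proposition \ref{remdejong} has a well-defined generic Newton polygon $\nu_0\in X_\ast(T)_\BQ$ obtained from a geometric generic point of $S$. Because $\Spec\ol\Defo$ lies over $\zeta=0$, this generic point sits in $\NilpF$, so Proposition \ref{mazur} applies there.

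Next I would show $\nu_0\preceq\mu$. Let $\mu'$ denote the Hodge polygon of $\ul\CG$ at the geometric generic point. By boundedness of the universal family by $\mu$ and Lemma \ref{LemmaHPBounded} we have $\mu'\preceq\mu$, while Mazur's inequality (Proposition \ref{mazur}) gives $\nu_0\preceq\mu'$. Transitivity of the partial order $\preceq$ on $X_\ast(T)_\BQ$ then yields $\nu_0\preceq\mu$, so $\mu-\nu_0$ is a nonnegative $\BQ$-linear combination of simple coroots, hence $\langle\omega_i,\mu-\nu_0\rangle\ge 0$ for every $i$.

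Now I would invoke Corollary \ref{cor3.4}(b) for the integral scheme $S$, the local $G$-shtuka $\ul\CG$, the generic Newton polygon $\nu_0$, and the Newton polygon $\nu$ (which by assumption occurs in $S$, so $\nu\preceq\nu_0$). This yields
\[
\dim(\Test)\;\ge\;\dim(S)-\sum_i\lceil\langle\omega_i,\nu_0-\nu\rangle\rceil\;=\;\langle 2\rho,\mu\rangle-\sum_i\lceil\langle\omega_i,\nu_0-\nu\rangle\rceil.
\]
Writing $\langle\omega_i,\nu_0-\nu\rangle=\langle\omega_i,\mu-\nu\rangle-\langle\omega_i,\mu-\nu_0\rangle\le\langle\omega_i,\mu-\nu\rangle$ and using monotonicity of the ceiling function termwise, I get $\sum_i\lceil\langle\omega_i,\nu_0-\nu\rangle\rceil\le\sum_i\lceil\langle\omega_i,\mu-\nu\rangle\rceil$, which combined with the previous inequality gives the desired lower bound.

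The proof has no real obstacle: the only point that deserves attention is the justification that the hypotheses of Corollary \ref{cor3.4} are met, namely irreducibility of $S$ (from Proposition \ref{PropReducedDefoSp}) and the comparability $\nu\preceq\nu_0\preceq\mu$ (from Mazur's inequality together with the boundedness by $\mu$).
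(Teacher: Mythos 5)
Your proof is correct and follows essentially the same route as the paper's: cite Proposition~\ref{PropReducedDefoSp} for $\dim S=\langle 2\rho,\mu\rangle$, combine Lemma~\ref{LemmaHPBounded} with Mazur's inequality (Proposition~\ref{mazur}) to see that the generic Newton polygon $\nu_0$ satisfies $\nu_0\preceq\mu$, and then apply Corollary~\ref{cor3.4}(b). The only difference is that you spell out the monotonicity step $\sum_i\lceil\langle\omega_i,\nu_0-\nu\rangle\rceil\le\sum_i\lceil\langle\omega_i,\mu-\nu\rangle\rceil$ (coming from $\langle\omega_i,\mu-\nu_0\rangle\ge 0$) which the paper leaves implicit when it says "it is enough to show that the Newton polygon at the generic point is $\preceq\mu$."
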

\begin{proof}
By Proposition \ref{PropReducedDefoSp},  $S=\Spec\ol\Defo$ is equidimensional of dimension $ \langle 2\rho, \mu\rangle$. We apply Corollary \ref{cor3.4} (b) to the reduced subscheme $S'$ of an irreducible component of $S$. By the corollary it is enough to show that the Newton polygon in the generic point of $S'$ is $\preceq\mu$. This follows from Proposition \ref{mazur}, because the same inequality holds for the Hodge polygon by Lemma~\ref{LemmaHPBounded}.
\end{proof}

\begin{remark}\label{remnewtondim}
By \cite{Kottwitz2006}, the estimate can be rewritten as
\begin{equation}\label{eq06}
\langle 2\rho, \mu\rangle-\sum_i \lceil\langle\omega_i,\mu-\nu\rangle\rceil=\langle \rho, \mu+\nu\rangle-\frac{1}{2}{\rm def}(b).
\end{equation}
Here $b\in LG(k)$ is any element with Newton polygon $\nu$ and $\kappa(b)=[\mu]$. Furthermore, ${\rm def}(b)=\rk(G)-\rk_{\mathbb{F}_q\dpl z\dpr}(J)$ where $J:=\QIsog(\ul\CG)$ is the group of quasi-isogenies of $\ul\CG=(K_k,b\s)$ and where $\rk_{\mathbb{F}_q\dpl z\dpr}J$ is the rank of a maximal $\mathbb{F}_q\dpl z\dpr$-split subtorus of $J$. The group $J$ is the set of $\BF_q\dpl z\dpr$-valued points of a reductive algebraic group over $\BF_q\dpl z\dpr$ which is an inner form of a Levi subgroup of $G$. This can be shown using the same argument as for the analogous statement for $p$-divisible groups, compare \cite{Kottwitz85}, \cite[Corollary 1.14]{RZ} or \cite{Kottwitz97}. Note that using 1.1 of \cite{Kottwitz2006}, one sees that (\ref{eq06}) also holds without the additional assumption that the derived group of $G$ is simply connected (which is a general assumption in Kottwitz's paper).
\end{remark}

%
%
\section{Basic Newton strata}\label{SectBasic}

An element $b\in LG(k)$ is called \emph{basic} if its Newton point $\nu\in X_\ast(T)_\BQ$ is central in $G$ (compare \cite[\S 5.1]{Kottwitz85}). Note that this property only depends on the $\sigma$-conjugacy class of $b$.

\begin{proposition}\label{prop41}
Let $\ul\CG=(\CG,\varphi)$ be a local $G$-shtuka over a reduced noetherian complete local ring $R$ over $\mathbb{F}_q$ with algebraically closed residue field. Let its Newton polygon be constant on $\Spec R$ and basic. Then there exists a local $G$-Shtuka $\ul\CF$ over $\mathbb{F}_q$ and a quasi-isogeny $\ul\CG\rightarrow \ul\CF\times_{\Spec\mathbb{F}_q}\Spec R$.
\end{proposition}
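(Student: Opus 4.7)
The plan is to trivialize the torsor, produce a constant model $\ul\CF$ over a finite field for the special fibre using decency together with the basicness of the Newton polygon, use rigidity to lift the resulting identification on the special fibre to a compatible system of quasi-isogenies over the Artinian quotients of $R$, and finally extract an $R$-valued quasi-isogeny from this compatible system by invoking the locally-formally-of-finite-type structure of the Rapoport--Zink space.

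First I would trivialize the $K$-torsor using Proposition~\ref{PropGzTorsor}(c) and the strict Henselian property of $R$, writing $\ul\CG\cong(K_R,b\sigma)$ for some $b\in LG(R)$ with reduction $b_0:=b\bmod\mathfrak{m}_R\in LG(k)$, whose Newton polygon $\nu$ is basic and therefore central. After $\sigma$-conjugation inside $LG(k)$ one arranges that $b_0$ is decent, $(b_0\sigma)^s=z^{s\nu}\sigma^s$; since $z^{s\nu}$ is central, comparing $(b_0\sigma)^{s+1}$ computed in two ways forces $\sigma^s(b_0)=b_0$ and hence $b_0\in LG(\BF_{q^s})$. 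The conjugating element lifts from $LG(k)$ to $LG(R)$ via the Cartan decomposition together with the surjection $K(R)\twoheadrightarrow K(k)$ (smoothness of $G$), so we may also assume $b\equiv b_0\pmod{\mathfrak{m}_R}$. Setting $\ul\CF:=(K_{\BF_{q^s}},b_0\sigma)$, the identity map is then a quasi-isogeny $\ul\CG|_k\isoto\ul\CF\times_{\Spec\BF_{q^s}}\Spec k$, and the task becomes to lift it to an $R$-valued quasi-isogeny.

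Next, writing $R_n:=R/\mathfrak{m}_R^n$, I would apply Proposition~\ref{PropRigidity} to the nilpotent ideal $\mathfrak{m}_R/\mathfrak{m}_R^n$ in $R_n$ to lift the identity QI over $k$ uniquely to $f_n\colon\ul\CG|_{R_n}\to\ul\CF\times_{\Spec\BF_{q^s}}\Spec R_n$ for every $n$; by the uniqueness of rigidity the $f_n$ are automatically compatible under restriction. Choose a dominant $\mu$ for which $\ul\CG$ is bounded by $\mu$; one may take $\mu$ to be the Hodge polygon of $b_0$, since Hodge polygons are semicontinuous (they go up under specialization) and the closed point realizes their maximum on $\Spec R$. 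The pairs $(\ul\CG|_{R_n},f_n)$ then assemble into a morphism $\Spf R\to\wh X_{\preceq\mu}(b_0)$ into the Rapoport--Zink space of Theorem~\ref{ThmADLV=RZSp}.

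The main obstacle is to pass from this formal morphism back to an $R$-valued quasi-isogeny: the $f_n$ must be \emph{uniformly} bounded, so that their limit lies in $LG(R)=G(R\dpl z\dpr)$ rather than only in $\varprojlim_n G(R_n\dpl z\dpr)$. This is exactly where the locally-formally-of-finite-type structure of $\wh X_{\preceq\mu}(b_0)$ established in Theorem~\ref{ThmADLV=RZSp} intervenes: the proof there exhibits $\wh X_{\preceq\mu}(b_0)$ as an increasing union of quasi-compact open formal subschemes $U^f$, where $U^f$ parametrizes triples whose quasi-isogeny is bounded by $2f\rho\dual$. Since $\Spf R$ has a single closed point, the morphism $\Spf R\to\wh X_{\preceq\mu}(b_0)$ factors through some $U^f$, giving the desired uniform bound. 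Fixing a closed immersion $\eta\colon G\hookrightarrow\GL(V)$ via Proposition~\ref{PropFaithfulRep}, the matrices $\eta(f_n)$ then all lie in a fixed $z^{-N}\mathrm{Mat}(V\otimes_{\BF_q}R_n\dbl z\dbr)$; since $\varprojlim_n R_n\dbl z\dbr=R\dbl z\dbr$ the entrywise limit produces $\eta(f)\in\mathrm{Mat}(V\otimes_{\BF_q}R\dpl z\dpr)$, which, because $\eta$ is a closed immersion, comes from $f\in G(R\dpl z\dpr)=LG(R)$. This $f$ is the sought quasi-isogeny $\ul\CG\to\ul\CF\times_{\Spec\BF_{q^s}}\Spec R$.
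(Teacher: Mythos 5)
Your strategy diverges significantly from the paper's (which argues directly by renormalizing the Frobenius and proving convergence of a fixed-point iterate), and I do not think the route through the Rapoport--Zink space closes: there is a genuine gap at the step where you extract a uniform bound.

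The problem is the claim that factoring $\Spf R\to\wh X_{\preceq\mu}(b_0)$ through some quasi-compact open $U^f$ ``gives the desired uniform bound.'' From the construction in the proof of Theorem~\ref{ThmADLV=RZSp}, $U^f$ is the open ind-subscheme $\Mnull|_{|U^f|}$; as a functor on $\Nilp_{k'\dbl\zeta\dbr}$ it parametrizes pairs $(\ul\CG,\ppsi)$ whose \emph{closed points} lie in $|U^f|\subset(\CM^{n'})_\red$, i.e.\ whose quasi-isogeny is bounded by $2n'\rho\dual$ only after pullback to the closed points. This does not control $\ppsi$ over the Artinian thickenings $R/\Fm_R^n$ uniformly in $n$. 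In the proof of Theorem~\ref{ThmADLV=RZSp} the transition from boundedness at closed points to boundedness over $R/J^c$ is done via the second part of Proposition~\ref{PropRigidity}, but the bound $\omega$ produced there depends on $c$ (it involves quasi-compactness of $\Spec R/J^c$ and grows with the nilpotency length of the ideal), so no uniform bound over $\Spf R$ is obtained this way. Proposition~\ref{remdejong} gives a bijection between bounded local $G$-shtukas over $\Spec R$ and $\Spf R$, but that statement concerns the shtukas, not the quasi-isogenies; there is no analogue asserting that a compatible system of quasi-isogenies over $\Spf R$ assembles to one over $\Spec R$, and the remark after it, together with Example~\ref{exunbounded}, explains why no such statement can hold in general. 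A related warning sign is that your argument makes essentially no use of the hypothesis that $R$ is reduced (beyond the fact that it guarantees boundedness by some $\mu$), whereas the statement is genuinely false without reducedness.

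The paper avoids all of this by a direct argument. After arranging (via Kottwitz's classification over the residue field $k$) that $\bar\varphi^h=c\cdot(\sigma^h)^\ast$ for a central $c=z^{h\nu}$, one renormalizes to $h=1$, $c=1$, lifts $1\in LG(k)$ to $\tilde x\in LG(R)$, and shows that the sequence $\varphi^n(\tilde x)$ converges to a fixed point $x$. The crucial step --- that $x$ lies in $LG(R)$ rather than merely in the completion --- is where reducedness enters: by Lemma~\ref{LemmaBoundedIfQC}\ref{(b)}, boundedness of the $z$-denominators can be checked at the finitely many generic points $\eta$ of $\Spec R$, and there one uses that $\varphi_\eta$ is $\sigma$-conjugate to the identity in $LG(\overline{k(\eta)})$ to write $\varphi_\eta^n(\tilde x_\eta)=y\cdot(\sigma^n)^\ast(y^{-1}\tilde x_\eta)$ with $y$ fixed, giving a uniform denominator bound. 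If you want to salvage your approach, you would need to reproduce exactly this ingredient: a bound on the denominators of the compatible family $(f_n)$ that you verify at the generic points of $\Spec R$ using basicness, not one that you hope to read off from the formal-of-finite-type structure of the Rapoport--Zink space.
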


\begin{proof}
By Remark~\ref{RemEtaleTrivial} the $K$-torsor $\CG$ over $R$ is isomorphic to the trivial torsor $K_R$, and choosing a trivialization $\varphi$ can be written as $b\s$ for some $b\in LG(R)$. Let $\mathfrak{m}$ be the maximal ideal of $R$ and let $k=R/\mathfrak{m}$. There is an $h>0$ and a lift of $h\nu\in X_*(T)_{\mathbb{Q}}$ to $X_*(T)$ which maps to $h[\mu_{\ul\CG}]$ in $\pi_1(G)$. Indeed, after multiplying $\nu$ by the common denominator $h_1$ we obtain an element of $X_*(T)$ whose class in $\pi_1(G)$ coincides with $h_1[\mu_{\ul\CG}]$ up to torsion. By multiplying with a second integer, we may assume that the two elements are equal. As $k$ is algebraically closed, \cite[\S 4.3]{Kottwitz85} shows that there is a $y\in LG(k)$ such that the reduction $\ol{\varphi}^h$ of $\varphi^h$ modulo $\Fm$ maps $y$ to $cy$ for the central element $c:=z^{h\nu}\in LG(k)$ (where $h\nu$ denotes the lifted element of $X_*(T)$). Since $k\subset R$ we may replace $\ul\CG\cong\bigl(K_R,b\s\bigr)$ by the quasi-isogenous local $G$-shtuka $\bigl(K_R,y^{-1}b\s(y)\cdot\s\bigr)$ and we may assume that $\ol\phi^h=c\,(\sigma^h)^\ast$.\\
 
\noindent {\it Claim.} There is a quasi-isogeny between $\ul\CG$ and a local $G$-shtuka $(\CG',\varphi')$ with $\CG'=K_R$ and $(\varphi')^h=c\,(\sigma^h)^\ast$.\\  

\noindent Identifying $\CG$ with the trivial torsor, the claim is equivalent to the existence of an element $x\in LG(R)$ with $\varphi^h(x)=cx$. Replacing $\varphi$ by $c^{-1}\varphi^{h}$ and $\sigma$ by $\sigma^h$ we may assume that $h=1$, and $c=1$. As $c$ is central and commutes with $\s$, the $\sigma$-conjugacy class of this renormalized $\varphi$ is $[1]$ in each point of $\Spec{R}$. Let $\tilde{x}\in LG(R)=G(R\dpl z\dpr)$ be a lift of $1\in LG(k)$. Then $\varphi(\tilde x)\equiv \tilde x\pmod{\mathfrak{m}}$. As $\varphi$ is $\sigma$-linear, $\varphi^n(\tilde{x})\equiv \varphi^{n-1}(\tilde x)\pmod{(\s)^{n-1}(\mathfrak{m})}$. Thus the sequence $\varphi^n(\tilde x)$ has a limit $x\in G\bigl(\widehat{R\dpl z\dpr}\bigr)$. The congruences further show that $\varphi(x)=x$. To prove the claim it remains to show that $x\in LG(R)$. As in the proof of Proposition \ref{remdejong} we have to show that the denominators occurring in $\varphi^n(\tilde{x})$ are bounded independently of $n$. As $R$ is a reduced noetherian ring, it is enough to check this in each generic point $\eta$ of $\Spec R$ separately. Let $\ol{k(\eta)}$ be an algebraic closure of $k(\eta)$ and let $\tilde {x}_{\eta}$ be the image of $\tilde x$ in $LG(\ol{k(\eta)})$. Let $\varphi_{\eta}$ be the induced map on $LG(\ol{k(\eta)})$. As $\ol{k(\eta)}$ is algebraically closed and as $\varphi_{\eta}$ is in the class $[1]\in B(G)$, there is a $y\in LG(\ol{k(\eta)})$ with $\varphi_{\eta}(y)=y$. Then $$\varphi_\eta^n(\tilde {x}_{\eta})=\varphi_\eta^n(yy^{-1}\tilde x_\eta)=\varphi_\eta^n(y)\cdot(\sigma^n)^\ast(y^{-1}\tilde{x}_{\eta})=y\cdot(\sigma^n)^\ast(y^{-1}\tilde{x}_{\eta}).$$ This shows that the $z$-powers occurring in $\varphi_{\eta}^n(\tilde{x}_{\eta})$ are bounded independently of $n$. Especially, $x\in LG(R)$. This proves the claim.

It remains to show that if $\ul\CG$ is a local $G$-shtuka with $\varphi^h=c\,(\sigma^h)^\ast$, then $\ul\CG$ is isogenous to a constant local $G$-shtuka $\ul\CF_R$ as in the assertion. Over the residue field $k$ this follows again from the classification of $\sigma$-conjugacy classes, see Remark \ref{remclass}. As $\CG$ is trivial we may thus assume that $\varphi=b\s$ for some $b$ whose reduction $\ol b$ modulo $\mathfrak{m}$ is defined over $\mathbb{F}_q$. Let $\tilde b$ be the image of $\ol b\in LG(\mathbb{F}_q)$ in $LG(R)$. It suffices to show that $b=\tilde b$. We use induction on $n$ to show that $b\equiv \tilde b\pmod{(\s)^n(\mathfrak{m})}$. For $n=0$ this follows from the definition of $\tilde b$. Assume that $b\equiv \tilde b\pmod{(\s)^n(\mathfrak{m})}$. Then
$$b\s(b\s)^{h-1}=c=\tilde{b}^h=\tilde b\s(\tilde{b}^{h-1})\equiv \tilde b\s(b\s)^{h-1}\pmod{(\s)^{n+1}(\mathfrak{m})}.$$ Thus $b\equiv \tilde{b}\pmod{(\s)^{n+1}(\mathfrak{m})}.$
\end{proof}

We are now able to give the proofs of Theorem~\ref{thmnewtonadlv}, Theorem \ref{thmequidim} and Corollary \ref{corunivnp}.

\begin{proof}[Proof of Theorem~\ref{thmnewtonadlv}.]

By Theorem \ref{ThmADLV=RZSp} we obtain a local $G$-shtuka $\ul\CG'$ over $(X_{\preceq\mu}(b))\kompl_g$ together with a quasi-isogeny to $\bigl(K_{(X_{\preceq\mu}(b))\kompl_g},b\s\bigr)$. Then $\ul\CG'$ is a deformation of its special fibre, which is isomorphic to $(K_k,g^{-1}b\s(g)\s)$. By definition it is isogenous to a constant local $G$-shtuka, and especially its Newton polygon is constant. Also, its Hodge polygon is $\preceq\mu$ in every point of  $(X_{\preceq\mu}(b))\kompl_g$. Since $\zeta=0$ on $(X_{\preceq\mu}(b))\kompl_g$ and $(X_{\preceq\mu}(b))\kompl_g$ is reduced (being the completion of a reduced local ring) $\ul\CG'$ defines a morphism $f: (X_{\preceq\mu}(b))\kompl_g\rightarrow \mathcal{N}_{\nu}$.

To construct a morphism in the opposite direction, we use Proposition \ref{prop41}. Let $\ul\CG$ be the universal object over $\mathcal{N}_{\nu}$. By the proposition there is a quasi-isogeny $\ppsi$ from $\ul\CG$ to a constant local $G$-shtuka. Changing this quasi-isogeny by a quasi-isogeny between constant local $G$-shtukas, we may assume that the constant local $G$-shtuka is $(K_k,b\s)$ and that the quasi-isogeny at the special fibre is given by $g$. Since $\zeta=0$ on $\CN_\nu$ and $\CN_\nu$ is reduced this yields a morphism $h:\mathcal{N}_{\nu} \rightarrow (X_{\preceq\mu}(b))\kompl_g$. The composition $f\circ h$ first constructs the additional quasi-isogeny $\ppsi$, and then forgets it. Thus it is the identity. The composition $h\circ f$ is the identity by rigidity of quasi-isogenies of local $G$-shtukas (Proposition~\ref{PropRigidity}).
\end{proof}

\begin{proof}[Proof of Theorem \ref{thmequidim}] Note that as $b$ is basic, $\langle \rho, \nu\rangle=0$. By \cite{GHKR},  and \cite{Viehmann06} we already know that $X_{\mu}(b)$ and $X_{\preceq \mu}(b)$ have dimension $\langle \rho, \mu\rangle-\frac{1}{2}{\rm def}(b)$. It remains to show that each irreducible component has at least this dimension. This is an immediate consequence of Theorem \ref{thmnewtonadlv} together with the lower bound on the dimension of each irreducible component of $\mathcal{N}_{\nu}$ in Proposition \ref{propnewtondim} and Remark \ref{remnewtondim}. 
\end{proof}

\begin{proof}[Proof of Corollary \ref{corunivnp}]
  From Theorem \ref{thmequidim} and Theorem \ref{thmnewtonadlv} we obtain that $\mathcal{N}_{\nu}$ is equidimensional of dimension  $\langle \rho, \mu\rangle-\frac{1}{2}{\rm def}(b)=\langle 2\rho, \mu\rangle-\sum_i \lceil\langle\omega_i,\mu-\nu\rangle\rceil$. Let $\nu_0$ be the Newton point at the generic point of $\Spec \ol\Defo$.  By Corollary \ref{cor3.4}(b) we obtain $$\sum_i \lceil\langle\omega_i,\mu-\nu\rangle\rceil\leq \sum_i \lceil\langle\omega_i,\nu_0-\nu\rangle\rceil.$$ The two sides of this inequality are the maximal lengths of chains of comparable Newton polygons between $\nu$ and $\mu$, respectively $\nu_0$ (see \cite[Theorem 7.4 (iv)]{Chai}). As $\nu_0\preceq\mu$, the right hand side is strictly smaller than the left hand side unless $\mu=\nu_0$.
\end{proof}

%
%

\section{The Iwahori case}\label{sec9}

In this section we replace the maximal bounded open subgroup $K$ by an Iwahori subgroup $I$ of $G$ and explain how some of the preceding results can be generalized to this situation. Again we can relate the dimension of the basic Newton stratum of some universal $(\zeta=0)$-deformation to the dimension of an affine Deligne-Lusztig variety. However, in this case one does not have a closed expression for the dimension of the basic Newton stratum yet. 

As before let $G$ be a split connected reductive group over $\mathbb{F}_q$. Let $B$ be a Borel subgroup and let $T\subseteq B$ be a split maximal torus. Let $\pi:R\dbl z\dbr\rightarrow R$ be the projection. Let $I$ be the Iwahori subgroup scheme of $K$ over $\BF_q$ defined as
\[
I(R)\;:=\;\{\,g\in G\bigl(R\dbl z\dbr\bigr): \pi(g)\in B(R)\,\}
\]
on any $\BF_q$-algebra $R$. Let $\widetilde{W}\cong W\ltimes X_\ast(T)$ denote the extended affine Weyl group of $G$. Then $LG(k)=\coprod_{x\in\widetilde{W}}I(k)xI(k)$ by the Bruhat-Tits decomposition. 

We now define the analog of local $G$-shtukas, with $K$ replaced by $I$. We restrict our attention to base schemes $S$ with $\zeta=0$.

\begin{definition} \label{DefLocGIShtuka}
\begin{enumerate}
\item A \emph{local $G$-shtuka with $I$-structure} over an $\mathbb{F}_q$-scheme $S$ is a pair $\ul\CI=(\CI,\phi)$ consisting of an $I$-torsor $\CI$ on $S$ (for the \'etale topology) together with an isomorphism $\phi:\s\tilde\CI \isoto\tilde\CI$. Here $\tilde\CI$ denotes the $LG$-torsor associated to $\CI$.
\item We fix an element $x\in\wt{W}$. A local $G$-shtuka with $I$-structure $\ul\CI$ over $S$ is \emph{of affine Weyl type $x$} if \'etale-locally on $S$, the shtuka is isomorphic to a trivial $I$-torsor $I_S$ with $\phi=b\s$ for some $b\in I(S)xI(S)$. 
\end{enumerate}
\end{definition}

\begin{remark}\label{remIwa2}
\begin{enumerate}
\item Lemma \ref{lemdefi} shows that this rather ad hoc definition is analogous to the notion of a local $G$-shtuka with fixed Hodge polygon $\mu$.
\item Let $R$ be a complete local noetherian ring with algebraically closed residue field. In the same way as Proposition \ref{remdejong} one can show that there is an equivalence of categories between local $G$-shtukas with $I$-structure of affine Weyl type $x$ over $\Spec R$ and over $\Spf R$.
\end{enumerate}
\end{remark}
\begin{lemma}\label{lemdefi}
Let $(\CG,\phi)$ be a local $G$-shtuka which has constant Hodge polygon $\mu$ and is bounded by $\mu$ over an $\BFZ/(\zeta)$-scheme $S$. Then \'etale-locally on $S$ the $K$-torsor $\CG$ is trivial and $\phi$ is given by an element of $K(S)z^{\mu}K(S)$.
\end{lemma}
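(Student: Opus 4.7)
The assertion is étale local on $S$, so by Proposition~\ref{PropGzTorsor} we may assume that $S$ is affine and that $\CG$ is the trivial $K$-torsor, $\CG = K_S$; a trivialization then identifies $\phi$ with left multiplication by an element $b \in LG(S)$. Composition with the projection $LG \to \Gr$ defines a morphism $f := [b^{-1}] : S \to \Gr$. The plan is to show that $f$ factors scheme-theoretically through the Schubert cell $\Gr^\mu = K z^{(-\mu)_\dom} K / K$, and then to use that the restriction $K z^{(-\mu)_\dom} K \to \Gr^\mu$ of the projection $LG \to \Gr$ is a right $K$-torsor admitting étale-local sections in order to lift $f$ to an actual element of $K(S') z^{(-\mu)_\dom} K(S')$ on a further étale cover $S' \to S$.

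To get the factorization I would combine two inputs. First, because $(\CG, \phi)$ is bounded by $\mu$ and $\zeta = 0$ on $S$, the map $f$ factors scheme-theoretically through the closed subscheme $Y \subset \Gr$ obtained by restricting $\wh\Gr^{\preceq(\mu, z-\zeta)}$ to $\zeta = 0$: this mirrors the universal property worked out in the proof of Theorem~\ref{ThmDefoSp}, since boundedness of $\phi = b\s$ by $(\mu, z-\zeta)$ translates to boundedness of $\gamma = b$. By Proposition~\ref{PropProSchubertCell}, $Y$ has underlying topological space $\Gr^{\preceq\mu}$. Second, the assumption of constant Hodge polygon $\mu$ together with Definition~\ref{DefHodgePoly} gives, at each geometric point $s \in S$, that $b(s) \in K(\kappa(s)) z^\mu K(\kappa(s))$, and hence $b(s)^{-1} \in K(\kappa(s)) z^{(-\mu)_\dom} K(\kappa(s))$ (using that the Weyl group lifts into $K$), so $f(s) \in |\Gr^\mu|$. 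Therefore $f$ factors through the open subscheme $Y' := Y \times_\Gr \Gr^\mu$ of $Y$, which is itself a closed subscheme of $\Gr^\mu$ with the same underlying topological space as $\Gr^\mu$. Since the Schubert cell $\Gr^\mu$ is smooth (being a $K$-orbit), in particular reduced, we conclude $Y' = \Gr^\mu$ as schemes, and $f$ factors as $S \to \Gr^\mu \hookrightarrow \Gr$.

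Given this factorization, an étale cover $S' \to S$ on which $f$ lifts along the right $K$-torsor $K z^{(-\mu)_\dom} K \to \Gr^\mu$ produces $\tilde b : S' \to K z^{(-\mu)_\dom} K$ with $\tilde b \in K(S') z^{(-\mu)_\dom} K(S')$ and $[\tilde b] = f|_{S'}$ in $\Gr(S')$. After a further étale localization, the equality of classes in $\Gr$ yields $k \in K(S')$ with $\tilde b = b^{-1} k$, so $b = k \tilde b^{-1}$. Since $G$ is split over $\BF_q$, the longest Weyl element $w_0$ lifts to $N_G(T)(\BF_q) \subset K(\BF_q)$, giving $z^{w_0 \mu} = w_0 z^\mu w_0^{-1} \in K z^\mu K$; combined with $\tilde b^{-1} \in K(S') z^{-(-\mu)_\dom} K(S') = K(S') z^{w_0 \mu} K(S')$, this gives $b \in K(S') z^\mu K(S')$, as required.

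The main obstacle is the scheme-theoretic factorization of $f$ through the open Schubert cell $\Gr^\mu$ rather than through the possibly non-reduced subscheme $Y$: boundedness alone provides only the latter (and as Example~\ref{ExDefoSpNotSmooth} illustrates, the $\zeta = 0$ fibre of $\wh\Gr^{\preceq(\mu, z-\zeta)}$ really can carry nilpotent structure), so one must exploit the reducedness of $\Gr^\mu$ together with the pointwise Hodge polygon information to descend to $\Gr^\mu$ as a scheme.
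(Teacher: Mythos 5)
Your overall plan mirrors the paper's short proof exactly: trivialize étale-locally to get $b\in LG(S')$, show that $[b^{-1}]\colon S'\to \Gr$ factors scheme-theoretically through the open Schubert cell $\Gr^\mu = Kz^{(-\mu)_\dom}K/K$, and then lift along the smooth projection $Kz^{(-\mu)_\dom}K\to\Gr^\mu$. You also correctly identify the only delicate point (the scheme-theoretic factorization through $\Gr^\mu$). The final reduction from $\tilde b^{-1}\in K(S')z^{w_0\mu}K(S')$ to $b\in K(S')z^\mu K(S')$ via $w_0\in K(\BF_q)$ is fine.

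However, the middle step has the inclusion running the wrong way, and the argument as written is either vacuous or circular. From boundedness you get that $f:=[b^{-1}]$ factors through the closed subscheme $Y\subset\Gr$ cut out by the boundedness equations, and from the pointwise Hodge polygon you get that $|f|$ lands in $|\Gr^\mu|$; together these only give factorization through the open subscheme $Y_V := Y\cap V$ of $Y$ (where $V\subset\Gr$ is an open with $\Gr^\mu=V\cap\Gr^{\preceq\mu}$). Since $Y_{\mathrm{red}}=\Gr^{\preceq\mu}$, the ideal $\CI_Y$ is contained in $\CI_{\Gr^{\preceq\mu}}$, so it is $\Gr^\mu$ that sits as a closed subscheme inside $Y_V$, not the other way around; the reducedness of $\Gr^\mu$ therefore tells you nothing about whether $Y_V$ carries nilpotents. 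Meanwhile, the scheme-theoretic fibre product $Y'=Y\times_\Gr\Gr^\mu$ you introduce is cut out in $V$ by $\CI_Y|_V + \CI_{\Gr^\mu}=\CI_{\Gr^\mu}$, i.e.\ $Y'=\Gr^\mu$ already, so ``$f$ factors through $Y'$'' is precisely the statement to be proved, and $Y'$ is open in $Y$ if and only if $Y$ is reduced. The missing substantive input is that $Y_V$ \emph{is} reduced — equivalently that the boundedness equations cut out $\Gr^{\preceq\mu}$ scheme-theoretically rather than a nilpotent thickening. This is what the paper's proof quietly uses when it asserts that $b^{-1}$ factors through the (reduced) scheme $\Gr^{\preceq\mu}$ by Proposition~\ref{PropProSchubertCell}; it rests on the known reducedness/normality of scheme-theoretic Schubert varieties in the affine Grassmannian (the Faltings results invoked in Proposition~\ref{PropReducedDefoSp}, or for $\GL_r$ the classical fact that the determinantal/Plücker equations cut out the reduced Schubert variety, visible here via the local-freeness of $\coker\phi$ in Lemma~\ref{LemmaG.3}). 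It cannot be extracted formally from the reducedness of $\Gr^\mu$ and the topological Proposition~\ref{PropProSchubertCell} alone, as you attempt.
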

\begin{proof}
By Proposition~\ref{PropGzTorsor} there is an \'etale covering
$S'\to S$ such that $\CG_{S'}$ is trivial. Choosing a trivialization we
can write $\phi=b\s$ for some $b\in LG(S')$. Then $b^{-1}$
induces a morphism $S'\to \X$ which factors through $\Gr^{\preceq\mu}$ by
Proposition~\ref{PropProSchubertCell} since $\phi$ is bounded by $\mu$.
Since the Hodge polygon is equal to $\mu$ it factors through the open
subscheme $\Gr^\mu$ which is the homogeneous space $Kz^{(-\mu)_{\dom}} K/K$. This
proves our assertion.
\end{proof}
Like $\X$ also the \emph{affine flag variety} $\CF:=LG/I$ is an ind-scheme of ind-finite type. Analogously to Proposition \ref{PropSchubertCell} we have for any $x\in\wt W$:

\begin{remark}\label{Prop9.3}
\begin{enumerate}
\item 
The set $I(k)xI(k)/I(k)$ is the set of $k$-valued points of a locally closed reduced subscheme $\CF_x$ of $\CF$ which is of finite type over $k$.
\item 
$\CF_x$ is irreducible and of dimension $\ell(x)$, the length of $x\in \widetilde{W}$.
\end{enumerate}
\end{remark}

Let $\ul\BG=(\BG,\phi_{\BG})$ be a local $G$-shtuka with $I$-structure of affine Weyl type $x$ over an algebraically closed field $k\in\NilpF$. Choosing a trivialization of $\mathbb{G}$, we can write $\phi_{\BG}=b_0\s$ for some $b_0\in I(k)xI(k)$. Then $b_0^{-1}$ defines a point in $\CF_{x^{-1}}(k)$. Let $\DefoI$ be the complete local ring of $\CF_{x^{-1}}$ at this point. It is a complete noetherian local ring over $k$. Note that $\ell(x^{-1})=\ell(x)$, so $\CF_{x^{-1}}$ has dimension $\ell(x)$.

\begin{theorem}\label{ThmDefoSpI}
$\DefoI$ pro-represents the formal $(\zeta=0)$-deformation functor of $\ul\BG$
\begin{eqnarray*}
& & F:\es\bigl(\text{Artinian local $k\dbl\zeta\dbr/(\zeta)$-algebras with residue field
  $k$}\bigr)\longto \Sets\\[2mm]
A&\longmapsto& \Bigl\{\;\text{Isomorphism classes of pairs }(\ul\CG,\beta)\text{ where}\\
& & \quad \ul\CG\text{ is a local $G$-shtuka with $I$-structure of affine Weyl type $x$ over $\Spec A$\,,}\\
& & \quad \beta:\ul\BG\isoto\ul\CG\otimes_A k \text{ is an isomorphism of local $G$-shtukas with $I$-structure}\;\Bigr\}
\end{eqnarray*}
where $(\ul\CG,\beta)$ and $(\ul\CG',\beta')$ are isomorphic if there exists an isomorphism $\gamma:\ul\CG\to\ul\CG'$ with $\beta'=(\gamma\otimes_A k)\circ\beta$.
\end{theorem}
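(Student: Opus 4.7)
The plan is to adapt the proof of Theorem~\ref{ThmDefoSp} to the Iwahori setting, working with the affine flag variety $\CF$ in place of $\Gr$, $I$-torsors in place of $K$-torsors, and restricted to $\zeta=0$, which removes all the $z-\zeta$-subtleties. First I would construct the universal pro-object over $\DefoI$: writing $\DefoI_n := \DefoI/\Fm_{\DefoI}^{n}$ and using the Iwahori analog of \cite[Lemma~2.1]{Ngo-Polo} to produce a section of $LG \to \CF$ in a Zariski neighborhood of $b_0^{-1}$, I lift the tautological morphism $\Spec\DefoI_n \to \CF_{x^{-1}} \hookrightarrow \CF$ to an element $\tilde b_n^{-1} \in LG(\DefoI_n)$ reducing to $b_0^{-1}$ modulo the maximal ideal. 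Choosing these compatibly yields $\tilde b \in LG(\DefoI)$, and because each morphism $\Spec\DefoI_n \to \CF$ factors through the locally closed subscheme $\CF_{x^{-1}}$ by construction, $\tilde b$ belongs \'etale-locally to $I(\DefoI)\,x\,I(\DefoI)$. Setting $\ul\CI^{\mathrm{univ}} := (I_\DefoI,\tilde b\s)$ together with the tautological trivialization $\alpha$ then produces an element of $F(\DefoI)$, giving a morphism of functors $\Hom_{k}(\DefoI,-) \to F$.

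To see this morphism is bijective I would argue by induction following Theorem~\ref{ThmDefoSp}. Given $(\ul\CG,\beta)\in F(A)$ and setting $A_n := A/\Fm^{q^n}$, I claim there is a unique $k$-algebra homomorphism $u_n:\DefoI\to A_n$ such that $(\ul\CG,\beta)\otimes_A A_n \cong (I_{A_n},u_n(\tilde b)\s,\alpha)$. The case $n=0$ is forced, as the only $k$-algebra map $\DefoI\to k$ is the residue map and $(\ul\CG,\beta)\otimes_A k \cong \ul\BG = (I_k,b_0\s,\alpha)$. For the inductive step, I lift the trivialization from $A_{n-1}$ to $A_n$ using Proposition~\ref{PropGzTorsor} to obtain $b_n\in LG(A_n)$ with $b_n\otimes_{A_n}A_{n-1}=u_{n-1}(\tilde b)$. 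The hypothesis that $\ul\CG$ is of affine Weyl type $x$ ensures that \'etale-locally $b_n\in I(A_n)\,x\,I(A_n)$, so $b_n^{-1}$ defines a morphism $\Spec A_n \to \CF$ that factors through $\CF_{x^{-1}}$; since $A_n$ is complete local with residue field $k$ and sends the closed point to $b_0^{-1}$, this further factors through $\Spec\DefoI$, producing $u_n:\DefoI\to A_n$. Setting $g := b_n\,u_n(\tilde b)^{-1}\in I(A_n)$, the congruence $g\equiv 1\pmod{\Fm^{q^{n-1}}A_n}$ together with $\Fm^{q^n}A_n=0$ forces $\s(g)=1$, so changing the trivialization by $g$ converts $(I_{A_n},b_n\s,\alpha)$ into $(I_{A_n},u_n(\tilde b)\s,\alpha)$. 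Uniqueness is symmetric: two candidates $u_n,u'_n$ yield some $g\in I(A_n)$ with $g^{-1}u_n(\tilde b)\s(g)=u'_n(\tilde b)$ and $g\otimes_{A_n}A_{n-1}=1$; as before $\s(g)=1$, whence $u_n(\tilde b^{-1})=u'_n(\tilde b^{-1})\,g^{-1}$ shows that $u_n,u'_n$ induce the same $A_n$-valued point of $\CF$, and the universal property of $\DefoI$ yields $u_n=u'_n$.

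The main technical point I expect to require care is the scheme-theoretic (rather than merely set-theoretic) factorization of the morphisms $\Spec A_n\to\CF$ through the locally closed subscheme $\CF_{x^{-1}}$, which is what legitimately invokes the universal property of $\DefoI$ as the complete local ring of $\CF_{x^{-1}}$ at $b_0^{-1}$. This reduces to checking that the condition ``$b\in I(R)\,x\,I(R)\subset LG(R)$'' appearing in the definition of affine Weyl type $x$ is a scheme-theoretic condition compatible with \'etale descent on the base. Because $IxI$ is a locally closed subfunctor of $LG$ and because factorization through a locally closed subscheme descends from \'etale covers, this is automatic once the definition is unpacked. With that checkpoint cleared, everything else is a direct transcription of the proof of Theorem~\ref{ThmDefoSp}, considerably simplified by the absence of any boundedness argument and by the identity $z-\zeta=z$ that obtains in the present $\zeta=0$ setting.
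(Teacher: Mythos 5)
Your proposal is correct and follows exactly the route the paper itself takes: the paper's proof of Theorem~\ref{ThmDefoSpI} consists of the single sentence ``The proof of this theorem is the same as for Theorem~\ref{ThmDefoSp}, always replacing $K$ by $I$,'' and your argument is a faithful unpacking of that substitution (using the affine flag variety $\CF$ and the Schubert cell $\CF_{x^{-1}}$ in place of $\wh\X$ and $\wh\Gr^{\preceq(\mu,z-\zeta)}$, and $I$-torsors via Proposition~\ref{PropGzTorsor} in place of $K$-torsors). Your checkpoint about the scheme-theoretic factorization through $\CF_{x^{-1}}$ is correctly resolved, and the only step worth flagging for completeness is that the existence of $\tilde b=\invlim\tilde b_n$ in $LG(\DefoI)$ rests on the Iwahori analog of Proposition~\ref{remdejong}, namely Remark~\ref{remIwa2}(b), which plays the role that boundedness by $\mu$ played in Theorem~\ref{ThmDefoSp}.
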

\begin{proof}
The proof of this theorem is the same as for Theorem \ref{ThmDefoSp}, always replacing $K$ by $I$.
\end{proof} 

Let $\ul\CG$ be the local $G$-shtuka with $I$-structure over $\Spec \DefoI$ which corresponds via Remark \ref{remIwa2} (b) to the chosen universal local $G$-shtuka with $I$-structure over $\Spf \DefoI$. It consists of the trivial $I$-torsor $I_\DefoI$ together with $\phi=b_\DefoI\s$ for some element $b_\DefoI\in LG(\DefoI)$. As in Section \ref{secnewton} we can consider the function assigning to each geometric point of $\Spec \DefoI$ the $\sigma$-conjugacy class of $b_\DefoI$ in that point. This induces a Newton stratification on $\Spec \DefoI $. We denote by $\CN^I_{\nu}$ the locally closed reduced subscheme of $\Spec \DefoI$ where the Newton polygon of $b_\DefoI$ is equal to $\nu$.

We fix an element $b\in LG(k)$ and some $x\in \widetilde{W}$. The affine Deligne-Lusztig variety $X^I_x(b)$ associated to $G$, $I$, $b$, and $x$ is defined as the locally closed reduced subscheme of the affine flag variety $\CF$ whose $k$-valued points for an algebraically closed field $k$ are given by $$X^I_x(b)(k)\;=\;\bigl\{\,g\in LG(k)/I(k):\; g^{-1}b\s(g)\in I(k)xI(k)\,\bigr\}.$$
\begin{theorem}\label{thmi}
Let $g\in LG(k)$ be a representative of an element of $X^I_x(b)$ and assume that the Newton polygon $\nu$ of $b$ is basic. Let $\mathcal{N}^I_{\nu}$ be the closed Newton stratum in the universal $(\zeta=0)$-deformation of the local $G$-shtuka with $I$-structure $\bigl(I_k,g^{-1}b\s(g)\s\bigr)$. Then $\mathcal{N}^I_{\nu}$ is isomorphic to the completion of $X^I_x(b)$ at $g$. 
\end{theorem}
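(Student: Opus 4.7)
The plan is to mimic the proof of Theorem~\ref{thmnewtonadlv} with $K$ replaced by $I$ and $X_{\preceq\mu}(b)$ replaced by $X^I_x(b)$. The two ingredients we need to transfer to the Iwahori setting are the moduli interpretation of the affine Deligne-Lusztig variety (Theorem~\ref{ThmADLV=RZSp}) and the isogeny triviality of basic local $G$-shtukas over reduced complete local rings (Proposition~\ref{prop41}). Given these, the proof reduces to constructing two mutually inverse morphisms and invoking rigidity of quasi-isogenies (Proposition~\ref{PropRigidity}).

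First I would construct $f:(X^I_x(b))\kompl_g\to\mathcal{N}^I_\nu$. The Iwahori analog of Theorem~\ref{ThmADLV=RZSp} identifies $(X^I_x(b))\kompl_g$ with a moduli space carrying a universal local $G$-shtuka with $I$-structure $\ul\CG'$ equipped with a quasi-isogeny to the constant shtuka $\bigl(I_{(X^I_x(b))\kompl_g},b\s\bigr)$. Because $\ul\CG'$ is quasi-isogenous to a constant shtuka, its Newton polygon is constant and equal to $\nu$. By the definition of $X^I_x(b)$ (and invariance of the affine Weyl type under the $\sigma$-conjugation $b\mapsto g^{-1}b\s(g)$) this shtuka is of affine Weyl type $x$. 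Since $(X^I_x(b))\kompl_g$ is reduced and $\zeta=0$ on it, Theorem~\ref{ThmDefoSpI} and Remark~\ref{remIwa2}(b) produce the morphism $f$ to $\mathcal{N}^I_\nu$ classifying $\ul\CG'$ together with the trivialization of its special fibre coming from $g$.

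Next I would construct $h:\mathcal{N}^I_\nu\to(X^I_x(b))\kompl_g$. Let $\ul\CG$ be the restriction to $\mathcal{N}^I_\nu$ of the universal deformation over $\Spec\DefoI$, which exists as a scheme-theoretic shtuka by Remark~\ref{remIwa2}(b). The proof of Proposition~\ref{prop41} goes through verbatim in the Iwahori setting---it only uses that $\CG$ is \'etale-locally trivial (which holds for $I$-torsors) and that the Frobenius can be lifted over a complete local ring, both of which remain valid---so $\ul\CG$ admits a quasi-isogeny $\ppsi$ to a constant local $G$-shtuka with $I$-structure. Composing with a quasi-isogeny between constant shtukas we may arrange that the target is $(I_k,b\s)$ and that $\ppsi$ specializes to left multiplication by $g$ at the closed point. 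Since $\zeta=0$ and $\mathcal{N}^I_\nu$ is reduced, this yields the morphism $h$ to $(X^I_x(b))\kompl_g$.

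Finally, $f\circ h$ is the identity because it first constructs and then forgets the extra quasi-isogeny, while $h\circ f$ is the identity because the quasi-isogeny produced by Proposition~\ref{prop41} applied to $\ul\CG'$ is forced to agree with the one already carried by $\ul\CG'$, thanks to the rigidity of quasi-isogenies (Proposition~\ref{PropRigidity}) and the normalization at the special fibre. The main technical obstacle is verifying that the Iwahori versions of Theorem~\ref{ThmADLV=RZSp} and Proposition~\ref{prop41} go through as expected; for the moduli-theoretic statement one has to check that the locus where the Frobenius of the universal shtuka over the analog of $\wh\CF$ is of affine Weyl type $x$ is representable by a formal scheme locally formally of finite type, but this follows by the same quasi-compactness and rigidity arguments used in the proof of Theorem~\ref{ThmADLV=RZSp}, while Proposition~\ref{prop41} adapts without essential change since its proof is purely loop-group theoretic and does not use that $K$ is hyperspecial.
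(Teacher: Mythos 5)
Your proposal is correct and follows essentially the same route as the paper: the paper's proof of Theorem~\ref{thmi} is a one-line remark that one should repeat the proof of Theorem~\ref{thmnewtonadlv} with $K$ replaced by $I$, noting that Proposition~\ref{prop41} is a statement about $\sigma$-conjugacy in $LG$ and hence independent of whether one uses $K$ or $I$. You have reconstructed exactly that argument in detail, including the construction of the two mutually inverse maps and the observation that the Iwahori analogs of the moduli interpretation and of Proposition~\ref{prop41} go through with no essential change.
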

\begin{proof}
The proof of this theorem is the same as for Theorem \ref{thmnewtonadlv}, always replacing $K$ by $I$. The main ingredient is in both cases Proposition \ref{prop41}, which considers elements of $LG(\DefoI)$ up to $\sigma$-conjugation. This result does not depend on the subgroups $K$ or $I$.
\end{proof} 
\begin{corollary} Let $b$ be basic.
Let $S$ be an irreducible component of $X^I_x(b)$ and let $g\in S$ be a $k$-valued point not contained in any other irreducible component of $X^I_x(b)$. Then $\dim S=\ell(x)-d(g,x)$ where $d(g,x)$ is the maximal length of a realizable chain of Newton polygons in the universal deformation of $(I_k,g^{-1}b\s(g)\s)$ at its closed Newton stratum (which is irreducible). 
\end{corollary}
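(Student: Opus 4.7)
The plan is to combine Theorem~\ref{thmi} with the Iwahori analogue of Corollary~\ref{cor3.4}(a) and a direct computation of $\dim\Spec\DefoI$. First, the Schubert cell $\CF_{x^{-1}}\subset\CF$ is known to be isomorphic to an affine space of dimension $\ell(x^{-1})=\ell(x)$ (Remark~\ref{Prop9.3}(b)). Hence $\DefoI$ is a regular local ring of dimension $\ell(x)$, and $\Spec\DefoI$ is in particular an integral $\BF_q$-scheme of that dimension.

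Second, the universal local $G$-shtuka with $I$-structure on $\Spec\DefoI$ has an underlying local $G$-shtuka, obtained by inducing the torsor along $I\hookrightarrow K$. The Newton stratification induced on $\Spec\DefoI$ by this underlying local $G$-shtuka coincides with the one defined in Section~\ref{sec9}, because the Newton polygon depends only on the $\s$-conjugacy class of the Frobenius viewed as an element of $LG$, not on the choice of integral structure. Consequently Corollary~\ref{cor3.4}(a) (whose proof rests on the purity Theorem~\ref{reinheit} and Corollary~\ref{corpure}, both of which concern only the underlying local $G$-shtuka) applies directly to $\Spec\DefoI$. Using that $\CN^I_\nu$ is irreducible (and is by construction the closed Newton stratum of the basic polygon $\nu$), it is its own unique irreducible component, so Corollary~\ref{cor3.4}(a) yields
\[
\dim\Spec\DefoI-\dim\CN^I_\nu\;=\;d(g,x),
\]
where $d(g,x)$ is the maximal length of a realizable chain of Newton polygons in $\Spec\DefoI$ at $\CN^I_\nu$, exactly as in the statement.

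Third, because $b$ is basic, Theorem~\ref{thmi} identifies $\CN^I_\nu$ with the completion $(X^I_x(b))\kompl_g$ of the Iwahori affine Deligne-Lusztig variety at $g$. The dimension of this completion equals the dimension of the local ring of $X^I_x(b)$ at $g$, which is the supremum of the dimensions of the irreducible components of $X^I_x(b)$ containing $g$. By hypothesis $S$ is the only such component, so $\dim\CN^I_\nu=\dim(X^I_x(b))\kompl_g=\dim S$. Combining the three steps gives $\dim S=\ell(x)-d(g,x)$.

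The potential obstacle is the verification that the dimension-drop formula of Corollary~\ref{cor3.4}(a) transfers to the Iwahori setting; this reduces, as indicated above, to observing that purity and Grothendieck-style specialization for Newton polygons are statements about the ambient local $G$-shtuka and are therefore unaffected by the extra Iwahori-level structure. The other verifications (smoothness of $\CF_{x^{-1}}$, irreducibility of $\Spec\DefoI$, and the standard fact that a completion of a scheme at a closed point sees only the components through that point) are routine.
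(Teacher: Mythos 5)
Your proof is correct and takes essentially the same route as the paper, whose own proof reads simply ``This is an immediate consequence of Theorem~\ref{thmi} and Corollary~\ref{cor3.4}(a).'' You have supplied exactly these two ingredients, together with the supporting observations the authors leave implicit: that $\dim\DefoI=\ell(x)$ via the Schubert cell $\CF_{x^{-1}}$, and that the Newton stratification of $\Spec\DefoI$ determined by the local $G$-shtuka with $I$-structure agrees with the one seen by the underlying local $G$-shtuka, so that Corollary~\ref{cor3.4}(a) applies verbatim.
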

\begin{proof}
This is an immediate consequence of Theorem \ref{thmi} and Corollary \ref{cor3.4}(a).
\end{proof}
\begin{remark}
For deformations of local $G$-shtukas (without Iwahori-structure) we saw that such maximal lengths of realizable chains in the universal deformation of $g$ bounded by some $\mu$ are equal to the maximal length of chains of Newton polygons between $\mu$ and the basic Newton polygon. These lengths are given by Chai's formula, compare Corollary \ref{cor3.4}(b). For deformations of local $G$-shtukas with $I$-structure, the situation is more difficult. Let $b\in I(k)xI(k) $ and let $\mu$ be the dominant element in the Weyl group orbit of the translation part of $x\in\widetilde{W}$. Let $\nu$ be the Newton polygon of $b$. Then in general not every Newton polygon $\nu'$ with $\nu\preceq\nu'\preceq \mu$ arises as the Newton polygon of some point in the universal deformation of $b$ of affine Weyl type $x$. Let $N(b,x)$ be the set of Newton polygons that occur in the universal deformation of $b$. Then the maximal chain lengths in $N(b,x)$ are in general shorter than those in the set of all Newton polygons between $\nu$ and $\mu$. Up to now there is no closed expression for $N(b,x)$. A second question is whether there always is a chain in $N(b,x)$ of maximal length that is realizable in the universal deformation. However this last assertion holds in all known examples (for example for $G=SL_3$ and all $b$, $x$ by \cite{Beazley}).
\end{remark}

We spend the rest of this section using Theorem \ref{thmi} to prove the basic case of a conjecture of Beazley \cite[Conjecture 1]{Beazley} comparing dimensions of affine Deligne-Lusztig varieties inside the affine flag variety with codimensions of Newton strata in $I(k) xI(k)$. Let $b\in LG(k)$ be basic and $x\in\widetilde{W}$ with $X_x^I(b)\neq\emptyset$. We consider the subset $IxI:=I(k)xI(k)\subset LG(k)$. Let $\nu$ be the Newton polygon of $b$ and let $(IxI)_{\nu}$ be the subset of $IxI$ of elements having Newton polygon $\nu$. Theorem \ref{thmuniformbound} for $\CB=IxI$ implies that there is a bounded open subgroup $I_0$ of $I(k)$ such that the Newton polygon of an element of $IxI$ only depends on its image in $IxI/I_0$. This last set is naturally the set of $k$-valued points of a scheme. The specialization property of Newton polygons then shows that $(IxI)_{\nu}/I_0$ is the set of $k$-valued points of a closed subscheme of $IxI/I_0$. We define $\codim\bigl((IxI)_{\nu}\subseteq IxI\bigr):= \dim IxI/I_0 -\dim (IxI)_{\nu}/I_0$. By our choice of $I_0$ it is independent of $I_0$, provided $I_0$ is small enough. A subset $S$ of $IxI$ that is right invariant under a (sufficiently small) group $I_0$ is called irreducible if $S/I_0$ is irreducible. Again this is independent of the choice of $I_0$. The notions of irreducible component or smoothness for such subsets are defined similarly.

\begin{proposition}\label{Prop9.7}
Let $b\in LG(k)$ be basic and $x\in\widetilde{W}$ such that $X_x^I(b)\neq\emptyset$. Then
$$\dim X_x^I(b)=\ell(x)-\codim\bigl((IxI)_{\nu}\subseteq IxI\bigr).$$
\end{proposition}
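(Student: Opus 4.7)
The plan is to deduce the proposition from Theorem~\ref{thmi} by means of a formal-local isomorphism that identifies a transverse slice to $\s$-conjugation orbits in $IxI$ with the universal deformation space.

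First, since $b$ is basic, every element of $(IxI)_\nu$ is $\s$-conjugate to $b$, so the map $X_x^I(b)\to (IxI)_\nu$, $g\mapsto g^{-1}b\s(g)$, is surjective. I pick $g\in X_x^I(b)(k)$ such that $b_0:=g^{-1}b\s(g)$ lies in a top-dimensional irreducible component of the closed subscheme $(IxI)_\nu/I_0 \subseteq (IxI)/I_0$ (for a suitable small $I_0$ as in the paragraph preceding the proposition). By Theorem~\ref{thmi}, the completion of $X_x^I(b)$ at $g$ is isomorphic to the basic Newton stratum $\mathcal{N}^I_\nu$ in the universal $(\zeta=0)$-deformation of the local $G$-shtuka with $I$-structure $(I_k,b_0\s)$ of affine Weyl type $x$. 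By Theorem~\ref{ThmDefoSpI} and Remark~\ref{Prop9.3}, $\DefoI$ is the complete local ring of $\CF_{x^{-1}}$ at $b_0^{-1}$, so $\dim\Spec\DefoI=\ell(x^{-1})=\ell(x)$.

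The main ingredient is the following local isomorphism. Let $\tilde b \in LG(\DefoI)$ denote the universal element, so that $s\mapsto\tilde b(s)$ defines a formal section of the left $I$-torsor $IxI\to\CF_{x^{-1}}$ over a formal neighborhood of $b_0^{-1}$. Writing $\wh I$ for the formal completion of $I$ at the identity, consider
\[
\alpha\colon\wh I\times_k\Spec\DefoI\longrightarrow (IxI)^\wedge_{b_0},\qquad (i,s)\mapsto i\cdot\tilde b(s)\cdot\s(i)^{-1}.
\]
I claim $\alpha$ is an isomorphism of formal schemes. Indeed, the companion $\alpha'(i,s):=i\cdot\tilde b(s)$ is an isomorphism because $\tilde b$ is a section of the left $I$-torsor. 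Since $\s$ is the $q$-th power Frobenius on $I$, its differential at $1$ vanishes (any tangent vector is killed by the $q$-th power with $q\ge 2$), so the factor $\s(i)^{-1}$ contributes nothing at first order and one finds $d\alpha|_{(1,0)}=d\alpha'|_{(1,0)}$, which is an isomorphism. Passing to a sufficiently small congruence subgroup $I_0\subset I$ realises both source and target as smooth formal schemes of the same finite dimension (using that $\CF_{x^{-1}}$ is a Schubert cell isomorphic to an affine space and that $IxI/I_0$ is an $(I/I_0)$-bundle over $\CF_x$), and the formal inverse function theorem yields the claim.

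Under $\alpha$, the element $i\tilde b(s)\s(i)^{-1}$ is $\s$-conjugate to $\tilde b(s)$, so their Newton polygons agree. Hence $\alpha^{-1}\bigl(((IxI)_\nu)^\wedge_{b_0}\bigr) = \wh I\times\mathcal{N}^I_\nu$, and quotienting by the chosen $I_0$ (on which $(IxI)_\nu$ is right-invariant) gives
\[
\codim_{b_0 I_0}\bigl((IxI)_\nu/I_0 \subseteq (IxI)/I_0\bigr) \;=\; \codim\bigl(\mathcal{N}^I_\nu\subseteq\Spec\DefoI\bigr).
\]
By the choice of $b_0$, the left-hand side equals the global codimension $\codim((IxI)_\nu\subseteq IxI)$. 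Combining everything,
\[
\dim_g X_x^I(b)\;=\;\dim\mathcal{N}^I_\nu\;=\;\ell(x)-\codim\bigl((IxI)_\nu\subseteq IxI\bigr).
\]
The same argument applied at any other $g'\in X_x^I(b)$ yields the inequality $\dim_{g'}X_x^I(b)\le\ell(x)-\codim((IxI)_\nu\subseteq IxI)$, because the local codimension in the smooth ambient $(IxI)/I_0$ is always at least the global one. Hence the displayed identity in fact computes $\dim X_x^I(b)$, proving the proposition.

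The main obstacle will be to rigorously justify that $\alpha$ is an isomorphism of formal schemes despite $I$ being infinite-dimensional: this requires choosing a congruence subgroup $I_0$ small enough to reduce to a finite-dimensional smooth situation, and promoting the vanishing of $d\s$ at $1$ to an isomorphism of full formal neighborhoods via the formal inverse function theorem (rather than just on tangent spaces).
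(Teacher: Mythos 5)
Your overall strategy — identify $\Spec\DefoI$ as a transverse slice to the $\s$-conjugation orbits inside $IxI$ near $b_0$ and transport the Newton stratification across — is in the same spirit as the paper, which in Proposition~\ref{propi} constructs a morphism $\pi\colon Y=(IxI/I_0)^\wedge_{b_0}\to\Spec\DefoI$ together with a section and shows that realizable chains of Newton polygons agree on both sides, and then applies Corollary~\ref{cor3.4}(a). Your claim that $\alpha$ is an isomorphism is indeed correct, but the justification you offer is the weak point. The formal inverse function theorem argument cannot be run after ``passing to $I_0$'' as you propose, because $\alpha$ simply does not descend: for $j_0$ a formal element of $I_0$ one has $\alpha(ij_0,s)=ij_0\tilde b(s)\s(j_0)^{-1}\s(i)^{-1}$, which does not lie in the right $I_0$-orbit of $\alpha(i,s)$, so $\alpha$ does not induce a map $\wh{I/I_0}\times\Spec\DefoI\to(IxI/I_0)^\wedge_{b_0}$. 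You are therefore stuck with a map between non-noetherian formal spectra, for which ``$d\alpha$ bijective at the closed point implies isomorphism'' is not a theorem you can quote. The honest way to prove $\alpha$ is an isomorphism is to bypass differentials altogether and use the pro-representability of the deformation functor (Theorem~\ref{ThmDefoSpI}) together with rigidity (Proposition~\ref{PropRigidity}): for an Artinian local $k$-algebra $A$ and $g\in(IxI)^\wedge_{b_0}(A)$, the universal property gives a unique $u\colon\DefoI\to A$ and a unique $\eta\in\ker(I(A)\to I(k))$ with $g=\eta\,u(\tilde b)\,\s(\eta)^{-1}$; this is exactly bijectivity of $\alpha$ on $A$-points. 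The tangent computation is a consequence, not a proof.

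Even after $\alpha$ is established as an isomorphism of (pro-)functors, you still owe an argument relating codimensions of Newton strata in the infinite-dimensional completion $(IxI)^\wedge_{b_0}$ to codimensions in the finite-type quotient $IxI/I_0$ — your codimension is defined through the latter, but $\alpha$ lives entirely in the former. This descent is routine (the projection $(IxI)^\wedge_{b_0}\to(IxI/I_0)^\wedge_{b_0 I_0}$ is formally smooth with pro-smooth formal fibers), but you need to say it; the paper avoids the issue by never leaving the finite-dimensional picture. Your final step — choosing $g$ so that $b_0$ lies in a top-dimensional component of $(IxI)_\nu/I_0$ and observing that local codimension at any other point is at least the global one — is fine, although note that the paper's version additionally records that $\DefoI$ and the completion of $IxI/I_0$ at $b$ are equidimensional (by the argument of Proposition~\ref{propnewtondim}), which makes the comparison of local and global codimensions cleaner. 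In short: the picture is right, the isomorphism statement is right, but its proof must go through Theorem~\ref{ThmDefoSpI} and Proposition~\ref{PropRigidity}, not the inverse function theorem.
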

This proposition proves the basic case of \cite[Conjecture 1]{Beazley}.
\begin{proof}
By Theorem \ref{thmi} and Remark~\ref{Prop9.3} it is enough to show that $\codim((IxI)_{\nu}\subseteq IxI)$ is equal to $\dim \Spec \DefoI - \dim \mathcal{N}^I_{\nu}$ where $\mathcal{N}^I_{\nu}$ is the Newton stratum in the universal $(\zeta=0)$-deformation $\Spec \DefoI$ associated to some $g\in X_x^I(b)$ such that $X_x^I(b)$ is smooth and of maximal dimension in $g$. As in the proof of Proposition~\ref{propnewtondim} one sees that $\Spec \DefoI$ and the completion of $IxI/I_0$ in $b$ are equidimensional. The assertion is then an immediate consequence of Proposition \ref{propi} and Corollary \ref{cor3.4} (a).
\end{proof}

\begin{proposition}\label{propi}
Let $b\in I(k)xI(k)$ with Newton polygon $\nu$, let $\Test$ be an irreducible component of $(IxI)_{\nu}$. Assume that $\Test$ is the only such component containing $b$, and that $\Test$ is normal in $b$. Then the closed Newton stratum $\CN_{\nu}^I$ in the universal $(\zeta=0)$-deformation of the local $G$-shtuka with $I$-structure $(I_k,b\s)$ is irreducible. Let $I_0$ be a subgroup of $I$ such that the Newton polygon of an element of $IxI$ only depends on its $I_0$-coset. Then a chain of Newton polygons is realizable in the universal $(\zeta=0)$-deformation at its closed Newton stratum if and only if it is realizable in $IxI/I_0$ at $\Test/I_0$.
\end{proposition}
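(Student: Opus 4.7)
My plan is to construct a surjective morphism $\Psi$ from the formal completion of $IxI$ at $b$ to the universal $(\zeta=0)$-deformation $\Spec\DefoI$, and then transfer irreducibility and chain data across $\Psi$. Concretely, for any local Artinian $k$-algebra $R$ and any lift $b'\in IxI(R)\subset LG(R)$ of $b$, the pair $(I_R, b'\sigma^\ast)$ is a local $G$-shtuka with $I$-structure of affine Weyl type $x$ deforming $(I_k, b\sigma^\ast)$; by the universal property of Theorem~\ref{ThmDefoSpI} this yields a morphism $\Spec R\to\Spec\DefoI$. This defines $\Psi:\widehat{(IxI)}_b\to\Spec\DefoI$. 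Surjectivity follows because the universal element $b_{\DefoI}\in LG(\DefoI)$ lies in $IxI(\DefoI)$ (since the shtuka has affine Weyl type $x$) and supplies a section of $\Psi$ after choosing a trivialization of the universal $I$-torsor, which is possible by formal smoothness of $I$. Two lifts $b', b''\in IxI(R)$ give the same point in $\Spec\DefoI$ precisely when there exists $g\in I(R)$ reducing to $1\in I(k)$ with $b''=g^{-1}b'\sigma^\ast(g)$; thus the fibers of $\Psi$ are formal $\sigma$-conjugation orbits under the pro-unipotent part of $I(R)$.

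Since the Newton polygon is invariant under $\sigma$-conjugation, $\Psi^{-1}(\CN_\nu^I)=\widehat{(IxI)_\nu}_b$. Under the hypothesis that $Y$ is the unique irreducible component of $(IxI)_\nu$ containing $b$ and is normal at $b$, the formal completion $\widehat{Y}_b$ is an integral formal scheme, and $\widehat{(IxI)_\nu}_b=\widehat{Y}_b$. As the image of an irreducible formal space under the surjection $\Psi$, the stratum $\CN_\nu^I$ is irreducible, establishing assertion~(1).

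For realizable chains, a chain $\nu=\nu_n\prec\dotsb\prec\nu_0$ being realizable in the universal deformation at $\CN_\nu^I$ amounts to the existence of a chain $S_n\subseteq\overline{S_{n-1}}\setminus S_{n-1}\subseteq\dotsb$ of irreducible subsets of the Newton strata of $\Spec\DefoI$ with $S_n=\CN_\nu^I$. Under the surjection $\Psi$, such chains correspond bijectively to chains of irreducible, $\sigma$-conjugation-invariant, closed reduced formal subschemes of $\widehat{IxI}_b$ with the same nesting and closure pattern and labelled by the same Newton polygons. Descending along the projection $IxI\to IxI/I_0$ (well-defined on Newton polygons by the choice of $I_0$), these correspond in turn to chains of irreducible subsets in $IxI/I_0$ at $Y/I_0$, yielding the claimed equivalence.

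The principal obstacle will be making the chain correspondence rigorous: one must verify that the fibers of $\Psi$, i.e.~the formal $\sigma$-conjugation orbits of the pro-unipotent subgroup of $I$ reducing to $1$, are connected (so that irreducible formal subschemes upstairs correspond to irreducible formal subschemes downstairs and vice versa), and that the descent along $IxI\to IxI/I_0$ neither creates nor destroys irreducible components of the Newton strata inside the formal neighborhood of $b$. Once these foundational facts are in place, both assertions of the proposition follow by a direct bookkeeping argument.
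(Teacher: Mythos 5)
Your construction of a morphism from the completed $IxI$ side to $\Spec\DefoI$, the use of the universal section to get surjectivity, and the irreducibility argument via Zariski normality and surjectivity are essentially the paper's. One technical caveat: the paper works with $\Spf$ of the completion of $IxI/I_0$ at $b$ (a noetherian formal scheme of finite dimension), not with a formal completion of the pro-scheme $IxI$ itself, which keeps the commutative-algebra arguments (e.g.\ Zariski's Main Theorem) on solid ground. You should do the same; you do quotient by $I_0$ for the chain statement, but the morphism $\Psi$ and the irreducibility argument should already be set up on $Y = \Spec$ of the completion of $IxI/I_0$ at $b$.

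However, there is a genuine gap in the chain-transfer step, precisely where you flag your ``principal obstacle.'' You assert that chains in $\Spec\DefoI$ ``correspond bijectively to chains of irreducible, $\sigma$-conjugation-invariant, closed reduced formal subschemes'' upstairs, but this is not obvious and you give no argument. The problem is in the direction from $\Spec\DefoI$ back up to $Y$: given the chain $S_n\subseteq\overline{S_{n-1}}\setminus S_{n-1}\subseteq\dotsm$, the preimages $Y_i = \pi^{-1}(S_i)$ are closed and invariant, but proving $Y_{i+1}\subseteq\overline{Y_i}$ is not automatic, since $\pi$ is not a closed map (its fibers are large). Connectedness of fibers, which you suggest as the needed input, does not directly resolve this, because you must show the specific closure relations transfer. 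The paper handles this with a concrete lifting argument: for a geometric point $y$ of $Y_{i+1}$ with values in $\tilde k$, pick a $\tilde k\dbl t\dbr$-valued point of $\Spec\DefoI$ with special point $\pi(y)$ and generic point in $S_i$; restrict the section $f'$ of $\pi$ to it; then $\sigma$-conjugate $f'_{\tilde k\dbl t\dbr}$ by an element $i\in I(\tilde k)$ that moves its special fiber $f'_{\tilde k}$ to $y$, producing a $\tilde k\dbl t\dbr$-valued point of $Y$ with special point $y$ and generic point in $Y_i$. This is the step your sketch is missing, and without it the ``if and only if'' in the second assertion is unproved.
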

\begin{proof}
If $\Spf R_1$ and $\Spf R_2$ are the completions of $IxI/I_0$ and $(IxI)_{\nu}/I_0$ in $b$, let $Y=\Spec R_1$ and $Y_{\nu}=\Spec R_2$. We choose a representative $f\in I(Y)xI(Y)$ of the universal element of $IxI/I_0$ over $Y$ with $f\otimes_{R_1} k=b$. We consider the local $G$-shtuka with $I$-structure $\ul\CI=(I_Y,f\s)$ over $Y$. Since $\ul\CI$ is a deformation of its special fiber $(I_k,b\s)$ this induces a morphism $\pi:Y\rightarrow \Spec \DefoI$ mapping $Y_{\nu}$ to $\mathcal{N}^I_{\nu}$ and also each other Newton stratum of $Y$ to the corresponding Newton stratum in $\Spec \DefoI$. Let $f'$ be such that $(I_\DefoI,f'\s)$ is the universal object over $\Spec \DefoI$ and that $f'\otimes_{\DefoI}k =b$. Then $f'\in I(\DefoI)xI(\DefoI)$. This provides a section of $\pi$. Especially $\pi$ is surjective. Our assumptions on $\Test$ imply that the closed Newton stratum in $Y$ is irreducible by Zariski's Main Theorem~\cite[Theorem VIII.32]{ZariskiSamuel2}. Therefore the closed Newton stratum $\mathcal{N}^I_{\nu}$ in $\Spec \DefoI$ is also irreducible. Thus the chains realizable in $Y$ at $Y_{\nu}$ are also realizable in $\Spec \DefoI$ at $\CN^I_{\nu}$.

For the other direction let $\nu=\nu_n\prec\dotsc\prec\nu_0$ be a realizable chain of Newton polygons in $\Spec \DefoI$ at $S_n=\mathcal{N}^I_{\nu}$ and let $S_i$ be corresponding irreducible subschemes of $\Spec \DefoI$. Let $\Test_i\subseteq Y$ be the inverse image of $S_i$ under $\pi$. Then $\Test_n=Y_{\nu}$. We want to show that $\nu_n\prec\dotsc\prec\nu_0$ is also realizable in $Y$. To do that we use decreasing induction on $i$ to show that there is an irreducible component $\Test_i'$ of $\Test_i$ with $\Test_{i+1}'\subseteq \overline{\Test_i'}$. As $\Test_{i+1}'$ is (by induction) irreducible, it is enough to show that $\Test_{i+1}'\subseteq \overline{\Test_i}$, or that $\Test_{i+1}\subseteq \overline{\Test_i}$. Let $y$ be a geometric point of $\Test_{i+1}$ with values in some field $\tilde{k}$. As $S_{i+1}\subseteq \overline{S_i}$ there is a $\tilde{k}\dbl t\dbr$-valued point of $S_{i}\cup S_{i+1}$ with special point $\pi(y)$ and general point in $S_{i}$. Let $f'_{\tilde{k}\dbl t\dbr}$ be the reduction of $f'$ at this point and let $f'_{\tilde{k}}$ be the reduction in its special point. As $\pi(f'_{\tilde k})=\pi(y)$ there is an $i\in I(\tilde{k})$ with $i^{-1}f'_{\tilde k}\s(i)=y$. Then $i^{-1}f'_{\tilde{k}\dbl t\dbr}\s(i)$ is a $\tilde k\dbl t\dbr$-valued point with special point $y$ and general point in $\Test_i$. This finishes the proof that chains realizable in $\Spec \DefoI$ are also realizable in $Y$.
\end{proof}

\section*{Appendix: Admissibility}
\addcontentsline{toc}{section}{Appendix: Admissibility}
\addtocounter{section}{1}
\setcounter{theorem}{0}

In this section we provide the results needed to generalize Vasiu's proof of purity for $F$-crystals to local $G$-shtukas. Theorem \ref{thmuniformbound} also implies admissibility of the Newton stratification of $I$-double cosets needed in Section \ref{sec9}.

For $n\geq 0$ let $I_n=\{g\in K(k): g\equiv 1 \pmod{z^n}, g\in B \pmod{z^{n+1}}\}$. Here $B$ is the chosen Borel subgroup. Recall that a subset of $LG(k)$ is bounded if and only if it is contained in a finite union of $K$-double cosets $Kz^{\mu}K$. 

\begin{theorem}\label{thmuniformbound}
Let $\CB$ be a bounded subset of $LG(k)$. Then there is a $c\in \mathbb{N}$ such that for each $d\in\mathbb {N}$, each $g\in \CB$ and each $h\in I_{d+c}$ there is a $k\in I_{d}$ with $gh=k^{-1}g\s(k)$. 
\end{theorem}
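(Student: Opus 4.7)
I would prove Theorem~\ref{thmuniformbound} by an iterative Hensel/Newton-type argument, constructing $k$ as a convergent infinite product $k = \prod_{n\geq 0}\delta^{(n)}$ whose factors lie in successively deeper $K$-filtration subgroups and are chosen to cancel the error in $g^{-1}k^{-1}g\,\sigma(k) = h$. The two structural inputs are: (i) a faithful matrix embedding $G\hookrightarrow\GL_N$ (Proposition~\ref{PropFaithfulRep}), which together with the boundedness of $\CB$ yields a uniform constant $c_0 \geq 1$ such that for every $g\in\CB$ and every $n\geq c_0$,
\[
g^{\pm 1} K_n g^{\mp 1}\;\subseteq\;K_{n-c_0}\qquad\text{and}\qquad \mathrm{Ad}(g^{\pm 1})\bigl(\Lie G\otimes z^n k\dbl z\dbr\bigr)\;\subseteq\;\Lie G\otimes z^{n-c_0}k\dbl z\dbr,
\]
where $K_n := \ker(K\to G(k\dbl z\dbr/z^n))$; and (ii) the surjectivity of $\sigma$ on $k\dpl z\dpr$ coming from the algebraic closedness of $k$. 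I would set $c := c_0 + 1$.

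Starting with $k^{(0)} := 1$ and $\epsilon^{(0)} := h^{-1} \in K_{d+c}$, I write $g^{-1}(k^{(n)})^{-1} g\,\sigma(k^{(n)}) = h\cdot(\epsilon^{(n)})^{-1}$ with $\epsilon^{(n)}\in K_{m_n}$, $m_n\geq d+c$, and update $k^{(n+1)} := k^{(n)}\delta^{(n)}$ with $\delta^{(n)} = \exp(X^{(n)})$ for some $X^{(n)}$ to be determined. The identity
\[
\epsilon^{(n+1)} \;=\; \bigl[h^{-1}\bigl(g^{-1}(\delta^{(n)})^{-1} g\bigr)h\bigr]\,\epsilon^{(n)}\,\sigma(\delta^{(n)})
\]
linearizes to the $\sigma$-linear equation
\[
\sigma(X^{(n)}) - \mathrm{Ad}(g^{-1})(X^{(n)})\;\equiv\;-\log\epsilon^{(n)}\pmod{\text{quadratic}}.
\]
I would solve this equation level-by-level in $z$. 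After reducing to $g = z^\mu$ by absorbing the $K$-factors of a Cartan decomposition $g = sz^\mu t$ (which preserve the $K$-filtration), the equation decouples along the root-space decomposition $\Lie G = \Lie T\oplus\bigoplus_\alpha\Lie G_\alpha$ into scalar Artin-Schreier-type equations $\xi^{(q)} - z^{-\langle\alpha,\mu\rangle}\xi = \eta$ on each component, each solvable coefficient-by-coefficient because $k$ is algebraically closed. Careful tracking of leading terms shows $X^{(n)} \in \Lie G \otimes z^{m_n} k\dbl z\dbr$, so $\delta^{(n)} \in K_{m_n} \subseteq K_{d+c} \subseteq I_d$ and every partial product $k^{(n)}$ lies in $I_d$.

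The new error $\epsilon^{(n+1)}$ then consists of (a) the quadratic-and-higher terms in $X^{(n)}$, which after conjugation by $g$ lie in $K_{2m_n - c_0} \subseteq K_{m_n+1}$ (using $m_n \geq c_0+1$); and (b) the commutator correction $(\Id - \mathrm{Ad}(h^{-1}))\mathrm{Ad}(g^{-1})(X^{(n)})$, which lies in $K_{m_n - c_0 + d + c} = K_{m_n+1}$ (using $c = c_0+1$). Hence $m_{n+1} \geq m_n+1$, the factors $\delta^{(n)}$ converge to $1$ in the $K$-adic topology, and the infinite product $k := \prod_n\delta^{(n)}$ converges in the closed subgroup $I_d$. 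By continuity, $g^{-1}k^{-1}g\,\sigma(k) = \lim_n g^{-1}(k^{(n)})^{-1}g\,\sigma(k^{(n)}) = h$, which rearranges to $gh = k^{-1}g\,\sigma(k)$ as required. The main technical obstacle will be the filtration bookkeeping for the Lie-algebra solution: because $\mathrm{Ad}(g^{-1})$ can shift $z$-levels of Lie-algebra elements by up to $c_0$ in either direction, a naive estimate would put $X^{(n)}$ in a shallower filtration than $\epsilon^{(n)}$. The root-by-root analysis is essential here: for positive roots $\alpha$ the Artin-Schreier recursion pushes the solution to level $m_n + \langle\alpha,\mu\rangle$, while for negative-root and torus components the Frobenius surjectivity on $k$ keeps the solution exactly at level $m_n$, so the full solution always lies in $z^{m_n}\Lie G \otimes k\dbl z\dbr$ and the iteration closes up.
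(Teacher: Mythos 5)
Your proposed Newton/Hensel iteration is a reasonable general strategy, and the filtration bookkeeping at the end (positive roots pushed deeper by Artin--Schreier recursion, negative and toral components held at level $m_n$ by Frobenius surjectivity) is exactly the right kind of analysis \emph{once} the adjoint operator has the diagonal form $\mathrm{Ad}(z^{-\mu})$. But the step where you pass to that form is wrong. You cannot ``reduce to $g=z^\mu$ by absorbing the $K$-factors of a Cartan decomposition $g=sz^\mu t$,'' because the equation $\sigma(X)-\mathrm{Ad}(g^{-1})X=E$ only transforms well under $\sigma$-conjugation: substituting $X=\mathrm{Ad}(s)X'$ and multiplying by $\mathrm{Ad}(\sigma(s)^{-1})$ replaces $\mathrm{Ad}(g^{-1})$ by $\mathrm{Ad}(\sigma(s)^{-1}g^{-1}s)$, i.e.\ moves $g$ to the $\sigma$-conjugate $s^{-1}g\sigma(s)=z^\mu t\sigma(s)$, which is \emph{not} $z^\mu$ unless $t=\sigma(s)^{-1}$ --- and Cartan gives no such relation. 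Since the $\sigma$-conjugacy class, not the $K$-double coset, is the invariant that controls the semilinear operator $\sigma-\mathrm{Ad}(g^{-1})$, and since a $\sigma$-conjugacy class essentially never contains $z^\mu$ itself, the root-space decoupling you rely on is unavailable. Without it, $\mathrm{Ad}(g^{-1})$ mixes directions in which it raises and lowers the $z$-filtration, neither fixed-point formulation $X=\sigma^{-1}(\mathrm{Ad}(g^{-1})X+E)$ nor $X=\mathrm{Ad}(g)(\sigma(X)-E)$ is a contraction, and the claim $X^{(n)}\in z^{m_n}\Lie G$ has no justification. (A secondary issue: $\exp$ of a Lie-algebra element is not available in characteristic $p$; one must build $\delta^{(n)}$ through the group filtration $K_n/K_{n+1}\cong\Lie G\otimes k$, which is routine but not what you wrote.)

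The paper handles exactly this difficulty by a two-stage reduction you have no analogue of. First, it shows (Theorem~\ref{thmuniformbound}, first half) that every $g\in\CB$ is $\sigma$-conjugate by an element of a \emph{bounded} set $\CC$ to a single standard representative $b$; this is the real content and uses Iwasawa decomposition, the fact that connected components of affine Deligne--Lusztig varieties $X^M_{\preceq\mu'}(b)$ for superbasic $b$ in the Levi $M$ are projective of finite type, and Viehmann's connected-components result \cite{conncomp}. Second, Lemma~\ref{lemapp} proves the statement for the single element $b$, and it does so precisely by choosing $b\in\wt W_M$ normalizing $I_M$ and using the Iwahori decomposition $I_n=I_{N,n}I_{M,n}I_{\ol N,n}$, where $\mathrm{Ad}(b^{\pm1})$ is \emph{topologically nilpotent} on $\ol N$ resp.\ $N$ and preserves $I_M$ --- this is the filtration-compatible decomposition that your argument needs but does not have. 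Your iteration would go through if you first carried out a $\sigma$-conjugation (with uniformly bounded conjugator) to a representative whose adjoint action respects a splitting of $\Lie G$ into ``expanding,'' ``contracting,'' and ``isometric'' pieces; establishing that uniformity is the part you skipped, and it is where the nontrivial geometric input lives.
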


\begin{proof}
Note that the set of $\sigma$-conjugacy classes of elements of $\CB$ is finite. We write $\CB$ as a disjoint union of its intersections with the different $\sigma$-conjugacy classes and consider each subset separately. Thus from now on we assume that all elements of $\CB$ are $\sigma$-conjugate under $LG(k)$. Let $\nu$ be their Newton polygon. Then $\CB$ lies in a finite union of double cosets $Kz^{\mu'}K$ with $\nu\preceq\mu'$ and all occurring $\mu'$ have the same image in $\pi_1(G)$. Hence $\CB\subset\coprod_{\mu'\preceq\mu} Kz^{\mu'}K$ for some $\mu$.

We first show that there is a bounded subset $\CC$ of $LG(k)$ and an element $b\in LG(k)$ such that each element of $\CB$ is $\sigma$-conjugate via an element of $\CC$ to $b$. There is a standard parabolic subgroup $P=MN$ of $G$ with Levi component $M$ and a $b\in M$ in the given $\sigma$-conjugacy class with $M$-dominant Newton polygon $\nu$ such that $b$ is superbasic in $M$. Indeed, $M$ is simply the smallest standard Levi subgroup of the centralizer of $\nu$ in $G$ such that the given $\sigma$-conjugacy class contains an element of $M$. Each $g\in \CB$ is of the form $g=f^{-1}b\s(f)$ for some $f\in LG(k)$. We have to show that we may take all $f$ in some bounded subset of $LG(k)$. Using the Iwasawa decomposition we write $f=mnk$ with $m\in M(k\dpl z\dpr)$, $n\in N(k\dpl z\dpr)$ and $k\in K$. Thus all elements of $\CB$ are $\sigma$-conjugate via $K$ to elements of the form $m^{-1}b\s(m)\bigl((m^{-1}b\s(m))^{-1}n^{-1}(m^{-1}b\s(m))\s n\bigr)\in MN$. These elements are still all in $\coprod_{\mu'\preceq\mu} Kz^{\mu'}K$ and in the same $\sigma$-conjugacy class. We claim that the Levi parts $l:=m^{-1}b\s(m)$ of these elements are then again in a finite union of $M(k\dbl z\dbr)$-double cosets, namely those contained in $\coprod_{\mu'\preceq\mu} Kz^{\mu'}K$. To see this, consider for a $\lambda\in X_*(T)$ which is central in $M$ and with $\langle\alpha,\lambda\rangle >0$ for each root of $T$ in $N$ the element $g_x\in LG(k[x])$ given by $g_0=l$ and $g_x=\lambda(x)g\lambda(x)^{-1}$ for $x\neq 0$. An easy calculation shows that this defines an element of $LG(k[x])$. As $\lambda(x)\in K$, all $g_x$ for $x\neq 0$ are in $\coprod_{\mu'\preceq\mu} Kz^{\mu'}K$. As this union is closed, it also contains $g_0=l$. Reformulating the boundedness for the elements $l$ in terms of $m$, we obtain $m\in X^M_{\mu'}(b)$, where $\mu'$ is one of these finitely many dominant coweights of $M$. By \cite[Proposition 1]{conncomp}, $m$ is of the form $m=jm_0$ for some $j\in J_M=\{x\in M(k\dpl z\dpr): x^{-1}b\s(x)=b\}$ and $m_0$ in some fixed connected component of $X^M_{\preceq\mu'}(b)$. As the element $g$ we started with only determines $m$ up to left multiplication by $J_M$, we may assume that $m=m_0$. As $b$ is superbasic in $M$, each connected component of $X_{\preceq\mu'}^M(b)$ is a projective scheme of finite type (see for example \cite{Viehmann06}). Especially, all elements $m$ (for one $\mu'$, but then also for all of the finitely many occurring $\mu'$) are in some bounded subset of $LG(k)$. By $\sigma$-conjugating $g$ with $m^{-1}=m_0^{-1}$ we obtain an element $g'$ of the form $\tilde{n}^{-1}b\s(\tilde n)$ for $\tilde n=mnm^{-1}\in N(k\dpl z\dpr)$. Thus $g'=bn'$ with $n'=b^{-1}\tilde{n}^{-1}b\s(\tilde n)\in N$. So far we only conjugated $g$ by elements in a bounded subset of $LG(k)$, thus the $n'$ still are contained in a subset of $N(k\dpl z\dpr)$ which can be bounded by a bound only depending on the original $\CB$. Let $N_0$ be a given bounded open subgroup of $N(k\dpl z\dpr)$. By $\sigma$-conjugating by a sufficiently dominant element of $T(\mathbb{F}_q\dpl z\dpr)$ which is central in $M$ we may map all $n'$ in this bounded subset to $N_0$. Note that again the element we conjugate by is uniformly bounded (by a bound depending on $\CB$ and the chosen $N_0$). Using Lemma \ref{lemapp} for $b$ and $N_0=I_{c_b}\cap N$ (for $d=0$), we obtain that each element of $\CB$ is $\sigma$-conjugate by an element of a bounded set $\CC$ to $b$. 

Let $c_1$ be such that $f I_{d+c_1}f^{-1}\in I_d$ for all $f\in \CC$ and all $d$. Let further $c_2$ be such that $\s(f)^{-1} I_{d+c_2}\s(f)\in I_d$ for all $f\in \CC$ and all $d$. Let $c=c_b+c_1+c_2$. Let $g\in \CB$ and $h\in I_{d+c}$ for some $d>0$. Let $f\in \CC$ with $f^{-1}g\s(f)=b$. Then 
\begin{align*}
gh&=fb\bigl(\s(f^{-1})h\s(f)\bigr)\s(f^{-1})\\ \intertext{As the expression in the bracket is in $I_{d+c_b+c_1}$ by our choice of $c_2$, Lemma \ref{lemapp} shows that there is an $k'\in I_{d+c_1}$ such that this equals}
&=fk'^{-1}b\s(k')\s(f^{-1})\\
&=(fk'f^{-1})^{-1}g\s(fk'f^{-1}).
\end{align*}
As $fk'f^{-1}\in I_{d}$ by the choice of $c_1$, this proves the theorem.
\end{proof}

\begin{lemma}\label{lemapp}
Let $b\in LG(k)$. Then there is a $c_b\in \mathbb{N}$ such that for each $d\in\mathbb {N}$ and each $g\in I_{d+c_b}$ there is a $k\in I_{d}$ with $bg=k^{-1}b\s(k)$.
\end{lemma}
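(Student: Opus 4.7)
The plan is to solve $bg = k^{-1}b\s(k)$ in $LG(k)$ by $z$-adically convergent successive approximation. Rewriting, set $F(k):=b^{-1}k^{-1}b\,\s(k)$; we need to show that for each $g\in I_{d+c_b}$ there is $k\in I_d$ with $F(k)=g$. First, I choose $c_b\in\mathbb{N}$ large enough that conjugation by $b^{\pm 1}$ shifts the $z$-adic filtration on $\Lie G\otimes k\dbl z\dbr$ by at most $c_b$; in particular, $bI_{n+c_b}b^{-1}\subseteq I_n$ and $b^{-1}I_{n+c_b}b\subseteq I_n$ for every $n\geq 1$. Such a $c_b$ exists because $b\in LG(k)=G(k\dpl z\dpr)$ is a fixed element whose matrix entries (with respect to a chosen faithful representation) have uniformly bounded $z$-denominators.

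Next I construct $k=\lim_{N\to\infty}k_N$ inductively. Set $k_0:=1$, so $\epsilon_0:=F(k_0)g^{-1}=g^{-1}\in I_{d+c_b}$. Given $k_N\in I_d$ with $\epsilon_N:=F(k_N)g^{-1}\in I_{d+c_b+N}$, I seek a correction $h_N$ such that $k_{N+1}:=h_N k_N$ satisfies $\epsilon_{N+1}\in I_{d+c_b+N+1}$. A direct calculation gives
\[
F(h_Nk_N)\;=\;F(k_N)\cdot\s(k_N)^{-1}F(h_N)\s(k_N),
\]
so using $\s(k_N)\in I_d$ the problem reduces, modulo a deeper filtration, to finding $h_N$ with $F(h_N)\equiv\epsilon_N^{-1}\pmod{I_{d+c_b+N+1}}$. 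Passing to the associated graded at the Lie-algebra level, this becomes the $k$-additive equation $\s(Y)-\Ad(b^{-1})(Y)\equiv Z\pmod{z^{d+c_b+N+1}}$, where $Y$ represents the leading term of $h_N$ and $Z$ that of $\epsilon_N^{-1}$.

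The key surjectivity is that $\s-\Ad(b^{-1})$ admits preimages lying in $I_d$ (or deeper) for targets in $I_{d+c_b+N}\bmod I_{d+c_b+N+1}$. This is established after fixing a faithful matrix representation and decomposing $M_r(k\dbl z\dbr)$ according to the $z$-adic filtration: since $\s$ acts preserving $z$-powers via the bijective $q$-Frobenius on the algebraically closed residue field $k$, while $\Ad(b^{-1})$ is $k\dpl z\dpr$-linear with $z$-shift bounded by $c_b$, on each graded quotient $I_n/I_{n+1}$ (a finite-dimensional $k$-vector group) the equation reduces to an Artin--Schreier--Lang-type statement $Y^{(q)}-AY=Z$ with $A$ a linear operator, solvable because the connected unipotent quotient admits Lang-style surjectivity in characteristic $p$. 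Iterating gives a $z$-adically Cauchy sequence $(k_N)\subset I_d$ whose limit $k$ solves $F(k)=g$.

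The main obstacle is the precise bookkeeping of filtration shifts: because $\Ad(b^{-1})$ can either raise or lower $z$-orders by amounts up to $c_b$ (with the direction depending on the component of $\Lie G$ and on $b$), the constant $c_b$ must be chosen from the start to absorb the worst-case shift uniformly, so that at every stage of the iteration the correction $h_N$ genuinely lies in $I_d$ and the error $\epsilon_N$ improves by one power of $z$ at each step.
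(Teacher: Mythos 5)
Your overall plan --- solve $bg=k^{-1}b\s(k)$ by successive approximation in the $z$-adic filtration --- is the same iterative idea as the paper's proof, but the step where you linearize and ``pass to the associated graded'' has a genuine gap. The operator $\Ad(b^{-1})$ does not preserve the filtration $(I_n)_n$, so it does not induce a well-defined $k$-linear operator $A$ on a single graded quotient $I_n/I_{n+1}$, and the equation ``$Y^{(q)}-AY=Z$ on $I_n/I_{n+1}$'' you propose to solve is not a well-posed Artin--Schreier--Lang equation. Depending on the $T$-weight, $\Ad(b^{-1})$ can strictly lower the $z$-order, strictly raise it, or preserve it, and a single unified correction scheme does not converge: on the piece of $\Lie G$ where $\Ad(b^{-1})$ lowers the order, a correction $Y$ at level $n$ contributes $\Ad(b^{-1})Y$ at a strictly shallower level, wrecking the error estimate, and the right move there is to take $Y=-\Ad(b)Z$ (landing strictly deeper), not $Y=\s^{-1}(Z)$.

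The paper closes this gap by first $\sigma$-conjugating $b$ to the standard representative in $\widetilde{W}_M$, with $M$ the centralizer of the Newton polygon of $b$, so that $bI_Mb^{-1}=I_M$ and $b^r=z^{r\nu}$. This normalization makes the block structure of $\Ad(b^{-1})$ with respect to the Iwahori decomposition $I_n=I_{N,n}I_{M,n}I_{\ol N,n}$ clean: it strictly lowers $z$-orders on $\Lie N$, preserves them on $\Lie M$, and strictly raises them on $\Lie\ol N$. The paper then runs three separate iterations --- one using $\s^{-1}$ on the $\ol N$-part, one using conjugation by $b$ on the $N$-part, and a Lang-type surjectivity quoted from \cite[6.3]{GHKR2} on the $M$-part --- and verifies convergence in each. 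Without the normalization, your choice of $c_b$ (bounding the filtration shift caused by $b^{\pm1}$ only) is also too weak; the paper needs $c_b$ to control conjugation by $b^i$ for all $|i|<r$, and then uses $b^r=z^{r\nu}$ to deepen the filtration for larger exponents. These ingredients are exactly what your last paragraph acknowledges as ``the main obstacle'' but does not actually supply, and without them the argument does not close.
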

\begin{proof}
By $\sigma$-conjugating $b$ we may assume that $b$ is equal to the standard representative of its $\sigma$-conjugacy class as defined in \cite[7.2]{GHKR2}. More precisely, this implies that $b$ has the following properties. Let $M$ be the centralizer of the $G$-dominant Newton polygon of $b$, it is the Levi component of a standard parabolic subgroup $P$ of $G$. Then the standard representative is an element $b$ of the extended affine Weyl group $\widetilde{W}_M$ of $M$ with $bI_Mb^{-1}=I_M$ where $I_M=I\cap M$. Note that this $M$ is not the same as in the proof of Theorem \ref{thmuniformbound}. There we considered a Levi such that no $\sigma$-conjugate of $b$ is contained in an even smaller one. Here, the Levi subgroup $M$ is the largest standard Levi subgroup such that $b$ is basic in $M$. As $b$ is in $\widetilde{W}_M$, there is an $r>0$ such that $b^r=z^{r\nu}$.

It is enough to prove the lemma for $d>0$. Let $c_b$ be so large that $b^i I_{d+c_b}b^{-i}\subseteq I_d$ for all $i\in [-r+1,\dotsc,r-1 ]$. (Using that $I_d$ is generated by the corresponding affine root subgroups, one sees that if this condition holds for one $d$ then it holds for all.) For the groups $I_n$ we have an Iwahori decomposition $I_n=I_{N,n}I_{M,n}I_{\overline N,n}$ where the factors are the intersections of $I_n$ with $N$, $M$ and $\overline N$ and where $N$ and $\overline N$ are the unipotent radicals of $P$ and its opposite parabolic. Let $g\in I_{d+c_b}$ and let $g=g_Ng_Mg_{\ol N}$ be its decomposition. We begin by showing that there is an $f_{\overline N}\in I_{\overline N,d}$ with $f_{\overline N}^{-1}bg\s(f_{\overline N})\in bI_{M,d+c_b}I_{N,d+c_b}$. Recall that $b^r=z^{r\nu}$ for a $\nu$ with $\langle \alpha,\nu\rangle<0$ for every root of $T$ in $\ol N$. Thus $g\mapsto b^{-1}gb$ is an elementwise topologically nilpotent map on $\ol N$. The definition of $c_b$ together with $b^r=z^{r\nu}$ also implies that $b^{-i}gb^i\in I_d$ for all $i\geq 0$ and $g\in I_{\ol N,d+c_b}$. Let $f_1=(\s)^{-1}(g_{\ol N})$. Then 
\begin{eqnarray*}
f_1^{-1}bg_Ng_Mg_{\ol N}\s(f_1)&=&f_1^{-1}bg_Ng_M\\
&=&b(b^{-1}f_1^{-1}b)g_Ng_M.
\end{eqnarray*} 
Note that $[I_a,I_{a'}]\subset I_{a+a'}$ for all $a,a'$. As $b^{-1}f_1^{-1}b\in I_d$, the above product lies in $bg_Ng_M(b^{-1}f_1^{-1}b)I_{2d+c_b}$. Repeating this construction for $g'\in g_Ng_M(b^{-1}f_1^{-1}b)I_{2d+c_b}$ and its Iwahori decomposition $g'=g'_Ng'_Mg'_{\ol N}$ yields an $f_2=(\s)^{-1}(g'_{\ol N})$. We have $g'_{\ol N}\in (b^{-1}f_1^{-1}b)I_{\ol N,2d+c_b}$, and by the choice of $c_b$, this is in $I_{d}$. We iterate this process and let $f_{\ol N}=f_1\circ f_2\circ\dotsm$. As conjugation by $b$ is elementwise topologically nilpotent on $\ol N$, the product converges. We obtain that $bg$ is $\sigma$-conjugate via an element of $I_{\ol N,d}$ to an element of the form $b\tilde{g}$ with $\tilde{g}=\tilde{g}_N\tilde{g}_M\in I_{N,d+c_b}I_{M,d+c_b}$. A similar argument (using $f_1=b\tilde{g}_Nb^{-1}$ and that $g\mapsto bg b^{-1}$ is elementwise topologically nilpotent on $N$) shows that we may also achieve that $\tilde{g}_N=1$. For $g\in I_{M,d+c_b}$ we use that $b^{-1}I_{M,a}b=I_{M,a}$ for all $a$. In this situation the argument in \cite[6.3]{GHKR2} also shows that $bg$ is $\sigma$-conjugate via $I_{M,d}$ (or even $I_{M,d+c_b}$) to $b$. 
\end{proof}

\begin{corollary}\label{corhombound}
Let $(M,b\s)$ and $(M',b'\s)$ be two effective local shtukas over $k$ of rank $r$ bounded by $\mu$ and $\mu'$. Then there is a $c\in \mathbb{N}$ only depending on $\mu$ and $\mu'$ with the following property. Let $g:M\rightarrow M'$ be a linear map with $(b'\s (g)- gb)(x)\in {z^{c+d}}M'$ for some $d$ and every $x\in \s M$. Then there exists a homomorphism of local shtukas $f:(M,b\s)\rightarrow (M',b'\s)$ with $(f-g)(x)\in z^dM'$ for every $x\in M$.
\end{corollary}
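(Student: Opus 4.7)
\emph{Reformulation.} Writing $f = g + h$, the problem reduces to finding $h \in z^d\Hom(M, M')$ with $b'\s(h) - hb = -E$, where $E := b'\s(g) - gb \in z^{c+d}\Hom(\s M, M')$ by hypothesis. Introducing the $\sigma$-linear operator $T$ on $\Hom(M, M')[1/z]$ via $T(h) := b'\,\s(h)\,b^{-1}$, we equivalently solve $(\id - T)(h) = E b^{-1}$ with $z$-adic valuation at least $d$; note the right-hand side automatically lies in the image of $\id - T$, since $E b^{-1} = -(\id - T)(g)$. The fixed points of $T$ in $\Hom(M, M')$ are exactly the morphisms of local shtukas $M \to M'$.

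\emph{Reduction to standard form.} To make the constant $c$ depend only on $\mu, \mu'$, $\sigma$-conjugate $b$ (resp.\ $b'$) to a decent standard representative $b_0$ (resp.\ $b_0'$) of its $\sigma$-conjugacy class, satisfying $b_0^r = z^{r\nu}$ and $(b_0')^{r'} = z^{r'\nu'}$ for positive integers $r, r'$ and Newton cocharacters $\nu, \nu'$, as in the proof of Lemma~\ref{lemapp}. Because $b \in K z^\mu K$ and $b' \in K z^{\mu'} K$, the conjugating elements can be chosen of uniformly bounded valuation; the resulting change of trivializations of $M$ and $M'$ modifies $g$ additively by elements of uniformly controlled valuation, contributing only a constant shift in $c$ depending solely on $\mu, \mu'$.

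\emph{Slope-wise solving.} With $b, b'$ standardized, decompose $M$ and $M'$ into isoclinic components over the algebraically closed field $k$: $M = \bigoplus_\alpha M_\alpha$ and $M' = \bigoplus_\beta M'_\beta$, giving $\Hom(M, M')[1/z] = \bigoplus_{\alpha,\beta} \Hom(M_\alpha, M'_\beta)$. Passing to a common period $s$ (a suitable multiple of $\operatorname{lcm}(r, r')$), the operator $T^s$ acts on the $(\alpha, \beta)$-summand by multiplication by $z^{s(\beta - \alpha)}$ times a unit. All slopes $\alpha, \beta$ are bounded by $\mu, \mu'$ via Proposition~\ref{mazur}. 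Decompose $Eb^{-1} = \sum_{\alpha,\beta} (Eb^{-1})_{\alpha\beta}$. For $\beta > \alpha$, $T$ is topologically nilpotent on the $(\alpha,\beta)$-summand and the Neumann series $h_{\alpha\beta} = \sum_{n \geq 0} T^n\bigl((E b^{-1})_{\alpha\beta}\bigr)$ converges $z$-adically with valuation at least $d$. For $\beta < \alpha$, the reversed series $h_{\alpha\beta} = -\sum_{n \geq 1} T^{-n}\bigl((E b^{-1})_{\alpha\beta}\bigr)$ converges, each step costing at most $s(\alpha - \beta)$ in valuation, which is bounded uniformly in terms of $\mu, \mu'$.

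\emph{Slope-equal case and conclusion.} The delicate case is $\beta = \alpha$, where $T^s$ equals the identity up to a unit and the iteration fails to converge. On this piece $(\id - T)$ acts on a finite-rank $\BF_q\dpl z\dpr$-module whose kernel is the morphism space between the isoclinic shtukas of slope $\alpha$, and $(Eb^{-1})_{\alpha\alpha}$ already lies in its image by the observation from the first step. A finite-dimensional linear-algebra argument (using the $T$-isotypic decomposition of the slope-zero piece; on the non-trivial $T$-eigenspaces, $\id - T$ is invertible with bounded denominator, while on the trivial eigenspace the image equals zero and the right-hand side has zero projection there) produces a solution $h_{\alpha\alpha}$ with valuation at least $d$, provided $c$ exceeds a threshold determined by the finitely many structural invariants of the isoclinic Hom, all bounded by $\mu, \mu'$. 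This is the principal technical obstacle of the argument. Assembling the pieces yields $h \in z^d\Hom(M, M')$ with $b'\s(h) - hb = -E$, and $f := g + h$ is the sought morphism; the constant $c$ is the aggregate of the shifts accumulated in the standardization and slope analyses, depending only on $\mu, \mu'$.
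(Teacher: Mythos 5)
Your proposal takes a genuinely different route from the paper, but it has a gap precisely where you flag "the principal technical obstacle." The paper's proof is a short reduction: set $\tilde M = M\oplus M'$, $\tilde b = \diag(b,b')$, and $\tilde g$ the unipotent block matrix with lower-left entry $g$. Then $\tilde g^{-1}\tilde b\,\s(\tilde g)$ has lower-left block $b'\s(g)-gb = E$, which is $\equiv 0 \pmod{z^{c+d}}$ by hypothesis; Theorem~\ref{thmuniformbound} applied to $\CB = K\tilde b K$ then furnishes a $\sigma$-conjugating element $k\in I_d$, and reading off the block entries of $k$ produces $f$. This avoids any slope decomposition at the lattice level and packages all the difficulty into one uniformity statement that has already been proved.

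Your approach — reformulate as $(\id - T)(h) = Eb^{-1}$ with $T(h) = b'\s(h)b^{-1}$, pass to a Dieudonn\'e--Manin isoclinic decomposition, and solve by Neumann series on the slope-off-diagonal pieces — is coherent in outline and the off-diagonal slope cases are handled correctly (topological nilpotence of $T$ or $T^{-1}$ gives convergence with uniformly bounded loss of valuation). The genuine gap is the equal-slope piece. The phrase "finite-dimensional linear-algebra argument (using the $T$-isotypic decomposition ... on the trivial eigenspace the image equals zero and the right-hand side has zero projection there)" is not an argument: $T$ is $\sigma$-\emph{semilinear}, so $T^s$ is $\sigma^s$-semilinear, not linear, and "eigenspace" and "isotypic decomposition" do not have their usual meaning. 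More importantly, the observation $Eb^{-1} = -(\id-T)(g)$ lives at the level of $k\dpl z\dpr$; the projection $g_{\alpha\alpha}$ onto an isoclinic block need not preserve the lattice $\Hom(M,M')$ nor have controlled denominators, so it does not yield the needed solution $h_{\alpha\alpha}$ with $h_{\alpha\alpha}\in z^d(\cdot)$. You write "provided $c$ exceeds a threshold determined by ... invariants ... bounded by $\mu, \mu'$"; this is exactly the claim that has to be proved and is not. There is also a secondary gap in the standardization step: the assertion that the conjugating element taking $b\in Kz^\mu K$ to its decent representative can be chosen of uniformly bounded valuation is itself closely tied to the content of Theorem~\ref{thmuniformbound} and needs justification rather than assertion. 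In short, your strategy is in the spirit of slope-filtration arguments and could likely be made to work with substantially more effort (essentially re-proving the content of Theorem~\ref{thmuniformbound} and Lemma~\ref{lemapp} in the equal-slope case), but as written the crucial case is missing.
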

Recall that a local shtuka over $k$ is effective if and only if its Hodge polygon has only non-negative slopes.
\begin{proof}
Let $\tilde{M}=M\oplus M'$ and $\tilde b:\s\tilde M\rightarrow \tilde M$ with $\tilde b=(b,b')$. Consider $\tilde{g}:\tilde M\rightarrow \tilde M$ with $\tilde g|_{M'}=\id_{M'}$ and $\tilde{g}(m)=m+g(m)$ for $m\in M$. In matrix form we obtain
$$\tilde g^{-1}\tilde b\s(\tilde g)=\left(\begin{array}{cc}b&0\\-gb+b'\s(g)&b'\end{array}\right),$$ so the entries in the lower left block are congruent to zero modulo $z^{c+d}$. We use Theorem \ref{thmuniformbound} for $G=GL(\tilde M)$ and $\CB=K\tilde{b}K$. We obtain that there is a $c$ (only depending on $\CB$, i.~e. on the Hodge polygon of $\tilde b$ or equivalently on those of $b$ and $b'$), such that such a matrix is $\sigma$-conjugate via a $k\in I_d$ to $\tilde b$. An easy calculation shows that if we write $k=\left(\begin{array}{cc}k_1&k_2\\k_3&k_4\end{array}\right)$, then $f=gk_1+k_3$ has all claimed properties.
\end{proof}

The last corollary was also proved by Fessler~\cite{Fessler} using a different method.

%
%

{\small

}

\vfill

\begin{minipage}[t]{0.5\linewidth}
\noindent
Urs Hartl\\
Universit\"at M\"unster\\
Mathematisches Institut \\
Einsteinstr.~1\\
D -- 48149 M\"unster
\\ Germany
\\[1mm]
{\small www.math.uni-muenster.de/u/urs.hartl/}
\end{minipage}
\begin{minipage}[t]{0.45\linewidth}
\noindent
Eva Viehmann\\
Universit\"at Bonn\\
Mathematisches Institut \\
Endenicher Allee 60\\
D -- 53115 Bonn
\\ Germany
\\[1mm]
\end{minipage}


\begin{thebibliography}{GHKR2}
\addcontentsline{toc}{section}{References}
\bibitem[EGA]{EGA} A.\ Grothendieck: {\it {\'E}lements de G{\'e}o\-m{\'e}trie Alg{\'e}\-brique}, Publ.\ Math.\ IHES {\bf 4}, {\bf 8}, {\bf 11}, {\bf 17}, {\bf 20}, {\bf 24}, {\bf 28}, {\bf 32}, Bures-Sur-Yvette, 1960--1967; see also Grundlehren {\bf 166}, Springer-Verlag, Berlin etc.\ 1971.

\bibitem[SGA 1]{SGA1} A.\ Grothendieck: {\em Rev{\^e}tements {\'e}tales et groupe fondamental}, LNM {\bf 224}, Springer-Verlag, Berlin-Heidelberg 1971. 
 
\bibitem[SGA 3]{SGA3} M.\ Demazure, A.\ Grothendieck: \emph{Sch\'emas en Groupes I, II, III}, LNM {\bfseries 151}, {\bfseries 152}, {\bfseries 153}, Springer-Verlag, Berlin etc.\ 1970.

\bibitem[An]{Anderson2} G.\ Anderson: On Tate Modules of Formal $t$-Modules, \emph{Internat.\ Math.\ Res.\ Notices} {\bfseries 2} (1993), 41--52.

\bibitem[BL]{BeauvilleLaszlo} A.\ Beauville, Y.\ Laszlo: Conformal blocks and generalized theta functions, \emph{Comm.\ Math.\ Phys.\/} {\bfseries 164} (1994),  no.\ 2, 385--419.

\bibitem[Be]{Beazley} E.~T.~Beazley: \emph{Codimensions of Newton strata for $SL_3(F)$ in the Iwahori case}, preprint on {\tt arXiv:0711.3820}.

\bibitem[BD]{BeilinsonDrinfeld} A.\ Beilinson, V.\ Drinfeld: \emph{Quantization of Hitchin's integrable system and Hecke eigensheaves}, preprint on {\tt http://www.math.uchicago.edu/$\sim$mitya/langlands.html}.

\bibitem[BLR]{BLR} S.\ Bosch, W.\ L\"utkebohmert, M.\ Raynaud: \emph{N\'eron models}, Ergebnisse (3) {\bfseries 21}, Springer-Verlag, Berlin, 1990.

\bibitem[BT]{BT} F.~Bruhat, J.~Tits, Groupes r\'eductifs sur un corps local II, Sch\'emas en groupes. Existence d'une donn\'ee radicielle valu\'ee, \emph{Inst.\ Hautes \'Etudes Sci.\ Publ.\ Math.\/} {\bfseries 60} (1984) 197--376.

\bibitem[Ch1]{Chai} C.-L.~Chai: Newton polygons as lattice points, {\em Amer. J. Math.} {\bf 122} (2000), 967--990.

\bibitem[Ch2]{Chai2} C.-L.~Chai: Canonical coordinates on leaves of p-divisible groups: The two-slope case, preprint, {\tt http://www.math.upenn.edu/$\sim$chai/papers\_pdf/2slope\_v14.pdf}.

\bibitem[Dr]{Drinfeld2} V.~G.\ Drinfeld: Coverings of $p$-adic symmetric domains, \emph{Funct.\ Anal.\ Appl.\/} {\bfseries 10} (1976), 107--115.
 
\bibitem[Ei]{Eisenbud} D.\ Eisenbud: {\em Commutative Algebra with a View towards Algebraic Geometry}, Springer-Verlag, New York 1994. 

\bibitem[Fa]{Faltings03} G.\ Faltings: Algebraic loop groups and moduli spaces  of bundles, \emph{J.\ Eur.\ Math.\ Soc.\/} {\bfseries 5} (2003), no.\ 1, 41--68.

\bibitem[Fe]{Fessler} T.\ Fessler: \emph{The Semilinear Theorem and its consequences for isomorphisms of $F$-crystals and local shtuka}, Diploma thesis, University of Freiburg 2008.

\bibitem[Ga]{Gashi} Q.~R.~Gashi: The conjecture of Kottwitz and Rapoport in the case of split groups, preprint on {\tt arXiv:0805.4575}.

\bibitem[Ge]{Genestier} A.\ Genestier: \emph{Espaces sym{\'e}triques de Drinfeld\/}, Ast{\'e}risque {\bfseries 234}, Soc.\ Math.\ France, Paris 1996.

\bibitem[GHKR1]{GHKR} U.\ G\"ortz, T.\ Haines, R.\ Kottwitz, D.\ Reuman: Dimensions of some affine Deligne-Lusztig varieties, \emph{Ann.\ Sci.\ \'Ecole Norm.\ Sup.\ (4)} {\bfseries 39} (2006), no.\ 3, 467--511. 

\bibitem[GHKR2]{GHKR2} U.\ G\"ortz, T.\ Haines, R.\ Kottwitz, D.\ Reuman: Affine Deligne-Lusztig varieties in affine flag varieties, preprint on {\tt arXiv:0805.0045}.

\bibitem[HR]{HR} T.~Haines, M.~Rapoport: On parahoric subgroups, Appendix to \cite{PR2}, \emph{Advances in Mathematics} {\bfseries 219} (2008), 188--198.

\bibitem[Ha1]{HartlAbSh} U.\ Hartl: Uniformizing the Stacks of Abelian Sheaves, in \emph{Number Fields and Function fields - Two Parallel Worlds, Papers from the 4th Conference held on Texel Island, April 2004}, G.\ van der Geer, B.\ Moonen, R.\ Schoof, Editors, 167--222, Progress in Math.\ 239, Birkh\"auser-Verlag, Basel 2005.

\bibitem[Ha2]{HartlPSp} U.\ Hartl: \emph{Period Spaces for Hodge Structures in Equal Characteristic}, to appear in \emph{Ann. of Math.} (2010); see also {\tt arXiv:math.NT/0511686}.

\bibitem[Ha3]{HartlDict} U.\ Hartl: A Dictionary between Fontaine-Theory and its Analogue in Equal Characteristic, \emph{J.\ Number Th.\/} {\bfseries 129} (2009), 1734--1757; see also {\tt arXiv:math.NT/0607182}.

\bibitem[Ha]{Hartshorne}R.\ Hartshorne: \emph{ Algebraic Geometry}, Springer-Verlag, New-York, Heidelberg (1977).

\bibitem[HIO]{HIO} M.~Herrmann, S.~Ikeda, U.~Orbanz: \emph{Equimultiplicity and blowing up. An algebraic study. With an appendix by B. Moonen,} Springer-Verlag, Berlin, 1988.

\bibitem[Ja]{Jantzen} J.~C.\ Jantzen: \emph{Representations of algebraic groups}, Mathematical Surveys and Monographs 107, AMS, Providence, RI, 2003.

\bibitem[dJ]{dJ95} A.~J.\ de Jong: Crystalline Dieudonn\'e module theory via formal and rigid geometry, {\em Publ.\ Math.~IHES} {\bf 82} (1995), 5--96.

\bibitem[dJO]{deJongOort} A.\ J.\ de~Jong, F. Oort: Purity of the stratification by Newton polygons, \emph{J.\ Amer.\ Math.\ Soc.} {\bf 13} (2000), 209--241.

 
\bibitem[Ka]{Katz} N.\ Katz: Slope filtration of $F$-crystals.  {\em Journ{\'e}es de G{\'e}om{\'e}trie Alg{\'e}brique de Rennes (Rennes, 1978)}, Vol. I,  113--163, Ast{\'e}risque {\bf 63}, Soc.\ Math.\ France, Paris 1979.

\bibitem[Kn]{Knapp} A.~Knapp: \emph{Lie groups beyond an introduction}, Progress in Mathematics 140, Birkh\"auser Boston, Boston, MA, 2002.

\bibitem[Ko1]{Kottwitz85} R.~E.~Kottwitz: Isocrystals with additional structure, \emph{Compositio Math.\/} {\bfseries 56}  (1985),  no.\ 2, 201--220.

\bibitem[Ko2]{Kottwitz97} R.~E.~Kottwitz: Isocrystals with additional structure II, \emph{Compositio Math.\/} {\bfseries 109} (1997), no.\ 3, 255--339.

\bibitem[Ko3]{Kottwitz2006} R.~E.~Kottwitz: Dimensions of Newton strata in the adjoint quotient of reductive groups, {\em Pure Appl. Math. Q.} {\bf 2}  (2006), 817--836. 

\bibitem[KR]{KottwitzRapoport} R.~E.~Kottwitz, M.~Rapoport: On the existence of $F$-crystals, {\em Comment. Math. Helv.} {\bf 78} (2003), 153--184.

\bibitem[LS]{LaszloSorger} Y.\ Laszlo, C.\ Sorger: The line bundles on the moduli of parabolic $G$-bundles over curves and their sections, \emph{Ann.\ Sci.\ \'Ecole Norm.\ Sup.\ (4)} {\bfseries 30} (1997),  no.\ 4, 499--525.

\bibitem[Me]{Messing} W. Messing,\emph{ The crystals associated to Barsotti-Tate groups: with applications to abelian schemes},
Lecture Notes in Math. {\bf 264}, Springer, 1972.

\bibitem[NP]{Ngo-Polo} B.C.\ Ng\^o, P.\ Polo: R\'esolutions de Demazure affines et formule de Casselman-Shalika g\'eom\'etrique, \emph{J.\ Algebraic Geom.\/} {\bfseries 10} (2001), no. 3, 515--547. 

\bibitem[O1]{Oort04} F.\ Oort: Foliations in moduli spaces of abelian varieties, \emph{J.\ Amer.\ Math.\ Soc.\/} {\bfseries 17} (2004), no.\ 2, 267--296

\bibitem[O2]{Oort08} F.\ Oort: Foliations in moduli spaces of abelian varieties and dimension of leaves. To appear in \emph{Algebra, Arithmetic and Geometry, Volume I. In Honor of Y.I. Manin}, Progress in Mathematics {\bfseries 269}, Birkh\"{a}user, 2009.

\bibitem[OZ]{OortZink} F. Oort, Th. Zink: Families of $p$-divisible groups with constant Newton polygon, \emph{Documenta Math.} {\bf 7} (2002), 183--201.


\bibitem[PR1]{PappasRapoport} G.\ Pappas, M.\ Rapoport: Local models in the ramified case I, The EL-case, \emph{J.\ Algebraic Geom.\/} {\bfseries 12} (2003), no.\ 1, 107--145. 

\bibitem[PR2]{PR2} G.\ Pappas, M.\ Rapoport: Twisted loop groups and their affine flag varieties, \emph{Advances in Mathematics} {\bfseries 219} (2008), 118--198.

\bibitem[Ra]{RapoportGuide} M.~Rapoport: A guide to the reduction modulo $p$ of Shimura varieties, \emph{Ast\'{e}risque} {\bf 298} (2005), 271--318.

\bibitem[RR]{RapoportRichartz} M.~Rapoport, M.~Richartz: On the classification and specialization of $F$-isocrystals with additional structure, \emph{Compositio Math.\/}  {\bfseries 103}  (1996), 153--181. 

\bibitem[RZ]{RZ} M.\ Rapoport, T.\ Zink: {\it Period Spaces for $p$-divisible Groups\/}, Ann.\ Math.\ Stud.\ {\bf 141}, Princeton University Press, Princeton 1996.

\bibitem[RZ2]{RZ2} M.\ Rapoport, T.\ Zink: \emph{A finiteness theorem in the Bruhat-Tits building: an application of Landvogt's embedding theorem\/}, Indag.\ Mathem., N.S.\ {\bf 10}, 449--458.

\bibitem[Var]{Varshavsky} Y.\ Varshavsky: Moduli spaces of principal $F$-bundles, \emph{Selecta Math.\ (N.S.)\/} {\bfseries 10} (2004),  no.\ 1, 131--166.


\bibitem[Vas]{Vasiu} A.~Vasiu: Crystalline boundedness principle, {\em  Ann. Sci. \'{E}cole Norm. Sup.} {\bf 39} (2006), 245--300.

\bibitem[Vi1]{Viehmann06} E.\ Viehmann: The dimension of some affine Deligne-Lusztig varieties, \emph{Ann.\ Sci.\ \'Ecole Norm.\ Sup.\ (4)} {\bfseries 39} (2006), 513--526. 

\bibitem[Vi2]{conncomp} E.~Viehmann: Connected components of closed affine Deligne-Lusztig varieties, \emph{ Math. Ann.} {\bfseries 340} (2008), 315--333.

\bibitem[ZS]{ZariskiSamuel2} O.\ Zariski, P.\ Samuel: \emph{Commutative algebra, Vol.\ II}, Van Nostrand Co., Princeton-Toronto-London-New York 1960.

\end{thebibliography}
\end{document}